 %
  \documentclass{article}
  \usepackage{amsmath, amsthm, amssymb}
  
 \usepackage{tikz}
 \usepackage{amsmath}
 \usepackage{graphicx}
 \usepackage{multirow}
 \usepackage{caption}
 \usepackage{setspace}
 \usepackage{titlesec}
 \usepackage{color}
 \usepackage[left=1in,right=1in,top=1in,bottom=1in]{geometry}

  \usepackage{amssymb}
 \usepackage{graphicx}
  \usepackage{setspace}
  \usepackage{rotating}
  \usepackage{float}
  \usepackage{multirow}
  \usepackage{color}
  \usepackage{caption}
  \usepackage{rotating}
 \usepackage{eepic}
 \usepackage{colortbl}
 \usepackage[numbers]{natbib}
 \usepackage {multirow}
 \usepackage{setspace}
 \usepackage{indentfirst}

 \usepackage{textcomp}
 \usepackage{array}
 \usepackage{listings}
 \usepackage{setspace}
 \usepackage{colortbl}
 \usepackage{graphicx}
 \usepackage{amssymb, amsmath}
 \usepackage{subfig}
 \usepackage{epsfig}
 \usepackage{times}
 \usepackage{float}
 \usepackage{rotating}
 \usepackage{makeidx}
 \usepackage{url}
 \usepackage{multirow}
 \usepackage{booktabs}
 \usepackage[subfigure, titles]{tocloft}
 \usepackage{acronym}
 \usepackage{datetime}
 \usepackage{algorithm}
 \usepackage{algorithmic}
 \usepackage{etoolbox}

 \usepackage{enumitem}
 \usetikzlibrary {arrows.meta}
 \usetikzlibrary{arrows}
 \usetikzlibrary{shapes.misc}
 \usetikzlibrary{calc, backgrounds}
 \tikzset{vertex/.style={draw=none, line width = 2.4pt, fill, inner sep
 = .09cm, circle}}
 \tikzset{svertex/.style={draw=none, line width = 2.4pt, fill, inner sep
 = .067cm, circle}}
 \tikzset{edge/.style={very thick, gray, line cap=round}}
 \tikzset{bedge/.style={very thick, black, line cap=round}}
 \tikzset{cross/.style={cross out, draw=black, minimum
 size=2*(#1-\pgflinewidth), inner sep=0pt, outer sep=0pt},
 cross/.default={1pt}}
 \tikzset{
   arrow/.pic={\path[tips,every arrow/.try,->,>=#1] (0,0) --
 +(.1pt,0);},
   pics/arrow/.default={triangle 90}
 }
 \usepackage{changepage} 
 %
 %
  \def\fixspacing{%
 	\oddsidemargin=0.25truein
 	\evensidemargin=\oddsidemargin
 	\textwidth=6.0truein
 	\textheight=8.5truein
 	\topmargin=-0.25truein
  }%
  \fixspacing
 %
 %
  \let\reallabel\label
  \def\labelshow#1{\reallabel{#1}{\rm [#1]}}
  \def\showlabels{\let\label\labelshow}
 %
 %
  \theoremstyle{plain}
   \newtheorem{theorem}{Theorem}[section]
   
   \newtheorem{corollary}[theorem]{Corollary}
   \newtheorem{lemma}[theorem]{Lemma}
   \newtheorem{proposition}[theorem]{Proposition}
   \newtheorem*{fanoframework}{Fano Framework}
   \newtheorem*{metaA}{Metatheorem A}
   \newtheorem*{metaB}{Metatheorem B}
   \newtheorem*{metaC}{Metatheorem C}
  \theoremstyle{definition}
   \newtheorem{definition}[theorem]{Definition}
   \newtheorem{observation}[theorem]{Observation}
   \newtheorem{claim}[theorem]{Claim}

 %
 %

  \def\sG{{\cal G}}

  \let\Si\Sigma
  \let\ga\gamma

  \def\iv{^{-1}}

 \long\def\ignore#1{}
 
 %
 %
  
 %
 %
 \let\bp\oplus 
 \let\du* 
 \def\pe{\mskip-1.5mu\times\mskip-1.5mu} 
 \let\wi\odot 

 \def\sdu{{}^*} 
 \def\spe{{}^\times} 
 \def\swi{{}^\odot} 

 \def\edu{^{[*]}}
 \def\epe{^{[\times]}}
 \def\ewi{^{[\odot]}}
 
 \def\edpd{^{[*\times*]}}
 
 \def\epdp{^{[\times*\times]}}

 \def\ct{/} 
 \def\dt{\mskip-2mu\setminus\mskip-2mu} 

 \def\mZ{\mathbb{Z}}
 \let\wt\widetilde
 \let\ovl\overline

 \makeatletter
 \newcommand{\superimpose}[3][\mathord]{#1{\mathpalette\superimpose@{{#2}{#3}}}}
 \newcommand{\superimpose@}[2]{\superimpose@@{#1}#2}
 \newcommand{\superimpose@@}[3]{%
  \ooalign{%
    \hfil$\m@th#1#2$\hfil\cr
    \hfil$\m@th#1#3$\hfil\cr
  }%
 }
 \makeatother
 \def\cg{\mskip-2mu\superimpose[\mathbin]/\circ\mskip-2mu}

 \def\mathbi#1{\textbf{\em #1}}
 \def\cs#1{{\mathbf{#1}}} 
 \def\eg#1{{\textnormal{\mathbi{#1}}}}
 \def\eB{\eg{B}}
 \def\eG{\eg{G}}
 \def\eH{\eg{H}}
 \def\eJ{\eg{J}}
 \def\eM{\eg{M}}
 \def\cB{\cs{B}}
 
 \def\cH{\cs{H}}
 \def\cJ{\cs{J}}
 \def\cL{\cs{L}}
 \def\cM{\cs{M}}
 \def\cN{\cs{N}}
 \def\ca{\mathsf{a}}
 \def\cv{\mathsf{v}}
 \def\cf{\mathsf{f}}
 \def\cz{\mathsf{z}}
 \def\ov{\ovl{v}}
 \def\of{\ovl{f}}
 \def\oz{\ovl{z}}

 \let\la\lambda

 \def\tline{\hbox to \hsize}

 \def\citeemm#1{\cite{twisteddualEMM, GraphsonSurfEMM}}


 \begin{document}

 \title{A Fano Framework for Embeddings of Graphs in Surfaces
 }
 \author{%
 Blake Dunshee\thanks{%
 Email: \texttt{blake.dunshee@belmont.edu}.
 College of Sciences and Mathematics,
 Belmont University, Nashville, TN 37212, U.~S.~A.}
 \and
 M. N. Ellingham\thanks{%
 Email: \texttt{mark.ellingham@vanderbilt.edu}.
 Department of Mathematics,
 Vanderbilt University, Nashville, TN 37240, U.~S.~A.
 Supported by Simons Foundation awards 429625 and MPS-TSM-00002760.}
 	}
 \date{31 December 2024}

 \maketitle
 \begin{abstract}
 We consider seven fundamental properties of cellular embeddings of
graphs in compact surfaces, and show that each property can be
associated with a point of the Fano plane $F$, in such a way that
allowable combinations of properties correspond to projective subspaces
of $F$.
 This Fano framework allows us to deduce a number of implications
involving the seven properties, providing new results and unifying
existing ones.
 For each property, we provide a correspondence between embeddings with
that property and an associated structure for $4$-regular graphs, using
the medial graph of the graph embedding.  We apply this to characterize
when a graph embedding has a twisted dual with one of the properties.
 For each allowable combination of properties, we show that a graph
embedding with these properties exists.
 We investigate connections between the seven properties and three
weaker `Eulerian' properties.
 Our proofs involve parity conditions on closed walks in an extended
version of the `gem' (graph-encoded map) representation of a graph
embedding.
 \end{abstract}

 \section{Introduction}\label{sec:intro}

 \subsection{Summary of main results}

 Consider the following seven possible properties of a cellular
embedding $\eG$ of a finite graph in a compact surface:
 \begin{enumerate}[label=(\arabic*)]\setlength{\itemsep}{0pt}
  \item
 The underlying surface of $\eG$ is orientable. 
  \item
 A cycle in $\eG$ is 1-sided if and only if it is odd.
  \item
 The underlying graph of $\eG$ is bipartite.
  \item
 The embedding $\eG$ is \emph{directable}, meaning there exists an
orientation (direction) of the edges of $\eG$ so that every face
boundary is a directed walk. 
  \item
 The embedding $\eG$ is 2-face-colorable.
  \item
 If $\eG$ and its dual $\eG\sdu$ are embedded together in their common
underlying surface $\Si$ in the natural way, then the resulting regions
of $\Si - (\eG \cup \eG\sdu)$ can be properly 2-colored.
  \item
 The edges of $\eG$ can be 2-colored so that the colors alternate around
every vertex and around every face.
 \end{enumerate}
 These properties seem somewhat arbitrary, although several of them
involve some kind of $2$-colorability or parity condition. 
 In fact, the above properties are closely related and form a coherent
framework.
 Each property can be described using bipartiteness of a particular
graph derived from an embedded graph $\eG$, or as a parity condition on
cycles or closed walks in graphs known as the `gem' and `jewel' of
$\eG$.
 Using these descriptions we can map these properties to points in the
binary vector space $\mZ_2^3$, where the zero vector corresponds to a
trivial property satisfied by all graph embeddings, so that only the
nonzero vectors are of interest.  The nonzero vectors form a Fano plane,
and the Fano plane structure gives connections between the properties. 

 Our main result can be stated in general terms as follows.

 \begin{fanoframework}
 The seven properties above can be mapped to the points of the Fano
plane $F$, so that for every embedded graph $\eG$, the properties
satisfied by $\eG$ form a projective subspace of $F$.
 \end{fanoframework}

 In particular, the lines of the Fano plane can be described as follows.
 Given $a, b \in \{0, 1, 2, \dots, 7\}$, compute $a \bp b$ by turning
$a$ and $b$ into $3$-digit base $2$ numbers, adding these as vectors in
$\mZ_2^3$, and then converting back to an integer.
 For example, $6 \bp 3 = 110_2 \bp 011_2 = 101_2 = 5$.
 This operation is commutative and associative, and is a special case of
the well-known \emph{Nim sum}.
 Now three properties ($a$), ($b$), and ($c$) with $a, b, c \in \{1, 2,
\dots, 7\}$ form a line in the Fano
plane if and only if $a \bp b \bp c = 0$.

 The Fano Framework has a number of consequences, such as the following
metatheorem.

 \begin{metaA} Every embedded graph has exactly zero, one, three, or
seven of the seven properties above.  Thus, every combination of two
properties implies a third property, and if four properties are
satisfied then all seven properties hold.
 \end{metaA}

 There are another two metatheorems which together give a specific
theorem for any three of the seven properties.

 \begin{metaB} Let $(a)$, $(b)$, and $(c)$ be distinct properties from
the seven properties above, such that $a \bp b \bp c = 0$.  Then any two
of $(a)$, $(b)$, or $(c)$ imply the third.
 \end{metaB}

 \begin{metaC} Let $(a)$, $(b)$, and $(c)$ be distinct properties from
the seven properties above, such that $a \bp b \bp c \ne 0$.  Then the
three properties $(a)$, $(b)$, and $(c)$ together imply all seven
properties.
 \end{metaC}

 The rest of this section discusses important previous ideas that form
the background for our work.
 Section \ref{sec:embeddings} defines the machinery that we need for our
proofs.
 Section \ref{sec:parityconditions} establishes the Fano Framework in
terms of parity conditions for closed walks in gems or jewels, and
Section \ref{sec:fanoproperties} verifies that these conditions correspond
to the seven properties above.
 Section \ref{sec:bidirections} shows how bidirections of the medial
checkerboard (a colored version of the medial graph) are related to the
seven properties.
 These can be applied to give characterizations of when a twisted dual
is orientable or bipartite.
 Section \ref{sec:eulerian} investigates three weaker `Eulerian'
properties, which can also be characterized using parity conditions for
a special type of closed walk, or using bidirections of the medial
checkerboard.
 Section \ref{sec:examples} provides examples of embeddings with all
allowable combinations of the seven properties.
 While most of the results in this paper involve parity (i.e., modulo
$2$) conditions for closed walks in gems or jewels, Section
\ref{sec:allpds} provides a result involving a congruence condition
modulo $4$ which characterizes embeddings all of whose partial duals are
Eulerian, and also embeddings all of whose partial duals have a single
vertex.
 Some concluding remarks appear in Section \ref{sec:conclusion}.

 The major contributions of this paper are the overall framework, and
the development of suitable tools to make the proofs easy.  Our
arguments are mostly elementary, but this is due to the careful choice
and refinement of appropriate machinery.
 In addition to providing new results, our work unifies a number of
results in the literature.

 \subsection{Historical background}

 Our work rests on three key ideas.  The first is \emph{dualities} for
graph embeddings.  The notion of \emph{geometric duality} was originally
considered for polyhedra, and then transferred to graph embeddings
(maps).  This concept may go back to antiquity, and was certainly known
to Kepler in the early 17th century \cite[p.~181]{Ke1619} in connection
with the Platonic solids.  Another relevant concept is \emph{Petrie
duality}, which originated with ideas of skew polygons and zigzag walks
\cite[p.~202]{Co31} in regular polyhedra, attributed by Coxeter to his
collaborator J.~F.~Petrie.
 The interactions between geometric duality and Petrie duality were
described independently by Wilson \cite{Wilson} and Lins
\cite{gemslins}, giving an \emph{action of the symmetric group $S_3$ on
graph embeddings}, which includes a third duality operation, called
`opposite' by Wilson and `phial' by Lins, and now usually known as
\emph{Wilson duality}.

 The second key idea is representation of graph embeddings using
$3$-edge-colored cubic ($3$-regular) graphs.  This was discovered
independently by a number of researchers in the 1970s and early 1980s.
 Robertson \cite{Rob71} and Lins \cite{gemslins} represented embeddings
of graphs in surfaces using $3$-colored $3$-regular graphs, which Lins
called \emph{graph-encoded maps} or \emph{gems}.
 Bonnington and Little \cite{BoLi95} showed how a number of aspects of
topological graph theory can be developed in terms of gems.
 Others, such as Ferri \cite{Fer76} (building on earlier work by Pezzana
\cite{Pez74} and Ferri and Gagliardi \cite{FeGa75}) and Vince
\cite{Vin83} used more general $(n+1)$-edge-colored $(n+1)$-regular
graphs to represent decompositions of $n$-dimensional manifolds; Ferri
called these \emph{crystallizations}.
 References \cite{FeGaGr86, LiVi90} provide useful background, and
\cite{LiVi90} discusses connections with other combinatorial and
algebraic descriptions of graph embeddings.

 The contribution of Lins \cite{gemslins} is particularly important
because it ties together the two key ideas above.  Lins extended gems to
$4$-edge-colored $4$-regular graphs (still representing an embedding of
a graph in a surface), which we will call \emph{jewels}, in which the
$S_3$-action induced by geometric and Petrie dualities can be
implemented simply by permuting three of the edge colors.

 The third key idea is \emph{partial dualities} for graph embeddings.
 Partial Petrie duality has been used for a long time.  It was realized
independently by a number of people in the late 1970s, including Alpert,
Haggard (who attributed the idea to Edmonds around 1970), Ringel, and Stahl, that
the rotation system representation of orientable embeddings could be
extended to represent nonorientable embeddings by the use of what are
now called \emph{edge signatures}, taking values in $\{-1,1\}$ (or
sometimes $\{0,1\}$).
 Partial Petrie duality just consists of `twisting' some edges by
flipping their signatures.  This was used, for example, by Stahl
\cite{Sta78}, in one of the papers that introduced edge signatures, to
show that the genera of nonorientable embeddings of a given graph form
an interval.
 However, it is less obvious that there is also a partial version of
geometric duality, and it was a major breakthrough when Chmutov
\cite{GenDualityChmutov} pointed this out in 2009.  Chmutov's work was
anticipated to some extent by work of Bouchet, who defined
delta-matroids in \cite{Bou87}, along with an operation $S \triangle N$
for a delta-matroid $S$ and a subset $N$ of its ground set.
 Bouchet showed in \cite{Bou89} that every graph embedding has an
associated delta-matroid, but he did not investigate what changes in a
graph embedding correspond to the $\triangle$ operation (which is often
called `twisting' or `pivoting'); essentially these are partial
geometric duals.
 Ellis-Monaghan and Moffatt \citeemm. combined partial geometric duality
and partial Petrie duality, forming a \emph{twisted duality} framework
for graph embeddings, including an \emph{action of $S_3^E$ on embeddings
of graphs with a given edge set $E$}, which refines the $S_3$-action of
Wilson and Lins.

 The simple way in which partial geometric duality can be implemented in
gems was pointed out by one of the authors (Ellingham) and Zha
\cite{ElZh17}, and by Chmutov and Vignes-Tourneret in the more general
context of hypermaps \cite{ChVT22}.
 Ellingham and Zha also observed that Lins' approach to implementing
dualities by permuting edge colors in jewels can be extended to the
whole twisted duality framework.

 \section{Graphs and Cellular Graph Embeddings}\label{sec:embeddings}

 Here we summarize a number of definitions and known results (without
proof).  We define gems, jewels, and medial checkerboards, explain how
they correspond to embedded graphs, and how twisted duality can be
implemented in simple ways using gems and jewels.

 We need to discuss structures with varying levels of detail, so we
introduce some notational conventions. An abstract graph $H$ is denoted
using italics, an embedded graph $\eH$ is denoted using bold italics
(and its underlying graph is $H$), and a colored structure $\cH$
(possibly including embedding information) is denoted using a bold
upright font (with underlying graph $H$, and if appropriate, with
associated embedded graph $\eH$).
 If we mention an embedding property or substructure for a colored
structure $\cH$ we mean that property or substructure of the embedded
graph $\eH$.  And if we mention a graph property or substructure of
$\cH$ or $\eH$ we mean that property or substructure of the abstract
graph $H$.

 \subsection{Graphs}

 In this paper all graphs are finite and may contain loops and multiple
edges.  Graphs may be disconnected or empty.  We use standard graph
theory terminology following \cite{bondymurty} or \cite{Wes96}.
 As we allow loops and multiple edges, it will sometimes be convenient
to consider an edge to consist of two \emph{half-edges}, each incident
with a vertex.

 A graph $G$ is \emph{Eulerian} if it has a circuit (closed trail)
containing all the edges and vertices of $G$.  It is well known that a
graph is Eulerian if and only if it is connected and every vertex has
even degree.  More generally, $G$ is \emph{even-vertex} if every vertex
has even degree, whether or not $G$ is connected.

 A \emph{direction} (or \emph{orientation}) of a graph assigns a
direction to each edge.  (Later we also consider bidirections, which
allow the halves of an edge to be directed independently.)

 \subsection{Embedded graphs and duality}

 We assume the reader has a general familiarity with embeddings of
graphs in surfaces.  All surfaces in this paper are compact (except that
we sometimes discuss embeddings in the plane, which we think of as a
topological subspace of a sphere).
 Surfaces may be disconnected.  A graph embedding is \emph{cellular} if
every face is homeomorphic to an open $2$-cell, i.e., an open disk.  All
embeddings in this paper are cellular unless otherwise indicated. 
 In a cellular embedding $\eG$, each isolated vertex of $G$ is embedded in its
own sphere, with one face, which we call an \emph{isolated vertex
component} of $\eG$.
 The set of faces of $\eG$ is denoted by $F(\eG)$.
 Cellular embeddings of graphs can be represented in various ways,
including by rotation systems with edge signatures, or as ribbon graphs
(also known as fatgraphs or reduced band decompositions).  We refer the
reader to \cite{GraphsonSurfEMM, grosstucker, MoTh01} for details.

 A fundamental property of graph embeddings is orientability.
 A surface is \emph{orientable} if a consistent clockwise direction can
be assigned everywhere on each connected component of the surface.
 A simple closed curve in a surface is either \emph{$1$-sided}, having a
neighborhood homeomorphic to a M\"obius strip, or \emph{$2$-sided},
having a neighborhood homeomorphic to a cylinder.  A surface is
orientable if and only if every simple closed curve in the surface is
$2$-sided.
 A graph embedding $\eG$ is \emph{orientable} if its underlying surface
is orientable, which for cellular embeddings is equivalent to every
cycle of $G$ being embedded as a $2$-sided curve.
 In the rotation system/edge signature representation of a graph
embedding $\eG$, a cycle is $1$-sided if and only if it has an odd
number of edges of signature $-1$ (this does not depend on the
particular rotation system/edge signature representation).

 An embedded graph $\eG$ is \emph{directable} if there is a direction of
$G$ such that each face of $\eG$ is bounded by a directed closed walk. 

 We recall the definition of the \emph{geometric dual} $\eG\sdu$
(usually just called the \emph{dual}) of an embedded graph $\eG$ in a
surface $\Si$.
 Insert a new vertex $v_f$ in each face $f$ of $\eG$, and add one edge
$e\sdu$ crossing each edge $e$ of $\eG$, joining the vertices $v_f$ and
$v_g$ corresponding to the (possibly equal) faces $f$ and $g$ on either
side of $e$.
 The dual $\eG\sdu$ consists of the vertices $v_f$ and the edges $e\sdu$
in the surface $\Si$.
 Each isolated vertex component of $\eG$ corresponds to an isolated
vertex component of $\eG\sdu$.
 The dual embedding $\eG\sdu$ is orientable if and only if the primal
(original) embedding $\eG$ is orientable.

 We can also take the dual of some colored structures $\cH$.  Suppose
$\cH$ is a colored structure consisting of an embedded graph $\eH$ with
colors assigned to vertices, edges, or faces (possibly all three).
 Then to find $\cH\sdu$ we dualize $\eH$ and
 transfer edge colors to corresponding dual edges, and
 vertex colors to corresponding dual faces and vice versa.
 This will apply to barycentric subdivisions and gems (defined below),
and subgraphs of these.  We sometimes call $\cH\sdu$ a \emph{colored
dual} to emphasize that colors are transferred.

 We can also form the \emph{Petrie dual} of a cellularly embedded graph
$\eG$ by twisting every edge of $\eG$ (flipping the
signature of every edge in a rotation scheme/edge signature
representation); it is denoted by $\eG\spe$.
 Alternatively, we can define $\eG\spe$ in terms of the \emph{zigzags}
or \emph{Petrie walks} of $\eG$, closed walks that alternately turn
right and left at each vertex (relative to a local orientation of the
traverser of the walk) as we trace them around $\eG$ until they close
up.  We start with the graph $G$, take a closed disk $D_Z$ for each
zigzag $Z$ of $\eG$, and identify the boundary of $D_Z$ with $Z$,
forming a surface in which $G$ is embedded as $\eG\spe$.
 The zigzags of $\eG$ are the faces of $\eG\spe$, and conversely, the
faces of $\eG$ are the zigzags of $\eG\spe$.
 Each isolated vertex component of $\eG$ corresponds to an isolated
vertex component of $\eG\spe$.
 The underlying graph of $\eG\spe$ is the same as the underlying graph
of $\eG$.

 The \emph{degree} of a face or zigzag of $\eG$ is the length of the
associated closed walk in $G$.  We say $\eG$ is \emph{even-face} or
\emph{even-zigzag} if every face or zigzag, respectively, has even
degree.

 \subsection{Gems, jewels, and medial checkerboards}\label{gjm}

 The main representations we will use for a graph embedding $\eG$ are
gems, jewels, and medial checkerboards.
 Our treatment of gems and jewels loosely follows \cite{BoLi95, ElZh17,
gemslins}.  However, we modify some notation and terminology, and to
link gems, jewels, and medial checkerboards to embedded graphs we also
use barycentric subdivisions, which is an approach based on 
\cite{BoLi95, Fer76, FeGa75, Pez74, Vin83}.

 Isolated vertex components cause some minor difficulties in
representing an embedded graph using a gem, jewel, medial checkerboard,
or barycentric subdivision.  For much of this subsection and the next
subsection we will therefore focus on embedded graphs without isolated
vertex components.

 A \emph{graph-encoded map} or \emph{gem} $\cJ$ consists of a finite
cubic graph $J$ with a proper $3$-edge-coloring $\ga : E(J) \to \{\cv,
\cf, \ca\}$, such that the components of the $2$-factor induced by edges
of colors $\cv$ and $\cf$ are $4$-cycles.  The colors $\cv, \cf, \ca$
are constants, and in figures we will follow the convention of
\cite{BoLi95} and indicate them by red, blue, and yellow, respectively.
 We think of $\cv, \cf, \ca$ as vertex, facial, and auxiliary colors
that describe the correspondence between $\cJ$ and an embedded graph
$\eG$.

 A \emph{bigon} in $\cJ$ is a cycle whose edges are colored alternately
with two of the three colors. The bigons have three types:
 \begin{enumerate}[label=(\alph*)]\setlength{\itemsep}{0pt}
     \item \emph{e-squares}, whose edges are colored with $\cv$ and
$\cf$;
     \item \emph{v-gons}, whose edges are colored with $\cv$ and $\ca$;
and
     \item \emph{f-gons}, whose edges are colored with $\cf$ and $\ca$.
 \end{enumerate}
 These three sets of bigons will correspond naturally to the edges,
vertices, and faces, respectively, of a cellularly embedded graph.

 Every gem $\cJ$ has an associated embedded graph $\eJ$ which is
obtained by taking a closed disk $D_C$ for each bigon $C$ in $J$ and
identifying the boundary of $D_C$ with $C$, forming a surface in which
$J$ is embedded. 
 Faces associated with e-squares, v-gons, and f-gons are called
\emph{e-faces}, \emph{v-faces} and \emph{f-faces}, respectively.
 So, although gems are purely combinatorial objects, we can regard them
as embedded graphs when convenient.

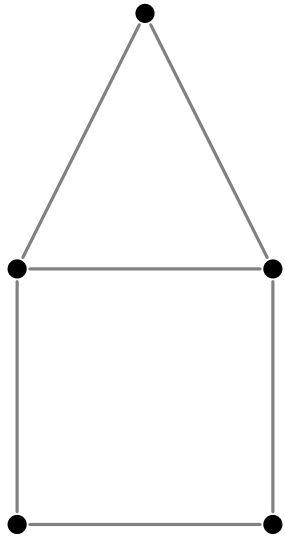
\begin{figure}

\begin{center}
   \begin{tikzpicture}[scale=0.17]
\node[vertex] at (0,0) (v0) {};
 \node[vertex] at (20,0) (v2) {};
 \node[vertex] at (0,20) (v6) {};
 \node[vertex] at (20,20) (v8) {};
 \node[vertex] at (10,40) (v13) {};

	\begin{scope}[nodes={sloped,allow upside down}][on background layer]
		\draw[edge] (v0) to (v2);
		\draw[edge] (v2) to (v8);
        \draw[edge] (v6) to (v8);
		\draw[edge] (v0) to (v6);
  	\draw[edge] (v13) to (v6);
		\draw[edge] (v13) to (v8);
	\end{scope}
\end{tikzpicture}
   \end{center}

  \caption{Embedded graph $\eG$ for which we will take the barycentric
subdivision and gem.}
   \label{fig:barycentricoriginalgraph}
 
\end{figure}
\hfill

\begin{figure}
 \tline{%
 \kern-20pt
   \begin{tikzpicture}[scale=0.17]
   
\node[vertex] at (0,0) (v0) {};
 \node[vertex] at (10,0) (v1) {};
 \node[vertex] at (20,0) (v2) {};
 \node[vertex] at (0,10) (v3) {};
 \node[vertex] at (10,10) (v4) {};
 \node[vertex] at (20,10) (v5) {};
 \node[vertex] at (0,20) (v6) {};
 \node[vertex] at (10,20) (v7) {};
 \node[vertex] at (20,20) (v8) {};
 \node[vertex] at (5,25) (v9) {};
 \node[vertex] at (10,24) (v10) {};
 \node[vertex] at (15,25) (v11) {};
  \node[vertex] at (10,30) (v12) {};
 \node[vertex] at (10,40) (v13) {};

	\begin{scope}[nodes={sloped,allow upside down}][on background layer]
		\draw[edge][red] (v0) to (v1);
		\draw[edge][red] (v1) to (v2);
        \draw[edge][red] (v0) to (v3);
		\draw[edge][red] (v2) to (v5);
  	\draw[edge][red] (v3) to (v6);
		\draw[edge][red] (v5) to (v8);
  	\draw[edge][red] (v6) to (v7);
		\draw[edge][red] (v7) to (v8);
        \draw[edge][blue] (v1) to (v4);
		\draw[edge][blue] (v3) to (v4);
  	\draw[edge][blue] (v4) to (v5);
		\draw[edge][blue] (v4) to (v7);
        \draw[edge][yellow!60!orange] (v0) to (v4);
		\draw[edge][yellow!60!orange] (v2) to (v4);
  	\draw[edge][yellow!60!orange] (v4) to (v6);
		\draw[edge][yellow!60!orange] (v4) to (v8);
  	\draw[edge][red] (v6) to (v9);
		\draw[edge][red] (v8) to (v11);
        \draw[edge][red] (v9) to (v12);
		\draw[edge][red] (v11) to (v12);
        \draw[edge][blue] (v7) to (v10);
		\draw[edge][blue] (v9) to (v10);
  	\draw[edge][blue] (v10) to (v11);
        \draw[edge][yellow!60!orange] (v6) to (v10);
		\draw[edge][yellow!60!orange] (v8) to (v10);
  	\draw[edge][yellow!60!orange] (v10) to (v12);
        \draw[edge, yellow!60!orange] (v0) .. controls (-15, 20) .. (v13);
         \draw[edge, yellow!60!orange] (v2) .. controls (35, 20) .. (v13);
          \draw[edge, blue] (v1) .. controls (5, -15) and (-40,15).. (v13);
        \draw[edge, blue] (v3) .. controls (-10, 20) .. (v13);
         \draw[edge, blue] (v5) .. controls (30, 20) .. (v13);
           \draw[edge, yellow!60!orange] (v6) .. controls (-5, 20) .. (v13);
         \draw[edge, yellow!60!orange] (v8) .. controls (25, 20) .. (v13);
           \draw[edge, blue] (v9) .. controls (3, 25) .. (v13);
         \draw[edge, blue] (v11) .. controls (17, 25) .. (v13);
           \draw[edge, yellow!60!orange] (v12) to (v13);
           
	\end{scope}
\end{tikzpicture}
\hfill
}
\vspace{-15mm}
  \caption{Barycentric subdivision of $\eG$.}
   \label{fig:barycentric}
\end{figure}
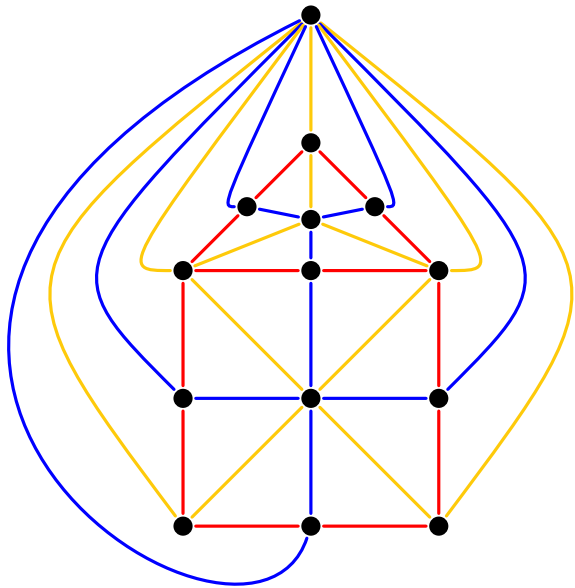
\hfill

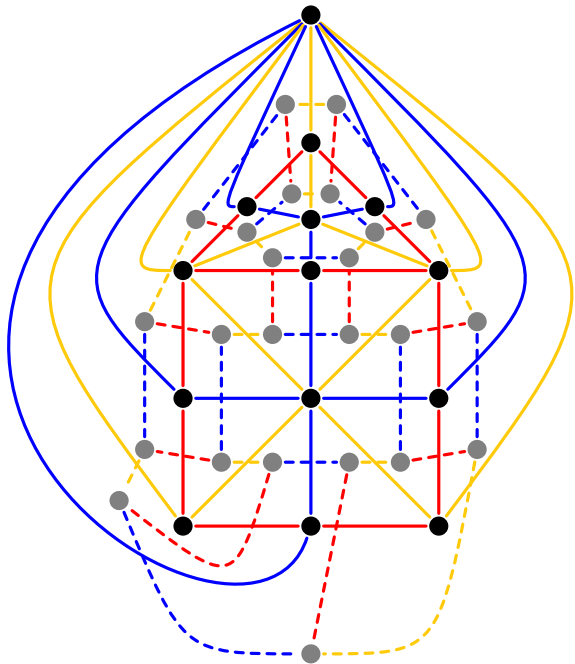
\begin{figure}
 \tline{\kern-20pt
   \begin{tikzpicture}[scale=0.17]
   
\node[vertex] at (0,0) (v0) {};
 \node[vertex] at (10,0) (v1) {};
 \node[vertex] at (20,0) (v2) {};
 \node[vertex] at (0,10) (v3) {};
 \node[vertex] at (10,10) (v4) {};
 \node[vertex] at (20,10) (v5) {};
 \node[vertex] at (0,20) (v6) {};
 \node[vertex] at (10,20) (v7) {};
 \node[vertex] at (20,20) (v8) {};
 \node[vertex] at (5,25) (v9) {};
 \node[vertex] at (10,24) (v10) {};
 \node[vertex] at (15,25) (v11) {};
  \node[vertex] at (10,30) (v12) {};
 \node[vertex] at (10,40) (v13) {};
 \node[vertex, gray] at (3,5) (dv1) {};
  \node[vertex, gray] at (7,5) (dv2) {};
   \node[vertex, gray] at (13,5) (dv3) {};
  \node[vertex, gray] at (17,5) (dv4) {};
 \node[vertex, gray] at (3,15) (dv5) {};
  \node[vertex, gray] at (7,15) (dv6) {};
   \node[vertex, gray] at (13,15) (dv7) {};
  \node[vertex, gray] at (17,15) (dv8) {};
   \node[vertex, gray] at (5,23) (dv9) {};
  \node[vertex, gray] at (7,21) (dv10) {};
   \node[vertex, gray] at (13,21) (dv11) {};
  \node[vertex, gray] at (15,23) (dv12) {};
     \node[vertex, gray] at (8,33) (dv13) {};
  \node[vertex, gray] at (8.5,26) (dv14) {};
   \node[vertex, gray] at (12,33) (dv15) {};
  \node[vertex, gray] at (11.5,26) (dv16) {};
\node[vertex, gray] at (1,24) (dv17) {};
  \node[vertex, gray] at (19,24) (dv18) {};
   \node[vertex, gray] at (-3,16) (dv19) {};
  \node[vertex, gray] at (23,16) (dv20) {};
  \node[vertex, gray] at (-3,6) (dv21) {};
  \node[vertex, gray] at (23,6) (dv22) {};
   \node[vertex, gray] at (-5,2) (dv23) {};
  \node[vertex, gray] at (10,-10) (dv24) {};

  	\begin{scope}[nodes={sloped,allow upside down}][on background layer]
\draw[edge, dashed, yellow!60!orange] (dv1) to (dv2);
\draw[edge, dashed, blue] (dv2) to (dv3);
\draw[edge, dashed, yellow!60!orange] (dv3) to (dv4);
\draw[edge, dashed, yellow!60!orange] (dv5) to (dv6);
\draw[edge, dashed, blue] (dv6) to (dv7);
\draw[edge, dashed, yellow!60!orange] (dv7) to (dv8);
\draw[edge, dashed, yellow!60!orange] (dv9) to (dv10);
\draw[edge, dashed, blue] (dv10) to (dv11);
\draw[edge, dashed, yellow!60!orange] (dv11) to (dv12);
\draw[edge, dashed, red] (dv13) to (dv14);
\draw[edge, dashed, yellow!60!orange] (dv14) to (dv16);
\draw[edge, dashed, red] (dv15) to (dv16);
\draw[edge, dashed, blue] (dv1) to (dv5);
\draw[edge, dashed, red] (dv1) to (dv21);
\draw[edge, dashed, red] (dv2) .. controls (4,-5) .. (dv23);
\draw[edge, dashed, yellow!60!orange] (dv21) to (dv23);
\draw[edge, dashed, blue] (dv23) .. controls (0, -10) .. (dv24);
\draw[edge, dashed, red] (dv4) to (dv22);
\draw[edge, dashed, yellow!60!orange] (dv22) .. controls (20,-10) .. (dv24);
\draw[edge, dashed, red] (dv3) to (dv24);
\draw[edge, dashed, yellow!60!orange] (dv18) to (dv20);
\draw[edge, dashed, red] (dv8) to (dv20);
\draw[edge, dashed, blue] (dv20) to (dv22);
\draw[edge, dashed, blue] (dv4) to (dv8);
\draw[edge, dashed, red] (dv6) to (dv10);
\draw[edge, dashed, red] (dv7) to (dv11);
\draw[edge, dashed, yellow!60!orange] (dv19) to (dv17);
\draw[edge, dashed, blue] (dv19) to (dv21);
\draw[edge, dashed, red] (dv17) to (dv9);
\draw[edge, dashed, blue] (dv17) to (dv13);
\draw[edge, dashed, yellow!60!orange] (dv15) to (dv13);
\draw[edge, dashed, red] (dv19) to (dv5);
\draw[edge, dashed, blue] (dv9) to (dv14);
\draw[edge, dashed, blue] (dv12) to (dv16);
\draw[edge, dashed, blue] (dv15) to (dv18);
\draw[edge, dashed, red] (dv12) to (dv18);
	\end{scope}
	\begin{scope}[nodes={sloped,allow upside down}][on background layer]
		\draw[edge][red] (v0) to (v1);
		\draw[edge][red] (v1) to (v2);
        \draw[edge][red] (v0) to (v3);
		\draw[edge][red] (v2) to (v5);
  	\draw[edge][red] (v3) to (v6);
		\draw[edge][red] (v5) to (v8);
  	\draw[edge][red] (v6) to (v7);
		\draw[edge][red] (v7) to (v8);
        \draw[edge][blue] (v1) to (v4);
		\draw[edge][blue] (v3) to (v4);
  	\draw[edge][blue] (v4) to (v5);
		\draw[edge][blue] (v4) to (v7);
        \draw[edge][yellow!60!orange] (v0) to (v4);
		\draw[edge][yellow!60!orange] (v2) to (v4);
  	\draw[edge][yellow!60!orange] (v4) to (v6);
		\draw[edge][yellow!60!orange] (v4) to (v8);
  	\draw[edge][red] (v6) to (v9);
		\draw[edge][red] (v8) to (v11);
        \draw[edge][red] (v9) to (v12);
		\draw[edge][red] (v11) to (v12);
        \draw[edge][blue] (v7) to (v10);
		\draw[edge][blue] (v9) to (v10);
  	\draw[edge][blue] (v10) to (v11);
        \draw[edge][yellow!60!orange] (v6) to (v10);
		\draw[edge][yellow!60!orange] (v8) to (v10);
  	\draw[edge][yellow!60!orange] (v10) to (v12);
        \draw[edge, yellow!60!orange] (v0) .. controls (-15, 20) .. (v13);
         \draw[edge, yellow!60!orange] (v2) .. controls (35, 20) .. (v13);
          \draw[edge, blue] (v1) .. controls (5, -15) and (-40,15).. (v13);
        \draw[edge, blue] (v3) .. controls (-10, 20) .. (v13);
         \draw[edge, blue] (v5) .. controls (30, 20) .. (v13);
           \draw[edge, yellow!60!orange] (v6) .. controls (-5, 20) .. (v13);
         \draw[edge, yellow!60!orange] (v8) .. controls (25, 20) .. (v13);
           \draw[edge, blue] (v9) .. controls (3, 25) .. (v13);
         \draw[edge, blue] (v11) .. controls (17, 25) .. (v13);
           \draw[edge, yellow!60!orange] (v12) to (v13);
           
	\end{scope}
\end{tikzpicture}
\hfill
 }
 \vspace{-5mm}
  \caption{The barycentric subdivision (solid) and gem (dashed) for $\eG$.}
   \label{fig:barycentricwithgem}
\end{figure}
\hfill

\begin{figure}
\centering

  \begin{center}
      
   \begin{tikzpicture}[scale=0.17]

 \node[vertex, gray] at (3,5) (dv1) {};
  \node[vertex, gray] at (7,5) (dv2) {};
   \node[vertex, gray] at (13,5) (dv3) {};
  \node[vertex, gray] at (17,5) (dv4) {};
 \node[vertex, gray] at (3,15) (dv5) {};
  \node[vertex, gray] at (7,15) (dv6) {};
   \node[vertex, gray] at (13,15) (dv7) {};
  \node[vertex, gray] at (17,15) (dv8) {};
   \node[vertex, gray] at (5,23) (dv9) {};
  \node[vertex, gray] at (7,21) (dv10) {};
   \node[vertex, gray] at (13,21) (dv11) {};
  \node[vertex, gray] at (15,23) (dv12) {};
     \node[vertex, gray] at (8,33) (dv13) {};
  \node[vertex, gray] at (8.5,26) (dv14) {};
   \node[vertex, gray] at (12,33) (dv15) {};
  \node[vertex, gray] at (11.5,26) (dv16) {};
\node[vertex, gray] at (1,24) (dv17) {};
  \node[vertex, gray] at (19,24) (dv18) {};
   \node[vertex, gray] at (-3,16) (dv19) {};
  \node[vertex, gray] at (23,16) (dv20) {};
  \node[vertex, gray] at (-3,6) (dv21) {};
  \node[vertex, gray] at (23,6) (dv22) {};
   \node[vertex, gray] at (-5,2) (dv23) {};
  \node[vertex, gray] at (10,-10) (dv24) {};

  	\begin{scope}[nodes={sloped,allow upside down}][on background layer]
\draw[edge, dashed, yellow!60!orange] (dv1) to (dv2);
\draw[edge, dashed, blue] (dv2) to (dv3);
\draw[edge, dashed, yellow!60!orange] (dv3) to (dv4);
\draw[edge, dashed, yellow!60!orange] (dv5) to (dv6);
\draw[edge, dashed, blue] (dv6) to (dv7);
\draw[edge, dashed, yellow!60!orange] (dv7) to (dv8);
\draw[edge, dashed, yellow!60!orange] (dv9) to (dv10);
\draw[edge, dashed, blue] (dv10) to (dv11);
\draw[edge, dashed, yellow!60!orange] (dv11) to (dv12);
\draw[edge, dashed, red] (dv13) to (dv14);
\draw[edge, dashed, yellow!60!orange] (dv14) to (dv16);
\draw[edge, dashed, red] (dv15) to (dv16);
\draw[edge, dashed, blue] (dv1) to (dv5);
\draw[edge, dashed, red] (dv1) to (dv21);
\draw[edge, dashed, red] (dv2) .. controls (4,-5) .. (dv23);
\draw[edge, dashed, yellow!60!orange] (dv21) to (dv23);
\draw[edge, dashed, blue] (dv23) .. controls (0, -10) .. (dv24);
\draw[edge, dashed, red] (dv4) to (dv22);
\draw[edge, dashed, yellow!60!orange] (dv22) .. controls (20,-10) .. (dv24);
\draw[edge, dashed, red] (dv3) to (dv24);
\draw[edge, dashed, yellow!60!orange] (dv18) to (dv20);
\draw[edge, dashed, red] (dv8) to (dv20);
\draw[edge, dashed, blue] (dv20) to (dv22);
\draw[edge, dashed, blue] (dv4) to (dv8);
\draw[edge, dashed, red] (dv6) to (dv10);
\draw[edge, dashed, red] (dv7) to (dv11);
\draw[edge, dashed, yellow!60!orange] (dv19) to (dv17);
\draw[edge, dashed, blue] (dv19) to (dv21);
\draw[edge, dashed, red] (dv17) to (dv9);
\draw[edge, dashed, blue] (dv17) to (dv13);
\draw[edge, dashed, yellow!60!orange] (dv15) to (dv13);
\draw[edge, dashed, red] (dv19) to (dv5);
\draw[edge, dashed, blue] (dv9) to (dv14);
\draw[edge, dashed, blue] (dv12) to (dv16);
\draw[edge, dashed, blue] (dv15) to (dv18);
\draw[edge, dashed, red] (dv12) to (dv18);
	\end{scope}

\end{tikzpicture}
\end{center}
  \caption{The gem of $\eG$.}
   \label{fig:gemonly}
   
\end{figure}
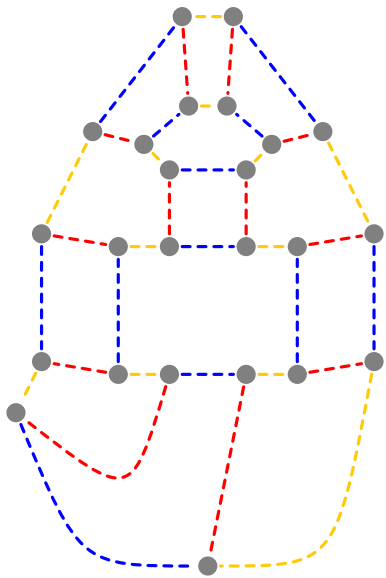

 To give the correspondence between embedded graphs and gems, we first
define the \emph{barycentric subdivision} $\cB$ of an embedded graph
$\eG$ without isolated vertex components.
 Subdivide each edge $e$ of $\eG$ with a new vertex $v_e$, and color all
edges of the resulting embedded graph $\eG'$ with $\cv$.
 Now add a new vertex $v_f$ inside each face $f$ of $\eG'$.
 Join $v_f$ to each vertex of $\eG$ on the boundary of $f$ with an edge
of color $\ca$, and to each vertex $v_e$ on the boundary of $f$ with an
edge of color $\cf$.
 The resulting embedded graph with colored edges is $\cB$, which is
embedded in the same surfaces as $\eG$.
 In Figure \ref{fig:barycentricoriginalgraph} we show a planar embedded graph $\eG$, and
in Figure \ref{fig:barycentric} we show its barycentric subdivision.

 We can identify the origin of each vertex of $\cB$ with incident edges,
calling it \emph{type {\rm V}, {\rm F}, or {\rm E}}, depending on
whether its incident edges have colors in $\{\cv, \ca\}$, $\{\cf,
\ca\}$, or $\{\cv, \cf\}$, respectively.
 The faces of $\cB$ are the \emph{flags} of $\eG$, and each flag is
bounded by a triangle whose edges use all three colors.
 We can recognize $\eG$ inside $\cB$, formed by vertices of type V
joined by edges of color $\cv$ subdivided by vertices of type E.  We can
also recognize $\eG\sdu$, formed by vertices of type F joined by edges
of color $\cf$ subdivided by vertices of type E.
 The barycentric subdivision is an embedded graph $\eB$ with added edge
colors, and the embedding is important.  The combinatorial information
(graph $B$ and edge coloring) can be the same for different embeddings
of the same graph $G$, so is not sufficient to determine $\eG$.

 Now we can show that every embedded graph without isolated vertex
components has a corresponding gem $\cJ$, and vice versa.  The gem $\cJ$
of $\eG$ is the colored dual $\cB\sdu$ of $\cB$.  Like $\cB$, this is
embedded in the same surface as $\eG$.
 Thus, each vertex of $\cJ$ represents a flag of $\eG$ (so that $\cJ$ is
sometimes called the \emph{flag graph} of $\eG$), and is incident with
three edges of different colors. 
 The v-gons, f-gons, and e-squares of $\cJ$ correspond to the vertices,
faces, and edges, respectively, of $\eG$.
 Conversely, if we have a gem $\cJ$, we can take its colored dual to
obtain $\cB$ and then recover $\eG$ from $\cB$.
 In Figure \ref{fig:barycentricwithgem} we illustrate $\cB$ and $\cJ$
together for the embedding of Figure \ref{fig:barycentricoriginalgraph},
and in Figure \ref{fig:gemonly} we show just $\cJ$.

 A \emph{jewel} is a slightly extended form of a gem, where we add edges
of a new color $\cz$, associated with the zigzags of $\eG$.
 Again following the conventions of \cite{BoLi95}, we use the color
green for $\cz$ in figures.

 A jewel $\cL$ consists of a $4$-regular graph $L$ with a proper
$4$-edge-coloring $\ga : E(L) \to \{\cv, \cf, \cz, \ca\}$, such that
each component of the $3$-factor induced by edges of colors $\cv$, $\cf$
and $\cz$ is a complete graph $K_4$, which we call an \emph{e-simplex},
and which contains an e-square consisting of its edges colored $\cv$ and
$\cf$.
 Jewels have v-gons and f-gons, defined in the same way as
for gems, but also additional bigons whose edges are alternately colored
$\ca$ and $\cz$, which we call \emph{z-gons}, and which correspond to
the zigzags of $\eG$.
 Unlike gems, a jewel $\cL$ does not provide an embedding of
its underlying graph $L$ (so $\eg{L}$ is not defined).

 Each jewel $\cL$ has a corresponding gem $\cJ$ obtained by deleting the
edges of color $\cz$, while we can obtain $\cL$ from $\cJ$ by adding an
edge of color $\cz$ joining each pair of vertices that are diagonally
opposite in an e-square of $\cJ$.  It follows that there is also a
correspondence between embedded graphs $\eG$ without isolated vertex
components and jewels $\cL$.

 \def\cc{\mskip0.5\thinmuskip}
 \def\ec#1#2{E^{\cc #1}_{#2}}

 If we have an edge-colored structure $\cH$ (barycentric subdivision,
gem, jewel) with edge-coloring $\ga$, then for a color $c \in \{\cv,
\cf, \cz, \ca\}$ we use $\ec{c}\cH$ to denote $\ga\iv(c)$, the set of
edges colored $c$ in $\cH$.
 We will write $\ec\cv\cH \cup \ec\cf\cH$ as $\ec{\cv\cc\cf}\cH$ and
make other similar notational abbreviations.

 A \emph{medial checkerboard} $\cM$ is a $4$-regular graph embedding
$\eM$ with a proper $2$-face-coloring $\ga: F(\eM) \to \{\cv, \cf\}$.
 Every embedded graph $\eG$ without isolated vertex components has an
associated medial checkerboard $\cM$, and vice versa.
 Given an embedded graph $\eG$, form the barycentric subdivision $\cB$,
and let $\eB'$ be the embedded subgraph induced by all edges colored
$\ca$.

 The vertices of $\eB'$ are the vertices of $\cB$
of types V and F, and each face of $\eB'$ has degree $4$ and its
interior contains a unique vertex of $\cB$ of type E.  Let $\eM$ be the
dual of $\eB'$, using the vertices of type E in $\cB$ as its vertices. 
 Color each face of $\eM$ with $\cv$ or $\cf$ depending on
whether it contains a vertex of $\cB$ of type V or F, respectively.  The
result is a medial checkerboard $\cM$.
 Note that $\eB'$, and hence $\cM$, is embedded in the same surface as
$\eG$.
 We can also think of $\cM$ as obtained from $\cB$ by adding colors
$\cv$ and $\cf$ to vertices of types V and F respectively, removing all
vertices of type E (and their incident edges of colors $\cv$ and $\cf$),
uncoloring edges of color $\ca$, and then taking the colored dual.

 Conversely, given a medial checkerboard $\cM$, we can take the dual of
$\eM$ and use the color information from $\cM$ to add back the remaining
vertices, edges and colors of $\cB$; from $\cB$ we can recover $\eG$.

 In the literature the term ``medial graph'' may refer to the
face-colored embedding $\cM$, the embedding $\eM$, or just the graph
$M$.
 To remove this ambiguity, we use \emph{medial graph} for $M$,
\emph{medial embedding} for $\eM$, and \emph{medial checkerboard} for
$\cM$.
 Also, the standard convention is to use the color black for faces of
$\cM$ corresponding to vertices of $\eG$, and white for those
corresponding to faces of $\eG$ (see for example \citeemm.).  However,
we use $\cv$ (red) and $\cf$ (blue) for consistency with the colors in
gems and jewels.

 Note that $\cM$ may also be obtained from the gem $\cJ$ by coloring the
v- and f-faces with $\cv$ and $\cf$, respectively, and contracting each
e-face to a single vertex.

 At this point we have developed correspondences between embedded graphs
$\eG$ without isolated vertex components, gems $\cJ$, jewels $\cL$, and
medial checkerboards $\cM$ (and also barycentric subdivisions $\cB$).
 For a given finite set $E$, the following are in one-to-one
correspondence for a given finite set $E$: embedded graphs $\eG$ with
edge set $E$ and no isolated vertex components (up to vertex relabeling
and surface homeomorphism), gems $\cJ$ with e-squares labeled by $E$ (up
to color- and label-preserving isomorphism), jewels $\cL$ with
e-simplices labeled by $E$ (up to color- and label-preserving
isomorphism), and medial checkerboards $\cM$ with vertices labeled by
$E$ (up to color- and label-preserving isomorphism and surface
homeomorphism).

 Finally, we address isolated vertex components.
 If $\eG$ has isolated vertex components, we will just ignore them and
use the other components of $\eG$ to form the corresponding barycentric
subdivision $\cB$, gem $\cJ$, jewel $\cL$, or medial checkerboard $\cM$.
 This means that we lose some information, and cannot uniquely
reconstruct $\eG$ from these other structures.
 We also lose the property that $\cB$, $\cJ$, $\cL$, and $\cM$ are
embedded in the same surface as $\eG$.
 There are ways we could encode isolated vertex components in $\cB$,
$\cJ$, $\cL$, or $\cM$:  for example, in \cite{GraphsonSurfEMM} medial
checkerboards $\cM$ can contain gadgets called `free loops' representing
isolated vertex components.
 However, as we verify later in appropriate places, the presence or
absence of isolated vertex components does not affect the properties we
consider in this paper, so it is simpler to just disregard such
components.

 \subsection{Twisted duality}\label{td}

 Here we discuss duality operations, as investigated by Wilson
\cite{Wilson} and Lins \cite{gemslins}, and their extensions to twisted
duality as defined by Ellis-Monaghan and Moffatt \citeemm., in terms of
gems and jewels.  We again follow \cite{BoLi95, ElZh17, gemslins}.

 In our work we will be applying operations such as duality or partial
duality, which may completely rearrange vertices and faces, but
essentially preserve the edge set of an embedded graph.  For example,
when we take the dual $\eG\sdu$, the primal edge $e$ of $\eG$ and the
dual edge $e\sdu$ of $\eG\sdu$ are different topological curves, but
combinatorially we may consider them to be the same edge $e$, with
different incidences in $\eG$ and in $\eG\sdu$, so that the underlying
graphs of $\eG$ and $\eG\sdu$ have the same edge sets.

 The dual $\eG\sdu$ of a cellularly embedded graph $\eG$ without
isolated vertex components can easily be described in terms of gems.
 First we move from $\eG$ to its associated gem $\cJ$. As observed by
Lins \cite{gemslins}, the gem $\cJ\edu$ corresponding to $\eG\sdu$ is
formed by swapping the colors $\cv$ and $\cf$ everywhere in $\cJ$.
 This interchanges the v-gons and f-gons in the gem which cause vertices
and faces to swap in the corresponding cellularly embedded graph.
 The dual also corresponds to swapping $\cv$ and $\cf$ in the jewel
$\cL$ to give a new jewel $\cL\edu$.

 Note that $\cJ\edu$ is different from $\cJ\sdu$ (which is $\cB$), and
$\cL\edu$ exists while $\cL\sdu$ does not ($\cL$ has no
associated embedded graph).  So our notation needs to distinguish between
applying an operator such as duality directly to a colored structure
$\cH$, or applying it to $\eG$ and making a corresponding change to
$\cH$.
 In general, when we apply an operation, such as duality, to an embedded
graph $\eG$, the corresponding operation on the associated colored
structures $\cB, \cJ, \cL, \cM$ will be denoted by enclosing the
operation in brackets $[\cdot]$.

 We can also define partial duality using gems (or jewels) \cite{ChVT22,
ElZh17}.
 Suppose $\eG$ is an embedded graph without isolated vertex components,
and with corresponding gem $\cJ$ (or
jewel $\cL$). The \emph{partial dual} $\eG \du A$ of $\eG$ with respect
to $A \subseteq E(G)$ can be obtained by swapping the colors $\cv$ and
$\cf$ on the e-squares of $\cJ$ (or the e-simplices of $\cL$)
corresponding to $A$, to give the gem $\cJ [{}\du A]$ (or jewel $\cL
[{}\du A]$) that corresponds to $\eG \du A$.
 This is equivalent to the original definition of partial duality given
by Chmutov \cite{GenDualityChmutov}, using arrow-marked ribbon graphs.
 Partial duals $\eG \du A$ generally have different underlying graphs,
but they all have the same edge set, $E(G)$.
 Note that $\eG\sdu = \eG \du E(G)$.

 The Petrie dual of an embedded graph $\eG$ without isolated vertex
components can also be interpreted very simply in terms of the
corresponding jewel $\cL$.
 The jewel $\cL\epe$ corresponding to $\eG\spe$ is obtained by swapping
the colors $\cf$ and $\cz$ everywhere in $\cL$, which exchanges the
f-gons and z-gons in $\cL$ and hence exchanges the faces and zigzags in
$\eG$.
 We can also interpret the Petrie dual in terms of the gem $\cJ$: remove
all edges colored $\cf$ and then add new edges colored $\cf$ joining
opposite corners of each e-square of $\cJ$ to obtain $\cJ\epe$. 
 Note that the underlying graph of $\cJ\epe$ is not $J$.

 We can also form the \emph{partial Petrie dual} $\eG \pe A$ with
respect to $A \subseteq E(G)$ by twisting (flipping the signatures of)
only the edges in $A$.
 The jewel $\cL[{} \pe A]$ corresponding to $\eG \pe A$ is obtained by
swapping colors $\cf$ and $\cz$ just in the e-simplices of $\cL$ that
correspond to $A$, and the corresponding gem $\cJ[{} \pe A]$ is obtained
by moving the edges of color $\cf$ just in the e-squares of $\cJ$ that
correspond to $A$.
 Partial Petrie duals $\eG \pe A$ all have the same underlying graph $G$
and hence the same edge set $E(G)$.
 Note that $\eG\spe = \eG \pe E(G)$.

 For embedded graphs $\eG$ with isolated vertex components, we can
implement all of the above operations (duality, partial duality, Petrie
duality, partial Petrie duality) by applying them to the other
components of $\eG$, leaving the isolated vertex components unchanged.

 We can apply sequences of duals and Petrie duals, which we compose left
to right, so that, for example, $\eG\sdu\spe = (\eG\sdu)\spe$.
 Since the dual and Petrie dual just permute the three colors $\cv, \cf,
\cz$ in a jewel $\cL$, they generate the $S_3$-action discovered by
Wilson \cite{Wilson} and Lins \cite{gemslins}, with the relation that
$\cL\edpd = \cL\epdp$, so that $\eG\sdu\spe\sdu =
\eG\spe\sdu\spe$ for embedded graphs $\eG$.  Thus, there are at most six
distinct embedded graphs that can be obtained from $\eG$ in this way:
$\eG$, $\eG\sdu$, $\eG\spe$, $\eG\sdu\spe$, $\eG\spe\sdu$ and
$\eG\sdu\spe\sdu = \eG\spe\sdu\spe$.

 We can also compose partial duals and partial Petrie duals, to produce,
for example, $\eG\du A \pe B \du C = ((\eG\du A)\pe B) \du C$ for $A, B,
C \subseteq E(G)$. 
 We use abbreviated notation when applying several operations to the
same set of edges $A$, such as $\eG \pe\du A$ to mean $G \pe A \du
A$.  Again the operations are composed left to right.
 A composition of partial dual and partial Petrie dual
operations is a \emph{twisted dual operation}, and an embedded graph
obtained by applying a twisted dual operation to $\eG$ is a
\emph{twisted dual} of $\eG$.
 Ellis-Monaghan and Moffatt \citeemm. developed the idea of twisted
duals and showed that twisted duals give an
action of $S_3^{E(G)}$ on the embedded graphs derived from $\eG$.
 This action can be interpreted as permuting the colors $\cv, \cf, \cz$
independently in each e-simplex of the jewel $\cL$, and extends the
$S_3$-action generated by geometric and Petrie duality.

 The \emph{partial Wilson dual} of $\eG$ with respect to $A$ is defined
as $\eG \wi A = \eG \du\pe\du A$, which is also equal to $\eG \pe\du\pe A$.  The partial Wilson dual corresponds to swapping colors
$\cv$ and $\cz$ in the e-simplices corresponding to $A$ in the jewel
$\cL$.  The \emph{Wilson dual} of $\eG$ is $\eG \swi = \eG \wi E(G) =
\eG \sdu \spe \sdu = \eG \spe \sdu \spe$, which corresponds to swapping
$\cv$ and $\cz$ everywhere in $\cL$ to give $\cL\ewi$.

 Our notation here differs a little from what is generally used, but is
more convenient for our purposes.  The standard notation introduced by
Chmutov \cite{GenDualityChmutov} for the partial dual $\eG \du A$ is
$\eG^A$, but this becomes awkward when we need to take partial Petrie
duals as well as partial duals.  Ellis-Monaghan and Moffatt \citeemm.
use $\delta$ to denote partial dual and $\tau$ to denote partial Petrie
dual, so they would write $\eG^{\delta(A)\tau(B)\delta(C)}$ for our $\eG
\du A \pe B \du C$.  Their operations $\delta(A)$ and $\tau(B)$ compose
left to right, but $\delta$ and $\tau$ by themselves compose right to
left, so that $\eG^{\delta\tau(A)} = (\eG^{\tau(A)})^{\delta(A)}$, which
is our $\eG\pe\du A = \eG\pe A \du A$.  In our notation twisted duality
operations always compose left to right.
 Our notation is
similar to that typically used for operations on delta-matroids.

 \section{Parity Conditions for Closed Walks in Gems and Jewels}
 \label{sec:parityconditions}

 \subsection{Generalized bipartiteness}

 Our results are based on a generalization of the idea of bipartiteness
in graphs.  One condition equivalent to bipartiteness is that all closed
walks have even length.  We will extend this by choosing a set of edges,
and investigating the condition that all closed walks contain an even
number of edges from this set.  We connect this with binary vertex
labelings of $G$ and with graphs obtained by contracting sets of edges
in $G$.

 \let\num\nu
 A \emph{$v_0v_\ell$-walk} $W$ in a graph $G$ is a sequence of vertices
and edges of $G$, $W = v_0 e_1 v_1 e_2 v_2 \dots v_{\ell-1} e_\ell
v_\ell$, where the incident vertices of each $e_i$ are $v_{i-1}$
and $v_i$; $\ell$ is the \emph{length} of $W$.
 If $S \subseteq E(G)$, define $\num_S(W)$ to be the number of $i$ for
which $e_i \in S$, i.e., the number of edges of $W$ that belong to $S$,
counted with their multiplicity in $W$.

 We say $W$ is \emph{trivial} if $\ell=0$, and \emph{closed} if $v_0 =
v_\ell$.  We often use $K$ to denote a closed walk.
 The following fundamental fact is well known.

 \begin{lemma}\label{oddwalkoddcyclelemma}
 In a graph $G$, a nontrivial closed walk $K =
v_0e_1v_1e_2v_2...e_\ell(v_\ell=v_0)$ with no internal repeated vertex
($v_i\neq v_j$ for $0\leq i < j < \ell$) is either a cycle or the walk
$v_0 e_1 v_1 e_1 v_0$.
 \end{lemma}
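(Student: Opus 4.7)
My plan is to argue by cases on the length $\ell \geq 1$ of the walk, since for small $\ell$ we must allow the two listed exceptional possibilities (a loop and the back-and-forth walk), while for $\ell \geq 3$ the hypothesis on internal vertices will force all edges to be distinct as well, and hence the walk to be a cycle.

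First I would dispose of $\ell = 1$ and $\ell = 2$ directly. If $\ell = 1$, then $v_1 = v_0$ and $e_1$ is a loop at $v_0$, which is a cycle of length $1$. If $\ell = 2$, then $K = v_0 e_1 v_1 e_2 v_0$ with $v_0 \neq v_1$ (by the internal-distinctness hypothesis applied to $i=0, j=1$). Either $e_1 = e_2$, giving precisely the walk $v_0 e_1 v_1 e_1 v_0$ allowed by the statement, or $e_1 \neq e_2$, and then $e_1, e_2$ are two distinct edges joining $v_0$ to $v_1$, forming a cycle of length $2$.

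For $\ell \geq 3$ the internal vertices $v_0, v_1, \ldots, v_{\ell-1}$ are already distinct by hypothesis, and $v_\ell = v_0$, so the vertex sequence traces a cycle provided I can show the edges are distinct as well. Suppose for contradiction that $e_i = e_j$ for some $1 \leq i < j \leq \ell$, so that $\{v_{i-1}, v_i\} = \{v_{j-1}, v_j\}$ as multisets of endpoints. I would split on whether $j < \ell$ or $j = \ell$. If $j < \ell$, all four indices $i-1, i, j-1, j$ lie in $\{0, \ldots, \ell-1\}$, so the corresponding vertices are pairwise distinct except where indices coincide; the only way to match the two endpoint sets is $i = j-1$ with $v_{i-1} = v_j$, but then $i-1$ and $j$ are distinct indices in $\{0, \ldots, \ell-1\}$, contradicting internal distinctness. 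If $j = \ell$, then $\{v_{i-1}, v_i\} = \{v_{\ell-1}, v_0\}$; since $i \leq \ell-1$, the only matchings force either $i = 0$ (excluded) or $i = 1$ with $v_1 = v_{\ell-1}$, which forces $\ell = 2$, contradicting $\ell \geq 3$.

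The main obstacle is the bookkeeping in this last case: one must track carefully which indices are ``internal'' and use the hypothesis $v_i \neq v_j$ for $0 \leq i < j < \ell$ without accidentally invoking it at $j = \ell$, where the equality $v_\ell = v_0$ actually holds. Once the edge-distinctness for $\ell \geq 3$ is established, the conclusion that $K$ is a cycle is immediate from the definition of a cycle as a closed walk with distinct vertices and edges except for the identification $v_\ell = v_0$.
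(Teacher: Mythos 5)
Your proof is correct. Note that the paper itself gives no proof of this lemma --- it is stated as a ``well known'' fundamental fact --- so there is no argument to compare against; your case analysis simply supplies the routine verification the authors leave implicit. The bookkeeping in the $\ell \ge 3$ case is handled properly: distinctness of $v_0,\dots,v_{\ell-1}$ rules out loops among $e_1,\dots,e_\ell$ and forces any repeated edge $e_i=e_j$ into one of the configurations you eliminate, and the $\ell=1,2$ cases correctly account for the loop cycle and the degenerate walk $v_0e_1v_1e_1v_0$. (One could shorten the $\ell\ge 3$ case by observing that once the vertices are distinct, the unordered endpoint pairs $\{v_{i-1},v_i\}$ are pairwise distinct, so no edge can repeat; but your direct index-chasing argument is equally valid.)
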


 By repeatedly breaking a closed walk into two shorter closed walks at
an internal repeated vertex, and applying Lemma
\ref{oddwalkoddcyclelemma}, we obtain the following.

 \begin{lemma}\label{oddwalkoddcyclelemma2}
 Let $G$ be a graph, let $S\subseteq E(G)$, and let $K$ be a closed walk
in $G$.  If $\num_S(K)$ is odd then there is a cycle $C$ that is a (not
necessarily consecutive) subsequence of $K$ for which $\num_S(C)$ is
also odd. 
 \end{lemma}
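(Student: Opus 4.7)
The plan is a straightforward induction on the length $\ell$ of the closed walk $K$, following the hint given just before the lemma statement. The base case $\ell=0$ is vacuous because a trivial closed walk has $\nu_S(K)=0$, which is even. For the inductive step, I would split into two cases depending on whether $K$ has an internal repeated vertex.

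First I would handle the case where $K$ has no internal repeated vertex. Lemma \ref{oddwalkoddcyclelemma} then tells us that $K$ is either a cycle, in which case we are immediately done by taking $C = K$, or $K$ is the walk $v_0 e_1 v_1 e_1 v_0$. In the latter situation $\nu_S(K)$ equals $0$ or $2$ depending on whether $e_1 \in S$, and in either subcase $\nu_S(K)$ is even, contradicting our hypothesis that it is odd. So this degenerate subcase cannot occur.

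Next, suppose $K$ does have some internal repeated vertex, say $v_i = v_j$ with $0 \le i < j < \ell$ and $(i,j) \ne (0,\ell)$. Then we can decompose $K$ into the two shorter closed walks
\[
K_1 = v_0 e_1 v_1 \cdots e_i v_i \, e_{j+1} v_{j+1} \cdots e_\ell v_\ell
\qquad \text{and} \qquad
K_2 = v_i e_{i+1} v_{i+1} \cdots e_j v_j,
\]
both of which have length strictly less than $\ell$. Since $\nu_S(K) = \nu_S(K_1) + \nu_S(K_2)$ and $\nu_S(K)$ is odd, at least one of $\nu_S(K_1)$, $\nu_S(K_2)$ must be odd. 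Applying the inductive hypothesis to that walk produces a cycle $C$ which is a subsequence of either $K_1$ or $K_2$ with $\nu_S(C)$ odd; since both $K_1$ and $K_2$ are subsequences of $K$, so is $C$, completing the induction.

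There is no real obstacle here: the only mild point to be careful about is verifying that the decomposition at a repeated vertex actually produces two \emph{shorter} closed walks (which uses the restriction that the repetition is strictly internal, so $j-i < \ell$ and also $\ell - (j-i) < \ell$), and that subsequences of subsequences remain subsequences of $K$, which is immediate from the definition.
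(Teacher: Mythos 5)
Your proof is correct and follows exactly the route the paper indicates: repeatedly (formalized as induction on length) splitting the closed walk at an internal repeated vertex into two shorter closed walks, and invoking Lemma~\ref{oddwalkoddcyclelemma} in the irreducible case. The parity bookkeeping and the check that both pieces are strictly shorter are handled properly, so nothing further is needed.
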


 A \emph{binary labeling} of a graph $G$ is a function $\la : V(G) \to
\mZ_2$.  If $S \subseteq E(G)$, then  an \emph{$S$-labeling} of $G$ is a
binary labeling such that for every edge $e$ with ends $u$ and $v$,
$\la(u) \ne \la(v)$ if $e \in S$ and $\la(u) = \la(v)$ if $e \notin S$,
or in other words, $\la(v) = (\la(u) + \num_S(uev)) \bmod 2$.
 For $u \in V(G)$ we say $\la$ is an \emph{$S$-labeling relative to $u$}
if $\la(u) = 0$.
 The following observations are easy but useful.  

 \begin{observation}\label{binlab}
 Let $G$ be a graph and $u, w \in V(G)$.
 \begin{enumerate}[label=\rm(\alph*)]\setlength{\itemsep}{0pt}
 \item\label{blS}
  Every binary labeling $\la$ of $G$ is an $S$-labeling for a unique $S
\subseteq E(G)$, namely $S = \{e \in E(G) \;|$ $\la(u) \ne \la(w)$ where
$u$ and $w$ are the endvertices of $e\}$.
 \item\label{blW}
 If $\la$ is an $S$-labeling of $G$ and $W$ is a
$uw$-walk in $G$, then $\la(w) = (\la(u) + \num_S(W)) \bmod 2$.
 \item\label{blU}
 An $S$-labeling of $G$ is unique up to flipping values (swapping $0$
and $1$) on components of $G$.
 \end{enumerate}
 \end{observation}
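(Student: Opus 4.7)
The plan is to prove the three parts in order, since each is short and (b) feeds into (c). All three follow quickly from the defining local relation $\la(v) = (\la(u) + \num_S(uev)) \bmod 2$ for an edge $uev$, so the main task is to set up notation and let induction do the work.

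For part \ref{blS}, I would proceed directly from the definitions. Given any binary labeling $\la$, let $S = \{e \in E(G) \mid \la(u) \ne \la(w) \text{ where } u, w \text{ are the ends of } e\}$. By construction, for every edge $e$ with ends $u, v$ we have $\la(u) \ne \la(v)$ iff $e \in S$, which is exactly the defining condition for $\la$ to be an $S$-labeling. For uniqueness, if $\la$ were also an $S'$-labeling for some $S' \subseteq E(G)$, then for every edge $e$ with ends $u, v$, membership of $e$ in $S'$ is forced to agree with whether $\la(u) \ne \la(v)$, and so $S' = S$.

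For part \ref{blW}, I would induct on the length $\ell$ of the $uw$-walk $W = v_0 e_1 v_1 \ldots e_\ell v_\ell$, with $v_0 = u$ and $v_\ell = w$. The base case $\ell = 0$ is immediate since $w = u$ and $\num_S(W) = 0$. For the inductive step, apply the inductive hypothesis to the $uv_{\ell-1}$-walk $W' = v_0 e_1 \ldots e_{\ell-1} v_{\ell-1}$ to get $\la(v_{\ell-1}) \equiv \la(u) + \num_S(W') \pmod 2$, then apply the defining local condition to the edge $e_\ell$ from $v_{\ell-1}$ to $v_\ell$ to obtain $\la(v_\ell) \equiv \la(v_{\ell-1}) + \num_S(v_{\ell-1}e_\ell v_\ell) \pmod 2$. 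Adding and noting that $\num_S(W) = \num_S(W') + \num_S(v_{\ell-1}e_\ell v_\ell)$ finishes the induction.

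For part \ref{blU}, suppose $\la_1$ and $\la_2$ are both $S$-labelings. Let $C$ be any connected component of $G$ and pick any vertex $x \in V(C)$. For any other vertex $y \in V(C)$, choose an $xy$-walk $W$ in $C$; part \ref{blW} gives $\la_i(y) \equiv \la_i(x) + \num_S(W) \pmod 2$ for $i = 1, 2$. Subtracting, $\la_1(y) - \la_2(y) \equiv \la_1(x) - \la_2(x) \pmod 2$, so $\la_1$ and $\la_2$ either agree throughout $C$ or differ throughout $C$, which is exactly the statement that they differ by flipping values on a union of components.

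The only real obstacle is being careful about closed/degenerate walks and about the fact that graphs may be disconnected or have loops; the walk definition at the start of the section already handles loops cleanly, and restricting attention to one component at a time in \ref{blU} handles disconnectedness. No new machinery is required.
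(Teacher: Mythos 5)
Your proposal is correct. The paper states this as an Observation with no proof at all (it is introduced with ``The following observations are easy but useful''), so there is nothing to diverge from; your argument is exactly the standard one the authors intend the reader to supply: (a) directly from the definition of $S$-labeling, (b) by induction on the length of the walk using the local relation $\la(v) = (\la(u) + \num_S(uev)) \bmod 2$, and (c) by applying (b) to conclude that the difference of two $S$-labelings is constant on each component. The only point you leave implicit in (c) is the converse direction --- that flipping an $S$-labeling on a union of components again yields an $S$-labeling --- but this is immediate since both ends of any edge lie in the same component, so the agreement/disagreement pattern across edges is unchanged.
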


 If $U \subseteq V(G)$ then $S$ is the \emph{coboundary of $U$ in $G$}
if $S$ is the set of edges of $G$ joining $U$ to $V(G)-U$.

 The graph $G \cg S$ is obtained from $G$ by \emph{contracting $S$ in
$G$}, by which we mean identifying vertices connected by edges of $S$,
and then deleting the edges of $S$.
 Contracting $S$ has the same result as contracting the edges in $S$
individually, in any order.  Contracting a loop is the same as deleting
it.  Every edge in $E(G)-S$ remains an edge of $G \cg S$, although
non-loops of $G$ may become loops of $G \cg S$.
 Contracting a set of edges $S$ in a graph $G$ does not correspond
exactly to contracting $S$ in an embedded graph $\eG$ to form $\eG\ct S$
(see the definition in \cite[Section 4.2]{GraphsonSurfEMM}), so we use
different notation.  However, if $S$ does not induce any noncontractible
cycles in $\eG$, then the underlying graph of $\eG\ct S$ is $G \cg S$.

 \let\implies\Rightarrow
 We can now define a generalization of bipartiteness via several simple
equivalent conditions, stated in Theorem \ref{oddwalkoddcycletheorem}.
 Conditions \ref{sbipK} and \ref{sbipW} are easily seen to be equivalent,
 \ref{sbipK} and \ref{sbipC} are equivalent by Lemma
\ref{oddwalkoddcyclelemma2}, and it is straightforward to prove that
\ref{sbipK} $\implies$ \ref{sbiplabel} $\implies$ \ref{sbipcoboundary}
$\implies$ \ref{sbipcontract} $\implies$ \ref{sbipK}, so we omit the
details.

 \begin{theorem}\label{oddwalkoddcycletheorem}
 Let $G$ be a graph with $S\subseteq E(G)$.
 Then the following are equivalent:
 \begin{enumerate}[label=\rm(\alph*)]\setlength{\itemsep}{0pt}
     \item\label{sbipK}
 	$\num_S(K)$ is even for every closed walk $K$ in $G$.
     \item\label{sbipW}
 	For all $u, w \in V(G)$, $\num_S(W)$ has the same parity for all
$uw$-walks $W$ in $G$.
     \item\label{sbipC}
 	$\num_S(C)$ is even for every cycle $C$ in $G$.
     \item\label{sbiplabel}
 	$G$ has an $S$-labeling.
     \item\label{sbipcoboundary}
 	$S$ is a coboundary in $G$.
     \item\label{sbipcontract}
 	$G\cg\overline{S}$ is bipartite, where $\overline{S}=E(G)-S$.
 \end{enumerate}
 \end{theorem}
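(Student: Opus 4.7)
The plan is to follow the implication chain suggested by the authors: first show that \ref{sbipK} and \ref{sbipW} are immediate restatements of each other, note that \ref{sbipK} $\iff$ \ref{sbipC} is just Lemma \ref{oddwalkoddcyclelemma2} read contrapositively, and then close the remaining conditions into a loop via \ref{sbipK} $\implies$ \ref{sbiplabel} $\implies$ \ref{sbipcoboundary} $\implies$ \ref{sbipcontract} $\implies$ \ref{sbipK}. This gives six conditions collected into a single equivalence class.

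For the easy equivalences I would argue as follows. Given two $uw$-walks $W_1, W_2$, concatenate $W_1$ with the reverse of $W_2$ to produce a closed walk at $u$ whose $\num_S$ value equals $\num_S(W_1)+\num_S(W_2)$; this forces \ref{sbipK} $\iff$ \ref{sbipW}, using the trivial walk at $u$ for the reverse direction. For \ref{sbipK} $\iff$ \ref{sbipC}, the forward direction is trivial since cycles are closed walks, and Lemma \ref{oddwalkoddcyclelemma2} supplies the converse by extracting an odd cycle from any hypothetical odd closed walk.

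For the main loop, I would construct the labeling in \ref{sbiplabel} from \ref{sbipK} by choosing a base vertex $u_0$ in each component of $G$, setting $\la(u_0)=0$, and defining $\la(v)=\num_S(W) \bmod 2$ for any $u_0 v$-walk $W$; well-definedness comes from \ref{sbipW}, and the $S$-labeling property falls out by extending walks one edge at a time. To get \ref{sbipcoboundary} from \ref{sbiplabel} I would simply take $U=\la\iv(1)$, so that $S$ is by construction the set of edges crossing between $U$ and $V(G)-U$. For \ref{sbipcoboundary} $\implies$ \ref{sbipcontract}, I would observe that $\clo{S}$ lies entirely inside $U$ or inside $V(G)-U$, so contracting $\clo{S}$ only identifies vertices on the same side of the partition; the classes $U$ and $V(G)-U$ then descend to a bipartition of $G\cg\clo{S}$, with the surviving edges of $S$ as the crossing edges. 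Finally, to recover \ref{sbipK} from \ref{sbipcontract}, I would project any closed walk $K$ in $G$ down to a closed walk in $G\cg\clo{S}$ of length exactly $\num_S(K)$, and invoke bipartiteness to conclude this length is even.

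The main obstacle, such as it is, is bookkeeping for disconnected graphs: the labeling in \ref{sbiplabel} is only unique up to flipping values on each component (Observation \ref{binlab}\ref{blU}), and the set $U$ witnessing \ref{sbipcoboundary} must be chosen coherently across all components at once. Once a base vertex is fixed in each component and $U$ is taken to be the full preimage $\la\iv(1)$ globally, the six implications are each a short observation and the conditions collapse into the single notion of generalized bipartiteness the paper wishes to use.
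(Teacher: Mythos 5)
Your proposal is correct and follows exactly the route the paper itself sketches (and leaves as omitted details): \ref{sbipK}$\iff$\ref{sbipW} by concatenation with a reversed walk, \ref{sbipK}$\iff$\ref{sbipC} via Lemma \ref{oddwalkoddcyclelemma2}, and the cycle \ref{sbipK}$\implies$\ref{sbiplabel}$\implies$\ref{sbipcoboundary}$\implies$\ref{sbipcontract}$\implies$\ref{sbipK}. The details you supply, including the care with components and the observation that no edge of $S$ becomes a loop under contraction of $\overline{S}$, are all sound.
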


 If $G$ and $S$ satisfy any of the conditions in Theorem
\ref{oddwalkoddcycletheorem}, we say that $G$ is \emph{$S$-bipartite}. 
 Note that $E(G)$-bipartiteness is just bipartiteness, and we refer to
an $E(G)$-labeling, with the two vertex classes of the bipartition
labeled $0$ and $1$, as a \emph{bipartite labeling} of $G$.

 \subsection{Main idea of this paper}

 At this point we can summarize our fundamental approach.
 The main idea of this paper is to translate properties of embedded
graphs into parity conditions for closed walks in gems or jewels, of the
form of Theorem \ref{oddwalkoddcycletheorem}\ref{sbipK}.
 These conditions can then be combined using addition.
 We use other $S$-bipartiteness conditions as appropriate.
 We apply Theorem \ref{oddwalkoddcycletheorem} frequently, usually
without explicit reference.

 The sets of edges $S$ that we will be concerned with are the edges of
prescribed colors in a gem or jewel.
 We simplify the notation for the function for counting the number of
edges colored $\cv$ in a walk in a jewel $\cL$, namely
$\num_{\ec\cv\cL}$, to $v_\cL$. 
 Similarly, functions $f_\cL$, $z_\cL$, and $a_\cL$ count the number of
edges colored $\cf$, $\cz$ and $\ca$, respectively.
 We use the same notation for a gem $\cJ$, except that we do not have
$z_\cJ$.
 We will write $v_\cL(K) +f_\cL(K)$ as $(v+f)_\cL(K)$, and use other
similar abbreviations.

 \subsection{Some known results}

 Before discussing our full Fano Framework, we illustrate our main idea
by re-proving two known results.  We begin with the following.

 \begin{proposition}[Wilson \cite{Wilson}, Ellis-Monaghan and Moffatt
\citeemm.]\label{oribipori}
 Let $\eG$ be an orientable graph embedding.  Then $\eG$ is bipartite if
and only if $\eG\spe$ is orientable.
 \end{proposition}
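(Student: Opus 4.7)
The plan is to translate each of the three properties in Proposition \ref{oribipori}---orientability of $\eG$, bipartiteness of $G$, and orientability of $\eG\spe$---into an $S$-bipartiteness condition on the jewel $\cL$ of $\eG$, in the sense of Theorem \ref{oddwalkoddcycletheorem}, and then exploit the $\mZ_2$-linear structure of such conditions. First I would pass from $\eG$ to its jewel $\cL$, ignoring any isolated vertex components, since these are unchanged by Petrie duality and do not affect orientability or bipartiteness of $\eG$. For each of the three properties the goal is to identify a color-determined set of edges $S \subseteq E(\cL)$---that is, a union of some of the four color classes $\ec{\cv}\cL, \ec{\cf}\cL, \ec{\cz}\cL, \ec{\ca}\cL$---such that the property is equivalent to $\num_S(K)$ being even for every closed walk $K$ in $\cL$.

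Concretely, orientability of $\eG$ should correspond to a set $S_1$ reflecting the standard description of $2$-sidedness as a parity of flag-transitions in the jewel; bipartiteness of $G$ should correspond to a set $S_3$ encoding that a $2$-coloring of $V(G)$ lifts to a $2$-coloring of the v-gons of $\cL$ behaving correctly across e-squares; and orientability of $\eG\spe$ corresponds to the set $S_{1'}$ obtained from $S_1$ by interchanging the colors $\cf$ and $\cz$, since Petrie dualization acts on jewels by exactly this color swap (Subsection \ref{td}).

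The final step is to verify the key linear identity $S_1 \bp S_3 \bp S_{1'} = \emptyset$ as a symmetric difference of color classes in $E(\cL)$. Once this is in place, for every closed walk $K$ in $\cL$ we have $\num_{S_1}(K) + \num_{S_3}(K) + \num_{S_{1'}}(K) \equiv 0 \pmod 2$, so by Theorem \ref{oddwalkoddcycletheorem} any two of the three $S$-bipartiteness conditions imply the third. Assuming $\eG$ orientable discharges the $S_1$ condition, leaving bipartiteness of $G$ equivalent to orientability of $\eG\spe$, which is exactly the biconditional claimed.

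The main obstacle is the identification of the color sets $S_1$ and $S_3$: I must check that each property is genuinely captured by the stated parity condition on closed walks in $\cL$, which requires a careful analysis of how cycles of $G$, face boundaries of $\eG$, and zigzags of $\eG$ are realized as closed walks in the jewel, and how orientation-reversing traversals manifest combinatorially in terms of the four edge-colors. Once these identifications are in hand, the identity $S_1 \bp S_3 \bp S_{1'} = \emptyset$ reduces to a short check on color classes, and the conclusion then follows at once.
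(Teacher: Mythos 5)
Your proposal is correct and follows essentially the same route as the paper: the paper proves the stronger ``any two imply the third'' version (Theorem \ref{twoimplythethird}) by translating the three properties into parity conditions for closed walks in the gem, via the functions $(v+f+a)_\cJ$, $f_\cJ$, and $(v+a)_\cJ$, and observing that any two of these determine the third modulo $2$ --- exactly your linear-dependence argument, with your jewel sets $\ec{\cv\cc\cf\cc\ca}\cL$, $\ec{\cf\cc\cz}\cL$, $\ec{\cv\cc\cz\cc\ca}\cL$ being the jewel translations the paper records in Table \ref{tab:condoribip}. Your treatment of $\eG\spe$ via the $\cf$/$\cz$ color swap in the jewel is precisely the paper's own derivation of Theorem \ref{gempeori} in Subsection \ref{ss:gemsvsjewels}.
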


 Wilson \cite[Theorem 3]{Wilson} stated Theorem \ref{oribipori} without
proof for regular maps (graph embeddings with certain symmetry
properties).  Ellis-Monaghan and Moffatt, \cite[Proposition
4.30(2)]{twisteddualEMM} and \cite[Proposition 3.27]{GraphsonSurfEMM},
gave an incorrect statement of Wilson's result (corrected in the errata
for \cite{GraphsonSurfEMM}, available from Moffatt's website), but a
correct proof that applies to all graph embeddings, not just regular
maps.

 Applying Proposition \ref{oribipori} with $\eG\spe$ replacing $\eG$
shows that if $\eG\spe$ is orientable and $\eG\spe$ is bipartite
(equivalent to $\eG$ being bipartite), then $\eG$ is orientable.
 Combining this with Proposition \ref{oribipori} gives the following
stronger statement.

 \begin{theorem}\label{twoimplythethird}
 Let $\eG$ be an embedded graph.  Any two of the following properties
imply the third:
 \begin{enumerate}[label=\rm(\alph*)]\setlength{\itemsep}{0pt}
     \item\label{Gori}
 	$\eG$ is orientable.
     \item\label{Gbip}
 	$\eG$ is bipartite (or equivalently $\eG\spe$ is bipartite).
     \item\label{Gpeori}
 	$\eG\spe$ is orientable.
 \end{enumerate}
 \end{theorem}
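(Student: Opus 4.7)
The plan is to derive Theorem~\ref{twoimplythethird} as a direct consequence of Proposition~\ref{oribipori}, by applying it both to $\eG$ and to $\eG\spe$ so that the two applications together cover all three of the required implications.

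Two of the three implications I would read straight off Proposition~\ref{oribipori}: under assumption (a), the proposition gives (b) $\iff$ (c), which immediately yields both (a) $\wedge$ (b) $\Rightarrow$ (c) and (a) $\wedge$ (c) $\Rightarrow$ (b).

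For the remaining implication (b) $\wedge$ (c) $\Rightarrow$ (a), I would apply Proposition~\ref{oribipori} with $\eG\spe$ in place of $\eG$. Under assumption (c) this yields: $\eG\spe$ is bipartite if and only if $(\eG\spe)\spe$ is orientable. Two facts established in Section~\ref{sec:embeddings} then close the argument. First, Petrie duality preserves the underlying graph, so bipartiteness of $\eG\spe$ is the same as bipartiteness of $\eG$, which is assumption (b). Second, Petrie duality is an involution (in jewel terms it swaps the two colors $\cf$ and $\cz$), so $(\eG\spe)\spe = \eG$. Together these upgrade the biconditional into the desired conclusion that $\eG$ is orientable.

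I do not anticipate any serious obstacle. The argument is purely formal once Proposition~\ref{oribipori} is available; the only small point needing care is the appeal to the two structural facts about Petrie duality—that bipartiteness depends only on the underlying graph, and that $\spe$ is of order two—both of which are already noted in the excerpt.
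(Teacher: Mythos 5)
Your proposal is correct, and it reproduces the derivation the paper itself sketches in the paragraph immediately preceding Theorem \ref{twoimplythethird}: apply Proposition \ref{oribipori} once to $\eG$ (which yields both implications having (a) as a hypothesis) and once to $\eG\spe$ (which yields (b) $\wedge$ (c) $\Rightarrow$ (a), using that $\spe$ is an involution and preserves the underlying graph). However, the paper's designated proof takes a genuinely different route: it translates each of (a), (b), (c) into a parity condition on closed walks in the gem $\cJ$ of $\eG$ --- namely that $(v+f+a)_\cJ(K)$, $f_\cJ(K)$, and $(v+a)_\cJ(K)$ are even for all closed walks $K$ (Theorems \ref{gemori}, \ref{gembip}, and \ref{gempeori}, respectively) --- and then observes that each of these three counting functions is the sum or difference of the other two, so evenness of any two forces evenness of the third. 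Your argument is shorter, but it rests on Proposition \ref{oribipori}, which the paper cites from the literature rather than proving; the gem-based proof is self-contained in the paper's own machinery and, more importantly, is the prototype for the general framework: the same linear-dependence-over-$\mZ_2$ argument is what underlies Metatheorem B for all seven properties, which is precisely why the authors chose to re-prove this known result that way.
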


 Theorem \ref{twoimplythethird} is in fact one case of Metatheorem B,
since conditions \ref{Gori}, \ref{Gbip}, and \ref{Gpeori} are just
properties (1), (2), and (3), respectively, of our original properties.
 (The equivalence between \ref{Gpeori} and property (3) is discussed in
Subsubsection \ref{cond011}.)
 We give a proof of the theorem using gems and generalized
bipartiteness.  Most of the work involves translating the properties of
Theorem \ref{twoimplythethird} into parity conditions for closed walks.

 We rely on the following result to translate conditions \ref{Gori} and
\ref{Gpeori} of Theorem \ref{twoimplythethird}.

 \begin{theorem}[Lins {\cite{gemslins}}]
 \label{orientablebip}
 Let $\eG$ be a graph embedding with corresponding gem $\cJ$.  Then
$\eG$ is orientable if and only if $\cJ$ is bipartite.
 \end{theorem}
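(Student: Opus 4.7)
The plan is to reduce the theorem to a parity condition on closed walks via Theorem \ref{oddwalkoddcycletheorem} (applied with $S = E(\cJ)$), which says that $\cJ$ is bipartite if and only if every closed walk in $\cJ$ has even length. It then suffices to prove that $\eG$ is orientable if and only if every closed walk in $\cJ$ has even length.

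The geometric bridge I would use is that each vertex of $\cJ$ represents a flag of $\eG$, i.e., a triangular face of the barycentric subdivision $\cB$ embedded in the surface of $\eG$, and two flags adjacent in $\cJ$ share exactly one edge of $\cB$. Crossing such a shared edge reverses the local orientation of the triangle, so every single step along an edge of $\cJ$ toggles the flag's local orientation, and hence a closed walk of length $\ell$ in $\cJ$ toggles local orientation exactly $\ell$ times.

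For the forward direction, I would fix a global orientation of the surface, which induces a consistent local orientation on each flag and therefore a binary labeling $\lambda : V(\cJ) \to \mZ_2$ that takes opposite values on adjacent vertices; any closed walk must then have even length. For the reverse direction, I would pick a basepoint in each component of $\cJ$, label it $0$, and label every other vertex by the parity of any walk from the basepoint to it (well-defined by hypothesis and Observation \ref{binlab}); this $\lambda$ assigns opposite values to adjacent flags, so the induced local orientations agree across every shared edge of $\cB$ and glue together into a global orientation of the surface, showing that $\eG$ is orientable.

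The main obstacle will be justifying the toggling claim uniformly across the three edge colors $\cv$, $\cf$, $\ca$ of $\cJ$. I would handle this either by appealing to the standard topological fact that a compact surface is orientable iff the triangles of any triangulation admit a coherent choice of orientations alternating across every shared edge, or by a color-by-color combinatorial check using the rotation system and edge signature representation of $\eG$, recalling that a cycle in $G$ is $1$-sided iff the product of its edge signatures equals $-1$, and translating this product into a closed-walk length parity in $\cJ$.
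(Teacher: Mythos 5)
Your outline is correct, but note that the paper does not actually prove Theorem \ref{orientablebip} from scratch: it attributes the statement to Lins and disposes of it by citing Tutte's 1949 result that a $3$-face-colorable embedding of a cubic graph is orientable if and only if the graph is bipartite, applied to the gem $\cJ$ (whose partition into v-, f-, and e-faces supplies the $3$-face-coloring, and which is embedded in the same surface as $\eG$), plus a one-line remark that isolated vertex components affect neither side of the equivalence. Your direct argument through the flags of the barycentric subdivision $\cB$ is essentially the standard proof of that underlying result, so the two routes establish the same thing; yours is self-contained where the paper's is a reduction to a citation. The one step in your sketch that needs care is the "toggling" claim: crossing a shared edge does not literally reverse a local orientation (transporting an orientation across an edge inside the surface preserves it), and a bare orientation of a triangle is not a bit. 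What flips is the \emph{handedness} of a flag relative to the canonical ordering of its three typed corners: a global orientation reads the corners of each flag either in the cyclic order V, E, F or in the order V, F, E, and two flags sharing an edge of $\cB$ traverse that edge in opposite directions under coherent orientations, hence receive opposite readings. This typed-corner structure is exactly what converts "coherent orientation of the triangulation" into "proper $2$-coloring of $\cJ$", i.e., bipartiteness via Theorem \ref{oddwalkoddcycletheorem}; without it the equivalence fails (the dual graph of the tetrahedral triangulation of the sphere is $K_4$, which is not bipartite), so your second, color-by-color fallback is really just this same observation in disguise. You should also add the paper's remark that isolated vertex components, which the gem ignores by convention, are embedded in spheres and hence never obstruct orientability, so the biconditional survives their omission.
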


 If $\eG$ has no isolated vertex components, Theorem \ref{orientablebip}
can be regarded as a consequence of a 1949 result of Tutte
\cite[p.~481]{Tu49}, namely that a $3$-face-colorable embedding of a
cubic graph is orientable if and only if the graph is bipartite.  This
has been rediscovered a number of times.  To apply it here, note that
the partition of the faces of a gem into v-, f-, and e-faces provides a
proper $3$-face-coloring, and that $\cJ$ is orientable if and only if
$\eG$ is orientable, because they are embedded in the same surface.
 If $\eG$ has isolated vertex components, they affect neither the
orientability of $\eG$ nor the bipartiteness of $\cJ$, so the result
also holds in that case.

 Since $(v+f+a)_\cJ$ counts all edges in a gem $\cJ$, Theorem
\ref{orientablebip} is equivalent to the following.

 \begin{corollary}\label{gemori}
 Let $\eG$ be a graph embedding with corresponding gem $\cJ$.  Then
$\eG$ (or equivalently $\eG\sdu$) is orientable if and only if
$(v+f+a)_\cJ(K)$ is even for all closed walks $K$ in $\cJ$.
 \end{corollary}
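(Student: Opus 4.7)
The plan is to derive Corollary \ref{gemori} as a direct reformulation of Theorem \ref{orientablebip}, the bridge being the observation that in a gem the color classes $\cv$, $\cf$, $\ca$ partition all of $E(J)$.

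First I would invoke Theorem \ref{orientablebip} to replace the orientability of $\eG$ by the bipartiteness of the gem $\cJ$. The equivalence with orientability of $\eG\sdu$ is immediate from the fact, stated in Subsection 2.2, that $\eG$ and $\eG\sdu$ share the same underlying surface and hence have the same orientability status (and, in any case, duality corresponds to swapping the colors $\cv$ and $\cf$ in $\cJ$, which preserves bipartiteness of $\cJ$).

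Next I would unpack bipartiteness of $\cJ$ using Theorem \ref{oddwalkoddcycletheorem} with $S = E(J)$. By condition \ref{sbipK} of that theorem, $\cJ$ is bipartite precisely when $\num_{E(J)}(K)$ is even for every closed walk $K$ in $\cJ$. But since $\ga : E(J) \to \{\cv,\cf,\ca\}$ is a proper $3$-edge-coloring of the cubic graph $J$, the three color classes $\ec\cv\cJ$, $\ec\cf\cJ$, $\ec\ca\cJ$ partition $E(J)$. Therefore, for any walk $K$,
\[
\num_{E(J)}(K) \;=\; v_\cJ(K) + f_\cJ(K) + a_\cJ(K) \;=\; (v+f+a)_\cJ(K),
\]
and the parity condition on $\num_{E(J)}(K)$ becomes exactly the parity condition on $(v+f+a)_\cJ(K)$ in the statement of the corollary. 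Chaining the equivalences gives the result.

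There is really no obstacle here, since everything is essentially a bookkeeping reformulation: the only substantive ingredient is Theorem \ref{orientablebip}, and the rest is just noting that counting all edges of a gem is the same as summing the counts over the three color classes, then applying the closed-walk characterization of bipartiteness from Theorem \ref{oddwalkoddcycletheorem}. The corollary is included precisely because this restatement is the form in which orientability will be combined with the parity conditions for the other six Fano properties in the sections that follow.
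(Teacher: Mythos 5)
Your proposal is correct and matches the paper's own justification, which simply notes that $(v+f+a)_\cJ$ counts all edges of $\cJ$ and so the corollary is a restatement of Theorem \ref{orientablebip} via the closed-walk characterization of bipartiteness in Theorem \ref{oddwalkoddcycletheorem}. Your additional remarks on the $\eG\sdu$ equivalence are consistent with the paper's discussion in Subsection 2.2.
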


 We can also derive the following from Theorem \ref{orientablebip}.

 \begin{theorem}\label{gempeori}
 Let $\eG$ be a graph embedding with corresponding gem $\cJ$.
 Then $\eG\spe$ (or equivalently $\eG\spe\sdu$) is orientable if and
only if $(v+a)_\cJ(K)$ is even for all closed walks in $\cJ$.
 \end{theorem}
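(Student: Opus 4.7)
The plan is to reduce Theorem \ref{gempeori} to the already-established Corollary \ref{gemori} by translating walks between $\cJ$ and the Petrie-dual gem $\cJ\epe$. By Corollary \ref{gemori} applied to $\eG\spe$ (whose associated gem is $\cJ\epe$), we have that $\eG\spe$ is orientable if and only if $(v+f+a)_{\cJ\epe}(K')$ is even for every closed walk $K'$ in $\cJ\epe$. So the task reduces to showing that this parity condition on $\cJ\epe$ is equivalent to the condition $(v+a)_\cJ(K) \equiv 0 \pmod 2$ for every closed walk $K$ in $\cJ$.

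For this, I would exploit the explicit description of $\cJ\epe$ already recalled in Subsection \ref{td}: $\cJ$ and $\cJ\epe$ share the same vertex set, the same $\cv$-edges and the same $\ca$-edges, and $\cJ\epe$ is obtained from $\cJ$ by replacing the two $\cf$-edges of each e-square by the two $\cf$-edges on the opposite diagonal. Concretely, if an e-square has vertices $1,2,3,4$ in cyclic order with $\cv$-edges $12,34$ and $\cf$-edges $23,41$ in $\cJ$, then in $\cJ\epe$ the $\cv$-edges remain $12,34$ and the $\cf$-edges become $13,24$. Consequently, each $\cf$-edge of one gem can be expressed as a length-$2$ path in the other gem using one $\cv$-edge and one $\cf$-edge within the same e-square.

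The key step is to use this to define two walk-translation maps. Given a closed walk $K'$ in $\cJ\epe$, replace each $\cf$-edge (a diagonal, such as $13$) by one of the two length-$2$ detours in $\cJ$ (for instance $1\xrightarrow{\cv}2\xrightarrow{\cf}3$); this yields a closed walk $K$ in $\cJ$ with $v_\cJ(K)=v_{\cJ\epe}(K')+f_{\cJ\epe}(K')$, $f_\cJ(K)=f_{\cJ\epe}(K')$, $a_\cJ(K)=a_{\cJ\epe}(K')$, so $(v+a)_\cJ(K)\equiv (v+f+a)_{\cJ\epe}(K')\pmod 2$. Conversely, given a closed walk $K$ in $\cJ$, replace each $\cf$-edge (such as $23$) by a length-$2$ detour in $\cJ\epe$ (for instance $2\xrightarrow{\cv}1\xrightarrow{\cf}3$); this yields a closed walk $K'$ in $\cJ\epe$ with $v_{\cJ\epe}(K')=v_\cJ(K)+f_\cJ(K)$, $f_{\cJ\epe}(K')=f_\cJ(K)$, $a_{\cJ\epe}(K')=a_\cJ(K)$, so $(v+f+a)_{\cJ\epe}(K')\equiv (v+a)_\cJ(K)\pmod 2$.

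Together these two translations show that every closed walk in one gem matches, modulo $2$, a closed walk in the other under the stated statistics, giving the equivalence of the two parity conditions and hence the theorem. The only subtlety — really a bookkeeping check rather than a genuine obstacle — is verifying that each local $\cf$-edge replacement is valid inside its e-square in both directions and that the colour counts transform exactly as claimed; once this is set up, everything falls out mod $2$ because the $\cf$-count is always doubled by the detour and therefore disappears.
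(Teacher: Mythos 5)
Your proof is correct and takes essentially the same route as the paper's: both apply Lins' theorem to $\eG\spe$ (Corollary \ref{gemori} is just its closed-walk reformulation) and then transfer the resulting condition from $\cJ\epe$ back to $\cJ$ using the fact that the two gems share their $\cv$- and $\ca$-edges while each one's $\cf$-edges are realized in the other as paths of length two consisting of one $\cv$-edge and one $\cf$-edge of an e-square. The only difference is the transfer device: the paper notes that a bipartite labeling of $\cJ\epe$ is precisely an $S$-labeling of $\cJ$ with $S$ the set of edges colored $\cv$ or $\ca$ (Theorem \ref{oddwalkoddcycletheorem}\ref{sbiplabel}), whereas you carry out the equivalent walk-by-walk detour substitution (clause \ref{sbipK}), which costs slightly more bookkeeping but is equally valid.
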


 \begin{proof}
 Suppose that $\eG\spe$ is orientable.  By Theorem \ref{orientablebip}, 
$\cJ\epe$ is bipartite and so has a bipartite labeling $\la$.
 The value of $\la$ flips as we go along edges of color $\cv$ or $\ca$
in $\cJ$, as they are also edges of $\cJ\epe$.  But the edges of color
$\cf$ in $\cJ$ join vertices that are distance $2$ in $\cJ\epe$, and so
the value of $\la$ does not change on edges of color $\cf$ in $\cJ$. 
Therefore, by Observation \ref{binlab} we see that $\la$ is an
$\ec{\cv\cc\ca}\eJ$-labeling of $\cJ$, and so $(v+a)_\cJ(K)$ is even for
all closed walks $K$ in $\cJ$.

 Conversely, if $(v+a)_\cJ(K)$ is even for all closed walks in $K$, then
there is an $\ec{\cv\cc\ca}\cJ$-labeling $\la$ in $\cJ$.  By similar
reasoning to the above, $\la$ is a bipartite labeling of $\cJ\epe$, so
$\cJ\epe$ is bipartite.  Thus, $\eG\spe$ is orientable by Theorem
\ref{orientablebip}.
 \end{proof}

 The following addresses Theorem \ref{orientablebip}\ref{Gbip}.

 \begin{theorem}\label{gembip}
 Let $\eG$ be a graph embedding with corresponding gem $\cJ$.
 Then $\eG$ (or equivalently $\eG\spe$) is bipartite if and only if
$f_\cJ(K)$ is even for all closed walks in $\cJ$.
 \end{theorem}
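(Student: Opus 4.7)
The plan is to reduce bipartiteness of $\eG$ to an $S$-bipartiteness condition on the gem $\cJ$, with $S = \ec\cf\cJ$, and then apply Theorem~\ref{oddwalkoddcycletheorem}. First, since $\eG$ and $\eG\spe$ share the same underlying graph $G$, they are bipartite simultaneously, so it suffices to treat $\eG$. Isolated vertex components of $\eG$ do not appear in $\cJ$, and since an isolated vertex can receive either label in any bipartite labeling, they affect neither side of the claimed equivalence; we may therefore assume $\eG$ has no isolated vertex components.

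The key structural observation is that $\ec{\cv\cc\ca}\cJ$, the subgraph of $\cJ$ spanned by edges of colors $\cv$ and $\ca$, is $2$-regular, because every vertex of a gem is incident with exactly one edge of each of the three colors. Its components are therefore cycles whose edges alternate between $\cv$ and $\ca$, which are precisely the v-gons of $\cJ$. Since v-gons correspond bijectively to vertices of $\eG$, contracting $\ec{\cv\cc\ca}\cJ$ in $\cJ$ produces a graph whose vertex set is $V(G)$. Each edge $e$ of $\eG$ corresponds to an e-square of $\cJ$ containing exactly two $\cf$-edges, and each of these $\cf$-edges joins a vertex of the v-gon for one endpoint of $e$ to a vertex of the v-gon for the other endpoint. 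Consequently $\cJ \cg \ec{\cv\cc\ca}\cJ$ is obtained from $G$ by replacing every edge with a pair of parallel edges, and this multigraph is bipartite if and only if $G$ is bipartite.

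To conclude, apply Theorem~\ref{oddwalkoddcycletheorem} with $S = \ec\cf\cJ$, noting that $\overline S = \ec{\cv\cc\ca}\cJ$: the condition that $f_\cJ(K)$ is even for every closed walk $K$ in $\cJ$ is equivalent to $\cJ \cg \overline S$ being bipartite, which by the previous paragraph is equivalent to $G$ being bipartite, i.e., to $\eG$ being bipartite. The main technical point (and only substantive step) is the structural identification in the middle paragraph, namely that the v-gons are the components of $\ec{\cv\cc\ca}\cJ$ and correspond to the vertices of $\eG$, and that the two $\cf$-edges of each e-square bridge the v-gons of the endpoints of the corresponding edge of $\eG$; both facts follow directly from the gem construction via the barycentric subdivision described in Section~\ref{gjm}.
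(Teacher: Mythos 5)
Your proposal is correct and follows essentially the same route as the paper's proof: both apply Theorem~\ref{oddwalkoddcycletheorem} (parts \ref{sbipK} and \ref{sbipcontract}) with $S=\ec\cf\cJ$, identify $J\cg\ec{\cv\cc\ca}\cJ$ as $G$ with each edge doubled via contraction of the v-gons, and dispose of isolated vertex components separately. The extra detail you give on why the contraction yields the doubled graph is a correct elaboration of what the paper states more briefly.
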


 \begin{proof}
 Suppose first that $\eG$ has no isolated vertex components.
 From Theorem \ref{oddwalkoddcycletheorem}\ref{sbipK} and
\ref{sbipcontract}, $f_\cJ(K)$ is even for all closed walks $K$ in $\cJ$
if and only if $H = J \cg \ec{\cv\cc\ca}\cJ$ is bipartite.  But $H$ is
just the result of contracting all v-gons in $J$, and is just $G$ with
each edge $e$ replaced by two parallel edges (originally the two edges
of color $\cf$ in the e-square of $\cJ$ corresponding to $e$).
 Therefore, $H$ is bipartite if and only if $G$ is bipartite, i.e.,
$\eG$ is bipartite.

 If $\eG$ has isolated vertex components, they can be discarded without
affecting either $\cJ$ or the bipartiteness of $\eG$, so the result also
holds in that case.
 \end{proof}

 Notice that in the proofs above we used different parts of Theorem
\ref{oddwalkoddcycletheorem}, involving parity conditions for closed
walks, labelings, and contractions.
 Now that we have translated the conditions of Theorem
\ref{twoimplythethird} into parity conditions for closed walks, its
proof is easy.

 \begin{proof}[Proof of Theorem \ref{twoimplythethird}]
 Let $\cJ$ be the gem of $\eG$.  By Theorems \ref{gemori},
\ref{gempeori}, and \ref{gembip}, the conditions of Theorem
\ref{twoimplythethird} become, respectively, the following.

 \begin{enumerate}[label=\rm(\alph*)]\setlength{\itemsep}{0pt}
     \item
 	$(v+a+f)_\cJ(K)$ is even for all closed walks $K$ in $\cJ$.
     \item
 	$f_\cJ(K)$ is even for all closed walks $K$ in $\cJ$.
     \item
 	$(v+a)_\cJ(K)$ is even for all closed walks $K$ in $\cJ$.
 \end{enumerate}
 But given two of the functions $(v+a+f)_\cJ, f_\cJ$, and $(v+a)_\cJ$,
the third can be obtained by either addition or subtraction, so any two
of (a), (b), or (c) imply the third.
 \end{proof}

 The other known result that we re-prove here is the following.  Recall
that an embedding is \emph{directable} if the edges can be directed so
that every face boundary is a directed walk.

 \begin{theorem}[{R.-X. Hao \cite[Lemma 4.1]{hao}}]\label{hao}
 Let $\eG$ be an orientable graph embedding.  Then $\eG$ is directable
if and only if $\eG$ is $2$-face-colorable.
 \end{theorem}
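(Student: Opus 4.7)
The plan is to translate each of the three properties in the statement into a parity condition on closed walks in the gem $\cJ$ of $\eG$, so that Theorem \ref{hao} reduces to a one-line identity in $\mZ_2$. Two of the three translations I can get essentially for free from what the excerpt has already established. First, Corollary \ref{gemori} says that $\eG$ is orientable iff $(v+f+a)_\cJ(K)$ is even for every closed walk $K$ in $\cJ$. Second, $\eG$ is 2-face-colorable iff $\eG\sdu$ is bipartite; applying Theorem \ref{gembip} to $\eG\sdu$ and using that geometric duality on the gem simply swaps the colors $\cv$ and $\cf$, I get that $\eG$ is 2-face-colorable iff $v_\cJ(K)$ is even for every closed walk $K$ in $\cJ$. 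Isolated vertex components can be discarded at the outset, since they are trivially directable and 2-face-colorable and do not affect $\cJ$.

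The main work is the analogous characterization of directability, which I would state as a lemma: $\eG$ is directable iff $(f+a)_\cJ(K)$ is even for every closed walk $K$ in $\cJ$. To prove it I use the flag interpretation of vertices of $\cJ$, in which each vertex corresponds to a triple (half-edge $(V,E)$, incident face $F$). A direction of $G$ picks, for each edge, one of its half-edges as the tail; encode this by $\la(\text{flag}) = 0$ if the flag's half-edge is the tail of its edge and $1$ otherwise. Crossing a $\cv$-edge of $\cJ$ keeps the same half-edge (only the face changes), so $\la$ is preserved; crossing a $\cf$-edge keeps the edge but swaps its two half-edges, so $\la$ always flips; crossing an $\ca$-edge keeps the vertex and face but changes the edge, so $\la$ may or may not flip. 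Going around an f-gon traces the boundary walk of a face $f$, with flags alternating in the pattern (V_0,E_1,f), (V_1,E_1,f), (V_1,E_2,f), (V_2,E_2,f), \ldots, and the requirement that $f$ be a directed closed walk is exactly that $\la$ alternates $0,1,0,1,\ldots$ around the f-gon. Since $\cf$-edges of the f-gon already flip $\la$, this forces $\la$ to flip on every $\ca$-edge of every f-gon, hence on every $\ca$-edge; conversely, given any $\ec{\cf\cc\ca}\cJ$-labeling $\la$, orienting each edge from its $\la=0$ half-edge to its $\la=1$ half-edge turns every face boundary into a directed walk. By Theorem \ref{oddwalkoddcycletheorem}, the existence of such a labeling is equivalent to $(f+a)_\cJ(K)$ being even for every closed walk $K$ in $\cJ$, proving the lemma.

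With the three characterizations in hand, the theorem is immediate: assuming $\eG$ is orientable, $(v+f+a)_\cJ(K)$ is even for every closed walk $K$ in $\cJ$, so $(f+a)_\cJ(K)$ is even iff $v_\cJ(K)$ is even, i.e., $\eG$ is directable iff $\eG$ is 2-face-colorable. The main obstacle is the directability lemma, specifically the careful bookkeeping needed to verify how $\la$ changes across each of the three gem edge colors and to handle loops and multi-edges cleanly via half-edges rather than vertices; everything else is routine manipulation of the already-available parity conditions.
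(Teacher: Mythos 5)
Your proposal is correct and follows essentially the paper's own route: the paper derives Theorem \ref{hao} (via Theorem \ref{hao2imply3}) from the correspondence between orientability, $2$-face-colorability, and directability and the evenness of $(v+f+a)_\cJ$, $v_\cJ$, and $(f+a)_\cJ$ on closed walks, and your directability lemma together with its flag/half-edge labeling argument is exactly the content and proof of Proposition \ref{directedembeddingdualpetrieori} combined with the entry for condition $(100)$ in Table \ref{tab:condoribip}. The only cosmetic difference is that the paper's primary phrasing applies Theorem \ref{twoimplythethird} to $\eG\sdu$, but it explicitly notes the direct parity-condition argument you give as an equivalent alternative.
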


 Again, it turns out that this result can be strengthened, to the
following.

 \begin{theorem}\label{hao2imply3}
 Let $\eG$ be an embedded graph.  Any two of the following properties
imply the third:
 \begin{enumerate}[label=\rm(\alph*)]\setlength{\itemsep}{0pt}
     \item\label{hori}
 	$\eG$ is orientable.
     \item\label{h2fc}
 	$\eG$ is $2$-face-colorable.
     \item\label{hdirectable}
 	$\eG$ is directable.
 \end{enumerate}
 \end{theorem}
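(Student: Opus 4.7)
\emph{Proof proposal.} The plan is to mimic the proof of Theorem~\ref{twoimplythethird}: translate each of the three properties of $\eG$ into a parity condition on closed walks in the gem $\cJ$ of $\eG$, and then observe that the three conditions are linearly dependent modulo~$2$. Since isolated vertex components affect none of the three properties and contribute nothing to $\cJ$, we may assume $\eG$ has none. By Corollary~\ref{gemori}, orientability of $\eG$ translates to $(v+f+a)_\cJ(K)$ being even for every closed walk $K$ in $\cJ$. Because $\eG$ is $2$-face-colorable if and only if its dual $\eG\sdu$ is bipartite, applying Theorem~\ref{gembip} to $\eG\sdu$ (whose gem $\cJ\edu$ is obtained from $\cJ$ by swapping the colors $\cv$ and $\cf$) translates $2$-face-colorability into $v_\cJ(K)$ being even for every closed walk $K$ in $\cJ$.

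The new work is the analogous characterization of directability: $\eG$ is directable if and only if $(f+a)_\cJ(K)$ is even for all closed walks $K$ in $\cJ$. Recall the three colors of $\cJ$: a $\cv$-edge joins flags that share the same vertex and edge of $\eG$; a $\cf$-edge joins flags that share the same edge and face (at the two endvertices of that edge); and an $\ca$-edge joins flags that share the same vertex and face (at consecutive edges around a corner). Given a direction of $\eG$, define a binary labeling $\la$ on the flags by $\la = 0$ if the flag's vertex is the tail of its edge, and $\la = 1$ if it is the head. Then $\la$ is automatically constant across $\cv$-edges and flips across $\cf$-edges, while the requirement that every face boundary be a directed walk is equivalent, via the f-gons of $\cJ$, to $\la$ alternating between $0$ and $1$ all the way around each f-gon---and so, in combination with the $\cf$-flipping, $\la$ also flips across every $\ca$-edge. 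Thus directability of $\eG$ is equivalent to $\cJ$ admitting an $\ec{\cf\cc\ca}\cJ$-labeling, which by Theorem~\ref{oddwalkoddcycletheorem} is equivalent to $(f+a)_\cJ(K)$ being even for every closed walk $K$ in $\cJ$. Conversely, any such labeling assigns both $\cv$-neighbors in each e-square the same label and the two sides of the e-square opposite labels, which recovers a consistent direction for each edge under which every face boundary walk alternates labels and is therefore directed.

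With the three translations in hand, the identity $(v+f+a)_\cJ \equiv v_\cJ + (f+a)_\cJ \pmod 2$ shows that any two of the conditions force the third, completing the proof. The main obstacle is the directability step in paragraph two---specifically, setting up the correspondence between edge directions and flag-labelings and verifying that the flipping/non-flipping behavior really does occur on each of the three edge colors. Once this translation is in place, the remainder is the same modulo-$2$ argument that closed the proof of Theorem~\ref{twoimplythethird}.
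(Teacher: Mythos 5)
Your proposal is correct and is essentially the route the paper itself takes: the paper's displayed proof applies Theorem \ref{twoimplythethird} to $\eG\sdu$, but it explicitly notes the alternative direct proof via the three parity conditions $(v+f+a)_\cJ$, $v_\cJ$, $(f+a)_\cJ$ that you carry out, and your directability translation is exactly the paper's Proposition \ref{directedembeddingdualpetrieori}, proved there by the same labeling argument (phrased as directing the $\cf$-edges of $\cJ$ and labeling each vertex by whether its $\cf$-edge points inward or outward). The only nitpick is that your labeling ``$\la=0$ if the flag's vertex is the tail of its edge'' is ill-defined when the edge is a loop; labeling each flag by whether its \emph{half-edge} is the tail half or the head half (which the paper's $\cf$-edge formulation does automatically) repairs this without changing anything else.
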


 This is also a special case of Metatheorem B, involving properties (1),
(5), and (4).  Condition \ref{hori} is equivalent to $\eG\sdu$ being
orientable, \ref{h2fc} is equivalent to $\eG\sdu$ being bipartite, and
(see Proposition \ref{directedembeddingdualpetrieori} below)
\ref{hdirectable} is equivalent to $\eG\sdu\spe$ being orientable.
Therefore, Theorem \ref{hao2imply3} is just Theorem
\ref{twoimplythethird} with $\eG$ replaced by $\eG\sdu$.

 Alternatively, we could give an independent proof for Theorem
\ref{hao2imply3} based on parity conditions for closed walks in the
jewel $\cJ$ of $\eG$.  As we have shown (for \ref{hori}) or will show
below (for \ref{h2fc} and \ref{hdirectable}), conditions \ref{hori},
\ref{h2fc}, and \ref{hdirectable} turn out to be equivalent to evenness
of $(v+f+a)_\cJ(K)$, $v_\cJ(K)$, and $(f+a)_\cJ(K)$, respectively, for
all closed walks $K$ in $\cJ$.  Thus, a proof similar to the proof of
Theorem \ref{twoimplythethird} above can be provided.

 Not every theorem implied by Metatheorem B can be obtained just by
applying Theorem \ref{twoimplythethird} to one of the six embeddings
$\eG$, $\eG\sdu$, $\eG\spe$, $\eG\sdu\spe$, $\eG\spe\sdu$, or
$\eG\sdu\spe\sdu$.  Metatheorem B implies seven theorems, and only three
of them (Theorems \ref{twoimplythethird} and \ref{hao2imply3}, and one
other) can be obtained in this way.  The other four theorems are, as far
as we know, new, and will be discussed in more detail later.

 \subsection{Gems versus jewels}\label{ss:gemsvsjewels}

 \let\al\alpha
 \def\alv{\al_\cv}
 \def\alf{\al_\cf}
 \def\alz{\al_\cz}
 \def\ala{\al_\ca}
 \def\alvfa{\alv\alf\ala}
 \def\alvfza{\alv\alf\alz\ala}
 \def\CW{\textrm{\rm CW}}
 \newenvironment{condition}%
 	{\par\smallskip\advance\leftskip by 2\parindent\noindent}%
 	{\par\smallskip\advance\leftskip by -2\parindent\noindent}

 In the previous subsection, our proofs related embedding properties to
parity conditions for closed walks in gems.  In fact, all seven of our
properties for an embedded graph $\eG$ can be described by a condition
of the form
 \begin{condition}%
 $\CW(\cJ, \alvfa)$:\quad
 $(\alv v + \alf f + \ala a)_\cJ(K)$ is even for all closed walks $K$ in
$\cJ$,
 \end{condition}%
 where $\cJ$ is the gem of $\eG$ and $\alvfa$ is a $01$-string of length
$3$.
 This actually gives eight conditions, but if  $\alv = \alf = \ala = 0$,
we have a property trivially satisfied by all embeddings.
 Our theory can be developed using conditions for gems, as can be seen
in an earlier version of this work in \cite[Chapter 4]{BlakeDiss}. 
However, a more symmetric approach, providing easier proofs of some
results, uses conditions in the jewel $\cL$ instead of the gem $\cJ$.

 In the jewel $\cL$ of $\eG$ we naturally consider conditions of the
form
 \begin{condition}%
 $\CW(\cL, \alvfza)$:\quad
 $(\alv v + \alf f + \alz z + \ala a)_\cL(K)$ is even for all closed
walks $K$ in $\cL$,
 \end{condition}%
 where $\alvfza$ is a $01$-string of length $4$.
 But some choices of $\alvfza$ give rise to conditions that are
satisfied only in trivial ways.  The remaining choices correspond to
parity conditions for closed walks in gems, as we now show.

 \begin{theorem}\label{gem2jewel}
 Suppose that $\cL$ is a jewel and $\cJ$ is the corresponding gem.
 Let $\alv, \alf, \alz, \ala \in \{0,1\}$.
 \begin{enumerate}[label=\rm(\alph*)]\setlength{\itemsep}{0pt}
  \item\label{jeweloddafz}
  If $\alv + \alf + \alz$ is odd, then
 $\CW(\cL, \alvfza)$ holds if and only if $\cL$ is empty.
  \item\label{jewelevenafz}
  If $\alv + \alf + \alz$ is even, then
 $\CW(\cL, \alvfza)$ holds 
 if and only if
 $\CW(\cJ, \alvfa)$ holds.
 \end{enumerate}
 \end{theorem}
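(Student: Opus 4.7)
The proof rests on the following structural observation: in any e-simplex of $\cL$, which is a $K_4$ containing the $\cv\cf$-square $a$-$b$-$c$-$d$-$a$ with the two diagonals $ac, bd$ necessarily colored $\cz$, the triangle $a, b, c, a$ is a closed walk of length three using exactly one edge of each of the colors $\cv$, $\cf$, $\cz$. This settles part \ref{jeweloddafz} immediately: if $\cL$ is empty then $\CW(\cL, \alvfza)$ holds vacuously, while if $\cL$ is nonempty then it contains at least one e-simplex, and the triangle just described yields a closed walk $K$ with
\[
(\alv v + \alf f + \alz z + \ala a)_\cL(K) = \alv + \alf + \alz,
\]
which is odd by hypothesis and so violates $\CW(\cL, \alvfza)$.

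For part \ref{jewelevenafz}, the forward direction is essentially automatic: since $\cJ$ is obtained from $\cL$ by deleting exactly the $\cz$-colored edges, any closed walk $K'$ in $\cJ$ is also a closed walk in $\cL$ with $z_\cL(K') = 0$, and the two weighted sums on $K'$ coincide. For the backward direction, I would build from each closed walk $K$ in $\cL$ a closed walk $K'$ in $\cJ$ by traversing $K$ and replacing every $\cz$-edge $ac$ (in its e-simplex with $\cv\cf$-square $a$-$b$-$c$-$d$-$a$) by the two-edge path $a, b, c$, which uses one edge of color $\cv$ and one of color $\cf$. Each such replacement removes one $\cz$-edge and adds one $\cv$-edge and one $\cf$-edge, so
\[
(\alv v + \alf f + \ala a)_\cJ(K') - (\alv v + \alf f + \alz z + \ala a)_\cL(K) = (\alv + \alf - \alz)\, z_\cL(K),
\]
which is even precisely because $\alv + \alf + \alz$ is even. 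Evenness of the left-hand quantity, guaranteed by $\CW(\cJ, \alvfa)$, therefore transfers to $(\alv v + \alf f + \alz z + \ala a)_\cL(K)$, as desired.

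I expect no serious obstacle here. The one step requiring any care is verifying that the $\cz$-edge-to-$\cv\cf$-path substitution produces a legitimate closed walk in $\cJ$, but this is immediate from the rigid $K_4$ structure of an e-simplex, since consecutive edges in $K$ still share a vertex after substitution and each inserted path starts and ends at the original endvertices of the replaced $\cz$-edge. Everything else is direct edge-counting, with no need for $S$-bipartiteness or any more elaborate machinery from Theorem \ref{oddwalkoddcycletheorem}.
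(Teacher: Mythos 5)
Your proposal is correct, and parts (a) and the forward direction of (b) coincide with the paper's argument. For the backward direction of (b), however, you take a genuinely different route. The paper works through the $S$-bipartiteness machinery: it converts $\CW(\cJ,\alpha_\cv\alpha_\cf\alpha_\ca)$ into the existence of an $S$-labeling of $J$ via Theorem \ref{oddwalkoddcycletheorem}, observes that the same vertex labeling is a $T$-labeling of $L$ for some $T$ obtained from $S$ by adding a subset of the $\cz$-edges, and then splits into the cases $\alpha_\cv=\alpha_\cf$ (where no diagonal lies in $T$) and $\alpha_\cv\ne\alpha_\cf$ (where every diagonal does) to identify $T$ with the edge set counted by the jewel condition. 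Your argument instead performs direct walk surgery: each $\cz$-edge of a closed walk in $\cL$ is rerouted through two consecutive sides of its e-square, trading a contribution of $\alpha_\cz$ for one of $\alpha_\cv+\alpha_\cf$, and the parity hypothesis $\alpha_\cv+\alpha_\cf+\alpha_\cz$ even is exactly what makes the trade parity-neutral. Your version is more elementary and self-contained --- it never leaves the world of closed walks and needs neither labelings nor the case split --- while the paper's version exercises the labeling formalism (Observation \ref{binlab}) that it reuses heavily elsewhere, e.g.\ in Section \ref{sec:bidirections}. One small point worth making explicit in your write-up: the two sides $ab$ and $bc$ of the e-square carry the two distinct colors $\cv$ and $\cf$ (one each), since colors alternate around the e-square; this is what justifies the claim that each substitution adds exactly one $\cv$-edge and one $\cf$-edge. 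With that noted, the edge counts (taken with multiplicity, as required for walks) go through exactly as you state.
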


 \begin{proof}
 (a) Assume that $\alv + \alf + \alz$ is odd.

 Suppose that $\CW(\cL, \alvfza)$ holds.  If $\cL$ is nonempty then it
has an e-simplex.  Let $K$ be any triangle in the e-simplex.  Then $K$
uses one edge of each color $\cv, \cf, \cz$, and so
   $(\alv v + \alf f + \alz z + \ala a)_\cL(K) = \alv + \alf + \alz$,
which is odd, a contradiction.  Therefore $\cL$ is empty.

 Conversely, if $\cL$ is empty, $\CW(\cL, \alvfza)$ is satisfied
vacuously.

 \smallskip
 (b) Assume that $\alv + \alf + \alz$ is even.

 Suppose that $\CW(\cL, \alvfza)$ holds. Every closed walk $K$ in $\cJ$
is also a closed walk in $\cL$, and moreover $z(K)=0$, so $(\alv v +
\alf f + \ala a)_\cJ(K) = (\alv v + \alf f + \alz z + \ala a)_\cL(K)$,
which is even.  Hence $\CW(\cJ, \alvfa)$ holds.

 Now suppose that $\CW(\cJ, \alvfa)$ holds.
 There is $S\subseteq E(J)$ with $(\alv v + \alf f + \ala a)_\cJ =
\num_S$, and by Theorem \ref{oddwalkoddcycletheorem}, $J$ has an
$S$-labeling $\la$.
 Since $V(L) = V(J)$, by Observation \ref{binlab}, $\la$ is an
$T$-labeling of $L$ for some $T \subseteq E(L)$, and furthermore $T = S
\cup S'$ for some $S' \subseteq E(L)-E(J) = \ec\cz\cL$.
 We know that $\num_{T}(K)$ is even for all closed walks $K$ in $\cL$
because the $T$-labeling $\la$ of $L$ exists.  So to prove that
$\CW(\cL, \alvfza)$ holds it suffices to show that  $\num_{T} = (\alv v
+ \alf f + \alz z + \ala a)_\cL$.

 If $\alv = \alf$, then $\alz = 0$.
 Moreover, diagonally opposite pairs of vertices of a given e-square in
$J$ have equal $\la$ values, and so the edges between such pairs in
$\cL$ do not belong to $T$.
 In other words, $\ec\cz\cL \cap T = \emptyset$ and so $T = S$.
 Therefore, in $L$ we have $\num_{T} = \num_S = (\alv v + \alf f + \alz
z + \ala a)_\cL$ because $\alz = 0$.

 If $\alv \ne \alf$, then $\alz = 1$.
 Moreover, diagonally opposite pairs of vertices of a given e-square in
$J$ have different $\la$ values, and so the edges between such pairs in
$\cL$ belong to $T$.
 In other words, $\ec\cz\cL \subseteq T$ and so $T = S \cup \ec\cz\cL$.
 Therefore, in $L$ we have $\num_{T} = \num_S + z_\cL = (\alv v + \alf f
+ \alz z + \ala a)_\cL$ because $\alz = 1$.
 \end{proof}

 In light of part \ref{jeweloddafz}, we will consider only conditions
$\CW(\cL, \alvfza)$ where $\alv + \alf + \alz$ is even.  In that case,
when moving from jewels to gems we just delete the term $\alz z$.  When
moving from gems to jewels we add a term $\alz z$ where $\alz$ is an
even parity check bit for $\alv$ and $\alf$ (i.e., $\alz$ makes $\alv +
\alf + \alz$ even).

 \let\ifoif\Leftrightarrow

 Using Theorem \ref{gem2jewel} we can, for example, give a very easy
proof of Theorem \ref{gempeori}, as follows:

 \smallskip

 \def\rgp#1#2{\leavevmode\hangindent=#2\parindent\hangafter=1%
 	{}\hskip-\parindent\hskip#1\parindent}

 \rgp13 $\eG\spe$ is orientable

 \rgp13 $\ifoif$ (Theorem \ref{gemori})\quad
 $\CW(\cJ\epe, 111)$ holds:
 $(v+f+a)_{\cJ\epe}(K)$ is even for all closed walks $K$ in $\cJ\epe$

 \rgp13 $\ifoif$ (Theorem \ref{gem2jewel})\quad
 $\CW(\cL\epe, 1101)$ holds:
 $(v+f+a)_{\cL\epe}(K)$ is even for all closed walks $K$ in $\cL\epe$

 \rgp13 $\ifoif$ (Petrie dual swaps colors $\cf$ and $\cz$ in
jewels)\quad
 $\CW(\cL, 1011)$ holds:
 $(v+z+a)_{\cL}(K)$ is even for all closed walks $K$ in $\cL$

 \rgp13 $\ifoif$ (Theorem \ref{gem2jewel})\quad
 $\CW(\cJ, 101)$ holds:
 $(v+a)_\cJ(K)$ is even for all closed walks $K$ in $\cJ$.
 \smallskip

 As this proof shows, a condition $\CW(\cL, \alvfza)$ has an advantage
over a condition $\CW(\cJ, \alvfa)$ when taking some combination of
duals and Petrie duals: we just modify $\CW(\cL, \alvfza)$ using the
corresponding permutation of the colors $\cv, \cf, \cz$. 

 The following observation is trivial but we state it for later
reference.

 \begin{observation}\label{asmod2}
 Suppose $\al, \al' \in \{0,1\}$.  Let $\al \bp \al' = (\al + \al')
\bmod 2$ be the sum of $\al$ and $\al'$ considered as elements of
$\mZ_2$.  Then $\al+\al'$, $\al-\al'$, and $\al \bp \al'$ all have the
same parity.
 \end{observation}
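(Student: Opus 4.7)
The plan is to give an essentially one-line proof by observing that the three quantities differ from one another by even integers. Specifically, I would first note that by definition of reduction modulo $2$, the integer $\al \bp \al' = (\al + \al') \bmod 2$ differs from $\al + \al'$ by a multiple of $2$ (in fact by $0$ or $2$, since $\al + \al' \in \{0, 1, 2\}$), and so these two have the same parity.

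Next I would observe that $(\al + \al') - (\al - \al') = 2\al'$, which is even. Hence $\al + \al'$ and $\al - \al'$ also have the same parity. Combining these two observations yields that all three of $\al + \al'$, $\al - \al'$, and $\al \bp \al'$ have the same parity.

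Alternatively, since there are only four choices of $(\al, \al') \in \{0,1\}^2$, one could simply tabulate the three values in each case and verify the claim directly. I would expect no obstacle whatsoever here; the observation is recorded purely for convenient later reference when translating between ordinary integer arithmetic on $\{0,1\}$-valued coefficients and arithmetic in $\mZ_2$.
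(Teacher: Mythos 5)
Your argument is correct: noting that $\al\bp\al'$ differs from $\al+\al'$ by a multiple of $2$, and that $(\al+\al')-(\al-\al')=2\al'$ is even, fully establishes the claim. The paper itself offers no proof, explicitly labeling the observation as trivial and recorded only for later reference, so your write-up simply supplies the obvious details in the expected way.
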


 \def\alcl#1{\alv#1\alf#1\alz#1\ala#1}
 \let\be\beta
 \def\bev{\be_\cv}
 \def\bef{\be_\cf}
 \def\bez{\be_\cz}
 \def\bevfz{\bev \bef \bez}
 Suppose we know that two conditions $\CW(\cL, \al{})$ and $\CW(\cL, \al')$
hold, where $\al = \alcl{}$ and $\al' = \alcl'$, so that
 \begin{condition}%
 $(\alv{} v + \alf{} f + \alz{} z + \ala{} a)_\cL(K)$ and
 $(\alv' v + \alf' f + \alz' z + \ala' a)_\cL(K)$
 \end{condition}%
 are even for all closed walks $K$ in $\cL$.  Then clearly
 \begin{condition}%
 $((\alv{}+\alv') v + (\alf{}+\alf') f + (\alz{}+\alz') z
 	+ (\ala{}+\ala') a)_\cL(K)$
 \end{condition}%
 is even for all closed walks $K$ in $\cL$.  But by Observation
\ref{asmod2}, this is equivalent to
 \begin{condition}%
 $((\alv{}\bp \alv') v + (\alf{} \bp \alf') f + (\alz{} \bp \alz') z
 	+ (\ala{} \bp \ala') a)_\cL(K)$
 \end{condition}%
 being even for closed walks $K$ in $\cL$.  In other words, the
condition $\CW(\cL, \al{} \bp \al')$ holds, where $\al{} \bp \al'$ is
the sum of $\al{}$ and $\al'$ considered as vectors in the vector space
$\mZ_2^4$.  Similarly, if $\CW(\cJ, \wt\al)$ and $\CW(\cJ, \wt\al')$
hold, where $\wt\al, \wt\al' \in \mZ_2^3$, then $\CW(\cJ, \wt\al \bp
\wt\al')$ holds.
  
 The conditions $\CW(\cL, \alvfza)$ are of most interest when
$\alv+\alf+\alz$ is even, i.e., when $\alv\bp\alf\bp\alz = 0$.  The
corresponding vectors $\al =\alvfza$ form a $3$-dimensional subspace $Y$
of $\mZ_2^4$.  There is a basis for $Y$ that is invariant under
permutations of the first three coordinates, namely $\{0111, 1011,
1101\}$.  These vectors correspond to the three functions
 \begin{condition}%
 $\ov_\cL = (f+z+a)_\cL$,\quad
 $\of_\cL = (v+z+a)_\cL$,\quad
 and\quad
 $\oz_\cL = (v+f+a)_\cL$,
 \end{condition}%
 which count all edges other than those of color $\cv$, $\cf$, or $\cz$,
respectively.  We therefore define another condition for a jewel $\cL$
and $\be = \bevfz \in \mZ_2^3$, namely
 \begin{condition}%
 $\ovl\CW(\cL, \bevfz)$:\quad
 $(\bev\ov + \bef\of + \bez\oz)_\cL(K)$ is even for all closed walks $K$
in $\cL$,
 \end{condition}%
 The conditions $\ovl\CW(\cL, \be)$ have the same good behavior as
conditions $\CW(\cL, \al)$ under duals and Petrie duals: we can just
modify the condition using the corresponding permutation of the colors
$\cv,\cf,\cz$.
 If $\ovl\CW(\cL, \be)$ and $\ovl\CW(\cL, \be')$ hold for $\be, \be' \in
\mZ_2^3$, then $\ovl\CW(\cL, \be \bp \be')$ also holds, by similar
reasoning to that used for conditions of the form $\CW(\cL, \al)$.
 The correspondence between conditions $\CW(\cL, \al)$ and $\ovl\CW(\cL,
\be)$ is easily determined, as follows.

 \begin{observation}\label{jewel2jewel}
 Suppose $\cL$ is a jewel, $\al = \alvfza \in \mZ_2^4$ with
$\alv\bp\alf\bp\alz = 0$, and $\be = \bevfz \in \mZ_2^3$.  Then
 \begin{enumerate}[label=\rm(\alph*)]\setlength{\itemsep}{0pt}
  \item
    $\CW(\cL, \al)$\quad $\ifoif$\quad
    $\ovl\CW(\cL, (\alv\bp\ala)(\alf\bp\ala)(\alz\bp\ala))$,\quad and
  \item
    $\ovl\CW(\cL, \be)$\quad $\ifoif$\quad
    $\CW(\cL,
 	(\bef\bp\bez)(\bev\bp\bez)(\bev\bp\bef)(\bev\bp\bef\bp\bez))$.
 \end{enumerate}
 \end{observation}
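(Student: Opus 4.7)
The plan is to treat both $\CW(\cL,\al)$ and $\ovl\CW(\cL,\be)$ as parity conditions on closed walks in $\cL$ arising from linear combinations over $\mZ_2$ of the basic color-counting functions $v_\cL, f_\cL, z_\cL, a_\cL$. Two such conditions are equivalent exactly when their underlying $\mZ_2$-linear combinations agree coefficient by coefficient on each color, so the whole observation reduces to a coordinate-level comparison in $\mZ_2^4$.

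First I would prove part (b). Substituting $\ov_\cL=(f+z+a)_\cL$, $\of_\cL=(v+z+a)_\cL$, and $\oz_\cL=(v+f+a)_\cL$ into $\bev\ov+\bef\of+\bez\oz$ and collecting like terms yields
\[
 (\bef+\bez)\,v \;+\; (\bev+\bez)\,f \;+\; (\bev+\bef)\,z \;+\; (\bev+\bef+\bez)\,a.
\]
By Observation \ref{asmod2}, only the parities of these coefficients matter, so the resulting $\CW$-condition is precisely $\CW(\cL,\al)$ with $\al=((\bef\bp\bez)(\bev\bp\bez)(\bev\bp\bef)(\bev\bp\bef\bp\bez))$, as claimed. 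As a sanity check, $\alv\bp\alf\bp\alz=(\bef\bp\bez)\bp(\bev\bp\bez)\bp(\bev\bp\bef)=0$ automatically, so the output vector $\al$ always lies in the subspace $Y$, which is exactly why part (a) must restrict to such $\al$.

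For part (a), I would invert the $\mZ_2$-linear map $T\colon \mZ_2^3\to Y$ produced by (b). Substituting $\bev=\alv\bp\ala$, $\bef=\alf\bp\ala$, $\bez=\alz\bp\ala$ into the formula from (b) produces $(\alf\bp\alz,\;\alv\bp\alz,\;\alv\bp\alf,\;\alv\bp\alf\bp\alz\bp\ala)$, and applying the hypothesis $\alv\bp\alf\bp\alz=0$ collapses this to $(\alv,\alf,\alz,\ala)=\al$. This verifies both that $T$ is a bijection onto $Y$ and that its inverse is exactly the formula in part (a); the equivalence of the conditions then follows because they are parity conditions on the same underlying $\mZ_2$-linear combination.

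There is no genuine obstacle; the content is a routine linear-algebra verification over $\mZ_2^4$, entirely parallel to the Nim-sum reasoning used just before the observation to combine conditions $\CW(\cL,\al)$ and $\CW(\cL,\al')$. The only point worth emphasizing is that the hypothesis $\alv\bp\alf\bp\alz=0$ in (a) is precisely the statement $\al\in Y$, and it is exactly this restriction that makes the inverse formula well-defined and consistent with the image of $T$.
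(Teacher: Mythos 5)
Your proof is correct and follows essentially the same route the paper intends: the paper states this as an Observation without proof, relying on exactly the substitution of $\ov_\cL=(f+z+a)_\cL$, $\of_\cL=(v+z+a)_\cL$, $\oz_\cL=(v+f+a)_\cL$ and the mod-$2$ coefficient comparison via Observation~\ref{asmod2} that you carry out, with part~(a) obtained by inverting the linear map from part~(b) using $\alv\bp\alf\bp\alz=0$. Your opening claim that two such conditions are equivalent \emph{exactly when} the coefficient vectors agree is a harmless overstatement (only the ``if'' direction is used or needed), and everything else checks out.
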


 \subsection{Proof of a Fano structure}

 We have now developed a vector space structure for embedded graph
properties that can be represented as $\ovl\CW(\cL, \be)$ (or,
equivalently, as $\CW(\cL, \al)$ or $\CW(\cJ, \wt\al)$).
 This gives the overall Fano framework we wish to prove, subject to the
following claim which describes the link between the properties of $\eG$
from Section \ref{sec:intro} and the properties $\ovl\CW(\cL, \be)$.
 Define property (0) of a graph embedding $\eG$ to be the always-true
property.
 Let $b : \{0,1,2,\dots,7\} \to \mZ_2^3$ be the bijection mapping each
number to its representation as a $3$-digit binary number, considered as
a vector in $\mZ_2^3$.  For example, $b(3)=011$.

 \begin{claim}\label{propclaim}
 Suppose $\eG$ is an embedded graph with corresponding jewel $\cL$.
 If $p \in \{0,1,2,\dots,7\}$ then property $(p)$ for $\eG$, from
Section \ref{sec:intro} or the previous paragraph, is equivalent to
$\ovl\CW(\cL, b(p))$.
 \end{claim}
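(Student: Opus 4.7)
The plan is to go through the eight cases $p \in \{0, 1, \dots, 7\}$ individually. In each case I translate property $(p)$ of $\eG$ into a parity condition $\CW(\cJ, \wt\alpha_p)$ on closed walks in the gem $\cJ$, and then convert it to $\ovl\CW(\cL, b(p))$ via Theorem~\ref{gem2jewel}(b) (which inserts the parity-check bit $\alz$) followed by Observation~\ref{jewel2jewel}(a) (which passes from $\al$ to $\be$). The case $p=0$ is immediate since property $(0)$ is the always-true property and $\ovl\CW(\cL, 000)$ holds vacuously.

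For $p \in \{1, 3\}$ the gem conditions are already in hand: Corollary~\ref{gemori} gives orientability as $\CW(\cJ, 111)$, and Theorem~\ref{gembip} gives bipartiteness as $\CW(\cJ, 010)$. For $p = 2$, I would first show property $(2)$ is equivalent to $\eG\spe$ being orientable: in the edge-signature representation a cycle $C$ has $|C| - n(C)$ edges of signature $-1$ in $\eG\spe$, so $\eG\spe$ is orientable iff $|C| \equiv n(C) \pmod 2$ for every cycle, iff $C$ is $1$-sided in $\eG$ precisely when $|C|$ is odd; Theorem~\ref{gempeori} then yields $\CW(\cJ, 101)$. For $p = 5$, $2$-face-colorability of $\eG$ is equivalent to bipartiteness of $\eG\sdu$, whose gem $\cJ\edu$ is obtained by swapping $\cv$ and $\cf$; applying Theorem~\ref{gembip} to $\eG\sdu$ gives $f_{\cJ\edu}(K) = v_\cJ(K)$ even, i.e., $\CW(\cJ, 100)$.

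For the remaining three nontrivial cases I work directly with the gem. For $p = 4$, a direction of the edges of $\eG$ corresponds to a labeling $\la : V(\cJ) \to \mZ_2$ of the flags, where $\la(v, e, f)$ records whether $e$ is outgoing or incoming at $v$. Such a labeling is automatically constant on $\cv$-edges (same vertex and edge, only $f$ differs) and flips on $\cf$-edges (opposite ends of the same edge). The condition that each face boundary is a directed walk then forces $\la$ also to flip on $\ca$-edges (the two consecutive edges at a vertex in a face must be one incoming and one outgoing). Hence directability is equivalent to the existence of an $\ec{\cf\cc\ca}\cJ$-labeling, i.e., by Theorem~\ref{oddwalkoddcycletheorem} to $\CW(\cJ, 011)$. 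For $p = 6$, the regions of $\Sigma - (\eG \cup \eG\sdu)$ are the faces of $\cB$ after removal of its $\ca$-edges; two flags merge into a region exactly when they share an $\ca$-edge, and two regions are adjacent exactly when some of their flags share a $\cv$- or $\cf$-edge. Hence proper $2$-colorability of the regions is equivalent to bipartiteness of $J \cg \ec\ca\cJ$, which is $\CW(\cJ, 110)$. For $p = 7$, a $2$-edge-coloring of $\eG$ alternating around every vertex and face is precisely a proper $2$-vertex-coloring of the medial graph $M = J \cg \ec{\cv\cc\cf}\cJ$, so the condition is bipartiteness of $M$, which is $\CW(\cJ, 001)$.

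Each $\CW(\cJ, \wt\alpha_p)$ then converts to the target $\ovl\CW(\cL, b(p))$ by a mechanical computation: one checks, for instance, that $\CW(\cJ, 001)$ becomes $\CW(\cL, 0001)$ and then $\ovl\CW(\cL, 111) = \ovl\CW(\cL, b(7))$, and the six other correspondences are similar. The main obstacle is establishing the geometric-to-combinatorial identifications for $p \in \{2, 4, 6, 7\}$, especially the flag-labeling characterization of directability and the identification of the regions of $\Sigma - (\eG \cup \eG\sdu)$ with the $\ca$-matched pairs of flags; once these are in hand, everything else reduces to applying Theorems~\ref{oddwalkoddcycletheorem} and~\ref{gem2jewel} together with Observation~\ref{jewel2jewel}, plus a short verification that isolated vertex components of $\eG$ affect none of the seven properties.
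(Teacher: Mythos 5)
Your proposal is correct and follows essentially the same route as the paper: the paper verifies the claim property-by-property in Section \ref{sec:fanoproperties}, using Theorem \ref{orientablebip} and Theorem \ref{gembip} (plus color permutations in the jewel) for the orientability/bipartiteness cases, the signature-flipping argument for property (2), the $\ec{\cf\cc\ca}\cJ$-labeling of flags for directability (Proposition \ref{directedembeddingdualpetrieori}), the corner graph $J \cg \ec\ca\cJ$ for property (6), and the medial graph $J \cg \ec{\cv\cc\cf}\cJ$ for property (7), exactly as you outline. Your translations to the conditions $\CW(\cJ,\wt\al)$ and the conversions via Theorem \ref{gem2jewel} and Observation \ref{jewel2jewel} all match Table \ref{tab:condoribip}.
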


 We postpone the proof, although at this point we could easily prove it
for some values of $p$, such as $p=1$.

 \begin{observation}\label{transadd}
 If $p, p' \in \{0,1,2,\dots,7\}$ and $p \bp p'$ is defined as in
Section \ref{sec:intro}, then $b(p \bp p') = b(p) \bp b(p')$.
 \end{observation}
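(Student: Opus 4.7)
The plan is to observe that Observation \ref{transadd} is essentially a tautology unwinding the definition of the Nim sum given in Section \ref{sec:intro}. Recall that $p \bp p'$ for $p, p' \in \{0,1,\dots,7\}$ was defined there as: write $p$ and $p'$ as $3$-digit binary numbers, add these as vectors in $\mZ_2^3$, and convert the result back to an integer. Since $b$ is precisely the bijection that converts an integer in $\{0,\dots,7\}$ to its $3$-digit binary representation viewed as a vector in $\mZ_2^3$, the phrase ``write $p$ and $p'$ as $3$-digit binary numbers'' is the same as ``apply $b$ to each,'' and ``convert the result back'' is the same as ``apply $b^{-1}$.''

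Given this, the proof plan is simply to record the definition formally: the Nim sum is specified by the identity $p \bp p' = b^{-1}\bigl(b(p) \bp b(p')\bigr)$, where the $\bp$ on the right-hand side is addition in $\mZ_2^3$. Applying $b$ to both sides yields $b(p \bp p') = b(p) \bp b(p')$, which is exactly the claim. No inductive or case analysis is required.

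There is no substantive obstacle to the proof; the only thing worth being careful about is simply pointing out that the two uses of the symbol $\bp$ in the statement refer to operations on different sets (integers on the left, vectors in $\mZ_2^3$ on the right), and that $b$ by construction intertwines them. If desired, one could also check a small example (as in Section \ref{sec:intro}, $6 \bp 3 = 5$ corresponds to $110 \bp 011 = 101$) to make the translation transparent to the reader, but this is illustrative rather than part of the argument.
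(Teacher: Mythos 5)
Your proposal is correct and matches the paper's (implicit) reasoning: the paper states this as an Observation without proof precisely because, as you note, the integer operation $\bp$ is defined as $b^{-1}(b(p)\bp b(p'))$, so the identity is a tautological unwinding of the definitions.
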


 Now we can introduce the Fano plane.  The \emph{Fano plane $F$} has a
standard representation as a Desarguesian projective plane over $\mZ_2$:
the \emph{points} of $F$ are the nonzero vectors of $\mZ_2^3$,
\emph{$d$-dimensional projective subspaces} of $F$ are
$(d+1)$-dimensional vector subspaces of $\mZ_2^3$ with $0$ removed, and
in particular a \emph{line} of $F$ consists of three nonzero vectors in
$\mZ_2^3$ whose sum is $0$.

 \begin{theorem}\label{subspace}
 Let $\eG$ be an embedded graph with corresponding jewel $\cL$.  Then
 $X = \{\be \in \mZ_2^3 \;|\; \ovl\CW(\cL, \be)$ holds$\}$ is a vector
subspace of $\mZ_2^3$, and $X^-=X-\{0\}$ is a projective subspace of the
Fano plane $F$.
 \end{theorem}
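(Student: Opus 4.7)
The plan is to verify directly that $X$ satisfies the three vector subspace axioms over $\mZ_2$, leaning heavily on the additive behavior of the conditions $\ovl\CW(\cL, \be)$ that was already established in the discussion leading up to Observation \ref{jewel2jewel}. Most of the work has, in effect, already been done; this theorem is really a packaging result.

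First, I would check that $0 \in X$. For $\be = 000$, the condition $\ovl\CW(\cL, 000)$ asserts that $(0 \cdot \ov + 0 \cdot \of + 0 \cdot \oz)_\cL(K) = 0$ is even for every closed walk $K$, which is trivially true. So $0 \in X$.

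Next, I would verify closure under addition. Suppose $\be, \be' \in X$, so that $\ovl\CW(\cL, \be)$ and $\ovl\CW(\cL, \be')$ both hold. For any closed walk $K$ in $\cL$, the quantities $(\bev \ov + \bef \of + \bez \oz)_\cL(K)$ and $(\bev'\ov + \bef'\of + \bez'\oz)_\cL(K)$ are both even, hence so is their sum. By Observation \ref{asmod2}, coefficient-wise integer addition can be replaced by addition in $\mZ_2$ without changing parity, so $((\bev\bp\bev')\ov + (\bef\bp\bef')\of + (\bez\bp\bez')\oz)_\cL(K)$ is even for every closed walk $K$. That is exactly $\ovl\CW(\cL, \be\bp\be')$, so $\be\bp\be' \in X$. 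Closure under scalar multiplication over $\mZ_2$ is automatic, since the only scalars are $0$ and $1$, and $0\be = 0 \in X$ while $1 \be = \be \in X$. Thus $X$ is a vector subspace of $\mZ_2^3$.

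Finally, to conclude that $X^- = X - \{0\}$ is a projective subspace of $F$, I would just appeal to the definition given immediately before the theorem: in the standard representation of the Fano plane $F$ as the Desarguesian projective plane over $\mZ_2$, a $d$-dimensional projective subspace is precisely a $(d+1)$-dimensional vector subspace of $\mZ_2^3$ with $0$ removed. Since we have just shown $X$ is a vector subspace of $\mZ_2^3$, $X^-$ is by definition a projective subspace of $F$. There is no real obstacle here; the main step is simply stringing together the additive closure derivation (already outlined in the text) with the definition of projective subspace.
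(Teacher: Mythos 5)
Your proof is correct and follows essentially the same route as the paper's: check $0 \in X$, use the parity/addition argument (via Observation \ref{asmod2}) to get closure under $\bp$, note that closure under scalars is automatic over $\mZ_2$, and invoke the definition of projective subspace. The only difference is that you spell out the additive-closure step in full, whereas the paper simply cites the discussion following the definition of $\ovl\CW(\cL,\be)$.
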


 \begin{proof}
 In a binary vector space a subset is a subspace if it contains $0$ and
is closed under addition.  Since $\ovl\CW(\cL, 000)$ holds trivially for
all $\cL$, we have $0 = 000 \in X$.  From discussion above after the
definition of $\ovl\CW(\cL, \be)$, we know that if $\be, \be' \in X$,
then $\be \bp \be' \in X$.  Thus, $X$ is a vector subspace of $\mZ_2^3$
and hence $X^-$ is a projective subspace of $F$.
 \end{proof}

 \begin{proof}[Proof of results in Section \ref{sec:intro}, subject to Claim
\ref{propclaim}]
 The Fano Framework as stated in Section \ref{sec:intro} is obtained by
translating the statement about $X^-$ in Theorem \ref{subspace} into a
statement about properties of an embedded graph $\eG$ using Claim
\ref{propclaim}.
 Metatheorem A follows because every $d$-dimensional projective subspace
of $F$ has $2^{d+1}-1$ elements, where $d \in \{-1,0,1,2\}$.

 If $\ovl\CW(\cL, \cdot)$ holds for certain vectors of $\mZ_2^3$, then
by Theorem \ref{subspace} it must also hold for all vectors in their
span.
 If three nonzero vectors $\be,\be',\be'' \in \mZ_2^3$ satisfy
$\be\bp\be'\bp\be'' = 0$, then they are on a line in $F$, and each one
is in the span of the other two.  Thus, if $\ovl\CW(\cL, \cdot)$ holds
for two of them, then it holds for the third.  Translating this to
properties of $\eG$ using Claim \ref{propclaim} and Observation
\ref{transadd} yields Metatheorem B.
 If three nonzero vectors $\be,\be',\be'' \in \mZ_2^3$ satisfy
$\be\bp\be'\bp\be'' \ne 0$, then they are not on a line in $F$, and they
span all of $\mZ_2^3$, which translates to Metatheorem C.
 \end{proof}

 \section{Graph embedding properties}\label{sec:fanoproperties}

 \def\cd(#1){\ovl\CW(\cL, {#1})}

 In this section we consider the individual conditions $\ovl\CW(\cL,
\be)$ for $\be \ne 0$, and establish their links to the graph embedding
properties of Section \ref{sec:intro}, verifying Claim \ref{propclaim}, and
thus completing the proof of the Fano Framework. In some cases other
equivalent properties are described as well.  We also characterize the
graphs $G$ that possess an embedding satisfying each condition.

 From now on we assume $\eG$ is an embedded graph, $\cJ$ is its gem, and
$\cL$ is its jewel.  We write `$\eG$ satisfies condition $(\be)$', for
$\be \in \mZ_2^3$, to mean that $\cL$ satisfies $\cd(\be)$.  All
embedded graphs satisfy condition $(000)$, and since $b(0)=000$, Claim
\ref{propclaim} holds for $p=0$.

 \subsection{Orientability and bipartiteness of graph embeddings}

 Because we have carefully chosen our conditions $(\be)$ so that the
effect of duals and Petrie duals is easy to track by permuting colors in
$\cL$, and hence permuting coordinates in $\be$, we can see that three
of our conditions $(\be)$ describe orientability of two embeddings, and
three describe bipartiteness of two embeddings.  To see which, we use
Theorem \ref{gem2jewel} and Observation \ref{jewel2jewel} to convert
conditions for $\cJ$ into conditions $(\be)$ based on $\cL$.

 \def\hquad{\hskip0.5em\relax}
 \def\vstrut#1#2{\vrule height#1 depth#2 width0pt}
 \def\hdstrut{\vstrut{13pt}{3pt}}
 \begin{table}[tb]
 \begin{center}\rm
 \bgroup\def\arraystretch{0.15}
 \begin{tabular}{|l||l|l|l|}
 \hline
 \hdstrut Condition & Conditions, functions, and
 				         & Condition, function, and
 						& Orientability/ \\
                    & coboundary in $\cL$  & coboundary in $\cJ$
 						& bipartiteness \\
 \hline &&&\\ \hline
 \vstrut{12pt}{4pt}
 (001) & $\cd(001) = \CW(\cL,1101)$ & $\CW(\cJ,111)$ & $\eG, \eG\sdu$\\
 \vstrut{0pt}{6pt}
       & $\oz_\cL = (v+f+a)_\cL$    & $(v+f+a)_\cJ$  & orientable \\
 \vstrut{0pt}{6pt}
       & $\ec{\cv\cc\cf\cc\ca}\cL$ is coboundary
		& $\ec{\cv\cc\cf\cc\ca}\cJ$ is coboundary & \\
 \hline
 \hdstrut
 \vstrut{12pt}{4pt}
 (010) & $\cd(010) = \CW(\cL,1011)$ & $\CW(\cJ,101)$ & $\eG\spe,
 						     \eG\spe\sdu$\\
 \vstrut{0pt}{6pt}
       & $\of_\cL = (v+z+a)_\cL$    & $(v+a)_\cJ$    & orientable \\
 \vstrut{0pt}{6pt}
       & $\ec{\cv\cc\cz\cc\ca}\cL$ is coboundary
		& $\ec{\cv\cc\ca}\cJ$ is coboundary & \\
 \hline
 \vstrut{12pt}{4pt}
 (011) & $\cd(011) = \CW(\cL,0110)$ & $\CW(\cJ,010)$ & $\eG, \eG\spe$\\
 \vstrut{0pt}{6pt}
       & $(\of+\oz)_\cL$ or $(f+z)_\cL$    & $f_\cJ$  & bipartite \\
 \vstrut{0pt}{6pt}
       & $\ec{\cf\cc\cz}\cL$ is coboundary
		& $\ec{\cf}\cJ$ is coboundary & \\
 \hline
 \vstrut{12pt}{4pt}
 (100) & $\cd(100) = \CW(\cL,0111)$ & $\CW(\cJ,011)$ & $\eG\sdu\spe,
 						     \eG\sdu\spe\sdu$\\
 \vstrut{0pt}{6pt}
       & $\ov_\cL = (f+z+a)_\cL$    & $(f+a)_\cJ$    & orientable \\
 \vstrut{0pt}{6pt}
       & $\ec{\cf\cc\cz\cc\ca}\cL$ is coboundary
		& $\ec{\cf\cc\ca}\cJ$ is coboundary & \\
 \hline
 \vstrut{12pt}{4pt}
 (101) & $\cd(101) = \CW(\cL,1010)$ & $\CW(\cJ,100)$ & $\eG\sdu,
 						     \eG\sdu\spe$\\
 \vstrut{0pt}{6pt}
       & $(\ov+\oz)_\cL$ or $(v+z)_\cL$    & $v_\cJ$  & bipartite \\
 \vstrut{0pt}{6pt}
       & $\ec{\cv\cc\cz}\cL$ is coboundary
		& $\ec{\cv}\cJ$ is coboundary & \\
 \hline
 \vstrut{12pt}{4pt}
 (110) & $\cd(110) = \CW(\cL,1100)$ & $\CW(\cJ,110)$ & $\eG\spe\sdu,
 						     \eG\sdu\spe\sdu$\\
 \vstrut{0pt}{6pt}
       & $(\ov+\of)_\cL$ or $(v+f)_\cL$    & $(v+f)_\cJ$  & bipartite \\
 \vstrut{0pt}{6pt}
       & $\ec{\cv\cc\cf}\cL$ is coboundary
		& $\ec{\cv\cc\cf}\cJ$ is coboundary & \\
 \hline
 \vstrut{12pt}{4pt}
 (111) & $\cd(111) = \CW(\cL,0001)$ & $\CW(\cJ,001)$ & $\eM$\\
 \vstrut{0pt}{6pt}
       & $(\ov+\of+\oz)_\cL$ or $a_\cL$    & $a_\cJ$  & bipartite \\
 \vstrut{0pt}{6pt}
       & $\ec{\ca}\cL$ is coboundary
		& $\ec{\ca}\cJ$ is coboundary & \\
 \hline
 \end{tabular}
 \egroup
 \caption{Mapping between conditions and orientability/bipartiteness
properties.}\label{tab:condoribip}
 \end{center}
 \end{table}

 By Theorem \ref{orientablebip}, $\eG$ (and $\eG\sdu$) are orientable
$\ifoif$ $(v+f+a)_\cJ$ is even for all closed walks $K$ in $\cJ$
$\ifoif$ $(v+f+a)_\cL(K) = \oz(K)$ is even for all closed walks $K$ in
$\cL$, i.e., $(001)$ holds. 
 Since the Petrie dual switches colors $\cf$ and $\cz$ in $\cL$, we see
that $\eG\spe$ (and $\eG\spe\sdu$) are orientable if and only if $(010)$
holds.
 Since the Wilson dual switches colors $\cv$ and $\cz$ in $\cL$, we see
that $\eG\swi = \eG\sdu\spe\sdu$ (and $\eG\swi\sdu = \eG\sdu\spe$) are
orientable if and only if $(100)$ holds.  Thus, the three vectors of
weight $1$ in $\mZ_2^3$ correspond to orientability properties.

 By Theorem \ref{gembip}, $\eG$ (and $\eG\spe$) are bipartite $\ifoif$
$f_\cJ(K)$ is even for all closed walks $K$ in $\cJ$ $\ifoif$
$(f+z)_\cL(K)$ is even for all closed walks $K$ in $\cL$, i.e.,
$\CW(\cL, 0110)$ holds $\ifoif$ $(011)$ holds.
 Since duality switches colors $\cv$ and $\cf$ in $\cL$, we see that
$\eG\sdu$ (and $\eG\sdu\spe$) are bipartite if and only if $(101)$ holds.
 Since the Wilson dual switches colors $\cv$ and $\cz$ in $\cL$, we see
that $\eG\swi = \eG\spe\sdu\spe$ (and $\eG\swi\spe = \eG\spe\sdu$) are
bipartite if and only if $(110)$ holds.
 Thus, the three vectors of weight $2$ in $\mZ_2^3$ correspond to
bipartiteness properties.

 This leaves one condition, namely $(111)$, which says that
$(\ov+\of+\oz)_\cL(K) = a_\cL(K)$ is even for all closed walks $K$ in
$\cL$.  By Theorem \ref{oddwalkoddcycletheorem}, this is equivalent to
$L \cg \ec{\cv\cc\cf\cc\cz}\cL$ being bipartite, but that graph is
exactly the medial graph $M$.  So condition (111) is bipartiteness of
$M$ (or $\eM$ or $\cM$): the weight $3$ vector in $\mZ_2^3$ represents
medial bipartiteness.

 Table \ref{tab:condoribip} summarizes these conclusions and shows the
conditions in a number of forms, the corresponding edge-counting
functions for closed walks in $\cL$ and $\cJ$, and the corresponding
binary labelings of $\cL$ and $\cJ$.  The conditions are in
lexicographic order.

 At this point the Fano structure shown in Figure
\ref{fig:FanoFramework} is apparent.  The points represent the seven
conditions.  The lines (including the circle) represent the theorems
coming from Metatheorem B: the points on each line are labeled by three
vectors that sum to $0$.  The central point represents medial
bipartiteness, and each of the six outer points represents either
orientability or bipartiteness of two of the embedded graphs $\eG,
\eG\sdu, \eG\spe, \eG\sdu\spe, \eG\spe\sdu, \eG\sdu\spe\sdu$.  Each
embedded graph is listed between the two points that represent its
orientability and bipartiteness.

 \begin{figure}
     \centering
  \includegraphics{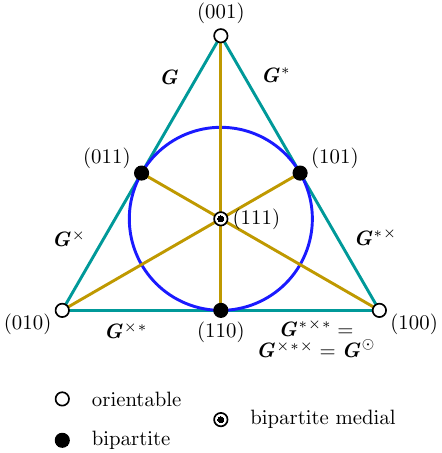}
     \caption{Fano Framework for Graph Embeddings.}
         \label{fig:FanoFramework}
 \end{figure}

  The three sides of the triangle give theorems relating two
orientability properties and one bipartiteness property: this includes
Theorems \ref{twoimplythethird} and \ref{hao2imply3}, and a theorem
derived by applying Theorem \ref{twoimplythethird} to $\eG\swi =
\eG\spe\sdu\spe$ or $\eG\swi\spe = \eG\spe\sdu$.

 The three medians of the triangle give theorems relating one
orientability property, medial bipartiteness and one bipartite property,
and are to our knowledge new.  One such result is Theorem
\ref{omb2imply3} below; the other two can be obtained by applying this
theorem to $\eG\spe$ or $\eG\spe\sdu$, and to $\eG\swi =
\eG\sdu\spe\sdu$ or $\eG\swi\sdu = \eG\sdu\spe$.

 \begin{theorem}\label{omb2imply3}
 Let $\eG$ be an embedded graph with medial checkerboard $\cM$.  Any two
of the following properties imply the third:
 \begin{enumerate}[label=\rm(\alph*)]\setlength{\itemsep}{0pt}
     \item
 	$\eG$ (or $\eG\sdu$) is orientable.
     \item
 	$\cM$ is bipartite.
     \item
 	$\eG\swi=\eG\spe\sdu\spe$ (or $\eG\swi\spe = \eG\spe\sdu$) is
bipartite.
 \end{enumerate}
 \end{theorem}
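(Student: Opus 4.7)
The plan is to recognize Theorem \ref{omb2imply3} as a direct instance of Metatheorem B, applied to the Fano line containing the three condition vectors associated with properties (a), (b), and (c). First I would use Table \ref{tab:condoribip} to identify each property with its associated condition $\ovl\CW(\cL, \be)$. Property (a), orientability of $\eG$ (equivalently of $\eG\sdu$), corresponds to the vector $\be = 001$. Property (b), bipartiteness of $\cM$, is the medial-bipartiteness condition and corresponds to $\be = 111$. Property (c), bipartiteness of $\eG\swi = \eG\spe\sdu\spe$ or equivalently of $\eG\swi\spe = \eG\spe\sdu$, corresponds to $\be = 110$.

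Next I would verify that these three vectors lie on a common line of the Fano plane by computing $001 \bp 111 \bp 110 = 000$. By Theorem \ref{subspace}, the set $X = \{\be \in \mZ_2^3 \mid \ovl\CW(\cL, \be) \text{ holds}\}$ is a vector subspace of $\mZ_2^3$. Hence if any two of $001$, $111$, and $110$ lie in $X$, then so does their sum, which equals the third. Translating back to properties of $\eG$ via Claim \ref{propclaim} yields the required implication among (a), (b), and (c).

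There is essentially no real obstacle: once the Fano Framework is in place and the vector assignments are read off from Table \ref{tab:condoribip}, the theorem follows by a one-line addition in $\mZ_2^3$. The only subtlety worth double-checking is the identification for (c), since $\eG\swi$ is not one of the six embeddings featured most prominently on the outer points of Figure \ref{fig:FanoFramework}; but recalling that the Wilson dual swaps the colors $\cv$ and $\cz$ in the jewel $\cL$, and that bipartiteness of $\eG$ itself corresponds to $\be = 011$, one obtains the vector $110$ by swapping the first and third coordinates, confirming the assignment. Consequently the whole argument reduces to a brief verbal proof that cites Metatheorem B (or equivalently Theorem \ref{subspace}) once the three conditions have been identified.
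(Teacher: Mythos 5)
Your proposal is correct and matches the paper's own treatment: Theorem \ref{omb2imply3} is stated there as one of the ``median'' lines of the Fano triangle, following directly from Metatheorem B once Table \ref{tab:condoribip} identifies the three properties with the vectors $001$, $111$, and $110$, which sum to $000$. Your identification of property (c) with $110$ via the Wilson dual's swap of $\cv$ and $\cz$ is exactly the justification the paper gives.
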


 The circle gives one theorem relating three bipartiteness properties,
which to our knowledge is also new.

 \begin{theorem}\label{bbb2imply3}
 Let $\eG$ be an embedded graph.  Any two of the following properties
imply the third:
 \begin{enumerate}[label=\rm(\alph*)]\setlength{\itemsep}{0pt}
     \item
 	$\eG$ (or $\eG\spe$) is bipartite.
     \item
 	$\eG\sdu$ (or $\eG\sdu\spe)$ is bipartite.
     \item
 	$\eG\swi=\eG\spe\sdu\spe$ (or $\eG\spe\sdu$) is bipartite.
 \end{enumerate}
 \end{theorem}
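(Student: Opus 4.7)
The plan is to recognize this theorem as a direct application of the Fano Framework to the three conditions of weight $2$ in $\mZ_2^3$. First, I would use Table \ref{tab:condoribip} to identify each property with its corresponding condition: property (a), bipartiteness of $\eG$ (equivalently $\eG\spe$, since Petrie duality does not change the underlying graph), corresponds to $\cd(011)$; property (b), bipartiteness of $\eG\sdu$ (equivalently $\eG\sdu\spe$), corresponds to $\cd(101)$; and property (c), bipartiteness of $\eG\swi = \eG\spe\sdu\spe$ (equivalently $\eG\spe\sdu$), corresponds to $\cd(110)$. Each of these equivalences has already been established in the subsection immediately preceding this theorem.

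Next, I would observe that $011 \bp 101 \bp 110 = 000$ in $\mZ_2^3$, so the three vectors $011$, $101$, $110$ lie on a common line in the Fano plane $F$ — this is precisely the ``circle'' displayed in Figure \ref{fig:FanoFramework}. By Theorem \ref{subspace}, the set $X = \{\be \in \mZ_2^3 \;|\; \ovl\CW(\cL, \be) \text{ holds}\}$ is a vector subspace of $\mZ_2^3$. Hence if any two of $011, 101, 110$ belong to $X$, the third (being their sum in $\mZ_2^3$) also belongs to $X$. Translating back using the identifications above yields the theorem.

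There is essentially no obstacle, because all of the substantive work has already been carried out: identifying bipartiteness of each of $\eG$, $\eG\sdu$, $\eG\swi$ with the corresponding weight-$2$ vector condition is done in the preceding subsection, and the closure of $X$ under $\bp$ is Theorem \ref{subspace}. The only detail worth noting explicitly is that the ``or'' clauses in each of (a), (b), (c) hold because bipartiteness depends only on the underlying graph, and Petrie duality preserves the underlying graph. Thus the proof is a one-line invocation of Metatheorem B applied to properties $(3)$, $(5)$, and $(6)$ (i.e., $b\iv(011)$, $b\iv(101)$, $b\iv(110)$).
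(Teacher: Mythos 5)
Your proposal is correct and follows exactly the paper's route: the paper derives this theorem as the ``circle'' line of the Fano plane, identifying the three bipartiteness properties with conditions $(011)$, $(101)$, and $(110)$ via Table \ref{tab:condoribip} and then invoking the closure of $X$ under $\bp$ from Theorem \ref{subspace} (equivalently, Metatheorem B). Nothing is missing.
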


 \subsection{Individual conditions and graph embedding properties}

 In this subsection we consider each condition $(\be)$ with $\be \in
\mZ_2^3$, $\be \ne 0$, for an embedded graph with gem $\cJ$ and jewel
$\cL$.
 We verify Claim \ref{propclaim} for $p=b\iv(\be)$ by showing that
$(\be)$ corresponds to the property $(p)$ from Section 1, describe
$(\be)$ in terms of bipartiteness of one or more graphs associated with
$\eG$, and in some cases discuss other properties equivalent to $(\be)$.
 (Each condition is also associated with a particular type of
bidirection of the medial checkerboard $\cM$, but we discuss this later,
in Section \ref{sec:bidirections}.)
 We consider the values of $\be$ in increasing order by weight, and
within each weight in an order that is convenient for our proofs.

 \definecolor{dkgreen}{rgb}{0,0.6,0.1}
\definecolor{ltdkgreen}{rgb}{0.3,0.8,0.5}
 \def\nc[#1]{{\color{dkgreen}{(#1)}}}

 \subsubsection{Condition $(001)$}\label{cond001}

 From Table \ref{tab:condoribip}, condition $(001)$ means that $\eG$ is
orientable, which is property $(1)$ from Section \ref{sec:intro}, as
required.

 By Theorem \ref{orientablebip}, condition $(001)$ is equivalent to the
gem $\cJ$ being bipartite.

 There is also another graph whose bipartiteness is equivalent to
condition $(001)$.
 From Table \ref{tab:condoribip}, $(v+f+a)_\cL(K)$ is even for all closed
walks $K$ in $\cL$, which by Theorem \ref{oddwalkoddcycletheorem} is
equivalent to $L'=L \cg \ec\cz\cL$ being bipartite.  The vertices of
this graph correspond to the edges colored $\cz$ in $\cL$, the
\emph{diagonals} of the e-squares of $\cJ$.  Each diagonal vertex is
adjacent by four edges of colors $\cv, \cf$ to the other diagonal of the
same e-square, and by two edges of color $\cz$ to two diagonals in
adjacent e-squares.  Replacing each group of four parallel edges of
colors $\cv, \cf$ in $L'$ by a single edge gives a $3$-regular graph
called the \emph{diagonal graph} of $\eG$, whose bipartiteness is
equivalent to condition $(001)$.

 Every graph $G$ has an orientable embedding, i.e., an embedding
satisfying condition $(001)$.

 \subsubsection{Condition $(010)$}\label{cond010}

 From Table \ref{tab:condoribip}, condition $(010)$ means that $\eG\spe$ is
orientable.
 Thus, every cycle $C$ of $\eG\spe$ is $2$-sided, i.e., if we take a
rotation system/edge signature representation, each cycle has an even
number of edges of signature $-1$.  This is true if and only if, after
we flip the signature of each edge to obtain a representation of $\eG$,
the even cycles of $G$ have an even number of edges of signature $-1$,
and the odd cycles have an odd number of edges of signature $-1$.  In
other words, a cycle in $\eG$ is $1$-sided if and only if it is odd. 
Thus, condition $(010)$ is equivalent to property (2) of Section
\ref{sec:intro}, as required.

 The condition that $\eG\spe$ be orientable has been used in the
analysis of regular maps, beginning with Wilson \cite{Wil78}, who called
$\eG$ `pseudo-orientable' if this condition holds.  More recently this
property has been called `bi-orientable' \cite{BrCaSi19}.  It can also
be described by saying that each vertex can be given a rotation so that
the rotations at the end of each edge are opposite.  In other words,
$\eG$ has a representation using a rotation system and edge signatures
in which every edge has signature $-1$.

 By Theorem \ref{orientablebip}, condition $(010)$ is equivalent to the
gem of $\eG\spe$, i.e., $\cJ\epe$, being bipartite.

 There is also another graph whose bipartiteness is equivalent to
condition $(010)$.
 From Table \ref{tab:condoribip}, $(v+a)_\cJ(K)$ is even for all closed
walks $K$ in $\cJ$, which by Theorem \ref{oddwalkoddcycletheorem} is
equivalent to $J'=J \cg \ec\cf\cJ$ being bipartite.  The vertices of
$J'$ correspond to the edges colored $\cf$ in $\cJ$, which represent the
sides of the edges of $\eG$.  After reducing parallel edges in $J'$ we
obtain a $3$-regular graph in which each side vertex is adjacent to the
other side of the same edge and to the two sides consecutive with it
around a face.  This is called the \emph{side graph} of $\eG$, whose
bipartiteness is equivalent to condition $(010)$.

 Since every graph $G$ has an orientable embedding $\eG$, it also has an
embedding whose Petrie dual is orientable, namely $\eG\spe$.  So every
graph has an embedding satisfying condition $(010)$.

 \subsubsection{Condition $(100)$}\label{cond100}

 From Table \ref{tab:condoribip}, condition $(100)$ means that $\eG\sdu\spe$
is orientable.
 The following proposition shows that this is equivalent to property
(4), which says that $\eG$ is directable.
 This result is due to the authors and Joanna A. Ellis-Monaghan,
and is equivalent to \cite[Theorem 1]{NS24} due to Nakamoto and Suzuki.
 We will give another proof in a future paper on directed embeddings;
the proof in \cite{NS24} is different from either of ours.

 \begin{proposition}\label{directedembeddingdualpetrieori}
 A graph embedding $\eG$ is directable if and only if $\eG\sdu\spe$ is
orientable.
 \end{proposition}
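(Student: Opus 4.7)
The plan is to leverage Table~\ref{tab:condoribip} and Theorem~\ref{oddwalkoddcycletheorem} to reduce the proposition to a combinatorial statement about $\cJ$. Those results give that $\eG\sdu\spe$ is orientable iff condition $(100)$ holds, iff $J \cg \ec\cv\cJ$ is bipartite, iff $V(\cJ)$ admits a binary labeling $\la$ that is constant on every $\cv$-edge and flips on every $\cf$- and $\ca$-edge. So it suffices to show that such a $\la$ exists iff $\eG$ is directable.

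The first step is to set up a bijection between directions of $\eG$ and those labelings $\la$ of $V(\cJ)$ which are constant on $\cv$-edges and flip on $\cf$-edges. A direction of $\eG$ amounts to a function $\la^*$ on half-edges $(v,e)$ taking value $0$ at tails and $1$ at heads; I would lift it to $\la(v,e,f) := \la^*(v,e)$ on flags. Because $\cv$-edges of $\cJ$ join flags $(v,e,f_1)$ and $(v,e,f_2)$ that share vertex and edge, $\la$ is automatically constant on $\cv$-edges; because $\cf$-edges join flags $(u,e,f)$ and $(w,e,f)$ at opposite endpoints of the same side of $e$, $\la$ flips on $\cf$-edges iff $\la^*$ gives distinct labels to the two half-edges of each edge. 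Conversely, any labeling with these two properties descends via $\la^*(v,e) := \la(v,e,f)$ to a direction.

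The key remaining step, and the main obstacle, is showing that the direction corresponding to $\la$ makes every face boundary a directed walk iff $\la$ additionally flips on every $\ca$-edge. An $\ca$-edge joins $(v,e_1,f)$ and $(v,e_2,f)$, where $e_1$ and $e_2$ are consecutive edges at $v$ along the boundary walk of face $f$. The face boundary is a directed walk in some traversal direction iff at each such $v$ one of $e_1,e_2$ is incoming and the other outgoing, which is exactly the statement $\la^*(v,e_1) \ne \la^*(v,e_2)$. Tracing the f-gon of $\cJ$ corresponding to $f$ (a cycle alternating $\cf$- and $\ca$-edges), if $\la$ flips on every edge then the half-edge labels along the boundary of $f$ alternate $0,1,0,1,\dots$, so one of the two traversals is a directed walk; conversely, directability forces this alternation and hence flipping on all $\ca$-edges.

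Combining the steps, $\eG$ is directable iff such a $\la$ exists iff $\eG\sdu\spe$ is orientable. Isolated vertex components can be disregarded since both properties hold trivially for them. The subtlety in the middle step concerns loops and edges that appear multiply in a face boundary; the gem formalism absorbs these uniformly because each flag is a distinct vertex of $\cJ$, so an edge appearing twice in a face boundary simply contributes two distinct $\cf$-edges, together with surrounding $\ca$-edges, to the f-gon, and the argument above is unaffected.
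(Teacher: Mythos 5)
Your proposal is correct and follows essentially the same route as the paper: both reduce condition $(100)$ via Table~\ref{tab:condoribip} and Theorem~\ref{oddwalkoddcycletheorem} to the existence of an $\ec{\cf\cc\ca}\cJ$-labeling of the gem, and both identify such a labeling with a head/tail assignment on half-edges (the paper phrases it as directing the $\cf$-colored edges of $\cJ$ and labeling each flag by whether its incident $\cf$-edge points inward or outward, which is the same labeling as your $\la^*$). Your write-up simply makes explicit the flag-level verification and the f-gon alternation argument that the paper compresses into ``this reasoning can be reversed.''
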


 \begin{proof}
 Suppose $\eG$ is directable, and take a direction of its edges such
that each face boundary is a directed walk.  We can apply the direction
of each edge of $\eG$ to the corresponding edges of color $\cf$ in
$\cJ$.  Then those edges in each f-gon point in the same direction
around the f-gon, and those edges of each e-square point towards the
same edge of color $\cv$.
 Each vertex of $\cJ$ can be labeled $0$ or $1$ according to whether its
incident edge of color $\cf$ is directed inwards or outwards,
respectively.
 This is an $\ec{\cf\cc\ca}\cJ$-labeling of $J$, and so, by Theorem
\ref{oddwalkoddcycletheorem}, $(f+a)_\cJ(K)$ is even for all closed
walks $K$ in $\cJ$, and hence, from Table \ref{tab:condoribip}, $\eG\sdu\spe$
is orientable.

 This reasoning can be reversed, using an $\ec{\cf\cc\ca}\cJ$-labeling
to define a suitable direction on edges of color $\cf$ in $\cJ$ and
hence on edges of $\eG$.
 \end{proof}

 Condition $(100)$ is also equivalent to $\eG\swi$ being orientable,
which by Theorem \ref{orientablebip} is equivalent to the gem of
$\eG\swi$, which we denote as $\cJ\ewi$, being bipartite.

 There is also another graph whose bipartiteness is equivalent to
condition $(100)$.
 From Table \ref{tab:condoribip}, $(f+a)_\cJ(K)$ is even for all closed
walks $K$ in $\cJ$, which by Theorem \ref{oddwalkoddcycletheorem} is
equivalent to $J''=J \cg \ec\cv\cJ$ being bipartite.
 The vertices of $J''$ correspond to the edges colored $\cv$ in $\cJ$,
which represent the ends of the edges (half-edges) of $\eG$.  After
reducing parallel edges in $J''$ we obtain a $3$-regular graph in which
each end vertex is adjacent to the other end of the same edge and to the
two ends consecutive with it around a vertex.  This is called the
\emph{end graph} of $\eG$, whose bipartiteness is equivalent to
condition $(100)$.

 If $\eG$ satisfies condition $(100)$, then for each $v$-gon $K$ in
$\cJ$, $(f+a)_\cJ(K) = a_\cJ(K)$ is even, which means that the
corresponding vertex has even degree.  Thus, a connected graph with an
embedding satisfying condition $(100)$ must be Eulerian.  This was
pointed out by Lins \cite[Theorem 2.9(b)]{LinsPhD}.
 
 To show the converse, that every Eulerian graph has an embedding
satisfying condition $(100)$, we use the following lemma.
 This is implicit in work of Bonnington, Conder, Morton and McKenna
\cite{BCMM02}, and is closely related to Theorems \ref{hao} and
\ref{hao2imply3}.

 \begin{lemma}\label{oridiremb}
 Let $D$ be an Eulerian digraph (having a closed directed trail that
uses every arc exactly once).  Then $D$ has an orientable embedding in
which the boundary of each face is a directed walk, and this embedding
is $2$-face-colorable.
 \end{lemma}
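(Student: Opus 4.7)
The plan is to construct an explicit orientable embedding of $D$ whose face boundaries are directed walks, and then invoke Theorem~\ref{hao2imply3} (or Theorem~\ref{hao}) to obtain $2$-face-colorability for free, rather than exhibiting a $2$-coloring by hand.

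First I would set up the rotation system. Since $D$ is Eulerian, each vertex $v$ has $\mathrm{in}\text{-}\deg(v)=\mathrm{out}\text{-}\deg(v)=k_v$, so the $2k_v$ arc-ends at $v$ split evenly into $k_v$ head-ends and $k_v$ tail-ends. I choose any cyclic order $\sigma_v$ of these $2k_v$ ends in which head-ends and tail-ends strictly alternate; this is possible precisely because the two counts are equal (and there is a lot of freedom in the pairing). Isolated vertices of $D$, if any, are placed on their own spheres as isolated vertex components. The family $\{\sigma_v\}_{v\in V(D)}$ is then a rotation system, so it determines an orientable cellular embedding $\eD$ of the underlying graph of $D$.

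Next I would verify directability. For the face-traversal rule attached to a rotation system, when we arrive at $v$ along an arc-end $h$ we continue along the successor $\sigma_v(h)$ (for a fixed orientation of the surface). In a directed walk, each arc is traversed from tail to head, so we must enter $v$ through a head-end; by the alternating property of $\sigma_v$, the successor $\sigma_v(h)$ is a tail-end, and the walk then continues along the outgoing arc in its prescribed direction. Applying this at every corner of every face shows that each face boundary of $\eD$ is a directed closed walk, i.e., $\eD$ is directable. Finally, since $\eD$ is both orientable and directable, Theorem~\ref{hao2imply3} (applied with (a) and (c) in hand) yields (b): $\eD$ is $2$-face-colorable.

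The only real subtlety is nailing down the face-traversal convention carefully enough to see that the alternation of head-ends and tail-ends in each $\sigma_v$ visibly forces every corner to be traversed in the tail-to-head direction of its arcs; no deeper idea is required, because once directability is established the $2$-face-colorability is handed to us by the already-proved Fano-type equivalence.
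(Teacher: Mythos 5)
The paper itself gives no proof of Lemma \ref{oridiremb}; it is stated as implicit in the work of Bonnington, Conder, Morton and McKenna \cite{BCMM02}, so there is no internal argument to compare against. Your construction is the standard one implicit in that reference, and it is correct: the alternating rotation at each vertex exists precisely because in-degree equals out-degree, it determines an orientable cellular embedding, and the alternation forces each facial orbit of the Heffter--Edmonds face tracing to be monochromatic, since a corner entered through a head-end is left through a tail-end and (symmetrically) a corner entered through a tail-end is left through a head-end. One point you should make explicit: your propagation argument, as written, only treats the faces traced in the forward sense. The remaining faces are traced entirely backwards (every arc from head to tail), and you need the observation that a face boundary is defined only up to reversal, so reversing such a traced walk yields the required directed closed walk. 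This is a one-sentence fix, not a flaw in the approach. In fact the forward/backward dichotomy also hands you the $2$-face-coloring directly without any appeal to the Fano machinery: the two sides of each arc are traced in opposite senses, so the two incident faces always have opposite types, and ``forward'' versus ``backward'' is a proper $2$-face-coloring. Your alternative route via Theorem \ref{hao2imply3} is nonetheless legitimate and non-circular, since that theorem rests on Theorem \ref{twoimplythethird} and Proposition \ref{directedembeddingdualpetrieori}, neither of which depends on Lemma \ref{oridiremb} or Corollary \ref{euleriangraph}.
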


 \begin{corollary}\label{euleriangraph}
 Every Eulerian graph $G$ has an embedding that is orientable,
$2$-face-colorable, and directable.
 \end{corollary}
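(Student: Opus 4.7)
The plan is to derive this corollary almost immediately from Lemma \ref{oridiremb}, with only a routine bridging step between undirected and directed Eulerian objects.

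First I would use the hypothesis that $G$ is Eulerian to produce an Eulerian orientation. Since $G$ has a closed trail $T$ using every edge exactly once, traversing $T$ and orienting each edge in the direction it is first traversed yields an Eulerian digraph $D$ whose underlying graph is $G$. (Equivalently, at each vertex the Euler circuit pairs up the incident edges into entering/leaving pairs, so in-degree equals out-degree.)

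Next I would apply Lemma \ref{oridiremb} to $D$. This directly produces an orientable embedding $\eg{D}$ in which every face boundary is a directed walk, and moreover this embedding is $2$-face-colorable. Forgetting the directions on the edges then yields an embedding $\eG$ of $G$ that is orientable and $2$-face-colorable, and whose edges admit an orientation (namely the one inherited from $D$) making every face boundary a directed walk, i.e., $\eG$ is directable.

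There is essentially no obstacle here beyond noting that ``$G$ Eulerian'' supplies a compatible orientation; the substantive content has been packaged into Lemma \ref{oridiremb}. As an aside, one could alternatively observe that once $\eG$ is orientable and directable, $2$-face-colorability would also follow from Theorem \ref{hao2imply3}, but the lemma already delivers all three properties simultaneously, so no appeal to the Fano framework is needed.
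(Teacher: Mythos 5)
Your argument is correct and is essentially identical to the paper's own proof: both orient the edges along an Euler circuit to obtain an Eulerian digraph $D$, apply Lemma \ref{oridiremb} to $D$, and then forget the arc directions, recovering directability by restoring them. Your closing aside about Theorem \ref{hao2imply3} is accurate but unnecessary, as you note.
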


 \begin{proof}
 By following an Euler circuit $T$ in $G$ and orienting each edge of $G$
in the direction of $T$, we obtain an Eulerian digraph $D$.
 Take an embedding $\eg{D}$ of $D$ provided by Lemma \ref{oridiremb},
and discard the directions on the arcs to obtain an embedding $\eG$ of
$G$.  This is clearly orientable, $2$-face-colorable, and directable
(add back directions to obtain $\eg{D}$).
 \end{proof}

 By Corollary \ref{euleriangraph}, an Eulerian graph $G$ has an
embedding that is directable, i.e., it satisfies condition $(100)$.
 Thus, a connected graph $G$ has an embedding satisfying condition
$(100)$ if and only if it is Eulerian.

 \subsubsection{Condition $(110)$}\label{cond110}

 From Table \ref{tab:condoribip}, condition $(110)$ means that $\eG\spe\sdu$
(or $\eG\spe\sdu\spe$) is bipartite.  Since an embedded graph is
bipartite if and only if its dual is $2$-face-colorable, this is
equivalent to $\eG\spe$ (or $(\eG\spe\sdu\spe)\sdu =
(\eG\sdu\spe\sdu)\sdu = \eG\sdu\spe$) being $2$-face-colorable.

 Since the faces of $\eG\spe$ are the zigzags of $\eG$ (and of
$\eG\sdu$), condition $(110)$ is also equivalent to $\eG$ being
$2$-zigzag-colorable, which means we can $2$-color the zigzags so that
the two zigzags sharing each edge are different colors (which implies
that a zigzag cannot use the same edge twice).

 We wish to show that condition $(110)$ is equivalent to property (6)
from Section \ref{sec:intro}, which says that if $\eG$ and its dual
$\eG\sdu$ are embedded together in their common underlying surface $\Si$
in the natural way, then the resulting regions of $\Si - (G \cup G\sdu)$
can be properly 2-colored.  Embedding $\eG$ and $\eG\sdu$
simultaneously, adding a new vertex at the intersection of each edge and
its dual edge, gives $\cB\dt \ec\ca\cB$, which we will call $\eG \cup
\eG\sdu$.
 We must show that this is $2$-face-colorable.

 We may assume that $\eG$ has no isolated vertex components.
 When we form $\cB$ we ignore isolated vertex components of $\eG$, so by
the above definition $\eG \cup \eG\sdu$ also has no information
regarding such components.
 While we could take the union of an isolated vertex component and its
dual in a single sphere, this would consist of two vertices embedded in
a sphere, which is problematic because it is not a cellular embedding.

 A face of $\cB \dt \ec\ca\cB$ is called a \emph{corner} of the embedded
graph $\eG$; it represents an incidence between a vertex and a face in $\eG$.
 Each  corner contains two flags with a common edge of color $\ca$.  The
corners of $\eG$ therefore correspond to edges of color $\ca$ in $\cB$,
edges of color $\ca$ in $\cJ$, or edges of $\cM$.  The \emph{corner
graph} of $\eG$ is the underlying graph of $(\cB\dt \ec\ca\cB)\sdu$, a
$4$-regular graph in which two vertices representing corners are joined
by an edge when those corners are consecutive around a vertex or a face.
 From this it is apparent that the corner graph is also the medial graph
of the medial embedding $\eM$ of $\eG$.

 Using the duality between deletion and contraction, and noting that the
edges in $\cJ$ of color $\ca$ do not induce any cycles, we see that the
corner graph is $J \cg \ec\ca\cJ$.  But from Table \ref{tab:condoribip},
condition $(110)$ means that $(v+f)_\cJ$ is even for all closed walks
$K$ in $\cJ$, and by Theorem \ref{oddwalkoddcycletheorem} this is
equivalent to the corner graph $J \cg \ec\ca\cJ$ being bipartite, which
is equivalent to $\eG \cup \eG\sdu = \cB\dt \ec\ca\cB$ being
$2$-face-colorable, as required.

 Condition $(110)$ is also equivalent to the Petrie dual of the medial
embedding, $\eM\spe$, being orientable.  To see this, observe that the
medial embedding is always $2$-face-colorable, so that $\eM^*$ is always
bipartite.  Therefore, applying Figure \ref{fig:FanoFramework} to
$\eM$, the medial graph of $\eM$ (i.e., the corner graph of $\eG$) being
bipartite is equivalent to $\eM\spe$ being orientable.

 If a graph $G$ has an embedding satisfying condition $(110)$, then
$\eG\spe\sdu$ is bipartite, which means that $\eG\spe$ is
$2$-face-colorable, which implies that each vertex of its underlying
graph, which is just $G$, has even degree.  Thus, a connected graph with
an embedding satisfying condition $(110)$ must be Eulerian.

 Conversely, if $G$ is Eulerian then by Corollary \ref{euleriangraph}
$G$ has a $2$-face-colorable embedding $\eG$.
 Now $\eG\spe$ is also an embedding of $G$, and $(\eG\spe)\spe = \eG$ is
$2$-face-colorable.  In other words, $\eG\spe$ is an embedding of $G$
satisfying condition $(110)$.

 Thus, a connected graph has an embedding satisfying condition $(110)$
if and only if it is Eulerian.

 \subsubsection{Condition $(011)$}\label{cond011}

 From Table \ref{tab:condoribip}, condition $(011)$ is equivalent to $\eG$
(or $\eG\spe$) being bipartite, which is property (3) from Section
\ref{sec:intro}.  Since an embedded graph is bipartite if and only if its
dual is $2$-face-colorable, this is equivalent to $\eG\sdu$ (or
$(\eG\spe\sdu$) being $2$-face-colorable.

 Condition $(011)$ is also equivalent to condition $(110)$ after
swapping colors $\cv$ and $\cz$ in the jewel, i.e., after taking the
Wilson dual.  Therefore, it is equivalent to bipartiteness of the corner
graph of $\eG\swi$, or the \emph{Wilson corner graph}.  This graph has
the same vertex set as the corner graph of $\eG$, but two vertices
representing corners are joined by an edge when those corners are
consecutive around a face or a zigzag of $\eG$.

 The class of graphs that have an embedding satisfying condition $(011)$
is the set of all bipartite graphs; any embedding satisfies the
condition.

 \subsubsection{Condition $(101)$}\label{cond101}

 From Table \ref{tab:condoribip}, condition $(101)$ is equivalent to $\eG\sdu$
(or $\eG\sdu\spe$) being bipartite.  Since an embedded graph is bipartite if
and only if its dual is $2$-face-colorable, this is equivalent to
$\eG$ (or $(\eG\sdu\spe\sdu$) being $2$-face-colorable, which is
property (5) from Section \ref{sec:intro}.

 Condition $(101)$ is also equivalent to condition $(110)$ after
swapping colors $\cf$ and $\cz$ in the jewel, i.e., after taking the
Petrie dual.  Therefore, it is equivalent to bipartiteness of the corner
graph of $\eG\spe$, or the \emph{Petrie corner graph}.  This graph has
the same vertex set as the corner graph of $\eG$, but two vertices
representing corners are joined by an edge when those corners are
consecutive around a vertex or a zigzag of $\eG$.

 If a graph $G$ has an embedding $\eG$ satisfying condition $(101)$,
then $\eG$ is $2$-face-colorable, which obviously means that every
vertex has even degree.  Thus, if $G$ is connected, $G$ is Eulerian.
 Conversely, suppose that $G$ is Eulerian.  By Corollary
\ref{euleriangraph}, $G$ has an embedding that is $2$-face-colorable,
i.e., it satisfies condition $(101)$.  Thus, a connected graph has an
embedding satisfying condition $(101)$ if and only if it is Eulerian.

 \subsubsection{Condition $(111)$}\label{cond111}

 As indicated in Table \ref{tab:condoribip}, condition $(111)$ is equivalent
to bipartiteness of the medial checkerboard $\cM$ of an embedded graph
$\eG$.  A proper $2$-coloring of the vertices of $\cM$ translates to a
coloring of the edges of $\eG$ where colors alternate around each vertex
and around each face, so condition $(111)$ is equivalent to property (7)
of Section \ref{sec:intro}.

 If $G$ has an embedding $\eG$ satisfying property (7), with edges
colored (say) black and white around each vertex and each face, then the
degree of each vertex is even.  Thus, if $G$ is connected then it is
Eulerian.  Moreover, the numbers of half-edges colored black and white
must be equal, so the numbers of edges colored black and white must be
equal, and so $|E(G)|$ must be even.

 Conversely, suppose $G$ is an Eulerian graph and $|E(G)|$ is even.  Let
$T$ be an Euler circuit in $G$.  Because $|E(T)|=|E(G)|$ is even, we can
color the edges of $T$ alternately black and white, resulting in an
equal number of black and white half-edges incident with each vertex. 
Choose a rotation scheme at each vertex that alternates black and white
half-edges, and assign all edges signature $1$.  The result is an
embedding $\eG$ that satisfies property (7), and is moreover orientable.

 Thus, a connected graph has an embedding satisfying condition $(111)$,
or equivalently property (7), if and only if it is Eulerian and has an
even number of edges.

 \subsubsection{Summary}\label{fanosummary}

 We have now verified Claim \ref{propclaim} by showing that that seven
properties listed in the introduction (Section \ref{sec:intro})
correspond to the seven Fano conditions in Table \ref{tab:condoribip}. 
We will henceforth refer to these as the seven `Fano properties'.

 In Table \ref{tab:summindcond} we give a full overview of the results
in this section.  The first column gives the appropriate condition and
corresponding property from Section \ref{sec:intro}.  The second column
gives equivalent embedding properties.  The third column gives
associated graphs, not already mentioned in the second column, whose
bipartiteness corresponds to the condition.  The fourth column
summarizes some results from Section \ref{sec:bidirections}
regarding bidirections of the medial checkerboard.  The final column
describes the connected graphs that have an embedding satisfying the
given condition.

 We have given a number of different ways to describe condition $(110)$,
which appear in the table.  We can similarly describe conditions $(011)$
and $(101)$ in several ways, but we omit these for brevity.

 We will provide examples to show that all combinations of the Fano
properties allowed by Metatheorem A actually occur.
 However, we postpone this to Section \ref{sec:examples},
since it is helpful to first discuss three further `Eulerian' properties
and their connections with the Fano properties.

 \begin{table}[th]
  \centering
  \begingroup
  \def\arraystretch{0.18}%
  \def\vstrut#1#2{\vrule height#1 depth#2 width0pt}%
  \def\vs#1#2{\vstrut{#1pt}{#2pt}}%
  \begin{tabular}{|l||l|l|l|l|}
  \hline
  \vs{10}3 Condition & Embedding properties & (Other) & Bidirection of & Connected \\
  \vs83 and & (all equivalent) & associated & $\cM$     & graphs having \\
  \vs85 property     &    & bipartite graphs &          & embedding \\
  \hline
  &&&&\\
  \hline
  \vs{12}3 $(001)$
	& $\eG, \eG\sdu$ orientable
	& gem $\cJ$,
	& b-direction
	& all
	\\
  \vs86 \kern0.5em(1)
	&
	& diagonal graph
	&&
	\\
  \hline
  \vs{12}3 $(010)$
	& $\eG\spe, \eG\spe\sdu$ orientable;
	& $\cJ\epe$,
	& d-direction
	& all
	\\
  \vs83 \kern0.5em(2)
	& cycle $1$-sided $\ifoif$ odd;
	& side graph
	&&
	\\
  \vs83
	& vertex rotations that
	&&&
	\\
  \vs86
	& \quad reverse along edges
	&&&
	\\
  \hline
  \vs{12}3 $(011)$
	& $\eG, \eG\spe$ bipartite;
	& Wilson corner
	& c-antidirection
	& bipartite
	\\
  \vs86 \kern0.5em(3)
	& $\eG\sdu, \eG\spe\sdu$ $2$-face-colorable
	& \quad graph
	&&
	\\
  \hline
  \vs{12}3 $(100)$
	& $\eG\sdu\spe, \eG\sdu\spe\sdu$ orientable;
	& $\cJ\ewi$,
	& c-direction
	& Eulerian
	\\
  \vs86 \kern0.5em(4)
	& directable
	& end graph
	&&
	\\
  \hline
  \vs{12}3 $(101)$
	& $\eG\sdu, \eG\sdu\spe$ bipartite;
	& Petrie corner
	& d-antidirection
	& Eulerian
	\\
  \vs86 \kern0.5em(5)
	& $\eG, \eG\sdu\spe\sdu$ $2$-face-colorable
	& \quad graph
	&&
	\\
  \hline
  \vs{12}3 $(110)$
	& $\eG\spe\sdu, \eG\sdu\spe\sdu$ bipartite;
	& corner graph =
	& b-antidirection
	& Eulerian
	\\
  \vs83 \kern0.5em(6)
	& $\eG\spe, \eG\sdu\spe$ $2$-face-colorable;
	& medial graph
	&&
	\\
  \vs83
	& $\eG, \eG\sdu$ $2$-zigzag-colorable;
	& \quad of $\eM$
	&&
	\\
  \vs83
	& $\eG \cup \eG\sdu$ $2$-face-colorable;
	&&&
	\\
  \vs86
	& $\eM\spe, \eM\spe\sdu$ orientable
	&&&
	\\
  \hline
  \vs{12}3 $(111)$
	& $\cM$ bipartite;
	&
	& t-direction
	& Eulerian with
	\\
  \vs83 \kern0.5em(7)
	& can $2$-color edges so alternate
	&&& \quad even number
	\\
  \vs86
	& \quad around vertices and faces
	&&& \quad of edges
	\\
  \hline
  \end{tabular}
  \endgroup
  \caption{Summary of individual conditions.}
  \label{tab:summindcond}
 \end{table}

 \let\ti\times
 \let\tofrom\leftrightarrow
 \def\ms{${}^-$}
 \def\ps{${}^+$}
 \def\cQ{\cs{Q}}
 \let\de\delta
 \let\tbdbl\tau 
 \let\gtype\gamma
 \let\stype\sigma

 \section{Medial checkerboard bidirections}\label{sec:bidirections}

 In this section we link the seven Fano plane conditions for an embedded
graph $\eG$ to structures in the medial checkerboard $\cM$.
 We use correspondences as follows:

 \smallskip
 \rgp24 Fano condition(s) for $\eG$

 \rgp24 $\tofrom$\quad binary labeling(s) of gem $\cJ$ or jewel $\cL$

 \rgp24 $\tofrom$\quad condition for edges colored $\ca$ in $\cJ$ or $\cL$

 \rgp24 $\tofrom$\quad structure in medial checkerboard $\cM$.
 \smallskip

 \noindent
 The last step here relies on the fact that edges of $\cM$ can be
identified with edges of color $\ca$ in $\cJ$ or $\cL$, because we can
think of $M$ as either $J \cg (E(J)-\ec\ca\cJ)$ or $L \cg
(E(L)-\ec\ca\cL)$.
 A similar approach appears in the proof of Proposition
\ref{directedembeddingdualpetrieori} above: there we used
correspondences between condition $(100)$, an
$\ec{\cf\cc\ca}\cJ$-labeling of the gem $\cJ$, and a condition for edges
colored $\cf$ in $\cJ$.

 Our work extends and systematizes some previous results in the
literature, which involve directions of the medial graph $M$.
 We work with \emph{bidirections}, which assign a direction to each
half-edge of a graph $G$.  An edge is \emph{directed} if its two halves
are directed the same way, and \emph{antidirected} if they are directed
oppositely.  We think of a \emph{direction} as a bidirection where all
edges are directed, and an \emph{antidirection} is a bidirection where
all edges are antidirected.  An antidirected edge is \emph{introverted}
or \emph{extraverted} depending on whether the halves are directed,
respectively, into or out of the center of the edge, or equivalently,
out of or into their incident vertices.

 Medial graphs are $4$-regular, and the bidirections that matter to us
have special properties.  In a $4$-regular graph with a bidirection, a
vertex is \emph{balanced} if it has two incoming half-edges and two
outgoing half-edges, and \emph{total} if all half-edges are incoming or
all half-edges are outgoing.  A bidirection is \emph{balanced},
\emph{total}, or \emph{balanced-total} if every vertex is balanced,
every vertex is total, or every vertex is balanced or total,
respectively.

 We will show that each individual Fano plane condition corresponds to a
particular subclass of balanced-total directions or antidirections of
$M$, with conditions involving the structure added to $M$ to form the
medial checkerboard $\cM$ (i.e., vertex rotations and face colorings).
 In Section \ref{sec:eulerian} we will expand this to include three
additional properties.
 We also discuss how these bidirections interact with partial duality
and partial Petrie duality.
 Allowable combinations of properties corresponding to lines in the Fano
plane, or the whole Fano plane, can also be related to structures in the
medial checkerboard: to do this we will use multiple binary labelings
simultaneously,


 \subsection{Previous results on medial graph
directions}\label{ss:previousdirections}

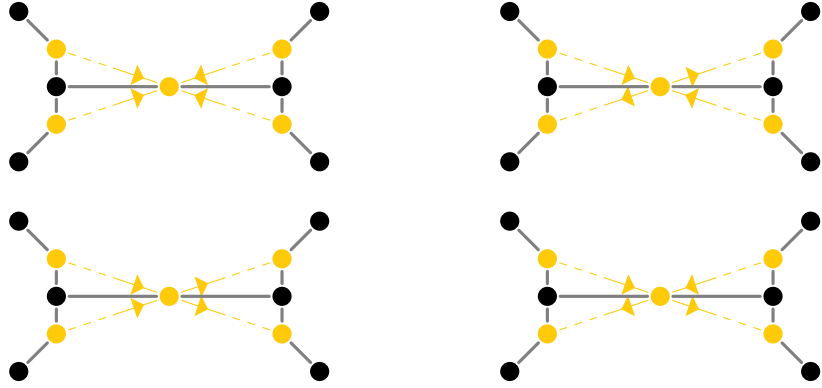
\begin{figure}
\hbox to \hsize{%
\hfill
   \begin{tikzpicture}[scale=1.0]
   
	\node[vertex] at (7,-5) (v0) {};
	\node[vertex, yellow!60!orange] at (7.5,-5.5) (v1) {};
	\node[vertex] at (7.5,-6) (v2) {};
	\node[vertex, yellow!60!orange] at (9,-6) (v3) {};
	\node[vertex] at (10.5,-6) (v4) {};
	\node[vertex, yellow!60!orange] at (10.5,-5.5) (v5) {};
	\node[vertex] at (11,-5) (v6) {};
	\node[vertex, yellow!60!orange] at (10.5,-6.5) (v7) {};
	\node[vertex] at (11,-7) (v8) {};
	\node[vertex, yellow!60!orange] at (7.5,-6.5) (v9) {};
	\node[vertex] at (7,-7) (v10) {};
	\begin{scope}[nodes={sloped,allow upside down}][on background layer]
		\draw[edge] (v0) to (v1);
		\draw[edge] (v1) to (v2);
       
            \draw[yellow!60!orange] (8.25,-5.75) -- pic[pos=.7]{arrow} (v3);
            \draw[yellow!60!orange] (9.75, -6.25) -- pic[pos=.7]{arrow} (v3);
            \draw[yellow!60!orange] (8.25,-6.25) -- pic[pos=.7]{arrow} (v3);
            \draw[yellow!60!orange] (9.75,-5.75) -- pic[pos=.7]{arrow} (v3);

            \draw[yellow!60!orange][dashed] (v1) to (8.25,-5.75);
            \draw[yellow!60!orange,dashed] (v5) to (9.75,-5.75);
            \draw[yellow!60!orange, dashed] (v7) to (9.75,-6.25);
		\draw[yellow!60!orange, dashed] (v9) to (8.25,-6.25);
        
		\draw[edge] (v2) to (v3);
		\draw[edge] (v2) to (v9);
		\draw[edge] (v3) to (v4);

		\draw[edge] (v4) to (v5);
		\draw[edge] (v4) to (v7);
		\draw[edge] (v5) to (v6);
		\draw[edge] (v7) to (v8);
		\draw[edge] (v9) to (v10);
	\end{scope}
\end{tikzpicture}
 \hfill
  \begin{tikzpicture}[scale=1.0]
   
	\node[vertex] at (7,-5) (v0) {};
	\node[vertex, yellow!60!orange] at (7.5,-5.5) (v1) {};
	\node[vertex] at (7.5,-6) (v2) {};
	\node[vertex, yellow!60!orange] at (9,-6) (v3) {};
	\node[vertex] at (10.5,-6) (v4) {};
	\node[vertex, yellow!60!orange] at (10.5,-5.5) (v5) {};
	\node[vertex] at (11,-5) (v6) {};
	\node[vertex, yellow!60!orange] at (10.5,-6.5) (v7) {};
	\node[vertex] at (11,-7) (v8) {};
	\node[vertex, yellow!60!orange] at (7.5,-6.5) (v9) {};
	\node[vertex] at (7,-7) (v10) {};
	\begin{scope}[nodes={sloped,allow upside down}][on background layer]
		\draw[edge] (v0) to (v1);
		\draw[edge] (v1) to (v2);
       
            \draw[yellow!60!orange] (8.25,-5.75) -- pic[pos=.7]{arrow} (v3);
            \draw[yellow!60!orange] (9.75, -6.25) -- pic[pos=.7]{arrow} (v3);
            \draw[yellow!60!orange] (v3) -- pic[pos=.6]{arrow} (8.25, -6.25);
            \draw[yellow!60!orange] (v3) -- pic[pos=.6]{arrow} (9.75,-5.75);

            \draw[yellow!60!orange][dashed] (v1) to (8.25,-5.75);
            \draw[yellow!60!orange,dashed] (v5) to (9.75,-5.75);
            \draw[yellow!60!orange, dashed] (v7) to (9.75,-6.25);
		\draw[yellow!60!orange, dashed] (v9) to (8.25,-6.25);
        
		\draw[edge] (v2) to (v3);
		\draw[edge] (v2) to (v9);
		\draw[edge] (v3) to (v4);

		\draw[edge] (v4) to (v5);
		\draw[edge] (v4) to (v7);
		\draw[edge] (v5) to (v6);
		\draw[edge] (v7) to (v8);
		\draw[edge] (v9) to (v10);
	\end{scope}
\end{tikzpicture}
\hfill
}

\vspace{5mm}

\hbox to \hsize{%
\hfill
  \begin{tikzpicture}[scale=1.0]
   
	\node[vertex] at (7,-5) (v0) {};
	\node[vertex, yellow!60!orange] at (7.5,-5.5) (v1) {};
	\node[vertex] at (7.5,-6) (v2) {};
	\node[vertex, yellow!60!orange] at (9,-6) (v3) {};
	\node[vertex] at (10.5,-6) (v4) {};
	\node[vertex, yellow!60!orange] at (10.5,-5.5) (v5) {};
	\node[vertex] at (11,-5) (v6) {};
	\node[vertex, yellow!60!orange] at (10.5,-6.5) (v7) {};
	\node[vertex] at (11,-7) (v8) {};
	\node[vertex, yellow!60!orange] at (7.5,-6.5) (v9) {};
	\node[vertex] at (7,-7) (v10) {};
	\begin{scope}[nodes={sloped,allow upside down}][on background layer]
		\draw[edge] (v0) to (v1);
		\draw[edge] (v1) to (v2);
       
            \draw[yellow!60!orange] (8.25,-5.75) -- pic[pos=.7]{arrow} (v3);
            \draw[yellow!60!orange] (8.25, -6.25) -- pic[pos=.7]{arrow} (v3);
            \draw[yellow!60!orange] (v3) -- pic[pos=.6]{arrow} (9.75, -6.25);
            \draw[yellow!60!orange] (v3) -- pic[pos=.6]{arrow} (9.75,-5.75);

            \draw[yellow!60!orange][dashed] (v1) to (8.25,-5.75);
            \draw[yellow!60!orange,dashed] (v5) to (9.75,-5.75);
            \draw[yellow!60!orange, dashed] (v7) to (9.75,-6.25);
		\draw[yellow!60!orange, dashed] (v9) to (8.25,-6.25);
        
		\draw[edge] (v2) to (v3);
		\draw[edge] (v2) to (v9);
		\draw[edge] (v3) to (v4);

		\draw[edge] (v4) to (v5);
		\draw[edge] (v4) to (v7);
		\draw[edge] (v5) to (v6);
		\draw[edge] (v7) to (v8);
		\draw[edge] (v9) to (v10);
	\end{scope}
\end{tikzpicture}
\hfill
  \begin{tikzpicture}[scale=1.0]
   
	\node[vertex] at (7,-5) (v0) {};
	\node[vertex, yellow!60!orange] at (7.5,-5.5) (v1) {};
	\node[vertex] at (7.5,-6) (v2) {};
	\node[vertex, yellow!60!orange] at (9,-6) (v3) {};
	\node[vertex] at (10.5,-6) (v4) {};
	\node[vertex, yellow!60!orange] at (10.5,-5.5) (v5) {};
	\node[vertex] at (11,-5) (v6) {};
	\node[vertex, yellow!60!orange] at (10.5,-6.5) (v7) {};
	\node[vertex] at (11,-7) (v8) {};
	\node[vertex, yellow!60!orange] at (7.5,-6.5) (v9) {};
	\node[vertex] at (7,-7) (v10) {};
	\begin{scope}[nodes={sloped,allow upside down}][on background layer]
		\draw[edge] (v0) to (v1);
		\draw[edge] (v1) to (v2);
       
            \draw[yellow!60!orange] (8.25,-5.75) -- pic[pos=.7]{arrow} (v3);
            \draw[yellow!60!orange] (9.75, -5.75) -- pic[pos=.7]{arrow} (v3);
            \draw[yellow!60!orange] (v3) -- pic[pos=.6]{arrow} (8.25, -6.25);
            \draw[yellow!60!orange] (v3) -- pic[pos=.6]{arrow} (9.75,-6.25);

            \draw[yellow!60!orange][dashed] (v1) to (8.25,-5.75);
            \draw[yellow!60!orange,dashed] (v5) to (9.75,-5.75);
            \draw[yellow!60!orange, dashed] (v7) to (9.75,-6.25);
		\draw[yellow!60!orange, dashed] (v9) to (8.25,-6.25);
        
		\draw[edge] (v2) to (v3);
		\draw[edge] (v2) to (v9);
		\draw[edge] (v3) to (v4);

		\draw[edge] (v4) to (v5);
		\draw[edge] (v4) to (v7);
		\draw[edge] (v5) to (v6);
		\draw[edge] (v7) to (v8);
		\draw[edge] (v9) to (v10);
	\end{scope}
\end{tikzpicture}
\hfill
}
  \caption{Top: t$^-$- and b-vertices; bottom: c- and d-vertices.}

    \label{fig:tbcd-vertices}
\end{figure}

 The use of directions (orientations) of the medial graph to
characterize embedding properties was introduced by Huggett and Moffatt
\cite{bippartdualsHM}.
 Deng, Jin, and Metsidik extended this in various ways \cite{dengjin,
MJ18, Met21} to bidirections and additional properties.
 These results characterized when partial duals of an embedded graph
$\eG$ have certain properties.
 We describe some known results of this kind here; additional results
are discussed in Subsection \ref{ss:eulerianbidirections}.

 \begin{definition}\label{bcdt}
 Suppose an embedded graph $\eG$, with medial checkerboard $\cM$, and a
bidirection $\de$ of the medial graph $M$, are given.
 A vertex $v_e$ of $\cM$, corresponding to $e \in E(G)$, is
 \begin{enumerate}[label=\rm(\alph*)]\setlength{\itemsep}{0pt}
 \item a \emph{b-vertex} if the half-edge directions around $v_e$ alternate
(in (towards $v_e$), out (away from $v_e$), in, out);
 \item a \emph{c-vertex} if the half-edge directions and face colors around
$v_e$ are in the cyclic order (in, $\cv$, in, $\cf$, out, $\cv$, out,
$\cf$);
 \item a \emph{d-vertex} if the half-edge directions and face colors around
$v_e$ are in the cyclic order (in, $\cf$, in, $\cv$, out, $\cf$, out,
$\cv$); and
 \item a \emph{t-vertex} if the half-edge directions around $v_e$ are
(in, in, in, in) (a \emph{t\ms-vertex}) or (out, out, out, out) (a
\emph{t\ps-vertex}).
 \end{enumerate}
 See Figure \ref{fig:tbcd-vertices}.
 Note that these definitions depend on both the bidirection $\de$ and
the vertex rotations and face colorings of $\cM$.
 But we consider $\de$ to be attached to the medial graph $M$, not the
complete medial checkerboard $\cM$, because we will consider the same
$\de$ for different medial checkerboards when we take twisted duals.
 If we have a balanced-total bidirection of $M$ then these four types
(with two subtypes of t-vertex) classify all vertices of $\cM$.

 An edge $e$ in $\eG$ is called a \emph{b-edge}, \emph{c-edge},
\emph{d-edge}, or \emph{t-edge} (or \emph{t\ms-edge} or
\emph{t\ps-edge}) according to the type of the corresponding vertex
$v_e$ in $\cM$.
 We use \emph{b-direction of $\cM$} (or \emph{b-direction of $\eG$}) to
refer to a direction of $M$ in which all vertices of $\cM$ are
b-vertices, \emph{bc-antidirection of $\cM$} to refer to
an antidirection of $M$ in which all vertices of $\cM$ are b- or
c-vertices, and other similar terminology.

 Note that if we reverse a balanced-total bidirection, or reverse it on
some of the components of $M$, then it remains a balanced-total
bidirection and the types of the vertices of $\cM$
or edges of $\eG$ remain unchanged.
 \end{definition}

 Huggett and Moffatt \cite{bippartdualsHM} defined c- and d-edges.
 The idea of t-edges was used implicitly by Metsidik \cite{Met21} and
defined by Deng, Jin, and Metsidik \cite{dengjin}, as part of results we
will discuss in Section \ref{sec:eulerian}.
 The definition of b-edges is new and will be used later in this
section.
 In the literature cd-bidirections are called \emph{all-crossing}
bidirections, and cdt-bidirections are called \emph{crossing-total}
bidirections.
 We now state some results from \cite{dengjin,bippartdualsHM}.

 \begin{theorem}[Huggett and Moffatt {\cite[Theorem
1.3]{bippartdualsHM}}]\label{HMplane}
 Let $\eG$ be a planar graph embedding with medial graph $M$
and $A\subseteq E(G)$. Then the partial dual $\eG \du A$ is bipartite if and
only if there exists a direction of $M$ for which all elements of $A$
are c-edges of $\eG$ and all elements of $E(G)-A$ are d-edges of $\eG$.
 \end{theorem}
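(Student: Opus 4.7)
The plan is to translate both sides of the biconditional into parity-of-closed-walk conditions in the gem $\cJ$ of $\eG$, and then identify them using the fact that $\cJ$ is bipartite (since planarity implies orientability, Theorem~\ref{orientablebip}).

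For the right-hand side, I would use that edges of $M$ correspond to $\ca$-edges of $\cJ$, so a direction of $M$ amounts to a direction of the $\ca$-edges of $\cJ$. Encode such a direction as a binary labeling $\la\colon V(\cJ)\to\{0,1\}$ with $\la(x)=1$ iff the $\ca$-edge at $x$ is directed into $x$. By Observation~\ref{binlab}, $\la$ is the $S_\la$-labeling for the unique set $S_\la$ of edges of $\cJ$ across which $\la$ flips, and $\ec\ca\cJ\subseteq S_\la$ since the two ends of every $\ca$-edge get opposite labels. The key bookkeeping step is to inspect a single e-square $S_e$ of $\cJ$ sitting beneath the medial vertex $v_e$: the two $\cv$-corners at $v_e$ in $\cM$ abut the two $\cv$-edges of $S_e$, and the two $\cf$-corners abut the two $\cf$-edges of $S_e$. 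Matching this against the cyclic patterns in Definition~\ref{bcdt}, $v_e$ is a c-vertex iff $\la$ flips across exactly the two $\cf$-edges of $S_e$, and a d-vertex iff $\la$ flips across exactly the two $\cv$-edges. Hence the desired direction exists iff $\cJ$ admits an $S_\la$-labeling for
\[
S_\la \;=\; \ec\ca\cJ \;\cup\; \bigcup_{e\in A}\{\text{$\cf$-edges of $S_e$}\} \;\cup\; \bigcup_{e\notin A}\{\text{$\cv$-edges of $S_e$}\},
\]
which by Theorem~\ref{oddwalkoddcycletheorem} is equivalent to $\num_{S_\la}(K)$ being even for every closed walk $K$ in $\cJ$.

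For the left-hand side, Theorem~\ref{gembip} applied to the gem $\cJ[{}\du A]$ of $\eG\du A$ says that $\eG\du A$ is bipartite iff the set $S_{\du A}$ of $\cf$-colored edges of $\cJ[{}\du A]$ satisfies $\num_{S_{\du A}}(K)$ even for every closed walk $K$ in $\cJ[{}\du A]$. Since partial duality in gems merely swaps $\cv$ and $\cf$ on the e-squares indexed by $A$, the underlying graph of $\cJ[{}\du A]$ is identical to that of $\cJ$ and
\[
S_{\du A} \;=\; \bigcup_{e\in A}\{\text{$\cv$-edges of $S_e$}\} \;\cup\; \bigcup_{e\notin A}\{\text{$\cf$-edges of $S_e$}\}.
\]
Comparing, $S_\la$ and $S_{\du A}$ are disjoint and together comprise all of $E(\cJ)$, so $\num_{S_\la}(K)+\num_{S_{\du A}}(K)$ equals the length of $K$. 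Planarity of $\eG$ forces orientability and hence, by Theorem~\ref{orientablebip}, bipartiteness of $\cJ$, so every closed walk in $\cJ$ has even length; the two parity conditions therefore coincide, proving the theorem.

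The main obstacle is the e-square bookkeeping in the middle paragraph: one must verify precisely the correspondence between the $\cv$- and $\cf$-corners at $v_e$ in $\cM$ and the $\cv$- and $\cf$-edges of $S_e$ in $\cJ$, so that the cyclic patterns defining c- and d-vertices translate cleanly into prescribed flipping parities of $\la$ on colored edges of $\cJ$. Once this dictionary is in place, the partition $E(\cJ)=S_\la\sqcup S_{\du A}$ and the appeal to planarity to kill the length-of-$K$ discrepancy are essentially automatic.
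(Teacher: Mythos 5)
Your proof is correct. One point of context first: the paper does not actually prove Theorem \ref{HMplane} in situ --- it is quoted from Huggett and Moffatt --- but in Subsection \ref{ss:bidirtwistedduals} (item (5) after Theorem \ref{typechange}) the authors explain how it factors through their machinery: (i) an orientable embedding is bipartite iff its Petrie dual is orientable (the Fano framework), (ii) $\eG\spe$ is orientable iff $\cM$ has a d-direction (Theorem \ref{bcdedge}), and (iii) partial duality swaps c- and d-edges on the dualized set (Theorem \ref{typechange}), applied to $\eG\du A$, which is orientable since its gem has the same underlying graph as $\cJ$. Your argument reaches the same conclusion by a single direct computation in the gem instead of this modular decomposition: you encode the putative direction as a binary labeling of $V(\cJ)$, check via the e-square dictionary (which matches Lemma \ref{cedgefza} and Table \ref{tab:bcd} restricted to the gem: c-vertices flip on $\cf$- and $\ca$-edges only, d-vertices on $\cv$- and $\ca$-edges only) that its flip set is $S_\la$, observe that $S_\la$ and the $\cf$-edge set $S_{\du A}$ of $\cJ[{}\du A]$ partition $E(J)$, and kill the discrepancy $\num_{S_\la}(K)+\num_{S_{\du A}}(K)=\ell(K)$ using bipartiteness of $\cJ$ from planarity. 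The underlying ingredients are the same ($S$-labelings and parity of closed walks), but your organization is more self-contained and makes transparent exactly where planarity/orientability is used, whereas the paper's route exposes the three reusable general facts and extends immediately to Theorem \ref{bippddedge} and to the twisted-dual statements of Theorems \ref{oritwdexistence} and \ref{biptwdexistence}. The only detail you elide is the case of isolated vertex components, which the gem discards; but these affect neither bipartiteness of $\eG\du A$ nor the medial graph, consistent with the paper's general convention.
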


 Deng, Jin, and Metsidik \cite{dengjin} extended Theorem \ref{HMplane}
to all embedded graphs, orientable or nonorientable, using directions of
a `modified medial graph', which depends on the choice of local
rotations for the vertices of $\eG$.  Their result can also be expressed
using bidirections of $M$ with both directed and antidirected edges.
 We will not discuss the details, as there is a simpler way to handle
the general case, which we describe in Subsection \ref{ss:antidirections}.
 In the orientable case, Deng, Jin, and Metsidik's modified medial graph
is just the medial graph, and they obtain the following straightforward
generalization of Theorem \ref{HMplane}.

 \begin{theorem}[Deng, Jin, and Metsidik {\cite[Corollary
3.2]{dengjin}}]\label{bippddedge} 
 Let $\eG$ be an orientable graph embedding with medial graph
$M$ and $A\subseteq E(G)$. Then $\eG\du A$ is bipartite if and only if
there exists a direction of $M$ for which all elements of $A$ are
c-edges of $\eG$ and all elements of $E(G)-A$ are d-edges of $\eG$.
 \end{theorem}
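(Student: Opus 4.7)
The plan is to apply the Fano framework of Section \ref{sec:fanoproperties}: translate both conditions into parity conditions on closed walks in the gem $\cJ$ of $\eG$, and then interpret the resulting combined condition as a direction of $M$.

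By Corollary \ref{gemori}, orientability of $\eG$ is equivalent to $(v+f+a)_\cJ(K)$ being even for every closed walk $K$ in $\cJ$. By Theorem \ref{gembip} applied to $\eG\du A$, together with the fact that partial duality swaps the colors $\cv$ and $\cf$ on the e-squares corresponding to $A$, bipartiteness of $\eG\du A$ is equivalent to $(f_{\ovl A}+v_A)_\cJ(K)$ being even for every closed walk $K$ in $\cJ$, where $v_A$ and $f_{\ovl A}$ count, respectively, $\cv$-edges in e-squares of $A$ and $\cf$-edges in e-squares of $E(G)-A$. Under the orientability hypothesis, these two conditions together are equivalent to their modulo-$2$ sum, namely that $(v_{\ovl A}+f_A+a)_\cJ(K)$ is even for every closed walk $K$ in $\cJ$, which by Theorem \ref{oddwalkoddcycletheorem} is equivalent to the existence of a binary labeling $\la:V(J)\to\mZ_2$ that flips across every $\ca$-edge, across every $\cv$-edge of an e-square outside $A$, and across every $\cf$-edge of an e-square in $A$.

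The final step is to identify $\la$ with the claimed direction of $M$. Since each vertex of $J$ is incident with a unique $\ca$-edge and $M=J\cg(E(J)-\ec\ca\cJ)$, the vertices of $J$ correspond bijectively to the half-edges of $M$, and the four vertices $p,q,r,s$ of the e-square for $e\in E(G)$ correspond to the four half-edges at $v_e$ in $\cM$. The condition that $\la$ flips on every $\ca$-edge says exactly that each edge of $M$ is directed (interpret label $0$ as ``out'' and $1$ as ``in''). For $e\in A$, the flipping pattern forces the cyclic label sequence $(\la(p),\la(q),\la(r),\la(s))=(a,a,a+1,a+1)$ for some $a\in\mZ_2$; read together with the alternation of $\cv$- and $\cf$-faces around $v_e$ (the face between the half-edges at $p,q$ arising from the v-gon of $p,q$, and so on), this is precisely the c-vertex pattern of Definition \ref{bcdt}. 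For $e\notin A$ the analogous computation yields $(a,a+1,a+1,a)$, the d-vertex pattern. Conversely, any direction of $M$ with the stated property recovers such a $\la$.

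The main obstacle is this last correspondence: carefully matching the cyclic order of the four vertices of an e-square with both the cyclic order of the four half-edges and the alternation of $\cv$- and $\cf$-faces around $v_e$ in $\cM$. This is a routine but detailed unpacking of the $\cJ$-to-$\cM$ dictionary from Subsection \ref{gjm}; once set up, both directions of the equivalence fall out immediately.
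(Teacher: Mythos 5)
Your proof is correct, but it takes a genuinely different route from the paper's. The paper never writes a self-contained proof of Theorem \ref{bippddedge}; instead (Subsection \ref{ss:bidirtwistedduals}, item (5)) it factors the result through the partial dual: since partial duality preserves orientability, $\eG\du A$ is orientable, so its bipartiteness is equivalent to $(\eG\du A)\spe$ being orientable (the Fano line through $(001)$, $(011)$, $(010)$), which by Theorem \ref{bcdedge} is the existence of a d-direction of the medial checkerboard of $\eG\du A$, which by Theorem \ref{typechange} pulls back to a direction of $M$ that is c on $A$ and d on $E(G)-A$ relative to $\cM$. You instead stay entirely in the gem of $\eG$: you add the orientability and bipartiteness parity conditions modulo $2$ to obtain a single $S$-labeling condition, with $S$ mixing $\ca$-edges with $\cv$- or $\cf$-edges of each e-square according to membership in $A$, and then read that labeling off directly as a mixed c/d-direction; this last step is in effect a per-e-square version of Lemma \ref{cedgefza} and its d-analogue via Observation \ref{medch2jewel}, and your local patterns $(a,a,a{+}1,a{+}1)$ and $(a,a{+}1,a{+}1,a)$ against the alternating $\cv$/$\cf$ face colors do match the c- and d-vertex definitions. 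Your route is more self-contained (it needs neither Theorem \ref{typechange} nor the preservation of orientability under partial duality), while the paper's modular route exposes the three independent ingredients and generalizes immediately to arbitrary twisted duals (Theorems \ref{oritwdexistence} and \ref{biptwdexistence}). One small wording point: ``these two conditions together are equivalent to their modulo-$2$ sum'' should read ``given the first condition, the second is equivalent to the modulo-$2$ sum of the two,'' which is what your chain of equivalences actually uses.
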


 Taking $A = \emptyset$, Theorem \ref{bippddedge} implies that an
orientable embedded graph $\eG$ is bipartite if and only there is a
d-direction of its medial graph.
 But we know that an orientable $\eG$ is bipartite if and only if
$\eG\spe$ (or $\eG\spe\sdu$) is orientable, i.e., $\eG$ satisfies
condition $(010)$.  This suggests that there is a d-direction of $\eG$
if and only if condition $(010)$ holds.  One direction of this was
verified by Yan and Jin.

 \begin{theorem}[Yan and Jin {\cite[Lemma
6.2]{YJ19}}]\label{petrieoridedge}
 If a graph embedding $\eG$ satisfies condition $(010)$, i.e., $\eG\spe$
is orientable, then there exists a d-direction of $\eG$.
 \end{theorem}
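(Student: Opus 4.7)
The plan is to build the required direction of $M$ from a binary labeling of the gem $\cJ$ that is guaranteed to exist by condition $(010)$. From Table~\ref{tab:condoribip}, condition $(010)$ is equivalent to $(v+a)_\cJ(K)$ being even for every closed walk $K$ in $\cJ$, which by Theorem~\ref{oddwalkoddcycletheorem} yields an $\ec{\cv\cc\ca}\cJ$-labeling $\la : V(J) \to \mZ_2$. Equivalently, $\la$ agrees across each $\cf$-edge of $\cJ$ and flips across each $\cv$-edge and each $\ca$-edge.

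I then identify each edge of $M$ with the unique $\ca$-colored edge of $\cJ$ from which it arises, using $M = J \cg \ec{\cv\cc\cf}\cJ$, and orient each such edge from its endpoint labeled $0$ to its endpoint labeled $1$. Since the two ends of every $\ca$-edge receive opposite labels, this prescription is well-defined and yields an honest direction of $M$ (not an antidirection).

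To verify that every vertex $v_e$ of $\cM$ becomes a d-vertex, I examine the e-square of $e$ in $\cJ$, labeling its vertices $p_1,p_2,p_3,p_4$ cyclically so that $p_1p_2$ and $p_3p_4$ have color $\cv$ while $p_2p_3$ and $p_4p_1$ have color $\cf$. The labeling rules force $\la(p_1) = \la(p_4) \ne \la(p_2) = \la(p_3)$. Contracting the e-square collapses $p_1,\dots,p_4$ to the vertex $v_e$, whose four half-edges in $M$ are precisely the $\ca$-edges rooted at $p_1,\dots,p_4$ in the same cyclic order. By construction, the half-edge at $p_i$ points out of $v_e$ iff $\la(p_i)=0$, so two consecutive half-edges around $v_e$ agree in direction exactly when the $\cJ$-edge joining their roots has color $\cf$. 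The face of $\cM$ between those consecutive half-edges descends from the face of $\cJ$ on the far side of that $\cJ$-edge, which is a v-face (color $\cv$) when the $\cJ$-edge has color $\cv$ and an f-face (color $\cf$) when it has color $\cf$. Hence $\cv$-colored faces lie between oppositely-directed half-edges and $\cf$-colored faces lie between like-directed half-edges, producing the cyclic pattern $(\textrm{out},\cv,\textrm{in},\cf,\textrm{in},\cv,\textrm{out},\cf)$ around $v_e$, which is a rotation of the d-vertex pattern of Definition~\ref{bcdt}.

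The main obstacle is the bookkeeping in the last step: carefully matching the cyclic order, face colors, and half-edge directions around $v_e$ in $\cM$ to the structure of the e-square and its surrounding faces in $\cJ$. Once this correspondence is nailed down, isolated vertex components require no special handling since they contribute nothing to $\cJ$ or $M$, and the nonuniqueness of $\la$ (it may be flipped on each component of $J$) only reverses the direction on the corresponding components of $M$, preserving the d-vertex property.
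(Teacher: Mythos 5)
Your proof is correct and is essentially the argument the paper uses: you translate condition $(010)$ into an $S$-labeling whose restriction to the $\ca$-edges defines the bidirection of $M$, and read off the d-vertex pattern from the e-square structure, exactly as in Lemma \ref{cedgefza} and Theorem \ref{cedge100} (the paper proves the c-direction case in detail and omits the d-direction case as ``similar''). The only cosmetic difference is that you work with the $\ec{\cv\cc\ca}\cJ$-labeling of the gem rather than the $\ec{\cv\cc\cz\cc\ca}\cL$-labeling of the jewel, which by Theorem \ref{gem2jewel} is equivalent.
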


 We will unify and extend the above results in the next three
subsections.

 \subsection{Medial checkerboard bidirections and
jewels}\label{ss:bidirections}

 In this subsection we establish a general connection between medial
checkerboard bidirections and embedding properties using binary labelings
of jewels.
 Theorems \ref{HMplane} and \ref{bippddedge} involve properties of
partial duals; we will show that characterization of the properties and
the effect of taking twisted duals (partial duals, partial Petrie duals,
and their compositions) can be considered independently.

 The following observation is obvious, but stated for later reference.

 \begin{observation}\label{bidirdir}
 Let $\eG$ be an embedded graph with medial graph $M$, and let
$\de$ be a bidirection of $M$.
 Every edge $e \in E(G)$ is a b-, c-, or d-edge of $\eG$ if and only if
$\de$ is a balanced bidirection, and every edge $e \in E(G)$ is a
t-edge of $\eG$ if and only if $\de$ is a total bidirection.
 \end{observation}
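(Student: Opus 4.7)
The plan is to verify both equivalences by an edge-by-edge case analysis, noting that the four types (b, c, d, t) defined in Definition \ref{bcdt} exhaust all possible local configurations of half-edge directions at a vertex of $\cM$, once one remembers that the four face-corners around any vertex of $\cM$ carry colors alternating $\cv, \cf, \cv, \cf$ (since $\cM$ is properly $2$-face-colored).

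First I would fix $e \in E(G)$ and examine the vertex $v_e$ in $\cM$: it has exactly four incident half-edges and four face-corners in cyclic order, with face colors alternating between $\cv$ and $\cf$. The bidirection $\de$ assigns one of two directions (in or out, relative to $v_e$) to each half-edge, giving a cyclic $01$-string of length $4$. There are only three possibilities up to cyclic rotation: four equal entries, two equal adjacent pairs, or alternating entries.

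Next I would match these possibilities to Definition \ref{bcdt}. The alternating pattern (in, out, in, out) is the b-vertex case. The adjacent pattern (in, in, out, out) combined with the alternating face colors gives either the c-pattern or the d-pattern, depending on whether $\cv$ or $\cf$ is the color situated between the two incoming half-edges. The four-equal case is the t-vertex case. Hence a vertex $v_e$ is balanced (two in, two out) if and only if $e$ is a b-, c-, or d-edge, and $v_e$ is total (all in or all out) if and only if $e$ is a t-edge.

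Both equivalences in the statement then follow immediately, since the vertex sets of $M$ and $\cM$ coincide, and a bidirection of $M$ is balanced (respectively total) exactly when every vertex of $M$ is balanced (respectively total). No real obstacle is anticipated; the only care needed is to remember that b/c/d/t are local types determined by $\de$ together with the face colorings and rotations recorded in $\cM$, even though $\de$ itself lives on $M$.
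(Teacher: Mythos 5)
Your proposal is correct and matches the paper's view of this statement: the paper labels it an Observation and gives no proof, treating it as immediate from Definition \ref{bcdt} in exactly the way you spell out (b-, c-, and d-vertices are by definition balanced, t-vertices are by definition total, and conversely a balanced vertex has cyclic in/out pattern either alternating or two adjacent pairs, the latter resolving to c or d via the alternating $\cv,\cf$ face colors). One small slip: there are not three but six cyclic classes of $01$-strings of length $4$ --- you omit the ``three of one direction, one of the other'' patterns --- but this is harmless here, since such a vertex is neither balanced nor total and is not of any of the four types, so it cannot arise on either side of either equivalence.
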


 The next observation is important for translating conditions between a
medial checkerboard and a jewel.

 \def\csimp{\cs{T}}
 \begin{observation}\label{medch2jewel}
 Suppose $\eG$ is an embedded graph with medial checkerboard $\cM$ and
jewel $\cL$.  Let $e \in E(G)$, let $v_e$ be the vertex of $\cM$
corresponding to $e$, and let $\csimp_e$ be the e-simplex of $\cL$
corresponding to $e$.
 \begin{enumerate}[label=\rm(\alph*)]\setlength{\itemsep}{0pt}
 \item\label{m2ja}
 Each half-edge $g$ of $M$ incident with $v_e$ corresponds to a
half-edge $g'$ of $L$ colored $\ca$ in $\cL$, and incident with a vertex of
$\csimp_e$.
 \item\label{m2jb}
 This gives a correspondence between bidirections $\de$ of $M$ and
bidirections $\de_L$ of the edges of $L$ colored $\ca$ in $\cL$: a half-edge
$g$ of $M$ is directed into or out of $v_e$ by $\de$ if and only
if $g'$ is directed into or out of $\csimp_e$ by $\de_L$,
respectively.
 \item\label{m2jc}
 Half-edges $g$ and $h$ of $\cM$ are consecutive in the rotation
around $v_e$ if and only if $g'$ and $h'$ are incident with consecutive
vertices $x$ and $y$ of the e-square in $\csimp_e$.  Moreover, the color
of the face between $g$ and $h$ in $\cM$ is the same as the
color of the edge of $\cL$ joining $x$ and $y$.
 \end{enumerate}
 \end{observation}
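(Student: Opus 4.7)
The plan is to prove all three parts by tracing the constructions carefully through the barycentric subdivision $\cB$, which is the common bridge between $\cM$ and $\cL$, exploiting the identity $M = L \cg (E(L) - \ec\ca\cL)$ noted in the paragraph preceding the observation.

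For part \ref{m2ja}, I would first recall from Subsection \ref{gjm} that edges of $M$ correspond exactly to edges of color $\ca$ in $\cB$, and that these in turn correspond, via $\cJ = \cB\sdu$ (with colors transferred to $\cL$), to edges of color $\ca$ in $\cL$. The vertex $v_e$ of $\cM$ arises from the type E vertex of $\cB$ that subdivides $e$, while $\csimp_e$ is the e-simplex whose e-square corresponds (via the dual) to that same type E vertex. Contracting all edges of $L$ except those colored $\ca$ then collapses $\csimp_e$ to $v_e$, and the identification of half-edges in part \ref{m2ja} is exactly the identification of half-edges preserved by this contraction.

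Part \ref{m2jb} follows almost immediately from part \ref{m2ja}: contracting a connected set of edges to a single vertex preserves the notion of ``in'' and ``out'' for uncontracted half-edges, so the bidirection correspondence is just the obvious one. I would state this as a short consequence.

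Part \ref{m2jc} is the main obstacle, since it requires relating the \emph{rotation} at $v_e$ in $\cM$ (which is embedding data) to the combinatorial adjacency structure within the e-square of $\csimp_e$. The plan is to examine the four flag triangles of $\cB$ that surround the type E vertex: these are arranged cyclically, and two adjacent flags share either a $\cv$-edge (meeting at a common type V vertex) or a $\cf$-edge (meeting at a common type F vertex), alternating around the type E vertex. Consecutive half-edges $g,h$ of $M$ around $v_e$ correspond precisely to $\ca$-edges of two such adjacent flags, and in $\cJ$ (hence $\cL$) adjacent flags correspond to vertices of $\csimp_e$ joined by an edge of the e-square whose color ($\cv$ or $\cf$) records which kind of vertex (V or F) is shared. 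Finally, since $\cM$ is obtained from $\cB$ by contracting $\ca$-edges and recording the type of vertex contracted as the face color (as described in Subsection \ref{gjm}), the face of $\cM$ between $g$ and $h$ is colored $\cv$ or $\cf$ exactly when the shared vertex is of type V or type F, matching the color of the corresponding edge of the e-square. This completes the argument; what looks elaborate reduces to unrolling the definitions, with the only real care being the correct identification of which type of shared vertex yields which face color.
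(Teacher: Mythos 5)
The paper offers no proof of this statement at all: it is labelled an Observation and is justified only by the remark immediately preceding it, that $M$ can be identified with $L \cg (E(L)-\ec\ca\cL)$, together with the constructions of $\cB$, $\cJ$, $\cL$, and $\cM$ in Subsection \ref{gjm}. Your proposal is a correct unrolling of exactly those definitions, and the key step for part \ref{m2jc} --- that the four flags around the type~E vertex subdividing $e$ dualize to the four vertices of the e-square, with adjacent flags sharing a type~V or type~F vertex according as the edge of $\cB$ between them is colored $\cv$ or $\cf$ --- is the right one. The only slip is your parenthetical description of $\cM$ as obtained from $\cB$ ``by contracting $\ca$-edges'': in the paper's construction $\cM$ is the colored \emph{dual} of the $\ca$-edge subgraph $\eB'$ (equivalently, $M$ is obtained from $J$ or $L$ by contracting the \emph{non}-$\ca$ edges), so the faces of $\cM$ correspond to the type~V and type~F vertices of $\cB$ by duality rather than contraction; the conclusion you draw from it --- that the face between $g$ and $h$ inherits the color $\cv$ or $\cf$ of the shared vertex --- is nevertheless exactly what that construction gives, so the argument stands.
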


 Suppose that $\eG$ is an embedded graph with medial checkerboard $\cM$
and jewel $\cL$.  Using Observation \ref{medch2jewel}\ref{m2ja} and
\ref{m2jb}, we set up a correspondence between a bidirection $\de$
of $M$ and a binary labeling $\la$ of $L$ by assigning $0$ or $1$ to
each $x \in V(L)$ according to whether the unique bidirected half-edge
incident with $x$ is directed into or out of $x$ by $\de_L$,
respectively.  We write $\la = \tbdbl(\de)$, and $\tbdbl$ is a
bijection, so $\de = \tbdbl\iv(\la)$.

 Note that the correspondence between half-edges of $M$ and certain
half-edges of $L$ is not affected by taking twisted duals.  Thus, the
correspondence between $\de$, $\de_L$, and $\la=\tbdbl(\de)$ is also not
affected by taking twisted duals.  
 The type of a vertex of the medial checkerboard $\cM$ may change under
taking a twisted dual, but the correspondence described in Observation
\ref{medch2jewel}\ref{m2jc} holds both for the original embedding and
for the modified embedding.

 Given $\de$ and $\la = \tbdbl(\de)$, we know $\la$ is an
$S$-labeling for some unique $S \subseteq E(L)$.  Then $\de$ is a
direction if and only if for each $e \in \ec\ca\cL$ with endpoints $x$
and $y$, $\la(x) \ne \la(y)$, which is equivalent to $\ec\ca\cL
\subseteq S$.  Therefore, by Table \ref{tab:condoribip} the conditions
$(001)$, $(010)$, $(100)$, and $(111)$ 
correspond to medial checkerboard directions.
 Similarly, $\de$ is an antidirection if and only if $S \cap
\ec\ca\cL = \emptyset$, so the conditions $(011)$, $(101)$, and $(110)$
correspond to medial checkerboard antidirections.  We will consider the
directions first.

 \subsection{Medial checkerboard directions and Fano
properties}\label{ss:directions}

 We begin by considering condition $(100)$.  Much of the work is done by
the following lemma.

 \begin{lemma}\label{cedgefza}
 Let $\eG$ be a graph embedding with medial checkerboard $\cM$ and jewel
$\cL$.  Let $\de$ be a bidirection of $M$ and $\la=\tbdbl(\de)$ the
corresponding binary labeling of $\cL$.
 Then $\de$ is a c-direction of $\cM$ if and only if $\la$ is an
$\ec{\cf\cc\cz\cc\ca}\cL$-labeling of $\cL$.
 \end{lemma}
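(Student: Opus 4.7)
The plan is to analyze the c-direction condition locally at each vertex $v_e$ of $\cM$ via the bijection $\tbdbl$ and the local structure of the e-simplex $\csimp_e$ of $\cL$. As a preliminary step, by Observation \ref{medch2jewel}\ref{m2jb}, an edge of $M$ is directed (as opposed to antidirected) if and only if the two endpoints of the corresponding $\ca$-edge of $\cL$ receive different values of $\la$. Hence $\de$ is a direction of $M$ if and only if $\la$ flips across every edge in $\ec\ca\cL$.

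For the main local argument, fix $e \in E(G)$ and label the vertices of the e-square in $\csimp_e$ cyclically as $x_1, x_2, x_3, x_4$ so that edges $x_1 x_2, x_3 x_4$ are colored $\cv$ and $x_2 x_3, x_4 x_1$ are colored $\cf$ (the opposite parity choice is handled symmetrically). By Observation \ref{medch2jewel}\ref{m2jc}, the cyclic order of the four $\ca$-half-edges around $v_e$ in $\cM$ matches $x_1, x_2, x_3, x_4$, with the face color between consecutive half-edges equal to the color of the corresponding e-square edge. Recalling that $\la(x_i) = 0$ (resp.\ $1$) means the $\ca$-half-edge at $x_i$ points into (resp.\ out of) $\csimp_e$, the c-vertex pattern $(\text{in}, \cv, \text{in}, \cf, \text{out}, \cv, \text{out}, \cf)$ translates through $\tbdbl$ to the constraint $\la(x_1) = \la(x_2) \neq \la(x_3) = \la(x_4)$. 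Within $\csimp_e$ this is exactly the statement that $\la$ is constant on the two $\cv$-edges, flips on the two $\cf$-edges, and, since the diagonals $x_1 x_3$ and $x_2 x_4$ of the e-square are the $\cz$-edges of $\csimp_e$, flips on the two $\cz$-edges as well.

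To conclude, every edge of $\cL$ colored $\cv$, $\cf$, or $\cz$ belongs to exactly one e-simplex, so applying the local equivalence at every $v_e$ shows that the c-vertex condition holds at every vertex of $\cM$ if and only if $\la$ is constant on $\ec\cv\cL$ and flips across every edge of $\ec{\cf\cc\cz}\cL$. Combined with the preliminary step on $\ca$-edges, $\de$ is a c-direction of $\cM$ if and only if $\la$ is an $\ec{\cf\cc\cz\cc\ca}\cL$-labeling, yielding the desired biconditional; both implications come from reading the same local analysis forward or backward.

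The only real (but minor) obstacle is cyclic bookkeeping at $v_e$: one has to verify that a half-turn of the cyclic pattern $(\text{in}, \cv, \text{in}, \cf, \text{out}, \cv, \text{out}, \cf)$ reproduces the same pattern, so that both label assignments $(\la(x_1),\la(x_2),\la(x_3),\la(x_4)) = (0,0,1,1)$ and $(1,1,0,0)$ are legitimate instances of a c-vertex, and that swapping which pair of opposite e-square edges is colored $\cv$ versus $\cf$ at the start of the argument produces an isomorphic analysis. Beyond this routine check, no additional difficulty arises.
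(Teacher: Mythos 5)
Your proof is correct and follows essentially the same route as the paper's: both translate the c-vertex pattern at each vertex $v_e$ of $\cM$ into a local constraint on $\la$ at the corresponding e-square via Observation \ref{medch2jewel}, observe that the constraint means $\la$ is constant on the $\cv$-edges and flips on the $\cf$-, $\cz$- (diagonal), and $\ca$-edges, and read the chain of equivalences in both directions. Your explicit attention to the half-turn symmetry of the cyclic pattern is a reasonable bookkeeping point that the paper leaves implicit.
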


 \begin{proof}
 We reason as follows.

 \smallskip
 \rgp13 Under $\de$ every edge of $M$ is directed and every edge
of $\eG$ is a c-edge.

 \rgp13 $\ifoif$
 Under $\de$ every edge of $\cM$ is directed, and around a vertex the
cyclic order of edge directions and face colors is (in (towards the
vertex), $\cv$, in, $\cf$, out, $\cv$, out, $\cf$).

 \rgp13 $\ifoif$
 (Observation \ref{medch2jewel})\quad
 Under $\de_L$ each edge of color $\ca$ in $\cL$ is directed, and around
an e-square of $\cL$ the cyclic order of directions of incident
half-edges and edge colors is (in (towards the e-square), 
 $\cv$, in, $\cf$, out, $\cv$, out, $\cf$).

 \rgp13 $\ifoif$
 The binary labeling $\la=\tbdbl(\de)$ of $L$ changes value across edges
of color $\ca$ in $\cL$, and around an e-square of $\cL$ the cyclic
order of label values and edge colors is
 ($0$, $\cv$, $0$, $\cf$, $1$, $\cv$, $1$, $\cf$).
 so that values change across edges of colors $\cf$ and $\cz$, but do
not change across edges of color $\cv$.

 \rgp13 $\ifoif$
 $\la$ is an $\ec{\cf\cc\cz\cc\ca}\cL$-labeling of $\cL$.
 \end{proof}

 \begin{theorem}\label{cedge100}
 Let $\eG$ be a graph embedding with medial checkerboard $\cM$. Then
$\eG$ satisfies condition $(100)$, i.e., $\eG\sdu\spe$ and
$\eG\sdu\spe\sdu$ are orientable, if and only if there is a c-direction
of $\cM$.  Such a direction is a balanced direction, and is unique up to
reversal on components of $M$.
 \end{theorem}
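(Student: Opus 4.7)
The plan is to prove the equivalence by chaining together the machinery already developed, with Lemma \ref{cedgefza} doing essentially all the work.

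First, I would argue the equivalence. By Lemma \ref{cedgefza}, a c-direction $\de$ of $\cM$ corresponds under the bijection $\tbdbl$ to an $\ec{\cf\cc\cz\cc\ca}\cL$-labeling $\la$ of $L$. By Theorem \ref{oddwalkoddcycletheorem}\ref{sbiplabel}, such a labeling of $L$ exists if and only if $\ec{\cf\cc\cz\cc\ca}\cL$ is a coboundary in $L$, which by Table \ref{tab:condoribip} is exactly condition $(100)$, i.e., $\eG\sdu\spe$ (equivalently $\eG\sdu\spe\sdu$) is orientable. So existence of a c-direction is equivalent to condition $(100)$.

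Next, I would note that a c-direction is balanced directly from Definition \ref{bcdt}: at each c-vertex exactly two incident half-edges point in and two point out. Alternatively, this follows from Observation \ref{bidirdir} together with the fact that every edge of $\eG$ is a c-edge (hence none is a t-edge).

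For uniqueness up to reversal on components of $M$, I would combine Observation \ref{binlab}\ref{blU} with the observation that components of $L$ and components of $M$ are in natural bijection. Since $M = L \cg (E(L) - \ec\ca\cL)$, contracting edges of $L$ cannot change the number of components, and each component of $M$ is built by contracting each e-simplex of a component of $L$ to a single vertex and keeping all $\ca$-edges. Thus the four vertices of $L$ making up an e-simplex lie in the same component of $L$ as the corresponding vertex of $M$. Flipping the values of $\la$ on a component of $L$ flips both endpoints of each $\ca$-edge in the corresponding component of $M$, which under $\tbdbl\iv$ reverses the direction of every edge in that component of $M$. Combined with Observation \ref{binlab}\ref{blU}, this gives uniqueness of the c-direction up to reversal on components of $M$.

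The one piece that requires slightly careful bookkeeping is the component correspondence for uniqueness; everything else is a direct translation via Lemma \ref{cedgefza}, Theorem \ref{oddwalkoddcycletheorem}, and Table \ref{tab:condoribip}. I expect the main obstacle to be writing the component argument cleanly, since one must be careful to distinguish reversing an edge of $M$ (flipping both its half-edge directions) from flipping a single vertex of $L$ (which corresponds to reversing only one $\ca$-half-edge and thus does not in general preserve being a direction).
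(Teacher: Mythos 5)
Your proof is correct and follows the paper's route for the main equivalence and for balancedness: Lemma \ref{cedgefza} reduces the existence of a c-direction of $\cM$ to the existence of an $\ec{\cf\cc\cz\cc\ca}\cL$-labeling of $\cL$, Table \ref{tab:condoribip} identifies that with condition $(100)$, and Observation \ref{bidirdir} gives that the direction is balanced. The only place you diverge is the uniqueness clause. The paper argues directly in $\cM$: at each vertex $v_e$ there are exactly two ways to direct the incident half-edges so that $v_e$ is a c-vertex, these two ways are reverses of each other, and the choice at one vertex propagates to every vertex in its component of $M$. You instead pull uniqueness back through $\tbdbl$ to Observation \ref{binlab}\ref{blU} and then verify that flipping the labeling on a component of $L$ corresponds to reversing every edge of $M$ in the corresponding component; your component correspondence (contraction preserves components, and both ends of each $\ca$-edge lie in one component) and your closing caveat (that flipping a single vertex of $L$ reverses only one half-edge and so need not preserve being a direction) are both sound. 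Both arguments are short; yours reuses machinery already in place, while the paper's is self-contained and avoids the component bookkeeping.
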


 \begin{proof}
 Let $\cL$ be the jewel of $\eG$.  By Lemma \ref{cedgefza} there is a
c-direction $\de$ of $\cM$ if and only if $\cL$ has an
$\ec{\cf\cc\cz\cc\ca}\cL$-labeling $\la =\tbdbl(\de)$, which by
Table \ref{tab:condoribip} is true if and only if $\eG$ satisfies condition
$(100)$.

 If $\de$ exists, it is a balanced direction by Observation
\ref{bidirdir}.
 There are two choices for directing the edges incident with a given
vertex $v_e$ of $\cM$ so that $v_e$ is a c-vertex, which are reverses of
each other.  But once we choose one of these we fix the choice at all
neighbors of $v_e$, and by propagation, at all vertices in the same
component as $v_e$.  Thus, $\de$ is unique up to reversal on
components of $M$.
 \end{proof}

 We can also begin with a $4$-regular graph $M$ with a balanced
direction $\de$, and ask whether we can embed and face-color $M$ to
obtain a medial checkerboard with a bidirection as in Theorem
\ref{cedge100}, corresponding to an embedded graph $\eG$ for which
condition $(100)$ holds.  In general we obtain many such $\eG$.

 \def\sG{\mathcal{G}}
 \begin{theorem}\label{4reg100}
 Let $M$ be a $4$-regular graph with a balanced direction $\de$.  We can
embed and face-color $M$ to obtain at least one medial checkerboard
$\cM$ in which every vertex of $\cM$ is a c-vertex under $\de$.  The
medial checkerboards $\cM$ obtainable in this way correspond to an
equivalence class $\sG$ of embedded graphs under partial Petrie duality. 
Any other bidirected $4$-regular graph that corresponds to $\sG$ in this
way is isomorphic to $M$ with a bidirection obtained from $\de$ by
reversal on a subset of components of $M$.
 \end{theorem}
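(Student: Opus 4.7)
The plan is to build the desired medial checkerboards by constructing their corresponding jewels directly from $(M, \de)$, using the bijection between jewels and embedded graphs from Subsection \ref{gjm}.

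For the first assertion, I construct a jewel $\cL$ as follows. For each vertex $v$ of $M$, whose four half-edges are split by $\de$ into two incoming and two outgoing, introduce four vertices of $\cL$ indexed by these half-edges and form an e-simplex on them by placing the two $\cv$-edges within the partition (joining the two incoming vertices, and joining the two outgoing vertices), while pairing the remaining four ``between'' edges of the $K_4$ into two $\cf$-edges and two $\cz$-edges, each color forming a perfect matching across the partition; exactly two such pairings exist, and I pick one arbitrarily at each $v$. For each edge $e=gh$ of $M$, with $g$ incident to $v$ and $h$ incident to $w$, I add an $\ca$-edge joining the vertex indexed by $g$ at $v$ to that indexed by $h$ at $w$. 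All jewel axioms are immediate from the construction, so this yields an embedded graph $\eG$ and medial checkerboard $\cM$; contracting the non-$\ca$ edges of $L$ returns $M$, so $M(\eG) = M$. Letting $\la = \tbdbl(\de)$, so that $\la$ assigns $0$ to vertices indexed by incoming half-edges and $1$ to those indexed by outgoing half-edges, I note that by construction $\cv$-edges connect same-label vertices while edges colored $\cf$, $\cz$, or $\ca$ connect different-label vertices, so $\la$ is an $\ec{\cf\cc\cz\cc\ca}\cL$-labeling of $\cL$. Lemma \ref{cedgefza} then gives that $\de$ is a c-direction of $\cM$.

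For the second assertion, the only freedom in the construction is, at each vertex $v \in V(M)$, the choice of which of the two valid ``between''-matchings is colored $\cf$ (the other then being $\cz$). Swapping this choice at $v$ is exactly exchanging the colors $\cf$ and $\cz$ in the e-simplex of $\cL$ associated with the edge $e$ of $G$ corresponding to $v$, which by Subsection \ref{td} implements partial Petrie duality with respect to $\{e\}$. As the choices range over $\{0,1\}^{V(M)}$, the embedded graphs produced therefore range over exactly one equivalence class $\sG$ of embedded graphs under partial Petrie duality.

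For the third assertion, suppose $(M', \de')$ yields the same class $\sG$. Fix any $\eG \in \sG$. Since the medial graph of an embedded graph depends only on its rotation system, and partial Petrie duality preserves rotations while modifying only edge signatures, the abstract labeled graph $M(\eG)$ is constant across $\sG$; hence $M' \cong M$ via the canonical identification of medial-graph vertices with edges of the underlying graph. Under this identification, both $\de$ and $\de'$ are c-directions of $\cM(\eG)$, so Theorem \ref{cedge100} forces them to coincide up to reversal on components of $M$. The main obstacle is the verification that exchanging $\cf$ and $\cz$ in the e-simplex of $\cL$ for $e$ implements partial Petrie duality with respect to $\{e\}$, and that the number of valid pairings inside each e-simplex is exactly two; the first follows from the jewel description of partial Petrie duality in Subsection \ref{td}, and the second is a direct combinatorial check in $K_4$.
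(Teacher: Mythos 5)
Your proposal is correct and follows essentially the same route as the paper: expand each vertex of $M$ into a $K_4$ to build the jewel, use the labeling $\tbdbl(\de)$ to color the same-label simplex edges $\cv$ and split the two cross-matchings into $\cf$ and $\cz$, invoke Lemma \ref{cedgefza} for the c-direction claim, identify the $\cf$/$\cz$ freedom with partial Petrie duality, and use Theorem \ref{cedge100} for uniqueness of $\de$ up to component reversal. The only difference is that you spell out a few details (the e-square check inside each simplex, the invariance of the medial graph under partial Petrie duality) that the paper leaves implicit.
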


 \let\simp=T
 \begin{proof}
 Create a new $4$-regular graph $L$ from $M$ by expanding each vertex
$v$ of $M$ to a copy $\simp_v$ of $K_4$ (a \emph{simplex}), so that each
edge of $M$ incident with $v$ becomes incident with a distinct vertex of
$\simp_v$ in $L$.  Then $L$ has the structure of the underlying graph of
a jewel, and $M = L \cg S$ where $S$ is the set of simplex edges.
 We can obtain a binary labeling $\la=\tbdbl(\de)$ of $L$ from $\de$ in
the manner described earlier for checkerboards and jewels, with an
intermediate bidirection $\de_L$ of $A= E(L)-S$.

 In each $\simp_v$ two vertices will be labeled $0$ and two vertices
will be labeled $1$.  Color the edges of $A$ in $L$ with $\ca$, and
color the two edges of each simplex with equal labels on their
endvertices with $\cv$.  The remaining four edges of the simplex form
two matchings: color the edges of one matching with $\cf$ and the edges
of the other with $\cz$.  The result is a jewel $\cL$ for which $\la$ is
a $\ec{\cf\cc\cz\cc\ca}\cL$-labeling, corresponding to an embedded graph
$\eG$ satisfying condition $(100)$.  Since $M = L \cg
\ec{\cv\cc\cf\cc\cz}\cL$, $M$ is the medial graph of $\eG$, and so the
medial checkerboard $\cM$ of $\eG$ is obtained by embedding and
face-coloring $M$.
 Now $\de$ is a bidirection of $\cM$ and $\la = \tbdbl(\de)$ is an 
$\ec{\cf\cc\cz\cc\ca}\cL$-labeling of $\cL$.
 Thus, by Lemma \ref{cedgefza}, every edge of $\eG$ is a c-edge under
$\de$, as required.

 This construction allows us to swap the colors $\cf$ and $\cz$ on any
simplices of $\cL$, which corresponds to taking arbitrary partial
Petrie duals of $\eG$, forming an equivalence class $\sG$.
 Any $\eG \in \sG$ determines $M$, and by Theorem \ref{cedge100} it also
determines $\de$ up to reversal on components of $M$.
 \end{proof}

 Every $4$-regular graph has a balanced direction (take an Euler circuit
$T$ and direct every edge in the direction of $T$), so the proof of
Theorem \ref{4reg100} gives a method for constructing graph embeddings
with a given medial graph $M$ and satisfying condition $(100)$.
 Also, note that two embedded graphs in the class $\sG$ from Theorem
\ref{4reg100} always have the same medial graph $M$, but their medial
embeddings $\eM$ will usually be distinct.

 We can apply similar analyses to conditions $(001)$ and $(010)$.  We
omit the proofs, but state the results in Theorems \ref{bcdedge} and
\ref{4regdir} below.

 The fourth condition related to a medial checkerboard direction is
condition $(111)$.  This is equivalent to $\cM$, or $M$, being
bipartite.  While an argument using an $\ec\ca\cL$-labeling of $\cL$ can
be made, it is easy to see directly that this is equivalent to $\cM$
having a bidirection in which every edge of $\eG$ is a t-edge: the
bipartition of $\cM$ is just the partition into t\ps- and t\ms-vertices.
 For the counterpart of Theorem \ref{4reg100} for condition $(111)$, we
create a jewel $\cL$ with an $\ec\ca\cL$-labeling based on a total
direction of $M$.  In any given simplex, all vertices have the same
label, and we can color the three matchings in the simplex arbitrarily
with the colors $\cv, \cf, \cz$.  We can permute these colors
arbitrarily on each simplex, which corresponds to taking arbitrary
twisted duals of $\eG$.

  The results for all four conditions $(001)$, $(010)$, $(100)$, and
$(111)$ can be summarized by the following two theorems, which refer to
Table \ref{tab:bcd}.


 \def\operation#1{$\langle$operation$\rangle$}
 \begin{theorem}\label{bcdedge}
 Let $\eG$ be a graph embedding with medial checkerboard $\cM$ and jewel
$\cL$.  Choose $\gtype$, $\stype$, $S$ and $(\be)$ from the same row of Table
\ref{tab:bcd}. 
 \begin{enumerate}[label=\rm(\alph*)]\setlength{\itemsep}{0pt}
 \item
 Let $\de$ be a bidirection of $M$ and $\la=\tbdbl(\de)$ the
corresponding binary labeling of $L$.
 Then $\de$ is a $\stype$-direction of $\cM$ if and only if $\la$ is
an $S$-labeling of $\cL$.
 \item
 Condition $(\be)$ holds for $\eG$ if and only if there is a
$\stype$-direction of $\cM$.  Such a direction is a direction of general
type $\gtype$, and is unique up to reversal on components of $M$.
 \end{enumerate}
 \end{theorem}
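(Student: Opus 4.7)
The plan is to generalize Lemma \ref{cedgefza} and Theorem \ref{cedge100} uniformly across the four rows of Table \ref{tab:bcd}, so that the c-direction case already handled becomes one instance of the same local calculation. For part (a), I would fix a row and, using Observation \ref{medch2jewel}, transport the cyclic pattern of half-edge directions and incident face colors prescribed at a $\stype$-vertex $v_e$ by Definition \ref{bcdt} to a cyclic pattern of binary labels and edge colors around the e-square of the e-simplex $\csimp_e \subseteq \cL$, under the bijection $\la = \tbdbl(\de)$. One then walks around $\csimp_e$ and records, for each of the four colors $\cv,\cf,\cz,\ca$, whether the labels at the endpoints differ; the set of colors on which they do is exactly the target $S$.

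Concretely I would work the three remaining cases in turn. For the b-direction row (condition $(001)$), the pattern (in, out, in, out) yields labels $(0,1,0,1)$ around the e-square, so labels flip across the $\cv$- and $\cf$-edges of the square; the $\cz$-diagonals join equally labeled vertices and do not flip; and because $\de$ is a genuine direction the $\ca$-half-edges flip, giving $S = \ec{\cv\cc\cf\cc\ca}\cL$, which matches Table \ref{tab:condoribip}. For the d-direction row (condition $(010)$), the pattern (in, $\cf$, in, $\cv$, out, $\cf$, out, $\cv$) yields labels $(0,0,1,1)$ around the square, so flips occur across $\cv$, across the $\cz$-diagonals (which now connect opposite labels), and across $\ca$, but not across $\cf$, giving $S = \ec{\cv\cc\cz\cc\ca}\cL$. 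For the t-direction row (condition $(111)$), all four half-edges at $v_e$ point the same way, so all four e-square labels are equal, no $\cv,\cf,\cz$ edges flip, and only $\ca$-edges flip, giving $S = \ec{\ca}\cL$. The c-direction case is exactly Lemma \ref{cedgefza}. Since $\tbdbl$ is a bijection in each case, $\de$ is a $\stype$-direction of $\cM$ if and only if $\la = \tbdbl(\de)$ is an $S$-labeling of $\cL$.

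For part (b), I would combine part (a) with Theorem \ref{oddwalkoddcycletheorem} and Table \ref{tab:condoribip}: condition $(\be)$ is equivalent to the existence of an $S$-labeling of $\cL$ for the appropriate $S$, which by part (a) is equivalent to the existence of a $\stype$-direction of $\cM$. That such a direction has general type $\gtype$ (balanced for $\stype \in \{\mathrm{b},\mathrm{c},\mathrm{d}\}$, total for $\stype = \mathrm{t}$) is immediate from Observation \ref{bidirdir}. For uniqueness up to reversal on components of $M$, I would invoke Observation \ref{binlab}\ref{blU}: an $S$-labeling of $\cL$ is unique up to flipping on components of $L$, and under $\tbdbl$, flipping $\la$ on the vertices of $L$ corresponding to a component $C$ of $M$ (the four vertices of each e-simplex $\csimp_e$ for $v_e \in C$) corresponds precisely to reversing $\de$ on $C$.

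The main obstacle is purely the bookkeeping of the local patterns around the e-square: in each row the diagonal color $\cz$ behaves differently (it flips for the d- and c-rows because their label sequence is $(0,0,1,1)$ whose diagonals are $(0,1)$ pairs, but does not flip for the b-row whose $(0,1,0,1)$ sequence puts equal labels on the diagonals, and trivially does not flip in the t-row), so one must be careful to verify the four flip-patterns against the four columns of Table \ref{tab:condoribip} exactly. Once this local calculation is done, the global statement and the uniqueness clause follow formally with no further geometric input.
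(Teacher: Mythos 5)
Your proposal is correct and takes essentially the same route as the paper: the authors prove the c-direction row in full (Lemma \ref{cedgefza} and Theorem \ref{cedge100}) and explicitly omit the ``similar analyses'' for the b-, d-, and t-rows, which is exactly the local e-square bookkeeping you carry out, and your flip-patterns ($S=\ec{\cv\cc\cf\cc\ca}\cL$, $\ec{\cv\cc\cz\cc\ca}\cL$, $\ec{\ca}\cL$) all check against Table \ref{tab:bcd}. Your uniqueness argument via Observation \ref{binlab}\ref{blU} is a slight (valid) repackaging of the paper's propagation argument in Theorem \ref{cedge100}, and your direct labeling treatment of the t-row is the alternative the paper itself notes ``can be made.''
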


 \def\sO{\mathcal{O}}
 \begin{table}[h]
 \def\hquad{\hskip0.5em\relax}
 \def\vstrut#1#2{\vrule height#1 depth#2 width0pt}
 \def\hdstrut{\vstrut{13pt}{5pt}}
 \begin{center}\rm
 \begin{tabular}{|l|l|l|l|l|}
 \hline
	\multicolumn{2}{|c|}{\hdstrut Direction of $\cM$}
	&&&
	\\
 \cline{1-2}
	\hdstrut General
        & Specific
	& $S$-labeling
	& Condition $(\be)$ and
	& Operation class $\sO$
	\\
	type $\gtype$
 	& type $\stype$
	& \quad of $\cL$
	& \quad property of $\eG$
	& \quad defining $\sG$
	\\
 \hline
 \vstrut{12pt}{4pt}%
	balanced
  & \quad b &	$\ec{\cv\cc\cf\cc\ca}\cL$-labeling
			& condition $(001)$, i.e., &	partial duality \\
 &&& $\eG$, $\eG\sdu$ orientable & \\
 \vstrut{12pt}{4pt}%
	balanced
  & \quad c &	$\ec{\cf\cc\cz\cc\ca}\cL$-labeling
			& condition $(100)$, i.e., &	partial Petrie duality \\
 &&& $\eG\sdu\spe$, $\eG\sdu\spe\sdu$ orientable & \\
 \vstrut{12pt}{4pt}%
	balanced
  & \quad d &	$\ec{\cv\cc\cz\cc\ca}\cL$-labeling
			& condition $(010)$, i.e., &	partial Wilson duality \\
 &&& $\eG\spe$, $\eG\spe\sdu$ orientable & \\
 \vstrut{12pt}{4pt}%
	total
  & \quad t &	$\ec{\ca}\cL$-labeling
			& condition $(111)$, i.e., &	twisted duality \\
 &&& $\cM$ bipartite & \\
 \hline
 \end{tabular}
 \caption{Correspondences for Theorems \ref{bcdedge} and
\ref{4regdir}.}%
 \label{tab:bcd}
 \end{center}
 \end{table}

 \begin{theorem}\label{4regdir}
 Choose $\gtype$, $\stype$, $(\be)$, and $\sO$ from the same row of
Table \ref{tab:bcd}.
 Let $M$ be a $4$-regular graph with a direction $\de$ of general type
$\gtype$; $M$ always has such a direction if $\gtype$ is `balanced'.

 We can embed and face-color $M$ to obtain at least one medial
checkerboard $\cM$ in which $\de$ is a $\stype$-direction, i.e., so that
condition $(\be)$ holds for the corresponding embedded graph $\eG$.  The
medial checkerboards $\cM$ obtainable in this way correspond to an
equivalence class $\sG$ of embedded graphs under the operations in
$\sO$.  Any other bidirected $4$-regular graph that corresponds to $\sG$
in this way is isomorphic to $M$ with a bidirection obtained from $\de$
by reversal on a subset of components of $M$.
 \end{theorem}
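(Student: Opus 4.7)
The plan is to generalize the construction used in the proof of Theorem \ref{4reg100} so that it handles all four rows of Table \ref{tab:bcd} uniformly. Given the $4$-regular graph $M$ and a direction $\de$ of general type $\gtype$, I would first produce a candidate jewel $\cL$ by replacing each vertex $v$ of $M$ with a simplex $\simp_v \cong K_4$, attaching the four edges of $M$ originally at $v$ to the four vertices of $\simp_v$; color each original edge of $M$ with $\ca$, and transfer $\de$ to a bidirection $\de_L$ of the $\ca$-colored edges of $L$ in the canonical way. Then let $\la = \tbdbl(\de)$ be the binary labeling of $L$ obtained from $\de_L$ as in Subsection \ref{ss:bidirections}.

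The key observation driving the proof is that the label pattern on each simplex $\simp_v$ is completely determined by the type of $\de$ at $v$. If $\de$ is balanced at $v$ (rows b, c, d), then $\simp_v$ has exactly two vertices labeled $0$ and two labeled $1$, so the three perfect matchings of $\simp_v$ split into one ``equal-label'' matching and two ``distinct-label'' matchings; if $\de$ is total at $v$ (row t), all four vertices share a label and all three matchings are equal-label. The row of Table \ref{tab:bcd} then prescribes exactly which color each matching must receive in order for $\la$ to be the required $S$-labeling: in row c, for instance, the unique equal-label matching must be colored $\cv$, while $\cf$ and $\cz$ may be assigned to the two distinct-label matchings in either order. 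Once the matchings of every simplex are colored in this way, $\cL$ is a legitimate jewel, Lemma \ref{cedgefza} (and its obvious analogues for the b, d, and t rows) yields $\la$ as an $S$-labeling of the required form, and the resulting embedded graph $\eG$ has medial graph $M$ with $\de$ as a $\stype$-direction of $\cM$; hence condition $(\be)$ holds.

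The flexibility in the coloring corresponds exactly to the operations in $\sO$: in each balanced row the two distinct-label matchings of a simplex may be assigned the two available colors in either of two ways, and swapping these assignments is precisely a partial duality, partial Petrie duality, or partial Wilson duality on the corresponding edge of $\eG$; in the t row all three matchings are equal-label and interchangeable, so the three colors $\cv,\cf,\cz$ can be assigned to them in any of $3!$ ways per simplex, giving an arbitrary twisted dual. Consequently the medial checkerboards produced by the construction form exactly one $\sO$-equivalence class $\sG$. For the uniqueness claim, Theorem \ref{bcdedge}(b) shows that any $\eG'\in\sG$ determines its medial graph together with the witnessing bidirection up to reversal on components, so the associated bidirected $4$-regular graph must be isomorphic to $(M,\de)$ modulo such reversals. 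For the existence of $\de$ when $\gtype$ is ``balanced'', take an Euler circuit on each component of $M$ and direct every edge along it.

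The main obstacle is organizational rather than conceptual: one must align the four rows of Table \ref{tab:bcd} carefully enough that a single simplex-coloring argument handles them all, and verify row by row that the allowed local color swaps correspond exactly to the operation class $\sO$. Once the label-pattern dichotomy (balanced vs.\ total) on each simplex is in place, each case reduces to applying the relevant Lemma~\ref{cedgefza}-type statement and reading the conclusion off Table \ref{tab:condoribip}.
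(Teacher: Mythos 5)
Your proposal is correct and takes essentially the same route the paper intends: the paper proves the c-row case in full as Theorem \ref{4reg100} and explicitly omits the remaining cases as ``similar analyses,'' and your simplex-expansion construction, the dichotomy between balanced (two-and-two) and total (constant) label patterns on each $K_4$, the identification of the permissible matching recolorings with the operation class $\sO$, and the appeal to Theorem \ref{bcdedge} for uniqueness up to component reversal are exactly that generalization. No gaps.
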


 The results summarized in Theorems \ref{bcdedge} and \ref{4regdir} are,
as far as we are aware, new, except for part of the result for
d-directions, stated earlier as Theorem \ref{petrieoridedge}, due to Yan
and Jin.
 In particular, there are no prior results involving b-edges.

 There are other ways to prove some of these results, as we already
noted above for condition $(111)$.  For example, we can show that if
$\eG$ is orientable, then there is a b-direction of $\cM$: just choose
one color, either $\cv$ or $\cf$, and direct all faces of $\eM$ of that
color in the clockwise direction for some orientation of the surface. 
 Yan and Jin used a similar argument to prove Theorem \ref{petrieoridedge}.
 Conversely, we can apply Theorem \ref{hao2imply3} to the medial
embedding $\eM$ to show that if there is a b-direction of $\cM$, then
$\eM$, and hence $\eG$, is orientable.

 \begin{figure}[tb]
     \centering
  \includegraphics{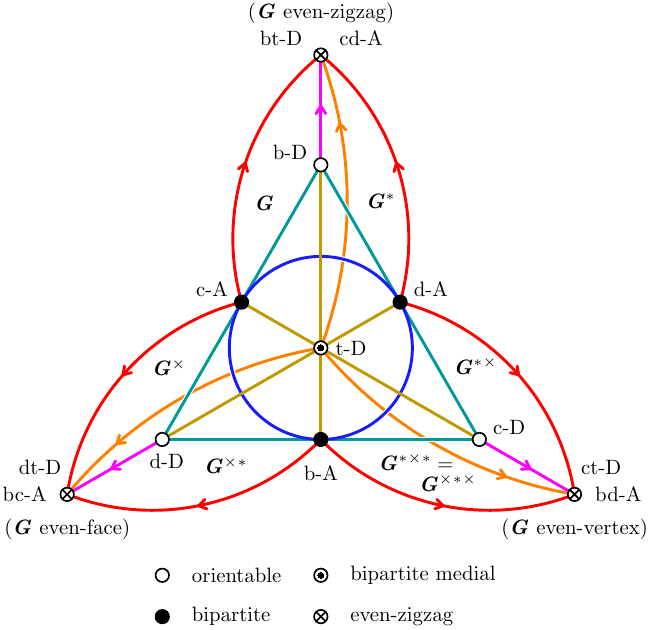}
     \caption{Extended Fano Framework (D = direction, A = antidirection).}
         \label{fig:FanoFrameworkExtended}
 \end{figure}

 We have now associated four of the seven Fano framework properties with
a property involving medial graph directions.  These results are
indicated on Figure \ref{fig:FanoFrameworkExtended}, an extended version
of Figure \ref{fig:FanoFramework}.
 Each of the central vertex and the open vertices is marked with the
type of direction that characterizes its property.  For example, the top
open vertex is marked with `b-D' to indicate that its property,
orientability of $\eG$ or $\eG\sdu$, corresponds to existence of a
b-direction of $\cM$.  Other parts of Figure
\ref{fig:FanoFrameworkExtended} will be explained later.

 \subsection{Twisted duality and balanced-total
bidirections}\label{ss:bidirtwistedduals}

 We next characterize the effect of partial duality and partial Petrie
duality on edge types for a bidirection $\de$ of the medial graph $M$ of an
embedded graph $\eG$.
 As noted in Subsection \ref{ss:directions}, a twisted duality operation
does not change $\de$ or the corresponding labeling $\la=\tbdbl(\de)$ of
the underlying graph $L$ of the jewel.
 However, the types of vertices of the medial checkerboard may change.
 Observation \ref{medch2jewel}\ref{m2jc} allows us to transfer
information about changes in the jewel to information about vertex types
in the medial checkerboard, i.e. edge types in the embedded graph.

 \begin{theorem}\label{typechange}
 Suppose we have an embedded graph $\eG$, a fixed balanced-total
bidirection $\de$ of its medial graph $M$, and $A \subseteq
E(G)$.
 In the partial dual $\eG\du A$, or partial Petrie dual $\eG\pe A$, the
type of each edge of $\eG$ not in $A$ does not change, and the type of
each edge in $A$ changes according to Table \ref{tab:typechange}.

 \begin{table}[h]
 \def\hquad{\hskip0.5em\relax}
 \def\vstrut#1#2{\vrule height#1 depth#2 width0pt}
 \def\hdstrut{\vstrut{13pt}{5pt}}
 \begin{center}\rm
 \begin{tabular}{|l||l|l|}
 \hline
 \multicolumn3{|c|}{\hdstrut Effect on edges in $A$} \\
 \hline
 \hdstrut Type in $\eG$ & Type in $\eG*A$ & Type in $\eG \ti A$ \\
 \hline
 \vstrut{13pt}{0pt}%
   \hquad b-edge & \quad b-edge & \quad d-edge \\
 \hquad c-edge & \quad d-edge & \quad c-edge \\
 \hquad d-edge & \quad c-edge & \quad b-edge \\
 \hquad t\ps-edge & \quad t\ps-edge & \quad t\ps-edge \\
 \vstrut{0pt}{5pt}%
   \hquad t\ms-edge & \quad t\ms-edge & \quad t\ms-edge \\
 \hline
 \end{tabular}
 \caption{Type changes under partial duality and partial Petrie duality.}
 \label{tab:typechange}
 \end{center}
 \end{table}
 \end{theorem}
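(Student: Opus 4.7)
The plan is to translate each vertex type at $v_e \in V(\cM)$ into a purely combinatorial condition on the e-simplex $\csimp_e$ of the jewel $\cL$, where the actions of partial duality and partial Petrie duality are trivial to read off. Let $\la = \tbdbl(\de)$ be the labeling of $L$ corresponding to $\de$ via the bijection of Subsection \ref{ss:bidirections}. A partial dual or partial Petrie dual acts on $\cL$ only by permuting the colors $\cv,\cf,\cz$ on certain e-simplices, so it leaves $\de$, $\la$, $V(L)$, and the edge set $\ec\ca\cL$ fixed. For $e \notin A$ the e-simplex $\csimp_e$ is not touched at all, so the correspondence in Observation \ref{medch2jewel}\ref{m2jc} between the cyclic structure at $v_e$ and the cyclic structure around the e-square $Q_e$ is preserved, and the type of $e$ is unchanged. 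Hence only edges $e \in A$ need analysis.

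Step 1 is to classify edge types via a \emph{label matching}. Since $\de$ is balanced-total, the four vertices of $\csimp_e$ carry $\la$-values that are either all equal or consist of exactly two $0$s and two $1$s. In the balanced case, the two edges of $\csimp_e$ (viewed as a copy of $K_4$) that join equal-label endpoints form a perfect matching of $\csimp_e$, coinciding with exactly one of the three color matchings; I will call its color the \emph{label color} of $e$. I would then verify the following dictionary by a direct case check against Definition \ref{bcdt} and Observation \ref{medch2jewel}\ref{m2jc}: all $\la$-values equal to $0$ (resp.\ $1$) gives a t\ms-edge (resp.\ t\ps-edge); label color $\cv$ gives a c-edge; label color $\cf$ gives a d-edge; label color $\cz$ gives a b-edge. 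For instance, if the label color is $\cv$, then the $\cv$-edges of $Q_e$ join equal-label vertices and the $\cf$-edges join different-label ones, so reading around $Q_e$ gives the cyclic pattern $(0,\cv,0,\cf,1,\cv,1,\cf)$ of labels and colors, which translates via Observation \ref{medch2jewel}\ref{m2jc} to the c-pattern in Definition \ref{bcdt}.

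Once the dictionary is in place, Table \ref{tab:typechange} falls out immediately. Partial duality at $e \in A$ transposes the colors $\cv$ and $\cf$ within $\csimp_e$ while fixing the label matching as an edge set of the underlying $K_4$, so its color is permuted by the transposition $(\cv\,\cf)$; translating back by the dictionary permutes edge types as c$\leftrightarrow$d with b, t\ps, t\ms fixed, which is the $\eG \du A$ column. Partial Petrie duality at $e \in A$ instead transposes $\cf$ and $\cz$ within $\csimp_e$, permuting label colors by $(\cf\,\cz)$ and therefore edge types as b$\leftrightarrow$d with c, t\ps, t\ms fixed, matching the $\eG \pe A$ column. The main obstacle will be Step 1: while the verification is routine, it requires some care, because the \emph{new} e-square after partial Petrie duality is a different $4$-cycle of $\csimp_e$ than the old one (the new $\cv\cf$-cycle uses the old $\cz$-matching in place of the old $\cf$-matching), so the cyclic order of vertices around the new $Q_e$ differs from that around the old one. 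The advantage of the label-color formulation is precisely that it sidesteps this complication: it records only which of the three matchings of $\csimp_e$ carries equal labels, and that is manifestly invariant under any relabeling of the colors.
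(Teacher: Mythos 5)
Your proposal is correct, and it establishes the same statement by reorganizing the computation that the paper carries out case by case. The paper's proof works in the jewel exactly as you do, via Observation \ref{medch2jewel}: it fixes one representative case (a b-edge under $\pe A$), writes out the cyclic sequence of colors and half-edge directions around the e-square, applies the swap of $\cf$ and $\cz$, identifies the new e-square as a different $4$-cycle of the e-simplex, reads off that the edge has become a d-edge, and declares the remaining cases similar. Your label-color dictionary --- all labels of $\csimp_e$ equal gives a t-edge (with subtype determined by the common label), and otherwise the color of the unique perfect matching of $\csimp_e$ joining equal-label vertices is $\cv$, $\cf$, or $\cz$ according as $e$ is a c-, d-, or b-edge --- is a per-edge localization of the $S$-labeling correspondences the paper records globally in Tables \ref{tab:bcd} and \ref{tab:bcdanti}, and it checks out against Definition \ref{bcdt} and Observation \ref{medch2jewel}\ref{m2jc}. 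What your formulation buys is that all ten cases collapse to a single equivariance observation: the label matching is a fixed edge subset of the $K_4$, the operations merely permute the colors $\cv,\cf,\cz$ on $\csimp_e$ while leaving $\la=\tbdbl(\de)$ untouched, so the type transforms by the induced permutation of label colors, namely $(\cv\,\cf)$ for $\du$ and $(\cf\,\cz)$ for $\pe$; this also yields the partial Wilson dual and arbitrary twisted duals for free, and it genuinely sidesteps the one delicate point in the direct computation (that the new e-square is a different $4$-cycle of the e-simplex), which you correctly identify. The only point to make explicit when writing this up is that the dictionary is applied to both the original and the modified jewel, which is legitimate because the correspondence of Observation \ref{medch2jewel}\ref{m2jc} holds for both embeddings, as the paper notes at the end of Subsection \ref{ss:bidirections}; with that in place your argument is complete.
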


 \begin{proof}
 We examine one case; the others are similar.
 Let $\cL$ be the jewel of $\eG$, and $\de_\cL$ the bidirection of
$\ec\ca\cL$ corresponding to $\de_\cM$.
 Suppose $e \in A$ is a b-edge in $\eG$, corresponding to an e-simplex
in $\cL$ containing the e-square $\cQ_e=(u_0 u_1 u_2 u_3)$.
 Applying Observation \ref{medch2jewel}, we may assume that edges $u_0
u_1$ and $u_2 u_3$ are colored $\cv$, edges $u_1 u_2$ and $u_3 u_0$ are
colored $\cf$, and under $\de_\cL$ 
 edges of $\ec\ca\cL$ are  directed into $u_0$ and $u_2$, and out of
$u_1$ and $u_3$.  In the jewel $\cL[\pe A]$ of $\eG \pe A$, the colors
$\cf$ and $\cz$ are swapped on the e-simplex, and the e-square of the
recolored e-simplex is $\cQ'_e = (u_0 u_1 u_3 u_2)$, where $u_0 u_1$ and
$u_2u_3$ are still colored $\cv$, while $u_1 u_3$ and $u_0 u_2$ are
colored $\cf$.  Therefore, around $\cQ'_e$ we have (in ($u_0$), $\cf$,
in ($u_2$), $\cv$, out ($u_3$), $\cf$, out ($u_1$), $\cv$).  Applying
Observation \ref{medch2jewel}, we see that $e$ is now a d-edge.
 \end{proof}

 A special case of Theorem \ref{typechange} was given by Deng, Jin, and
Metsidik \cite[Lemma 2.4]{dengjin}.  They showed that under full duality
all c-edges become d-edges, and vice versa, while t-edges remain
t-edges.

 \let\symdiff\triangle
 Using Theorem \ref{typechange} we can track the effect on edge types
(relative to a fixed medial graph direction) for any sequence of partial
dual and partial Petrie dual operations.  This has many consequences; we
provide a few examples.
 \newlength{\realparindent}
 \setlength{\realparindent}{\parindent}
 \begin{enumerate}[label=\rm(\arabic*)]\setlength{\parskip}{0pt}
	\setlength{\parindent}{\realparindent}
 \item
 Taking the partial Wilson dual with respect to $A$, $\eG \mapsto \eG
\pe\du\pe A = \eG \wi A$, interchanges b- and c-edges for edges in $A$
and leaves other edges and types unchanged.
 \item
  The operation $\eG \mapsto \eG \du A \pe B$ permutes the
edge type as (bdc) for edges in $A \cap B$ (b-edges become d-edges,
d-edges become c-edges, c-edges become b-edges), as (cd) for edges in $A
- B$, and as (bd) for edges in $B - A$.
 \item
 An alternative way to prove the parts of Theorems \ref{bcdedge} and
\ref{4regdir} for b- and d-directions is to derive them from c-direction
results (Lemma \ref{cedgefza} and Theorems \ref{cedge100} and
\ref{4reg100}) by using Theorem \ref{typechange} (possibly more than
once) with $A = E(G)$.
 \item
 We can prove results of the same flavor as Theorems \ref{HMplane} and
\ref{bippddedge}, characterizing when an embedded graph has a particular
type of twisted dual that is orientable.  The conditions for an
orientable partial dual may be summarized as follows.

 \begin{theorem}\label{oritwdexistence}
 Let $\eG$ be an embedded graph with medial checkerboard $\cM$.  Then a
twisted dual of $\eG$ is orientable if and only if there exists a
direction of $\cM$ satisfying the following conditions.
 \begin{enumerate}[label=\rm(\roman*)]\setlength{\itemsep}{0pt}
 \item The edges of $\eG$ that are unchanged or operated on by $\du$ are
b-edges,
 \item the edges of $\eG$ that are operated on by $\du\pe$ or
$\du\pe\du$ are c-edges, and
 \item the edges of $\eG$ operated on by $\pe$ or $\pe\du$ are d-edges.
 \end{enumerate}
 \end{theorem}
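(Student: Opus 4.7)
The plan is to reduce orientability of a twisted dual $\eG'$ to the existence of a b-direction on its medial checkerboard $\cM'$, and then transport that condition back to $\cM$ via Theorem \ref{typechange}. Every twisted dual is specified by a function $\omega : E(G) \to S_3$ assigning each edge $e$ one of $\{\mathrm{id},\ \du,\ \pe,\ \du\pe,\ \pe\du,\ \du\pe\du\}$, and by Theorem \ref{bcdedge} (applied to $\eG'$) orientability of $\eG'$ is equivalent to the existence of a b-direction of $\cM'$.

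First I would observe that the medial graph $M$ is invariant under twisted duality: the operation merely permutes the colors $\cv, \cf, \cz$ on each e-simplex of the jewel without changing its underlying $K_4$ structure, so the contraction $L \cg \ec{\cv\cc\cf\cc\cz}\cL$ always yields the same graph $M$. Combined with the remark in Subsection \ref{ss:bidirtwistedduals} that a bidirection $\de$ of $M$ and the corresponding labeling $\la = \tbdbl(\de)$ of $L$ are unchanged by twisted duality, this lets us view the same $\de$ simultaneously on $\cM$ and on $\cM'$: only the face colorings and vertex rotations differ, so edge types may change while the bidirection itself is fixed.

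The main step is then to determine, for each of the six possible $\omega_e$, what type the edge $e$ must have in $\cM$ under $\de$ in order to be a b-edge in $\cM'$. Since every element of $S_3$ is a composition of at most three single-step $\du$ or $\pe$ operations (using $\du\pe\du = \pe\du\pe$ to ensure the six-element action on a single edge is well-defined), this is obtained by iterating the rules of Theorem \ref{typechange}: on edge types, $\du$ acts as $(b \mapsto b,\ c \mapsto d,\ d \mapsto c)$ and $\pe$ as $(b \mapsto d,\ c \mapsto c,\ d \mapsto b)$. A brief case analysis then gives: $e$ is a b-edge of $\cM'$ if and only if $e$ is a b-edge of $\cM$ when $\omega_e \in \{\mathrm{id},\du\}$; a c-edge of $\cM$ when $\omega_e \in \{\du\pe,\du\pe\du\}$; and a d-edge of $\cM$ when $\omega_e \in \{\pe,\pe\du\}$. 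Taking the conjunction over all $e \in E(G)$ matches clauses (i)--(iii) exactly, completing the proof.

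I do not expect a substantial conceptual obstacle here; the only point requiring care is bookkeeping around the composition convention (left-to-right) and confirming that the defining relation of the $S_3$-action is compatible with the per-edge analysis, so that the six compositions above genuinely give all possible $\omega_e$. Once this is fixed, the theorem reduces to a straightforward table computation from Theorem \ref{typechange}.
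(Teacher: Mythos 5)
Your proposal is correct and follows essentially the same route as the paper's proof: reduce orientability of the twisted dual to the existence of a b-direction of its medial checkerboard via Theorem \ref{bcdedge}, then use Theorem \ref{typechange} to transport the edge-type requirements back to $\cM$ (your per-edge action of $\du$ and $\pe$ on types, and the resulting six-case table, match Table \ref{tab:typechange} and the theorem's clauses exactly). The paper's proof is just a terser version of the same argument, relying on the invariance of $M$ and $\de$ under twisted duality established in Subsection \ref{ss:bidirtwistedduals}.
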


 \begin{proof}
 Let $\eG'$ be the twisted dual.
 If we have a balanced direction of $\cM$ satisfying the above
conditions, then by Theorem \ref{typechange} all edges become b-edges
after taking the twisted dual, so by Theorem \ref{bcdedge} $\eG'$ is
orientable.  Conversely, if $\eG'$ is orientable then there is a
b-direction of its medial checkerboard $\cM'$, and after reversing the
twisted dual we see that the edges of $\eG$ have the specified
types.
 \end{proof}

 \item
 We see that Theorem \ref{bippddedge}, and its
special case Theorem \ref{HMplane}, regarding bipartiteness of partial
duals, are really a combination of three separate ideas.
 First, an orientable embedded graph $\eG$ is bipartite if and only if
$\eG\spe$ is orientable, which is part of the Fano framework.  Second,
$\eG\spe$ is orientable if and only if $\eG$ has a medial
checkerboard direction where every edge of $\eG$ is a d-edge, by Theorem
\ref{bcdedge}.
 And third, partial duality interchanges c-edges and d-edges in the
dualized set of edges, by Theorem \ref{typechange}.
 We will see a more direct way to address bipartiteness in Subsection
\ref{ss:antidirections}.
 \end{enumerate}
 
 \subsection{Medial checkerboard antidirections and Fano properties}
 \label{ss:antidirections}

 As noted at the end of Subsection \ref{ss:bidirections}, conditions for
an embedded graph $\eG$ involving an $S$-labeling of the jewel $\cL$
where $S \cap \ec{\ca}\cL = \emptyset$ correspond to antidirections of
the medial checkerboard.
 Here we consider b-, c-, and d-antidirections of $\cM$.  We could also
consider t-antidirections, but every medial checkerboard has a
t-antidirection obtained by antidirecting each edge in the same way (all
introverted, or all extraverted).  Thus, the existence of a
t-antidirection corresponds to the always-true condition $(000)$.

 We can apply the same type of analysis to b-, c-, and d-antidirections
of $\cM$ that we applied to b-, c-, and d-directions of $\cM$ in
Subsection \ref{ss:directions}. The arguments are very similar.  For
example, we saw that a c-direction of $\cM$ corresponds to an
$\ec{\cf\cz\ca}\cL$-labeling of $\cL$.  By much the same reasoning, a
c-antidirection of $\cM$ corresponds to an $\ec{\cf\cz}\cL$-labeling of
$\cL$.  
 The results for b-, c-, and d-antidirections and the respective three
conditions $(110)$, $(011)$, and $(101)$ can be summarized by Theorems
\ref{bcdantiedge} and \ref{4regantidir}, which refer to Table
\ref{tab:bcdanti}.
 Since the proofs are very similar to those of Theorems \ref{bcdedge}
and \ref{4regdir}, we omit them, apart from a discussion of the
existence of balanced antidirections before Theorem \ref{4regantidir}.


 \def\type{$\tau$}
 \def\operation#1{$\langle$operation$\rangle$}
 \begin{theorem}\label{bcdantiedge}
 Let $\eG$ be a graph embedding with medial checkerboard $\cM$ and jewel
$\cL$.  Choose $\gtype$, $\stype$, $S$ and $(\be)$ from the same row of Table
\ref{tab:bcdanti}. 
 \begin{enumerate}[label=\rm(\alph*)]\setlength{\itemsep}{0pt}
 \item
 Let $\de$ be a bidirection of $M$ and $\la=\tbdbl(\de)$ the
corresponding binary labeling of $L$.
 Then $\de$ is a $\stype$-antidirection of $\cM$ if and only if $\la$ is
an $S$-labeling of $\cL$.
 \item
 Condition $(\be)$ holds for $\eG$ if and only if there is a
$\stype$-antidirection of $\cM$.  Such an antidirection is an
antidirection of general type $\gtype$ (here, always balanced), and is
unique up to reversal on components of $\cM$.
 \end{enumerate}
 \end{theorem}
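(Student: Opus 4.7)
The plan is to mirror the proof of Theorem \ref{bcdedge} closely, with the single conceptual difference that antidirections replace directions. As noted at the end of Subsection \ref{ss:bidirections}, under the correspondence $\la = \tbdbl(\de)$, directions of $\cM$ correspond to $S$-labelings with $\ec{\ca}\cL \subseteq S$, while antidirections correspond to $S$-labelings with $\ec{\ca}\cL \cap S = \emptyset$. So the remaining task for each $\stype \in \{b,c,d\}$ is to identify exactly which non-$\ca$ colors must lie in $S$.

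I would first establish part (a) separately for each of $\stype = c, d, b$, following the chain of equivalences in Lemma \ref{cedgefza}. For $\stype = c$: Observation \ref{medch2jewel} transfers the cyclic c-vertex pattern (in, $\cv$, in, $\cf$, out, $\cv$, out, $\cf$) around each vertex of $\cM$ to the corresponding cyclic pattern around each e-square of $\cL$. In the labeling $\la = \tbdbl(\de)$, this forces labels $(0,0,1,1)$ around each e-square, so $\la$ matches across $\cv$-edges, differs across $\cf$-edges, and differs across the $\cz$-colored diagonals of the e-square; combined with antidirectedness on $\ca$-edges, $\la$ is an $\ec{\cf\cc\cz}\cL$-labeling, as Table \ref{tab:bcdanti} demands. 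Reversing each implication in the chain gives the converse. The $\stype = d$ case is obtained by swapping $\cv \leftrightarrow \cf$ throughout. For $\stype = b$, the b-vertex pattern (in, out, in, out) produces labels $(0,1,0,1)$ around each e-square, so $\la$ differs across $\cv$- and $\cf$-edges but matches across the $\cz$-diagonals and $\ca$-edges, yielding an $\ec{\cv\cc\cf}\cL$-labeling.

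For part (b), existence of an $S$-labeling of $\cL$ is equivalent by Theorem \ref{oddwalkoddcycletheorem} to $S$ being a coboundary in $\cL$, and by inspection of the $\cL$-column of Table \ref{tab:condoribip} the coboundaries $\ec{\cv\cc\cf}\cL$, $\ec{\cf\cc\cz}\cL$, $\ec{\cv\cc\cz}\cL$ are exactly conditions $(110)$, $(011)$, $(101)$, respectively. Combining with part (a) gives the required biconditional. Each b-, c-, or d-vertex has two incoming and two outgoing half-edges by definition, so the antidirection is automatically balanced. Uniqueness up to reversal on components of $M$ comes from Observation \ref{binlab}\ref{blU}: the $S$-labeling is unique up to flipping labels on components of $L$, and since $M = L \cg \ec{\cv\cc\cf\cc\cz}\cL$ collapses each e-simplex to a point, components of $L$ correspond to components of $M$, and flipping a component of $L$ under $\tbdbl\iv$ is exactly reversal of the antidirection on that component of $M$.

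The main obstacle is only bookkeeping: correctly tracking that the $\ca$-colored edges switch from ``labels differ'' in the direction case to ``labels match'' in the antidirection case, and correspondingly adjusting each $S$ from Table \ref{tab:bcd} by removing $\ec{\ca}\cL$ to obtain the sets in Table \ref{tab:bcdanti}. No new machinery is needed beyond what was already used for Theorem \ref{bcdedge}.
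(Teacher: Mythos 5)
Your proposal is correct and is essentially the proof the paper has in mind: the paper explicitly omits the argument for Theorem \ref{bcdantiedge}, stating it is very similar to that of Theorem \ref{bcdedge}, and your write-up is precisely that adaptation, with the $\ca$-edges correctly moved out of $S$ and the e-square label patterns $(0,0,1,1)$ and $(0,1,0,1)$ correctly identifying the sets in Table \ref{tab:bcdanti}. Your uniqueness argument via Observation \ref{binlab}\ref{blU} and the correspondence of components under contraction is a clean equivalent of the propagation argument used in Theorem \ref{cedge100}.
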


 \def\sO{\mathcal{O}}
 \begin{table}[h]
 \def\hquad{\hskip0.5em\relax}
 \def\vstrut#1#2{\vrule height#1 depth#2 width0pt}
 \def\hdstrut{\vstrut{13pt}{5pt}}
 \begin{center}\rm
 \begin{tabular}{|l|l|l|l|l|}
 \hline
	\multicolumn{2}{|c|}{\hdstrut Antidirection of $\cM$}
	&&&
	\\
 \cline{1-2}
	\hdstrut General
        & Specific
	& $S$-labeling
	& Condition $(\be)$ and
	& Operation class $\sO$
	\\
	type $\gtype$
 	& type $\stype$
	& \quad of $\cL$
	& \quad property of $\eG$
	& \quad defining $\sG$
	\\
 \hline
 \vstrut{12pt}{4pt}%
	balanced
  & \quad b &	$\ec{\cv\cc\cf}\cL$-labeling
			& condition $(110)$, i.e., &	partial duality \\
 &&& $\eG\spe\sdu$, $\eG\sdu\spe\sdu$ bipartite & \\
 \vstrut{12pt}{4pt}%
	balanced
  & \quad c &	$\ec{\cf\cc\cz}\cL$-labeling
			& condition $(011)$, i.e., &	partial Petrie duality \\
 &&& $\eG$, $\eG\spe$ bipartite & \\
 \vstrut{12pt}{4pt}%
	balanced
  & \quad d &	$\ec{\cv\cc\cz}\cL$-labeling
			& condition $(101)$, i.e., &	partial Wilson duality \\
 &&& $\eG\sdu$, $\eG\sdu\spe$ bipartite & \\
 \hline
 \end{tabular}
 \caption{Correspondences for Theorems \ref{bcdantiedge} and
\ref{4regantidir}.}%
 \label{tab:bcdanti}
 \end{center}
 \end{table}

 Every $4$-regular graph $M$ has an even number of edges, and therefore,
by designating edges as introverted or extraverted in an alternating way
as we follow an Euler circuit in $M$, we see that $M$ has at least one
balanced antidirection.  Thus, we can use reasoning similar to that of
Theorem \ref{4reg100} and the associated discussion to construct graph
embeddings with a given medial graph $M$ and satisfying one of the
conditions $(011)$, $(101)$ or $(110)$.

 \begin{theorem}\label{4regantidir}
 Choose $\gtype$, $\stype$, $(\be)$, and $\sO$ from the same row of
Table \ref{tab:bcdanti}.
 Let $M$ be a $4$-regular graph with an antidirection $\de$ of general type
$\gtype$ (here, always balanced); $M$ always has such an antidirection.

 We can embed and face-color $M$ to obtain at least one medial
checkerboard $\cM$ in which $\de$ is a $\stype$-antidirection, i.e., so
that condition $(\be)$ holds for the corresponding embedded graph $\eG$.
 The medial checkerboards $\cM$ obtainable in this way correspond to an
equivalence class $\sG$ of embedded graphs under the operations in
$\sO$.  Any other bidirected $4$-regular graph that corresponds to $\sG$
in this way is isomorphic to $M$ with a bidirection obtained from $\de$
by reversal on a subset of components of $M$.
 \end{theorem}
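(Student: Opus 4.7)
The plan is to mirror the proof of Theorem \ref{4reg100} (the c-direction existence result), replacing direction by antidirection throughout and using Theorem \ref{bcdantiedge} as the bridge between antidirections of the medial checkerboard and $S$-labelings of the jewel. Existence of the balanced antidirection $\de$ of $M$ is already established in the paragraph preceding the statement, so everything else amounts to reconstructing a jewel $\cL$ (and hence a medial checkerboard $\cM$) from $M$ and $\de$.

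First I would expand each vertex $v$ of $M$ into a simplex $T_v \cong K_4$, so that the four edges of $M$ incident with $v$ become incident with distinct vertices of $T_v$. This yields a $4$-regular graph $L$ with $M = L \cg S$, where $S$ is the union of simplex edges; write $A = E(L) - S$ for the edges inherited from $M$, and let $\lambda = \tbdbl(\de)$ be the associated binary labeling of $L$. Because $\de$ is an antidirection, the two endvertices of every edge of $A$ receive the same $\lambda$-value, and because $\de$ is balanced each simplex carries exactly two vertices labeled $0$ and two labeled $1$. I then color $A$ with $\ca$ and, within each simplex, let $c_0$ be the unique color in $\{\cv, \cf, \cz\}$ that does not appear in the $S$-set of the chosen row of Table \ref{tab:bcdanti} (so $c_0$ is $\cv$, $\cz$, or $\cf$ in the c-, b-, or d-row respectively). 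I assign $c_0$ to the perfect matching joining equally-labeled vertices, then split the four remaining edges, which form a $K_{2,2}$ on the unequal pairs, into its two perfect matchings and assign the remaining two colors arbitrarily. Since the union of any two perfect matchings of $K_4$ is a $4$-cycle, the $\cv$-$\cf$ subgraph of each simplex is automatically an e-square, so $\cL$ is a valid jewel; $M = L \cg \ec{\cv\cc\cf\cc\cz}\cL$ is then the medial graph of the embedded graph $\eG$ associated with $\cL$, and $\lambda$ is an $S$-labeling of $\cL$ by construction. Theorem \ref{bcdantiedge} then tells me that $\de$ is a $\stype$-antidirection of $\cM$ and condition $(\be)$ holds for $\eG$.

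The only freedom in the construction is, within each simplex, the choice of which $K_{2,2}$-matching receives which of the two ``changing'' colors; swapping these two colors inside a single simplex is precisely a single-edge partial duality, partial Petrie duality, or partial Wilson duality (depending on the row), via the color-permutation descriptions in Section \ref{td}. Hence the embedded graphs obtainable this way form exactly one equivalence class $\sG$ under $\sO$. For the last sentence of the statement, any other bidirected $4$-regular graph $(M', \de')$ giving $\sG$ must have $M' \cong M$ (since each $\eG' \in \sG$ has medial graph $M$), and then the uniqueness clause of Theorem \ref{bcdantiedge}(b) forces $\de'$ to differ from $\de$ only by reversal on components of $M$. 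I do not expect any serious obstacle: the construction transplants that of Theorem \ref{4reg100} essentially verbatim, and the only delicate point is matching the per-simplex color-swap choices with the advertised operations in $\sO$, which is immediate from the jewel descriptions of Section \ref{td}.
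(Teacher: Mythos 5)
Your proposal is correct and follows exactly the route the paper intends: the paper omits this proof, stating it is ``very similar'' to those of Theorems \ref{bcdedge} and \ref{4regdir} (modeled on Theorem \ref{4reg100}), and your adaptation---expanding vertices to simplices, using that an antidirection forces equal labels across $\ca$-edges and balancedness forces a $2$--$2$ label split in each simplex, coloring the equal-label matching with the color outside $S$ and the two unequal matchings with the two remaining colors---fills in those omitted details correctly, including the identification of the per-simplex color swap with the operation class $\sO$ from Table \ref{tab:bcdanti}.
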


 The results summarized in Theorems \ref{bcdantiedge} and
\ref{4regantidir} are, as far as we are aware, new.

 We have now associated three of the seven Fano framework properties
with a property involving medial graph antidirections.
 Each solid vertex of Figure \ref{fig:FanoFrameworkExtended} is marked
with the type of antidirection that characterizes its property.  For
example, the solid vertex on the left is marked with `c-A' to indicate
that its property, bipartiteness of $\eG$ or $\eG\spe$, corresponds to
existence of a c-antidirection of $\cM$.

 Antidirections of $M$ are really just binary labelings of the edges
of $M$ (e.g., extraverted edges are $0$, introverted edges are $1$). 
We can also think of them as signings of the edges (e.g., extraverted
edges are $-$, introverted edges are $+$) or as colorings of the edges
(e.g., extraverted edges are black, introverted edges are white). 

Thinking of antidirections as colorings gives more intuitive proofs of
Theorem \ref{bcdantiedge}. For example, if we think of a c-antidirection
of $\cM$ as a black/white coloring of the edges of $\cM$, the monochrome
cycles in $\cM$ are precisely the boundaries of the faces of $\cM$
colored $\cv$, and each vertex $v_e$ is contained in both a black cycle
and a white cycle.  In other words, we have a black/white coloring of
the vertices of $\eG$, and each edge $e$ of $\eG$ joins a black vertex
and a white vertex, meaning that $\eG$ is bipartite.

 Theorem \ref{typechange} applies to antidirections of the medial graph,
as well as to directions.  
 We can use this in conjunction with Theorem \ref{bcdantiedge} to give a
general characterization of embedded graphs with bipartite twisted
duals, similar to the characterization of orientable twisted duals in
Theorem \ref{oritwdexistence}.
 Unlike Theorems \ref{HMplane} and \ref{bippddedge}, it uses
antidirections, rather than directions, of the medial checkerboard,
it applies to nonorientable embeddings as well as orientable embeddings,
and it applies to all twisted duals, not just partial duals.

 \begin{theorem}\label{biptwdexistence}
 Let $\eG$ be an embedded graph with medial checkerboard $\cM$.  Then a
twisted dual of $\eG$ is bipartite if and only if there exists an
antidirection of $M$ satisfying the following conditions.
 \begin{enumerate}[label=\rm(\roman*)]\setlength{\itemsep}{0pt}
 \item The edges of $\eG$ that are unchanged or operated on by $\pe\du$
or $\du\pe\du$ are b-edges,
 \item the edges of $\eG$ that are unchanged or operated on by $\pe$ are c-edges, and
 \item the edges of $\eG$ operated on by $\du$ or $\du\pe$ are d-edges.
 \end{enumerate}
 \end{theorem}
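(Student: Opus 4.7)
The strategy is to mirror the proof of Theorem \ref{oritwdexistence}, but with antidirections of the medial graph in place of directions and with bipartiteness replacing orientability. Denote the twisted dual in question by $\eG'$, with medial checkerboard $\cM'$. By Subsubsection \ref{cond011}, $\eG'$ is bipartite exactly when condition $(011)$ holds for $\eG'$, and by Theorem \ref{bcdantiedge} this is equivalent to the existence of a c-antidirection of $\cM'$. Since every twisted dual operation only permutes the colors $\cv,\cf,\cz$ inside each e-simplex of the jewel and leaves the underlying medial graph $M$ unchanged, a bidirection of $M$ can be interpreted simultaneously as a bidirection of $M$ for $\eG$ and for $\eG'$; only the local face-color and rotation data, and hence the b/c/d/t classification of each edge, differs between the two.

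I would then apply Theorem \ref{typechange} edge by edge. Each edge of $\eG$ is subject to one of the six elements of the $S_3$-action generated by $\du$ and $\pe$, namely the identity, $\du$, $\pe$, $\du\pe$, $\pe\du$, or $\du\pe\du$. A direct calculation using Table \ref{tab:typechange} shows that the edges of $\eG$ ending as c-edges of $\eG'$ fall into three groups according to the operation applied: the identity and $\pe$ both send c to c; $\du$ and $\du\pe$ both send d to c; and $\pe\du$ and $\du\pe\du$ both send b to c. This exactly matches the classification in conditions (i)--(iii) of the statement.

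Both directions of the biconditional then follow. For the forward direction, assume a bidirection of $M$ satisfying (i)--(iii); Theorem \ref{typechange} converts every edge of $\eG$ into a c-edge of $\eG'$, so Theorem \ref{bcdantiedge} yields bipartiteness of $\eG'$. Conversely, if $\eG'$ is bipartite, Theorem \ref{bcdantiedge} provides a c-antidirection of $\cM'$; transporting it to a bidirection of $M$ and inverting each edge's operation via Theorem \ref{typechange} recovers the specified types in $\eG$. The principal bookkeeping obstacle is canonicalizing each edge's operation: one must first reduce the sequence of partial dual and partial Petrie dual operations applied to a given edge to one of the six $S_3$-representatives, using the facts that $\du$ and $\pe$ are each self-inverse and together generate $S_3$, before reading off the type change from Table \ref{tab:typechange}. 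Beyond this routine case analysis, no genuinely new ingredient is required.
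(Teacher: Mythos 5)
Your proposal is correct and follows essentially the same route as the paper's own (very terse) proof: characterize bipartiteness of the twisted dual via a c-antidirection using Theorem \ref{bcdantiedge}, and transfer edge types back and forth with Theorem \ref{typechange}. Your explicit check of which starting type maps to a c-edge under each of the six $S_3$-operations is exactly the computation the paper leaves implicit, and it in fact exposes that the word ``unchanged'' in condition (i) of the theorem statement is extraneous---unchanged edges belong only in (ii), as your calculation shows.
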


 \begin{proof}
 Let $\eG'$ be the twisted dual.
 If we have a balanced antidirection of $M$ satisfying the above
conditions, then by Theorem \ref{typechange} all edges become c-edges
after taking the twisted dual, so by Theorem \ref{bcdedge} $\eG'$ is
bipartite.  Conversely, if $\eG'$ is orientable then there is a
c-antidirection of its medial checkerboard $\cM'$, and after reversing
the twisted dual we see that the edges of $\eG$ have the specified
types.
 \end{proof}

 We can state similar results for twisted duals that are
$2$-face-colorable or $2$-zigzag-colorable; we leave the details to the
reader.
 Theorem \ref{biptwdexistence} and these related results allow us to
answer a question of Ellis-Monaghan and Moffatt \cite[(1),
pp.~1566-1567]{twisteddualEMM}, as to which graph embeddings have a
checkerboard colorable (i.e., $2$-face-colorable) or bipartite twisted
dual.

 \begin{corollary}
 Every embedded graph $\eG$ has a twisted dual that is bipartite, a
twisted dual that is $2$-face-colorable, and a twisted dual that is
$2$-zigzag-colorable.
 \end{corollary}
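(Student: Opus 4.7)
The plan is to deduce all three existence statements from Theorem \ref{biptwdexistence} together with its analogs for $2$-face-colorability and $2$-zigzag-colorability (the variants promised immediately before the corollary). All three target properties are the antidirection-based Fano conditions $(011)$, $(101)$, and $(110)$; by Theorem \ref{bcdantiedge}, each is characterized by the existence of a balanced antidirection of the medial graph $M$ of the relevant twisted dual under which every edge has a single specified type (c, d, and b respectively).

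The key enabling fact, noted just before Theorem \ref{4regantidir}, is that $M$ always admits at least one balanced antidirection $\delta$: alternate introverted and extraverted along an Euler circuit of $M$, which is well-defined because $|E(M)|$ is even and which balances at every vertex because each vertex is visited twice by the circuit, with its two visits each contributing one edge of each parity. I would fix such a $\delta$ once and for all, and use Observation \ref{bidirdir} to partition $E(G) = B \cup C \cup D$ into its b-, c-, and d-edges with respect to $\delta$.

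For each target property it then suffices to write down a single twisted dual operation that converts every edge of $\eG$ to the unique type realizing that property, using Theorem \ref{typechange} and the $\wi$-computation recorded as consequence (1) after it (namely that $\wi A$ swaps b- and c-edges on $A$ and leaves d-edges fixed). For the bipartite case we want all edges to become c-edges, which is accomplished by $\eG \wi B \du D$. For the $2$-face-colorable case we want all edges to become d-edges, accomplished by $\eG \pe B \du C$. For the $2$-zigzag-colorable case we want all edges to become b-edges, accomplished by $\eG \wi C \pe D$. In each case the resulting bidirected medial graph satisfies the hypotheses of Theorem \ref{bcdantiedge}, yielding the desired property for the twisted dual.

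The main (and only) obstacle is conceptual: one must first formulate the $2$-face-colorable and $2$-zigzag-colorable analogs of Theorem \ref{biptwdexistence}. These follow from exactly the same proof with the roles of the colors $\cv, \cf, \cz$ permuted by the dual or Wilson dual, as the paper already flags. Once those are in place, no calculation beyond checking the three rows of Table \ref{tab:typechange} is required, and the corollary follows in a few lines.
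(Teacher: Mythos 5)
Your proposal is correct and follows essentially the same route as the paper: fix a balanced antidirection of $M$ (which exists via the alternating Euler-circuit argument), classify edges into b-, c-, and d-types, and apply a twisted dual operation that, by Theorem \ref{typechange}, converts every edge to the single type characterizing the target property via Theorem \ref{bcdantiedge}. The paper simply cites the operation prescriptions of Theorem \ref{biptwdexistence} and its analogs rather than writing out the explicit operations $\wi B \du D$, $\pe B \du C$, $\wi C \pe D$ as you do, but the computations you record are consistent with Table \ref{tab:typechange}.
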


 \begin{proof}
 Construct an arbitrary balanced antidirection $\de$ of the medial graph
$M$ of $\eG$, which we know exists.  Then apply a twisted dual operation
to the edges of $\eG$ following the specifications in Theorem
\ref{biptwdexistence}(i)--(iii).  The result is a bipartite twisted dual
of $\eG$.  Twisted duals that are $2$-face-colorable or
$2$-zigzag-colorable can be found in similar ways.
 \end{proof}

 \section{Eulerian properties}\label{sec:eulerian}

 In this subsection we consider the property of being Eulerian, or more
generally even-vertex, for an embedded graph $\eG$.  (In the literature
in this area, `Eulerian' is often used to mean `even-vertex', but we
reserve `Eulerian' for connected graphs.)  We also consider
the related properties of being even-face or even-zigzag.  
 We will henceforth refer to the properties even-vertex, even-face, and
even-zigzag as \emph{Eulerian properties}.
 The Eulerian properties are weaker than the seven Fano properties, in
the following way.
 Each of the seven properties implies at least one of these three new
properties (as we show in this section), but $\eG$ may have all three
new properties and none of the seven properties (as demonstrated later
by an example).

 We will find several ways to characterize the three Eulerian
properties.
 First we discuss how these properties can be represented in terms of
parity conditions on closed walks.
 Then we show that these properties can be described in terms of both
directions and antidirections of the medial graph.  These descriptions
allow us to show that each of the seven Fano properties implies one,
two, or all three of the Eulerian properties.

 \subsection{Parity conditions for 2-color walks}\label{ss:2colwalks}

 To extend the idea of considering the parity of closed walks in a gem
or jewel to cover the Eulerian properties, we must restrict the closed
walks we consider to \emph{$2$-color} walks, which use edges of only two
colors (any two colors are allowed).

 To consider parity conditions for sets of edges in $2$-color closed
walks, by Theorem \ref{oddwalkoddcycletheorem} we may restrict our
attention to $2$-color cycles. These are the v-gons, f-gons, and
e-squares of a gem $\cJ$ or jewel $\cL$, and for $\cL$ also the z-gons,
the $4$-cycles of color $\cv$ and $\cz$, and the $4$-cycles of color
$\cf$ and $\cz$.
 All $4$-cycles $K$ using two of the colors $\cv, \cf, \cz$ in either
$\cJ$ or $\cL$ use an even number of edges of each of the four colors
$\cv, \cf, \cz, \ca$, so we may ignore these and consider only the
v-gons, f-gons, and (for $\cL$) z-gons.
 The degree of a vertex of $\eG$ is the number of edges colored $\cv$ in
the corresponding v-gon, or equivalently the number of edges colored
$\ca$ in that v-gon.

 We can describe all three Eulerian properties using conditions on
$2$-color walks in the jewel, but only two of them using such conditions
for the gem.  We describe the conditions for the jewel first.

 The proposition below characterizes when we have one, two, or all three
of the Eulerian properties.  It partly follows from the comments in
the previous paragraph.
 To obtain the final part of each of \ref{eja}, \ref{ejb}, or \ref{ejc},
we recall that all closed $2$-color walks have even length, so the
number of edges of a given color is even if and only if the number of
edges not of that color is even.

 \begin{proposition}\label{eulerianjewel}
 Let $\eG$ be an embedded graph with jewel $\cL$.
 \begin{enumerate}[label=\rm(\alph*)]\setlength{\itemsep}{0pt}%
     \item\label{eja}
 	$\eG$ is even-vertex if and only if $v_\cL(K)$ is even for all
$2$-color closed walks $K$ in $\cL$ if and only if $(f+z+a)_\cL(K)$ is
even for all $2$-color closed walks in $\cL$.
     \item\label{ejb}
 	$\eG$ is both even-vertex and even-face if and only if
$(v+f)_\cL(K)$ is even for all $2$-color closed walks $K$ in $\cL$ if
and only if $(z+a)_\cL(K)$ is even for all $2$-color closed walks $K$ in
$\cL$.
     \item\label{ejc}
 	$\eG$ is even-vertex, even-face, and even-zigzag if and only if
$(v+f+z)_\cL(K)$ is even for all $2$-color closed walks $K$ in $\cL$ if
and only if $a_\cL(K)$ is even for all $2$-color closed walks $K$ in
$\cL$.
 \end{enumerate}
 \end{proposition}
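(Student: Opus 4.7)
The plan is to reduce each of the three parity conditions to a check on the three families of bigons in $\cL$ (v-gons, f-gons, z-gons) by showing that every other relevant $2$-color cycle contributes automatically.

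First I would observe that, because $\cL$ is properly $4$-edge-colored, for any pair of distinct colors $\{c_1,c_2\} \subseteq \{\cv,\cf,\cz,\ca\}$ the subgraph of $\cL$ induced by edges of those colors is $2$-regular and properly $2$-edge-colored, hence a disjoint union of even cycles. Consequently every $2$-color closed walk $K$ in $\cL$ lies in one of these subgraphs and has even length, and Theorem \ref{oddwalkoddcycletheorem} applied inside that subgraph lets me replace ``$\num_S(K)$ is even for all $2$-color closed walks'' with the same condition on its cycles. When $\ca \in \{c_1,c_2\}$ those cycles are exactly the v-gons, f-gons, or z-gons of $\cL$; otherwise they are the $4$-cycles inside the e-simplices of $\cL$ spanned by two of the three simplex colors.

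Next, each such $4$-cycle inside an e-simplex uses exactly two edges of each of its two colors and no $\ca$-edges, so every counting function appearing in the proposition is even on it automatically. Hence each condition reduces to a check on v-gons, f-gons, and z-gons, and I would evaluate directly: $v_\cL$ on a v-gon equals the degree of the corresponding vertex of $\eG$ and vanishes on any f-gon or z-gon; analogously $f_\cL$ on an f-gon equals the corresponding face degree, and $z_\cL$ on a z-gon equals the corresponding zigzag length. Evaluating $v_\cL$, $(v+f)_\cL$, and $(v+f+z)_\cL$ on the three bigon types yields the first equivalence in \ref{eja}, \ref{ejb}, and \ref{ejc} respectively.

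The second equivalence in each part then follows from one uniform observation: since every $2$-color closed walk $K$ has even length, $(v+f+z+a)_\cL(K) \equiv 0 \pmod 2$, so complementary sums agree modulo $2$ on $K$; in particular $v_\cL \equiv (f+z+a)_\cL$, $(v+f)_\cL \equiv (z+a)_\cL$, and $(v+f+z)_\cL \equiv a_\cL$ on all $2$-color closed walks. The only step that needs attention is the initial structural claim about the $2$-color subgraphs of $\cL$ being unions of even cycles; I do not expect a serious obstacle, since it is forced immediately by the proper $4$-edge-coloring, and the remainder is routine bookkeeping.
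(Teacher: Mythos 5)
Your proof is correct and follows essentially the same route as the paper: reduce to $2$-color cycles via Theorem \ref{oddwalkoddcycletheorem}, discard the $4$-cycles inside e-simplices (on which every relevant function is automatically even), evaluate the counting functions on v-, f-, and z-gons, and obtain the complementary forms from the fact that all $2$-color closed walks have even length. No gaps; the paper's own argument is just a terser version of the same bookkeeping.
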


 We can permute the colors to obtain a condition for even-face
embeddings or even-zigzag embeddings from \ref{eja}, and a condition for
even-vertex and even-zigzag, or even-face and even-zigzag, embeddings
from \ref{ejb}.  Therefore, each Eulerian property, and each nonempty
combination of those properties, can be checked using a single parity
condition for $2$-color walks in the jewel.

 If we work with the gem $\cJ$ instead of the jewel $\cL$, we have the
following.

 \begin{proposition}\label{euleriangem}
 Let $\eG$ be an embedded graph with gem $\cJ$.
 \begin{enumerate}[label=\rm(\alph*)]\setlength{\itemsep}{0pt}%
     \item\label{ega}
 	$\eG$ is even-vertex if and only if $v_\cJ(K)$ is even for all
$2$-color closed walks $K$ in $\cJ$ if and only if $(f+a)_\cJ(K)$ is
even for all $2$-color closed walks in $\cJ$.
     \item\label{egb}
 	$\eG$ is both even-vertex and even-face if and only if
$(v+f)_\cJ(K)$ is even for all $2$-color closed walks $K$ in $\cJ$ if
and only if $a_\cJ(K)$ is even for all $2$-color closed walks $K$ in
$\cJ$.
 \end{enumerate}
 \end{proposition}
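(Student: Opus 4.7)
The plan is to reduce each parity condition to a statement about the three families of bigons in $\cJ$ (v-gons, f-gons, and e-squares), and then interpret those statements in terms of vertex- and face-degrees of $\eG$. The key point is that a $2$-color closed walk in $\cJ$ lives entirely inside the subgraph spanned by edges of the two chosen colors, and each such subgraph is a vertex-disjoint union of bigons. Applying Lemma \ref{oddwalkoddcyclelemma2} separately inside each of the three $2$-color subgraphs of $\cJ$, a given parity condition on edge counts holds for all $2$-color closed walks if and only if it holds for every bigon.

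For part \ref{ega}, I would compute $v_\cJ(K)$ on each type of bigon: an f-gon uses no edges of color $\cv$, so $v_\cJ(K)=0$; an e-square has exactly two edges of color $\cv$, so $v_\cJ(K)=2$; and a v-gon corresponding to a vertex $u$ of $\eG$ satisfies $v_\cJ(K)=\deg_G(u)$, by the identification of bigons of $\cJ$ with vertices recalled just before Proposition \ref{eulerianjewel}. Hence the parity condition reduces to every v-gon having even $v_\cJ$, which is exactly the even-vertex property. For the second equivalence, every bigon has even length, so every $2$-color closed walk has even length; therefore $v_\cJ(K)+f_\cJ(K)+a_\cJ(K)\equiv 0\pmod 2$, and $v_\cJ(K)$ is even iff $(f+a)_\cJ(K)$ is even.

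Part \ref{egb} follows the same template. On a v-gon, $(v+f)_\cJ = v_\cJ$ equals the degree of the corresponding vertex; on an f-gon, $(v+f)_\cJ = f_\cJ$ equals the degree of the corresponding face; and on an e-square, $(v+f)_\cJ = 4$, which is even. So the condition $(v+f)_\cJ(K)$ even on all $2$-color closed walks reduces, via the three bigon types, to every vertex and every face of $\eG$ having even degree, i.e., both even-vertex and even-face. The equivalence with $a_\cJ(K)$ being even is again immediate from even length of $2$-color closed walks in $\cJ$.

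The main obstacle is simply the bookkeeping of which bigon types contribute nontrivially to each parity sum, and it is a genuinely minor one; no new machinery is needed beyond Theorem \ref{oddwalkoddcycletheorem} and the standard identification of v-gons, f-gons, and e-squares of $\cJ$ with vertices, faces, and edges of $\eG$. Note that there is no $\cJ$-analogue of Proposition \ref{eulerianjewel}\ref{ejc} because zigzags of $\eG$ are detected only by edges of color $\cz$, which are present in $\cL$ but not in $\cJ$; this explains why Proposition \ref{euleriangem} has only two parts.
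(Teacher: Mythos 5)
Your proposal is correct and follows essentially the same route as the paper: reduce to the $2$-color cycles (bigons) via Theorem \ref{oddwalkoddcycletheorem}/Lemma \ref{oddwalkoddcyclelemma2}, observe that e-squares contribute evenly to each count while $v_\cJ$ on a v-gon and $f_\cJ$ on an f-gon equal the corresponding vertex and face degrees, and deduce the complementary-function equivalences from the even length of all $2$-color closed walks. Your closing remark about the absence of a zigzag analogue also matches the paper's own comment following the proposition.
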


 We can permute the colors to obtain a condition for even-face
embeddings from \ref{ega}.  However, we cannot obtain conditions
involving the even-zigzag property by using $2$-color walks in $\cJ$.

 Proposition \ref{eulerianjewel} tells us that certain Eulerian
properties follow from the seven Fano properties, because if a
particular parity condition holds for all closed walks in $\cL$, it
certainly holds for the $2$-color closed walks in $\cL$.  We can deduce
the following.  We already proved \ref{iea} directly when examining
condition $(100)$ in Subsubsection \ref{cond100}, and as noted earlier,
this was observed by Lins \cite[Theorem 2.9(b)]{LinsPhD}.

 \begin{corollary}\label{implyeulerian}
 Let $\eG$ be an embedded graph.
 \begin{enumerate}[label=\rm(\alph*)]\setlength{\itemsep}{0pt}%
     \item\label{iea}
 	If condition $(100)$ holds for $\eG$, i.e., $\eG\sdu\spe$ and
$\eG\sdu\spe\sdu$ are orientable, then $\eG$ is even-vertex.
     \item\label{ieb}
 	If condition $(110)$ holds for $\eG$, i.e., $\eG\spe\sdu$ and
$\eG\sdu\spe\sdu$ are bipartite, then $\eG$ is both even-vertex and
even-face.
     \item\label{iec}
 	If condition $(111)$ holds for $\eG$, i.e., the medial graph $M$
of $\eG$ is bipartite, then $\eG$ is even-vertex, even-face, and
even-zigzag.
 \end{enumerate}
 \end{corollary}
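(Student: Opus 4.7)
The plan is a direct specialization from all closed walks to $2$-color closed walks in the jewel $\cL$ of $\eG$. Table~\ref{tab:condoribip} has already recast each Fano condition as a parity statement over every closed walk $K$ in $\cL$: condition $(100)$ asserts that $(f+z+a)_\cL(K)$ is always even, $(110)$ asserts that $(v+f)_\cL(K)$ is always even, and $(111)$ asserts that $a_\cL(K)$ is always even. These are exactly the three weighted edge-counting functions that appear, restricted to $2$-color walks, in parts (a), (b), and (c) of Proposition~\ref{eulerianjewel}.

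The core observation I would use is the trivial one: every $2$-color closed walk is in particular a closed walk, so if a parity condition holds for all closed walks in $\cL$, it \emph{a fortiori} holds for the $2$-color subclass. Applying this to the three conditions and then invoking Proposition~\ref{eulerianjewel} finishes each part. For (a), the $(f+z+a)_\cL$ condition restricted to $2$-color walks plus Proposition~\ref{eulerianjewel}(a) gives that $\eG$ is even-vertex. For (b), the $(v+f)_\cL$ condition restricted to $2$-color walks plus Proposition~\ref{eulerianjewel}(b) gives that $\eG$ is simultaneously even-vertex and even-face. For (c), the $a_\cL$ condition restricted to $2$-color walks plus Proposition~\ref{eulerianjewel}(c) gives that $\eG$ is even-vertex, even-face, and even-zigzag.

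There is genuinely no obstacle here; the entire content of the corollary is the quantifier weakening from ``holds for all closed walks in $\cL$'' to ``holds for all $2$-color closed walks in $\cL$.'' All the substantive work was done earlier in building Table~\ref{tab:condoribip} (which identifies which parity condition corresponds to which Fano condition) and in proving Proposition~\ref{eulerianjewel} (which identifies which $2$-color parity condition corresponds to which Eulerian property). I would write the actual proof as a single sentence stating the specialization principle, followed by three one-line applications, one per part.
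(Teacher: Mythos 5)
Your proof is correct and follows essentially the same route as the paper's: restrict each all-closed-walk parity condition from Table \ref{tab:condoribip} to the subclass of $2$-color closed walks, then invoke the corresponding part of Proposition \ref{eulerianjewel}. In fact your identification of condition $(100)$ with the function $(f+z+a)_\cL$ is the accurate one; the paper's own proof of part (a) writes $(v+f+a)_\cL$, which appears to be a slip for $(f+z+a)_\cL$.
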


 \begin{proof} (a) If condition $(100)$ holds, then $(v+f+a)_\cL(K)$ is
even for all closed walks $K$, and hence for $2$-color $K$; therefore
$\eG$ is even-vertex by Proposition \ref{eulerianjewel}\ref{eja}.

 The proofs of (b) and (c) are similar: for (b) we use the function
$(v+f)_\cL$, and for (c) we use the function $a_\cL$.  (Each part of
Proposition \ref{eulerianjewel} gives two functions; from Subsection
\ref{ss:gemsvsjewels}) we see that only one corresponds to a Fano
condition.)
 \end{proof}

 By permuting colors in the jewel we can find eulerian properties
implied by all seven Fano properties.  The implications are shown in
Figure \ref{fig:FanoFrameworkExtended}.  We will prove these implications
again using bidirections of the medial graph, in the next
subsection.

 \subsection{Medial graph bidirections and Eulerian
properties}\label{ss:eulerianbidirections}

 Huggett and Moffatt \cite{bippartdualsHM} considered not only the
question of when a partial dual is bipartite, as described in Subsection
\ref{ss:previousdirections}, but also when it is Eulerian.
 They were able to give a sufficient condition \cite[Corollary
4.5]{bippartdualsHM} involving directions of the medial checkerboard
$\cM$ for a partial dual of a plane graph to be Eulerian, but it was not
a necessary condition (the condition was actually equivalent to $\eG$
being $2$-face-colorable).
 Metsidik and Jin \cite{MJ18} found a condition that was both necessary
and sufficient, involving bidirections of $\cM$ with both directed and
antidirected edges.
 These bidirections had c- and d-edges and satisfied additional
conditions.
 Metsidik \cite{Met21} generalized this result, using similar
bidirections with c-, d-, and t-edges to characterize when the partial
dual of an arbitrary graph embedding is even-vertex.  (Note that
\cite{Met21} employs nonstandard terminology, using `c-edge' to mean
either c- or t-edge, and `d-edge' to mean either d- or t-edge.)

 Deng, Jin, and Metsidik realized that simpler conditions could be given
just using directions of $\cM$, as follows.

 \begin{theorem}[Deng, Jin, and Metsidik {\cite[Theorem
1.5]{dengjin}}]\label{Eulerianallct}
 Let $\eG$ be a cellularly embedded graph with medial graph $M$ and
$A\subseteq E(\eG)$. Then $\eG \du A$ is even-vertex if and only if there
exists a direction of $M$ for which every element of $A$ is a d-edge or
a t-edge in $\eG$, and every element of $E(G)- A$ is a c-edge or a
t-edge in $\eG$.
 \end{theorem}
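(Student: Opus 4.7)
My plan is to isolate the effect of partial duality and then give a direct characterization of even-vertex embeddings via directions of the medial graph. Set $\eG' := \eG \du A$ and note $M(\eG)=M(\eG')=M$. I would first invoke Theorem \ref{typechange}: under the operation $\du A$, an edge $e \in A$ has its type changed as $\mathrm{b}\leftrightarrow\mathrm{b}$, $\mathrm{c}\leftrightarrow\mathrm{d}$, $\mathrm{t}\leftrightarrow\mathrm{t}$, while an edge $e \notin A$ keeps its type. Consequently the condition in the statement (d- or t-edge on $A$, c- or t-edge on $E(G)-A$, all measured in $\eG$) is equivalent to requiring every edge of $\eG'$ to be a c- or t-edge under the same direction $\de$ of $M$. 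The theorem therefore reduces to the cleaner claim: $\eG'$ is even-vertex if and only if $M$ admits a direction under which every edge of $\eG'$ is a c- or t-edge.

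Next I would translate this into a labeling condition on the jewel $\cL'$ of $\eG'$. By Observation \ref{medch2jewel} and the bijection $\tbdbl$ of Subsection \ref{ss:bidirections}, a direction $\de$ of $M$ corresponds to a binary labeling $\la=\tbdbl(\de)$ of $V(L')$ whose value flips across every edge of color $\ca$. Requiring every edge to be c or t forces $\de$ to be balanced-total, so every vertex of $\cM'$ is of type b, c, d, or t; a routine local case analysis, reading off labels $0/1$ at the four vertices of the corresponding e-simplex from in/out arrow directions, shows that c- and t-vertices are precisely those at which $\la$ is constant across both $\cv$-edges of the e-simplex, while b- and d-vertices always exhibit a flip across some $\cv$-edge. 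Thus the sought direction exists if and only if $\cL'$ admits a binary labeling $\la$ that flips across every $\ca$-edge and is constant across every $\cv$-edge.

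To finish, I would analyse the subgraph of $L'$ induced by the edges of colors $\cv$ and $\ca$: this is precisely the vertex-disjoint union of v-gons, each a cycle alternating $\cv,\ca,\cv,\ca,\dots$ of length $2\deg_{\eG'}(u)$ for the vertex $u$ of $\eG'$ that it represents. Restricting $\la$ to such a v-gon forces the pattern $0,0,1,1,0,0,1,1,\dots$ as one traverses it, which closes up consistently if and only if $\deg_{\eG'}(u)$ is even. Since the v-gons partition $V(L')$ and no constraint is imposed across edges of colors $\cf$ or $\cz$, the existence of the required labeling globally is equivalent to every vertex of $\eG'$ having even degree, i.e., $\eG'$ being even-vertex. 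The conceptually crucial step is the reduction in the first paragraph, which separates the combinatorics of partial duality from the parity analysis; the main technical hurdle is the local case analysis in the second paragraph, which is routine bookkeeping but must be handled carefully via the correspondence of Observation \ref{medch2jewel}.
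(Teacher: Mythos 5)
Your argument is correct, and its first step --- using Theorem \ref{typechange} to convert the mixed condition on $A$ and $E(G)-A$ into the single condition that every edge of $\eG\du A$ be a c- or t-edge --- is exactly the decomposition the paper records (it observes that Theorem \ref{Eulerianallct} ``is just a consequence of Corollary \ref{eulerianctedge} and Theorem \ref{typechange}''). Where you go further is in the base case: the paper does not prove Corollary \ref{eulerianctedge} independently, but instead derives it from the cited theorem of Deng, Jin, and Metsidik by setting $A=\emptyset$, so its presentation of this particular result is ultimately an appeal to the literature. You supply a self-contained proof of that base case via the jewel: a direction of $M$ corresponds to a binary labeling $\la=\tau(\de)$ flipping across every edge of color $\ca$, the c-or-t condition at an e-square is exactly constancy of $\la$ across its two edges of color $\cv$ (consistent with the $S$-labelings in Table \ref{tab:bcd}), and the subgraph of colors $\cv$ and $\ca$ is a disjoint union of v-gons on which such a labeling closes up precisely when the corresponding vertex degree is even. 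This is the same machinery the paper deploys for the neighbouring results (Lemma \ref{cedgefza}, Theorem \ref{cedge100}, and especially the antidirection analogue Theorem \ref{bdantidir}, whose proof is the mirror image of your v-gon parity argument), so your write-up effectively makes the paper's treatment of this theorem independent of the external citation. One small point worth stating explicitly: Theorem \ref{typechange} is formulated for balanced-total bidirections, so you should note (as you implicitly do) that any direction witnessing the hypothesis is automatically balanced-total, which legitimises invoking the type-change table in both directions of the equivalence.
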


 By applying the special case of Theorem \ref{typechange} for full
duality, Deng, Jin, and Metsidik \cite[Corollary 3.5]{dengjin} also
obtained a condition for partial duals that are even-face.
 If we take $A=\emptyset$ or $A = E(G)$ in Theorem \ref{Eulerianallct},
we obtain the following. 

 \begin{corollary}\label{eulerianctedge}
 Let $\eG$ be a cellularly embedded graph with medial checkerboard
$\cM$.  Then $\eG$ is even-vertex if and only if there exists a
ct-direction of $\cM$.
 \end{corollary}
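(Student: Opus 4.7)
The plan is to deduce this corollary as the immediate specialization of Theorem \ref{Eulerianallct} to $A = \emptyset$. Since $\eG = \eG \du \emptyset$, applying that theorem with $A = \emptyset$ yields that $\eG$ is even-vertex if and only if there is a direction of $M$ satisfying two conditions: every element of $A$ is a d-edge or a t-edge of $\eG$ (vacuously satisfied, since $A = \emptyset$), and every element of $E(G) - A = E(G)$ is a c-edge or a t-edge of $\eG$. The second condition is precisely the definition of a ct-direction of $\cM$ in the sense of Definition \ref{bcdt}, so the equivalence is immediate.

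The alternative specialization $A = E(G)$ mentioned in the text just before the corollary can also be used: Theorem \ref{Eulerianallct} with $A = E(G)$ characterizes even-vertex $\eG\sdu$ in terms of dt-directions of $\cM$, and applying this to $\eG\sdu$ in place of $\eG$ recovers the stated corollary once one observes that passing to the dual swaps the face colors $\cv$ and $\cf$ in the medial checkerboard, so that d-edges with respect to $\cM\sdu$ are exactly c-edges with respect to $\cM$, while t-edges remain t-edges. Either route is a one-line specialization of the cited theorem; there is no real obstacle, and the only point that warrants explicit mention is the routine verification that the phrase ``every edge is a c-edge or a t-edge'' produced by Theorem \ref{Eulerianallct} coincides with the ct-direction terminology of Definition \ref{bcdt}.
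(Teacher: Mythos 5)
Your proposal is correct and is exactly the paper's route: the paper derives this corollary by setting $A=\emptyset$ in Theorem \ref{Eulerianallct}, with the only observation needed being that ``every edge of $E(G)$ is a c-edge or a t-edge'' is the definition of a ct-direction of $\cM$. Your alternative via $A=E(G)$ applied to $\eG\sdu$ is also sound (it is consistent with the c/d swap under full duality in Theorem \ref{typechange}), but it is not needed.
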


 \begin{corollary}\label{evenfacedtedge}
 Let $\eG$ be a cellularly embedded graph with medial checkerboard
$\cM$.  Then $\eG$ is even-face if and only if there exists a
dt-direction of $\cM$.
 \end{corollary}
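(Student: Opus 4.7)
The plan is to derive this from the previous Corollary \ref{eulerianctedge} by dualizing. The key observation is that the faces of $\eG$ correspond bijectively to the vertices of $\eG\sdu$, with matching degrees, so $\eG$ is even-face if and only if $\eG\sdu$ is even-vertex. Applying Corollary \ref{eulerianctedge} to $\eG\sdu$ therefore reduces the statement to a claim about ct-directions of the medial checkerboard of $\eG\sdu$, which I will translate into a claim about dt-directions of $\cM$.

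The translation is handled by the full-duality case of Theorem \ref{typechange}, with $A = E(G)$. The medial graph $M$ is unchanged by duality, so a given bidirection $\de$ of $M$ simultaneously lives on the medial checkerboards of $\eG$ and of $\eG\sdu$; only the checkerboard structure (vertex rotations and face colors) changes. According to Table \ref{tab:typechange}, under full duality each b-edge stays a b-edge, c-edges and d-edges swap, and t$^\pm$-edges are preserved. Thus $\de$ is a ct-direction of the medial checkerboard of $\eG\sdu$ if and only if $\de$ is a dt-direction of $\cM$.

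Putting these two steps together yields the chain
\[
\eG \text{ is even-face} \iff \eG\sdu \text{ is even-vertex}
  \iff \text{$\cM\edu$ admits a ct-direction} \iff \text{$\cM$ admits a dt-direction},
\]
which is the corollary. The only step requiring any care is verifying that the bidirection $\de$ can be identified between the two medial checkerboards; but this was built into the statement of Theorem \ref{typechange}, where $\de$ is attached to the medial graph $M$ rather than to the full medial checkerboard, precisely so that it can be reused after a twisted dual operation. No new obstacle arises, and no direct manipulation of jewels or closed-walk parity is needed beyond what was already done for Corollary \ref{eulerianctedge}.
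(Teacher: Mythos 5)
Your proof is correct and follows essentially the same route as the paper: the paper obtains this corollary by setting $A=E(G)$ in Theorem \ref{Eulerianallct}, which amounts to exactly your chain (even-face of $\eG$ $\Leftrightarrow$ even-vertex of $\eG\sdu$, plus the c/d swap under full duality), and the paper itself notes that Theorem \ref{Eulerianallct} decomposes into Corollary \ref{eulerianctedge} and Theorem \ref{typechange}, which are the two ingredients you use.
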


 In keeping with our theme of breaking results down to simpler
components, we observe that Theorem \ref{Eulerianallct} is just a
consequence of Corollary \ref{eulerianctedge} and Theorem
\ref{typechange}.
 We can also use Theorem \ref{typechange} to characterize the situation
where $\cM$ has a bt-direction.

 \begin{theorem}\label{btedge}
 Let $\eG$ be a cellularly embedded graph with medial graph $M$.  Then $\eG$
is even-zigzag if and only if there exists a bt-direction of $\cM$.
 \end{theorem}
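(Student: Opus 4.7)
The plan is to reduce Theorem \ref{btedge} to Corollary \ref{evenfacedtedge} via the Petrie dual, using Theorem \ref{typechange} as the bridge. First I would observe that the medial graph $M$ of $\eG$ coincides with the medial graph of the Petrie dual $\eG\spe$: since Petrie duality on the jewel $\cL$ merely interchanges the colors $\cf$ and $\cz$, the contracted edge set $\ec{\cv\cc\cf\cc\cz}\cL$ that defines $M$ is unchanged. Thus a single bidirection $\de$ of $M$ can be interpreted simultaneously as a bidirection of the medial checkerboard $\cM$ of $\eG$ and of the medial checkerboard of $\eG\spe$; only the vertex rotations and face colorings differ between the two checkerboards.

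Next, I would apply Theorem \ref{typechange} with $A = E(G)$ to the Petrie-dual operation. Reading off the last column of Table \ref{tab:typechange}, the full Petrie dual converts b-edges of $\eG$ into d-edges of $\eG\spe$, d-edges into b-edges, and leaves c-edges and t-edges (of either sign) untouched. Consequently, $\de$ is a bt-direction with respect to $\eG$ if and only if $\de$ is a dt-direction with respect to $\eG\spe$.

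Finally, Corollary \ref{evenfacedtedge} applied to $\eG\spe$ says that such a dt-direction exists if and only if $\eG\spe$ is even-face, which is in turn equivalent to $\eG$ being even-zigzag since the faces of $\eG\spe$ are by definition the zigzags of $\eG$ (as recalled in Section \ref{sec:embeddings}). Chaining these three equivalences yields the theorem. The only delicate point is ensuring the single bidirection $\de$ of $M$ is compatible as data for two distinct medial-checkerboard structures simultaneously; this is precisely what Theorem \ref{typechange} is designed to guarantee, so no substantive obstacle arises beyond correctly consulting the Petrie column of the type-change table. An alternative, more direct route would invoke Proposition \ref{eulerianjewel}\ref{eja} after permuting the colors $\cv$ and $\cz$, translating even-zigzag into evenness of $z_\cL$ on $2$-color walks and then setting up an explicit correspondence with balanced-total bidirections via $\tbdbl$ in the style of Lemma \ref{cedgefza}, but the Petrie-dual reduction above appears cleaner.
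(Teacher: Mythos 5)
Your proposal is correct and follows essentially the same route as the paper's proof: reduce even-zigzag to even-face of $\eG\spe$, apply Corollary \ref{evenfacedtedge} to get a dt-direction, and use Theorem \ref{typechange} (the Petrie column with $A=E(G)$) to translate d-edges of $\eG\spe$ back to b-edges of $\eG$. The extra care you take about the shared medial graph $M$ underlying both checkerboards is implicit in the paper's argument but accurately stated.
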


 \begin{proof}
 The zigzag walks of $\eG$ are the faces of $\eG\spe$, so $\eG$ is
even-zigzag if and only if $\eG\spe$ is even-face, which by Corollary
\ref{evenfacedtedge} means the medial graph of $\eG\spe$ has an
orientation for which all edges of $\eG\spe$ are d- or t-edges, which by
Theorem \ref{typechange} means all edges of $\eG$ are b- or t-edges.
 \end{proof}

 Consequently, situations where we have a b-, c-, d-, or t-direction of
the medial checkerboard $\cM$, representing four of the properties in
the Fano framework, imply that we have one or more of the properties
even-vertex, even-face, or even-zigzag.
 For example, if we have conditions $(001)$ or $(111)$ we have a
b-direction or a t-direction of $\cM$, which in either case is a
bt-direction, so by Theorem \ref{btedge} $\eG$ is even-zigzag.
 The implications are shown by
directed edges (magenta or orange, if color is shown) in Figure
\ref{fig:FanoFrameworkExtended}.  The results we obtain from this
argument are the same as those we obtain from part \ref{iea}, and its
generalizations, and part \ref{iec} of Corollary \ref{implyeulerian}.

 Somewhat surprisingly, the Eulerian properties also correspond to
combinations of antidirection properties, so we also get implications
for the remaining three properties of the Fano framework, shown by
directed edges (red, if color is shown) in Figure
\ref{fig:FanoFrameworkExtended}.
 We now prove the necessary results.

 \begin{theorem}\label{bdantidir}
 Let $\eG$ be a cellularly embedded graph with medial checkerboard
$\cM$.  Then $\eG$ is even-vertex if and only if there exists a
bd-antidirection of $\cM$.
 \end{theorem}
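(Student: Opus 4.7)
The plan is to translate the even-vertex property into a parity condition on v-gons in the jewel $\cL$ of $\eG$, and then match this with a binary labeling of $L$ arising from a bd-antidirection of $\cM$ via the correspondence $\tbdbl$ of Subsection \ref{ss:bidirections}.

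First I would invoke Proposition \ref{eulerianjewel}\ref{eja}: combined with the observation that $v_\cL(K)$ is automatically even on any 2-color closed walk that avoids color $\ca$, this says that $\eG$ is even-vertex if and only if $v_\cL(K)$ is even for every v-gon $K$ in $\cL$. By Theorem \ref{oddwalkoddcycletheorem}, this condition is equivalent to the spanning 2-regular subgraph of $L$ with edge set $\ec{\cv\cc\ca}\cL$ admitting a binary labeling $\lambda$ that flips across each $\cv$-edge and is constant across each $\ca$-edge; since this subgraph spans $V(L)$, such a $\lambda$ is automatically defined on all of $V(L)$.

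Next I would prove the key lemma: a bidirection $\de$ of $M$ with $\lambda = \tbdbl(\de)$ is a bd-antidirection of $\cM$ if and only if $\lambda$ is constant across every edge of color $\ca$ and flips across every edge of color $\cv$ in $\cL$. The antidirection half is immediate from the definition of $\tbdbl$. For the b/d half, in the style of Lemma \ref{cedgefza} and using Observation \ref{medch2jewel}\ref{m2jc}, I would check case by case that the label pattern $(0,1,0,1)$ around an e-square corresponds to a b-vertex (labels flip on $\cv$ and $\cf$, constant on the two $\cz$-diagonals), while the pattern $(0,0,1,1)$ corresponds to a d-vertex (labels flip on $\cv$ and $\cz$, constant on $\cf$). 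The crucial observation is that any labeling of the four e-square vertices that flips across the two $\cv$-edges must, up to swapping $0$ and $1$, take one of these two forms; thus the b/d dichotomy is automatic once $\lambda$ flips on $\cv$ and is constant on $\ca$.

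With the key lemma in hand, both directions of the theorem are short. Given a bd-antidirection, the associated $\lambda$ is an $S$-labeling with $\ec{\cv}\cL \subseteq S$ and $\ec{\ca}\cL \cap S = \emptyset$; then for any v-gon $K$ we have $v_\cL(K) = \nu_S(K)$, which is even, so $\eG$ is even-vertex. Conversely, the labeling $\lambda$ produced in the first step meets exactly the hypotheses of the key lemma, so $\tbdbl\iv(\lambda)$ is a bd-antidirection of $\cM$. I expect the main obstacle to be the bookkeeping in the key lemma: b- and d-vertices differ in which of $\cf$ or $\cz$ behaves neutrally with respect to $\lambda$, and one must check carefully that allowing \emph{either} pattern at each e-simplex provides exactly the slack needed to accommodate all even-vertex embeddings, whereas insisting on a single pattern everywhere would recover the strictly stronger Fano conditions $(110)$ or $(101)$ from Theorem \ref{bcdantiedge}.
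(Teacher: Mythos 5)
Your proposal is correct and is essentially the paper's own argument in the labeling formalism: the paper antidirects the $\ca$-edges alternately introverted/extraverted around each v-gon (possible iff each vertex has even degree) and observes via Observation \ref{medch2jewel} that the two resulting cyclic patterns at an e-square are exactly the b- and d-patterns, which is precisely your key lemma with $\lambda=\tbdbl(\de)$ recording in/out. The only cosmetic difference is that you route the even-vertex side through Theorem \ref{oddwalkoddcycletheorem} and an $S$-labeling of the v-gon $2$-factor rather than stating the alternation directly.
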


 \begin{proof}
 Let $\cJ$ be the gem of $\eG$.

 First suppose $\eG$ is even-vertex.  Then we can antidirect the edges
colored $\ca$ in each v-gon of $\cJ$ so that they are alternately
introverted and extraverted.  Thus, each edge colored $\cv$ in an
e-square $Q_e$ joins two half-edges of color $\ca$ that have opposite
directions relative to $Q_e$ and so around $Q_e$ we see either (in,
$\cv$, out, $\cf$, in, $\cv$, out, $\cf$), or (in, $\cv$, out, $\cf$,
out, $\cv$, in, $\cf$).  By Observation \ref{medch2jewel}, this means
that $e$ is either a b-edge or a d-edge in $\eG$.

 Conversely, suppose that $\cM$ has a bd-antidirection.  Then
Observation \ref{medch2jewel} guarantees that each edge of color $\cv$
in an e-square $Q_e$ of $\cJ$ joins two half-edges of color $\ca$ that
have opposite directions relative to $Q_e$.  Therefore, around each
v-gon of $J$ the edges of color $\ca$ must alternate between introverted
and extraverted, and hence each vertex of $\eG$ has even degree.
 \end{proof}

 By applying Theorem \ref{typechange} to Theorem
\ref{bdantidir} we obtain the following.

 \begin{theorem}\label{bcantidir}
 Let $\eG$ be a cellularly embedded graph with medial checkerboard
$\cM$.  Then $\eG$ is even-face if and only if there exists a
bc-antidirection of $\cM$.
 \end{theorem}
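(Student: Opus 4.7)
The plan is to mirror exactly the strategy the paper has used twice already (for Theorem \ref{btedge} and for the transition from a direction statement to an antidirection statement), namely: take the known characterization of the even-vertex property via bd-antidirections (Theorem \ref{bdantidir}) and conjugate it by geometric duality using Theorem \ref{typechange}.

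First I would use the elementary fact that $\eG$ is even-face if and only if the dual $\eG\sdu$ is even-vertex, since the face degrees of $\eG$ are exactly the vertex degrees of $\eG\sdu$. Then by Theorem \ref{bdantidir} applied to $\eG\sdu$, this happens if and only if the medial checkerboard $\cM\sdu$ of $\eG\sdu$ admits a bd-antidirection $\de$. The crucial point is that $\eG$ and $\eG\sdu$ share the same underlying medial graph $M$ (only the $\cv/\cf$ face-coloring of the checkerboard swaps), so $\de$ is literally a bidirection of the same graph; only the \emph{types} of its edges can change from the viewpoint of $\eG$ versus $\eG\sdu$.

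Next I would invoke Theorem \ref{typechange} with $A=E(G)$, i.e.\ full geometric duality. From the table, the effect on each edge is $\text{b}\to\text{b}$, $\text{c}\to\text{d}$, $\text{d}\to\text{c}$, $\text{t}^{\pm}\to\text{t}^{\pm}$. Therefore, an edge of $\eG\sdu$ is a b-edge (respectively a d-edge) under $\de$ if and only if the same edge of $\eG$ is a b-edge (respectively a c-edge) under $\de$. Consequently $\de$ is a bd-antidirection of $\cM\sdu$ if and only if $\de$ is a bc-antidirection of $\cM$. Chaining the equivalences gives the theorem in both directions.

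I do not foresee a real obstacle: this is a bookkeeping argument and the two tools used (bd-antidirections characterize even-vertex, and partial/full duality permutes the balanced edge types by the transposition (c\,d)) are already proved in the excerpt. The only point to be careful about is that ``antidirection'' is invariant under the operation: a bidirection of $M$ that is an antidirection (every edge has its two halves pointing oppositely) remains an antidirection after applying a twisted dual operation, because the operation relabels vertex types in $\cM$ but does not alter the actual half-edge arrows of $\de$. Once this is noted, the proof is a two-line chain of equivalences that I would present in the same compact style the authors used for Theorem \ref{btedge}.
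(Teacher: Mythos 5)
Your proposal is correct and is essentially the paper's own argument: the paper derives Theorem \ref{bcantidir} from Theorem \ref{bdantidir} precisely by conjugating with full geometric duality via Theorem \ref{typechange}, exactly as you do (and as in the proof of Theorem \ref{btedge}). Your explicit check that the (c\,d) transposition of edge types under $\du E(G)$ turns a bd-antidirection for $\eG\sdu$ into a bc-antidirection for $\eG$, and that the antidirection property itself is unaffected by the operation, fills in the details the paper leaves implicit.
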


 \begin{theorem}\label{cdantidir}
 Let $\eG$ be a cellularly embedded graph with medial checkerboard
$\cM$.  Then $\eG$ is even-zigzag if and only if there exists a
cd-antidirection of $\cM$.
 \end{theorem}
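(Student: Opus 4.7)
The plan is to reduce Theorem \ref{cdantidir} to Theorem \ref{bcantidir} via Petrie duality, mirroring the argument that derived Theorem \ref{btedge} from Corollary \ref{evenfacedtedge}. The key fact is that the zigzag walks of $\eG$ are by definition the face boundaries of $\eG\spe$, so $\eG$ is even-zigzag if and only if $\eG\spe$ is even-face.

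Applying Theorem \ref{bcantidir} to $\eG\spe$, the condition that $\eG\spe$ is even-face is equivalent to the existence of a bc-antidirection of the medial checkerboard of $\eG\spe$. I would then invoke Theorem \ref{typechange} with $A = E(G)$ for the partial Petrie dual operation $\eG \mapsto \eG \pe E(G) = \eG\spe$. This theorem says that, for a fixed balanced-total bidirection $\de$ of the shared medial graph $M$, the b-edges and d-edges interchange roles when we pass from $\eG$ to $\eG\spe$, while c-edges (and t-edges) remain unchanged. Consequently, $\de$ is a bc-antidirection relative to $\eG\spe$ if and only if it is a cd-antidirection relative to $\eG$, i.e., of $\cM$. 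Chaining the two equivalences yields the theorem.

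The only real verification needed is that Theorem \ref{typechange} applies to antidirections and not merely to directions. This is automatic, since the statement is for arbitrary balanced-total bidirections (and balanced antidirections qualify, as b-, c-, and d-vertices are by definition balanced), and its proof uses only the in/out assignments of half-edges at each e-simplex via Observation \ref{medch2jewel}. So the main potential obstacle is just flagging this point explicitly; once it is noted, the proof collapses to a short three-step chain exactly parallel to that of Theorem \ref{btedge}.
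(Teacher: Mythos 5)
Your proposal is correct and follows essentially the same route as the paper: the paper obtains Theorem \ref{cdantidir} (together with Theorem \ref{bcantidir}) by applying Theorem \ref{typechange} to Theorem \ref{bdantidir}, and your chain through $\eG\spe$ and the b/d interchange under partial Petrie duality is exactly that argument, with the applicability of Theorem \ref{typechange} to balanced antidirections noted just as the paper notes it.
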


 Therefore, situations where we have a b-, c-, or d-antidirection of the
medial checkerboard $\cM$, representing the remaining three properties
in the Fano framework, imply that we have two of the properties
even-vertex, even-face, or even-zigzag.
 For example, if $\eG$ satisfies condition $(011)$, i.e., $\eG$ and
$\eG\spe$ are bipartite, then $\cM$ has a c-antidirection.  Therefore,
$\cM$ has both a bc-antidirection and a cd-antidirection, meaning that
$\eG$ is both even-face and even-zigzag by Theorems \ref{bcantidir} and
\ref{cdantidir}.  The implications we obtain in this way are identical
to those given by part \ref{ieb} of Corollary \ref{implyeulerian} and
its generalizations.

 At this point we can fully explain Figure
\ref{fig:FanoFrameworkExtended}.  There are ten vertices, seven of which
represent our Fano framework properties, and three new crossed vertices
that represent Eulerian properties: whether the embedded graph $\eG$
is even-vertex, even-face, or even-zigzag.  The edges between the solid,
open and crossed vertices (teal, magenta and red, if colors are shown)
form six triangles, and each is labeled in its interior with an
embedded graph $\eH$ ($\eH=\eG$, $\eG\sdu$, etc.).  The open vertex of
the triangle represents $\eH$ being orientable, the solid vertex
represents $\eH$ being bipartite, and the crossed vertex represents
$\eH$ being even-zigzag (not the easiest property to interpret, but what
makes the figure work nicely).  For example, at top left is a triangle
with the label `$\eG$' inside it; the crossed vertex of this triangle
represents $\eG$ being even-zigzag; the open vertex represents $\eG$
being orientable (condition $(001)$); and the solid vertex represents
$\eG$ being bipartite (condition $(011)$).
 The directed edges represent implications between properties, and the
undirected edges represent the Fano framework, as in Figure
\ref{fig:FanoFramework}.

 We can combine the results above with Theorem \ref{typechange} to
obtain results about whether graphs obtained by taking an arbitrary
sequence of partial duals and partial Petrie duals are even-vertex,
even-face, or even-zigzag.

 \section{Examples}\label{sec:examples}

 We now give examples of graph embeddings satisfying all possible
combinations of the seven Fano properties.
 We will also provide an example to show that it is possible to have all
three Eulerian properties without having any of the seven Fano
properties.

 Some of our examples are graphs embedded in the projective plane
(represented as a disk with antipodal boundary points identified) or
torus (represented as a rectangle with opposite sided identified). 
Other examples are represented as \emph{rotation projections} in the
plane: the graph is drawn in the plane so that the rotation at each
vertex is just the clockwise cyclic order of half-edges; edges of
signature $-1$ are marked with an X; and edge crossings should be
ignored.  Some examples are represented as rotation projections but in
the torus rather than the plane.

 \subsection{Examples of allowable combinations of Fano properties}
 \label{ss:excombfano}

 Here we provide examples showing that all combinations of Fano
properties allowed by Metatheorem A actually occur.

 \subsubsection{All seven Fano properties}\label{s2s:allseven}

 A grid embedding of $C_{2k} \times C_{2l}$ in the torus
(Figure \ref{fig:all7}) satisfies all seven properties. It is
orientable, bipartite, and 2-face-colorable. So the embedding satisfies
conditions $(001)$, $(011)$, and $(101)$, and by Metatheorem C all seven
properties are satisfied.

 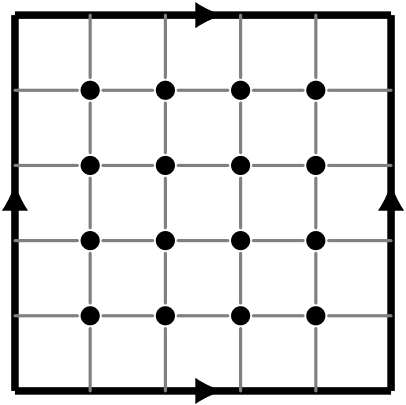
\begin{figure}
     \centering
    \begin{tikzpicture}[scale=0.5]
 	\node[vertex] at (2,2) (v0) {};
  \node[vertex] at (2,4) (v1) {};
  \node[vertex] at (2,6) (v2) {};
  \node[vertex] at (2,8) (v3) {};
  	\node[vertex] at (4,2) (v4) {};
  \node[vertex] at (4,4) (v5) {};
  \node[vertex] at (4,6) (v6) {};
  \node[vertex] at (4,8) (v7) {};
  	\node[vertex] at (6,2) (v8) {};
  \node[vertex] at (6,4) (v9) {};
  \node[vertex] at (6,6) (v10) {};
  \node[vertex] at (6,8) (v11) {};
  	\node[vertex] at (8,2) (v12) {};
  \node[vertex] at (8,4) (v13) {};
  \node[vertex] at (8,6) (v14) {};
  \node[vertex] at (8,8) (v15) {};

 	\begin{scope}[on background layer]
   \draw[line width = 1mm] (0,0) to (0,10);
    \draw[line width = 1mm] (0,0) to (10,0);
     \draw[line width = 1mm] (10,0) to (10,10);
      \draw[line width = 1mm] (0,10) to (10,10);

      \draw [arrows = {-Latex[width=10pt, length=10pt]}] (0,4.5) --
(0,5.5);
     \draw [arrows = {-Latex[width=10pt, length=10pt]}] (10,4.5) --
(10,5.5);
      \draw [arrows = {-Latex[width=10pt, length=10pt]}] (4.5,0) --
(5.5,0);
        \draw [arrows = {-Latex[width=10pt, length=10pt]}] (4.5,10) --
(5.5,10);
 \draw[edge] (v0) to (v1);
 		\draw[edge] (v1) to (v2);
 		\draw[edge] (v2) to (v3);
 		\draw[edge] (v4) to (v5);
 		\draw[edge] (v5) to (v6);
   	\draw[edge] (v6) to (v7);
 		\draw[edge] (v8) to (v9);
 		\draw[edge] (v9) to (v10);
   	\draw[edge] (v10) to (v11);
 		\draw[edge] (v12) to (v13);
 		\draw[edge] (v13) to (v14);
   	\draw[edge] (v14) to (v15);
    \draw[edge] (v0) to (v4);
 		\draw[edge] (v4) to (v8);
 		\draw[edge] (v8) to (v12);
 		\draw[edge] (v1) to (v5);
 		\draw[edge] (v5) to (v9);
   	\draw[edge] (v9) to (v13);
 		\draw[edge] (v2) to (v6);
 		\draw[edge] (v6) to (v10);
   	\draw[edge] (v10) to (v14);
 		\draw[edge] (v3) to (v7);
 		\draw[edge] (v7) to (v11);
   	\draw[edge] (v11) to (v15);
       \draw[edge] (v0) to (2,0);
 		\draw[edge] (v0) to (0,2);
 		\draw[edge] (v4) to (4,0);
 		\draw[edge] (v8) to (6,0);
 		\draw[edge] (v12) to (8,0);
   	\draw[edge] (v1) to (0,4);
 		\draw[edge] (v2) to (0,6);
 		\draw[edge] (v3) to (0,8);
   	\draw[edge] (v3) to (2,10);
 		\draw[edge] (v7) to (4,10);
 		\draw[edge] (v11) to (6,10);
   	\draw[edge] (v15) to (8,10);
    \draw[edge] (v12) to (10,2);
 		\draw[edge] (v13) to (10,4);
 		\draw[edge] (v14) to (10,6);
   	\draw[edge] (v15) to (10,8);
 	\end{scope}
 \end{tikzpicture}
     \caption{This embedding of $C_{2k} \times C_{2l}$ in the torus
satisfies all seven properties.}\label{fig:all7}
 \end{figure}

 \subsubsection{Three Fano properties (side of triangle)}
 \label{s2s:threeside}

 An embedding $\eG$ of a tree with at least two vertices in the plane is
orientable and bipartite (conditions $(001)$ and $(011)$) so it also
satisfies condition $(010)$.  However, the medial graph has a loop
corresponding to each leaf of the tree, and hence is not bipartite. 
Since condition $(111)$ does not hold, only the three conditions
$(001)$, $(011)$, and $(010)$ on the left side of the Fano triangle
hold.

 By taking the dual or the Wilson dual of such a $\eG$ we obtain
embeddings where exactly the three properties on one of the other two
sides of the Fano triangle hold.

 \subsubsection{Three Fano properties (median of triangle)}
 \label{s2s:threemedian}

 Figure \ref{fig:fanomedian} shows an example where we have twisted
one edge of a grid embedding of $C_{2k} \times C_{2l}$ in
the torus, as illustrated in Figure \ref{fig:all7}, 
 We keep the properties of being bipartite and having a bipartite medial
graph (conditions $(011)$ and $(111)$, which imply $(100)$) but lose
$2$-face-colorability (condition $(101)$).  Therefore, only the three
conditions $(011)$, $(111)$ and $(100)$ hold in this case, representing
the median of the Fano triangle from upper left to bottom right.

 By taking the dual or Wilson dual of this embeddding we obtain
embeddings where exactly the three properties on one of the other two
medians of the Fano triangle hold.

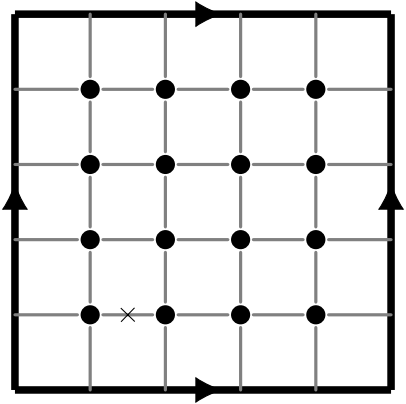
\begin{figure}
    \centering
   \begin{tikzpicture}[scale=0.5]
	\node[vertex] at (2,2) (v0) {};
 \node[vertex] at (2,4) (v1) {};
 \node[vertex] at (2,6) (v2) {};
 \node[vertex] at (2,8) (v3) {};
 	\node[vertex] at (4,2) (v4) {};
 \node[vertex] at (4,4) (v5) {};
 \node[vertex] at (4,6) (v6) {};
 \node[vertex] at (4,8) (v7) {};
 	\node[vertex] at (6,2) (v8) {};
 \node[vertex] at (6,4) (v9) {};
 \node[vertex] at (6,6) (v10) {};
 \node[vertex] at (6,8) (v11) {};
 	\node[vertex] at (8,2) (v12) {};
 \node[vertex] at (8,4) (v13) {};
 \node[vertex] at (8,6) (v14) {};
 \node[vertex] at (8,8) (v15) {};
  \node[cross=3pt, rotate=1] at (3,2) {};

	\begin{scope}[on background layer]
  \draw[line width = 1mm] (0,0) to (0,10);
   \draw[line width = 1mm] (0,0) to (10,0);
    \draw[line width = 1mm] (10,0) to (10,10);
     \draw[line width = 1mm] (0,10) to (10,10);

     \draw [arrows = {-Latex[width=10pt, length=10pt]}] (0,4.5) -- (0,5.5);
    \draw [arrows = {-Latex[width=10pt, length=10pt]}] (10,4.5) -- (10,5.5);
     \draw [arrows = {-Latex[width=10pt, length=10pt]}] (4.5,0) -- (5.5,0);
       \draw [arrows = {-Latex[width=10pt, length=10pt]}] (4.5,10) -- (5.5,10);
\draw[edge] (v0) to (v1);
		\draw[edge] (v1) to (v2);
		\draw[edge] (v2) to (v3);
		\draw[edge] (v4) to (v5);
		\draw[edge] (v5) to (v6);
  	\draw[edge] (v6) to (v7);
		\draw[edge] (v8) to (v9);
		\draw[edge] (v9) to (v10);
  	\draw[edge] (v10) to (v11);
		\draw[edge] (v12) to (v13);
		\draw[edge] (v13) to (v14);
  	\draw[edge] (v14) to (v15);
   \draw[edge] (v0) to (v4);
		\draw[edge] (v4) to (v8);
		\draw[edge] (v8) to (v12);
		\draw[edge] (v1) to (v5);
		\draw[edge] (v5) to (v9);
  	\draw[edge] (v9) to (v13);
		\draw[edge] (v2) to (v6);
		\draw[edge] (v6) to (v10);
  	\draw[edge] (v10) to (v14);
		\draw[edge] (v3) to (v7);
		\draw[edge] (v7) to (v11);
  	\draw[edge] (v11) to (v15);
      \draw[edge] (v0) to (2,0);
		\draw[edge] (v0) to (0,2);
		\draw[edge] (v4) to (4,0);
		\draw[edge] (v8) to (6,0);
		\draw[edge] (v12) to (8,0);
  	\draw[edge] (v1) to (0,4);
		\draw[edge] (v2) to (0,6);
		\draw[edge] (v3) to (0,8);
  	\draw[edge] (v3) to (2,10);
		\draw[edge] (v7) to (4,10);
		\draw[edge] (v11) to (6,10);
  	\draw[edge] (v15) to (8,10);
   \draw[edge] (v12) to (10,2);
		\draw[edge] (v13) to (10,4);
		\draw[edge] (v14) to (10,6);
  	\draw[edge] (v15) to (10,8);
	\end{scope}
\end{tikzpicture}
    \caption{This embedding is bipartite (011) and the medial graph is
bipartite (111), so we still have (100) as well, but this is not
2-face-colorable (101) so none of the other four properties can hold.} 
    \label{fig:fanomedian}
\end{figure}

 \subsubsection{Three Fano properties (circle)}
 \label{s2s:threecircle}

 Figure \ref{fig:fanocircle} shows an embedded graph $\eG$ in the
projective plane which is bipartite and $2$-face-colorable (conditions
$(011)$ and $(101)$, which imply condition $(110)$).  However, this
embedding is not orientable (condition $(001)$) and therefore it
satisfies only the three conditions corresponding to the circular line
of the Fano plane.

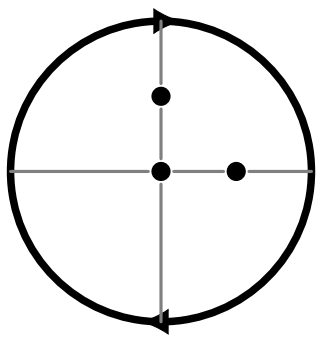
\begin{figure}
    \centering
   \begin{tikzpicture}[scale=0.5]
	\node[vertex] at (15,-10) (v0) {};
	\node[vertex] at (15,-8) (v1) {};
	\node[vertex] at (17,-10) (v2) {};
	
	\begin{scope}[on background layer]
 \draw[line width = 1mm] (15,-10) circle (4);
    \draw [arrows = {-Latex[width=10pt, length=10pt]}] (14.5,-6) -- (15.5,-6);
    \draw [arrows = {-Latex[width=10pt, length=10pt]}] (15.5,-14) -- (14.5,-14);
		\draw[edge] (v0) to (v2);
		\draw[edge] (v0) to (v1);
		\draw[edge] (v1) to (15,-6);
		\draw[edge] (v2) to (19,-10);
		\draw[edge] (v0) to (15,-14);
		\draw[edge] (v0) to (11,-10);

	\end{scope}
\end{tikzpicture}
    \caption{This graph drawn in the projective plane is bipartite,
2-face-colorable, but not orientable.}
    \label{fig:fanocircle}
\end{figure}

 \subsubsection{One Fano property (orientability)}
 \label{s2s:oneori}

 In Figure \ref{fig:justorientable} we give an embedding of $K_4$ in the
plane that is such that neither the graph nor its dual are Eulerian
(even-vertex). Therefore, it is orientable (condition $(001)$), but
satisfies none of the other seven properties.

 We can find examples that satisfy only condition $(010)$ or only
condition $(100)$ by taking the Petrie dual or Wilson dual,
respectively, of this example.

 \begin{figure}
     \centering
    \begin{tikzpicture}[scale=0.5]
 	\node[vertex] at (0,0) (v0) {};
  \node[vertex] at (0,4) (v1) {};
  \node[vertex] at (3,-3) (v2) {};
  \node[vertex] at (-3,-3) (v3) {};

 	\begin{scope}[on background layer]
 \draw[edge] (v0) to (v1);
 		\draw[edge] (v0) to (v2);
 		\draw[edge] (v0) to (v3);
 		\draw[edge] (v1) to (v2);
 		\draw[edge] (v1) to (v3);
   	\draw[edge] (v2) to (v3);
 	\end{scope}
 \end{tikzpicture}
     \caption{This embedding of $K_4$ in the plane satisfies only
$(001)$.}\label{fig:justorientable}
 \end{figure}
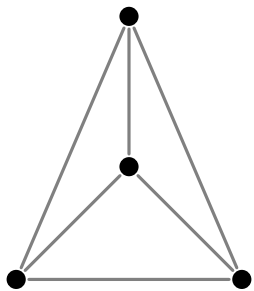

 \subsubsection{One Fano property (bipartiteness)}
 \label{s2s:onebip}

 The embedding of $K_{3,3}$ in the projective plane shown in Figure
\ref{fig:justbip} is bipartite, but it is not orientable,
2-face-colorable, or Eulerian (even-vertex).
 Thus, by Metatheorem B it only satisfies $(011)$.

 By taking a dual or Wilson dual of this example, we can find embeddings
that satisfy only condition $(101)$ or only condition $(110)$,
respectively.

 \begin{figure}
     \centering
    \begin{tikzpicture}[scale=0.5]
 	\node[vertex] at (15,-12) (v0) {};
 	\node[vertex] at (15,-8) (v1) {};
 	\node[vertex] at (13,-11) (v2) {};
 	\node[vertex] at (13,-9) (v3) {};
 	\node[vertex] at (17,-11) (v4) {};
 	\node[vertex] at (17,-9) (v5) {};
 	\begin{scope}[on background layer]
  \draw[line width = 1mm] (15,-10) circle (4);
     \draw [arrows = {-Latex[width=10pt, length=10pt]}] (14.5,-6) --
(15.5,-6);
     \draw [arrows = {-Latex[width=10pt, length=10pt]}] (15.5,-14) --
(14.5,-14);
 		\draw[edge] (v0) to (v2);
 		\draw[edge] (v0) to (v4);
 		\draw[edge] (v1) to (v3);
 		\draw[edge] (v1) to (v5);
 		\draw[edge] (v2) to (v3);
 		\draw[edge] (v4) to (v5);
   \draw[edge] (v5) to (18.464,-8);
   \draw[edge] (v4) to (18.464,-12);
   \draw[edge] (v0) to (15,-14);
     \draw[edge] (v3) to (11.536,-8);
   \draw[edge] (v2) to (11.536,-12);
 		\draw[edge] (v0) to (v4);
 		\draw[edge] (v1) to (15,-6);
 		\draw[edge] (v1) to (v5);
 		\draw[edge] (v2) to (v3);
 		\draw[edge] (v4) to (v5);
 	\end{scope}
 \end{tikzpicture}
     \caption{$K_{3,3}$ in the projective plane satisfies only
$(011)$.}\label{fig:justbip}
 \end{figure}
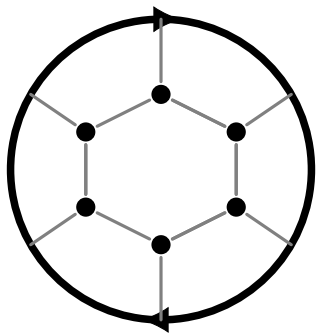

 \subsubsection{One Fano property (medial graph is bipartite)}
 \label{s2s:onemedialbip}

 By taking a twisted dual of the graph in Figure \ref{fig:all7}, we can
create the graph in Figure \ref{fig:onlymedialbipartite} which satisfies
only $(111)$.
 By taking the partial Petrie dual of one edge and the partial dual of
another, the resulting graph in Figure \ref{fig:onlymedialbipartite} is
not orientable, bipartite, or 2-face-colorable. The medial graph is
still bipartite since twisted duality does not impact the underlying
medial graph. Since the embedding satisfies $(111)$ but not $(001)$,
$(011)$, or $(101)$, by Metatheorem B none of the other three conditions
can be satisfied.

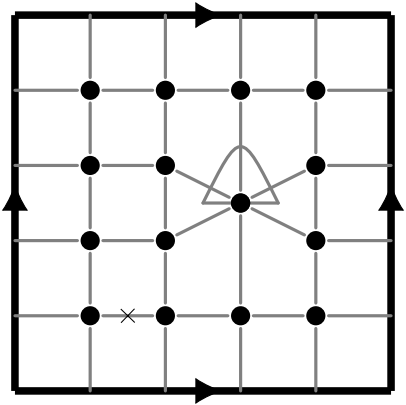
\begin{figure}
    \centering
   \begin{tikzpicture}[scale=0.5]
	\node[vertex] at (2,2) (v0) {};
 \node[vertex] at (2,4) (v1) {};
 \node[vertex] at (2,6) (v2) {};
 \node[vertex] at (2,8) (v3) {};
 	\node[vertex] at (4,2) (v4) {};
 \node[vertex] at (4,4) (v5) {};
 \node[vertex] at (4,6) (v6) {};
 \node[vertex] at (4,8) (v7) {};
 	\node[vertex] at (6,2) (v8) {};
 \node[vertex] at (6,5) (v9) {};
 \node[vertex] at (6,5.000000000001) (v10) {};
 \node[vertex] at (6,8) (v11) {};
 	\node[vertex] at (8,2) (v12) {};
 \node[vertex] at (8,4) (v13) {};
 \node[vertex] at (8,6) (v14) {};
 \node[vertex] at (8,8) (v15) {};
 \node[cross=3pt, rotate=1] at (3,2) {};

	\begin{scope}[on background layer]
  \draw[line width = 1mm] (0,0) to (0,10);
   \draw[line width = 1mm] (0,0) to (10,0);
    \draw[line width = 1mm] (10,0) to (10,10);
     \draw[line width = 1mm] (0,10) to (10,10);
        \draw [arrows = {-Latex[width=10pt, length=10pt]}] (0,4.5) -- (0,5.5);
    \draw [arrows = {-Latex[width=10pt, length=10pt]}] (10,4.5) -- (10,5.5);
     \draw [arrows = {-Latex[width=10pt, length=10pt]}] (4.5,0) -- (5.5,0);
       \draw [arrows = {-Latex[width=10pt, length=10pt]}] (4.5,10) -- (5.5,10);

\draw[edge] (v0) to (v1);
		\draw[edge] (v1) to (v2);
		\draw[edge] (v2) to (v3);
		\draw[edge] (v4) to (v5);
		\draw[edge] (v5) to (v6);
  	\draw[edge] (v6) to (v7);
		\draw[edge] (v8) to (v9);
		\draw[edge] (5,5) to (7,5);
        \draw[edge] (5,5) .. controls (6,7) .. (7,5);
  	\draw[edge] (v10) to (v11);
		\draw[edge] (v12) to (v13);
		\draw[edge] (v13) to (v14);
  	\draw[edge] (v14) to (v15);
   \draw[edge] (v0) to (v4);
		\draw[edge] (v4) to (v8);
		\draw[edge] (v8) to (v12);
		\draw[edge] (v1) to (v5);
		\draw[edge] (v5) to (v9);
  	\draw[edge] (v9) to (v13);
		\draw[edge] (v2) to (v6);
		\draw[edge] (v6) to (v10);
  	\draw[edge] (v10) to (v14);
		\draw[edge] (v3) to (v7);
		\draw[edge] (v7) to (v11);
  	\draw[edge] (v11) to (v15);
      \draw[edge] (v0) to (2,0);
		\draw[edge] (v0) to (0,2);
		\draw[edge] (v4) to (4,0);
		\draw[edge] (v8) to (6,0);
		\draw[edge] (v12) to (8,0);
  	\draw[edge] (v1) to (0,4);
		\draw[edge] (v2) to (0,6);
		\draw[edge] (v3) to (0,8);
  	\draw[edge] (v3) to (2,10);
		\draw[edge] (v7) to (4,10);
		\draw[edge] (v11) to (6,10);
  	\draw[edge] (v15) to (8,10);
   \draw[edge] (v12) to (10,2);
		\draw[edge] (v13) to (10,4);
		\draw[edge] (v14) to (10,6);
  	\draw[edge] (v15) to (10,8);
	\end{scope}
\end{tikzpicture}
    \caption{This partial twisted dual of $C_{2k} \times C_{2l}$ in the
torus satisfies only $(111)$.}
    \label{fig:onlymedialbipartite}
\end{figure}

 \subsubsection{No Eulerian or Fano properties}
 \label{s2s:none}

 The embedding in Figure \ref{fig:updatednoneof10} is not even-vertex
(the vertex degrees are $5$ and $1$), not even-face (it has a face of
degree $1$), and not even-zigzag (it has a zigzag of degree $1$).  Since
it has none of the Eulerian properties, it also has none of the Fano
properties.

\begin{figure}

\begin{center}
   \begin{tikzpicture}[scale=0.5]
\node[vertex] at (0,0) (v0) {};
 \node[vertex] at (0,5) (v1) {};
  \node[cross=6pt, rotate=2] at (2,5.6) {};

	\begin{scope}[nodes={sloped,allow upside down}][on background layer]
		\draw[edge] (v0) to (v1);
		\draw[edge] (v1) .. controls (-5,10) and (-5, 6).. (v1);
        \draw[edge] (v1) .. controls (5, 10) and (5,6).. (v1);
	
	\end{scope}
\end{tikzpicture}
   \end{center}

  \caption{A graph $G$ satisfying none of the ten properties.}
   \label{fig:updatednoneof10}
 
\end{figure}
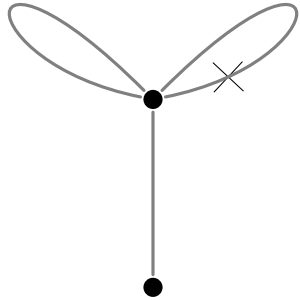
\hfill

 \subsection{Example with all Eulerian properties but no Fano properties}
 \label{ss:exalleuleriannofano}

 In Subsubsection \ref{s2s:none} above we provided an example with none
of the seven Fano properties or three Eulerian properties.  However,
there are also examples showing that no combination of the Eulerian
properties implies any of the Fano properties.

 \subsubsection{All Eulerian but no Fano properties}
 \label{s2s:alleuleriannofano}

 The embedding $\eG$ in Figure \ref{fig:eulerianpropertiesonly} has a single
vertex, a single face, and a single zigzag.  Therefore, the vertex,
face, and zigzag all have degree $6$, and the embedding has all three
Eulerian properties.  However, because it has only a single vertex,
single face, and single zigzag, it is not bipartite, $2$-face-colorable,
or $2$-zigzag-colorable.  The medial graph contains a triangle, so is
not bipartite.  Since $\eG$ contains a twisted loop it is not
orientable, and $\eG\spe$ contains two twisted loops, so is not
orientable.  Finally, $\eG$ is not directable: if we direct the edges so
that each face is bounded by a directed walk, then the half-edge
directions around the vertex would have to alternate between inward and
outward, which is not possible.

\begin{figure}
\begin{center}
   \begin{tikzpicture}[scale=0.7]
\node[vertex] at (0,0) (v0) {};
  \node[cross=6pt, rotate=2] at (0,-2.3) {};

	\begin{scope}[nodes={sloped,allow upside down}][on background layer]
		\draw[edge] (v0) .. controls (-2,-8) and (-2,8).. (v0);
		\draw[edge] (v0) .. controls (-2,3) and (2, 3).. (v0);
        \draw[edge] (v0) .. controls (-2, -3) and (2,-3).. (v0);
	
	\end{scope}
\end{tikzpicture}
   \end{center}
  \caption{A graph $G$ satisfying none of the fano properties but all
three eulerian properties.}
   \label{fig:eulerianpropertiesonly}
 
\end{figure}
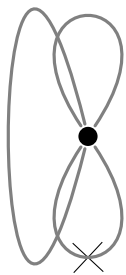
\hfill

 \section{Eulerian and single-vertex partial duals}\label{sec:allpds}

 In this section we discuss another property of embedded graphs that can
be represented by a congruence condition involving closed walks in the
jewel, and a strengthening of that property which we can also
characterize.

 \subsection{Eulerian partial duals}

 A \emph{vertex-face walk} in a jewel $\cL$ is a closed walk that alternates
between edges colored $\ca$ and edges colored either $\cv$ or $\cf$. A
\emph{vertex-face cycle} or \emph{v/f-gon} in a jewel $\cL$ is a cycle that
alternates between edges colored $\ca$ and edges colored either $\cv$ or
$\cf$. These v/f-gons represent a vertex (and equivalently a face) in
some partial dual of the cellularly embedded graph $\eG$ represented by
$\cL$.

 \begin{theorem}\label{vfgontheorem} Let $\eG$ be a connected, cellularly
embedded graph with corresponding jewel $\cL$. All partial duals of $\eG$ are
Eulerian if and only if all v/f-gons have length 0 (mod 4).
 \end{theorem}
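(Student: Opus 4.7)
The plan is to use the correspondence between v/f-gons of $\cL$ and vertices in partial duals of $\eG$. For any $A \subseteq E(G)$, the jewel of $\eG\du A$ is $\cL[{}\du A]$, obtained by swapping $\cv$ and $\cf$ on the e-simplices corresponding to $A$, and the vertices of $\eG\du A$ correspond to the v-gons of $\cL[{}\du A]$. A v-gon alternates $\ca$- and $\cv$-edges, so one of length $2k$ represents a vertex of degree $k$; hence the corresponding vertex has even degree exactly when the v-gon has length divisible by $4$.

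The key structural observation is that every v/f-gon $C$ in $\cL$ uses each e-simplex with only one of the colors $\cv$ and $\cf$, never both. Indeed, in a single e-simplex (a $K_4$), the two $\cv$-edges form one perfect matching on its four vertices and the two $\cf$-edges form the other, and every $\cv$-edge meets every $\cf$-edge at a common vertex. Since $C$ is a simple cycle, it cannot use a $\cv$-edge and a $\cf$-edge of the same e-simplex without repeating a vertex. I expect this structural claim to be the main (though brief) technical step.

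For the backward direction, assume every v/f-gon in $\cL$ has length $\equiv 0 \pmod{4}$, and fix any $A \subseteq E(G)$. A v-gon of $\cL[{}\du A]$, viewed back in $\cL$, is a cycle alternating between $\ca$-edges and edges that are $\cv$ in e-simplices outside $A$ or $\cf$ in e-simplices in $A$; thus it is a v/f-gon of $\cL$ and has length divisible by $4$. Every vertex of $\eG\du A$ then has even degree, and since partial duality of a connected cellular embedding produces a connected cellular embedding, $\eG\du A$ is Eulerian.

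For the forward direction, take any v/f-gon $C$ in $\cL$ and let $A$ be the set of edges $e \in E(G)$ such that $C$ uses a $\cf$-edge of the e-simplex corresponding to $e$. By the structural observation, on every e-simplex visited by $C$ but not in $A$, $C$ uses only $\cv$-edges. Swapping $\cv$ and $\cf$ on the e-simplices of $A$ then turns every non-$\ca$ edge of $C$ into a $\cv$-edge of $\cL[{}\du A]$, so $C$ is a v-gon of $\cL[{}\du A]$ representing a vertex of $\eG\du A$. By hypothesis $\eG\du A$ is Eulerian, so this vertex has even degree, which means $|E(C)| \equiv 0 \pmod{4}$.
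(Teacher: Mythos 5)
Your proof is correct and follows essentially the same route as the paper's: both directions rest on the observation that a v/f-gon of $\cL$ meets each e-simplex in edges of only one of the colors $\cv,\cf$, so that taking $A$ to be the set of edges whose e-simplices are met in $\cf$-edges turns the v/f-gon into a v-gon of $\cL[{}\du A]$, and conversely every v-gon of every $\cL[{}\du A]$ is a v/f-gon of $\cL$. One small point: the justification of the structural claim should invoke the alternation of a v/f-gon --- at the vertex shared by a $\cv$-edge and a $\cf$-edge of the same e-simplex, the cycle would also need the incident $\ca$-edge, giving three cycle edges at one vertex --- rather than merely the fact that the two edges share a vertex, which by itself does not prevent them from being consecutive edges of a simple cycle.
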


 \begin{proof} Let $\eG$ be a connected, cellularly embedded graph with
corresponding jewel $\cL$. First assume that all partial duals of $\eG$
are Eulerian. Then for any $A\subseteq E(G)$, the v-gons of $\cL[\du A]$
must have an even number of edges colored $\cv$. This means that the
total length of the v-gon must be equal to 0 (mod 4). But, every v/f-gon
in $\cL$ represents a v-gon in $\cL[\du A]$ for some edge set $A
\subseteq E(G)$. To see this, begin with a v/f-gon $C$ in $\cL$. Note
that since $C$ is a cycle, it cannot cross an e-square in $\cL$ along
both an edge colored $\cv$ and an edge colored $\cf$. Using both an edge
colored $\cv$ and an edge colored $\cf$ in the same e-square causes a
vertex-face walk to use all three edges incident with a vertex, thus
making it not a cycle. So, for each e-square $e$ in $\cL$, $C$ crosses
$e$ along at most one color (it may use one or both edges of that color,
but it may only use one of the two colors). Let $A$ be the set of edges
in $\eG$ corresponding to e-squares in $\cL$ that $C$ crosses along
edges colored $\cf$. Then consider $\eG\du A$. In $\eG\du A$, $C$
becomes a v-gon since it is a v/f-gon with only edges colored $\ca$ and
$\cv$. Therefore $C$ must have an even number of edges colored $\cv$
since $\eG\du A$ is Eulerian by assumption and so $C$ has length 0 (mod
4). Thus, all v/f-gons have length 0 (mod 4).

 Conversely, assume that all v/f-gons have length 0 (mod 4). Let $A$ be
an arbitrarily chosen subset of $E(G)$. Let $v$ be a vertex of $\eG\du
A$. Then $v$ is represented by a v-gon in $\eG\du A$ which is itself a
v/f-gon in $\cL$ up to possibly a different coloring on the edges
colored $\cv$ and $\cf$. The length of the v-gon representing $v$ is 0
(mod 4) and thus $v$ has even degree. Since $v$ and $A$ were chosen
arbitrarily, we see that every vertex in a partial dual of $\eG$ has
even degree. Therefore, all partial duals of $\eG$ are Eulerian.  
 \end{proof}

 \begin{corollary}\label{vfgoncoro} Let $\eG$ be a connected, cellularly
embedded graph with corresponding jewel $\cL$. If for each cycle $C$ in $\cL$
either $(v+f)_\cL(C)$ is even or $a_\cL(C)$ is even, then all partial duals
of $\eG$ are Eulerian. 
 \end{corollary}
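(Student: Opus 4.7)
The plan is to reduce Corollary \ref{vfgoncoro} directly to Theorem \ref{vfgontheorem} by verifying its hypothesis. By Theorem \ref{vfgontheorem}, it suffices to show that every v/f-gon in $\cL$ has length divisible by $4$.

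The key observation is structural: a v/f-gon $C$ is a cycle whose edges alternate between color $\ca$ and colors in $\{\cv,\cf\}$. If $C$ has length $\ell$, then exactly $\ell/2$ of its edges are colored $\ca$, and exactly $\ell/2$ are colored $\cv$ or $\cf$. In other words,
\[
a_\cL(C) \;=\; (v+f)_\cL(C) \;=\; \ell/2.
\]
In particular, $a_\cL(C)$ and $(v+f)_\cL(C)$ have the same parity.

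Now I apply the hypothesis of the corollary to the v/f-gon $C$: one of $(v+f)_\cL(C)$ or $a_\cL(C)$ is even. Since these two quantities are equal, they are both even, so $\ell/2$ is even and therefore $\ell \equiv 0 \pmod 4$. Thus every v/f-gon in $\cL$ has length $\equiv 0 \pmod 4$, and Theorem \ref{vfgontheorem} then gives that every partial dual of $\eG$ is Eulerian.

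This argument is essentially a one-line parity observation exploiting the rigid alternating structure of v/f-gons, so there is no genuine obstacle; the only subtlety to flag for the reader is the equality $a_\cL(C) = (v+f)_\cL(C)$ that forces the two parity conditions in the hypothesis to coincide on v/f-gons (whereas for general cycles in $\cL$ they may differ). Consequently the corollary's disjunctive hypothesis is strictly stronger than what Theorem \ref{vfgontheorem} requires, which is why the implication holds but is not an equivalence.
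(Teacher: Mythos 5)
Your proof is correct and is essentially the same as the paper's: both reduce to Theorem \ref{vfgontheorem} by noting that in a v/f-gon $C$ the alternation of colors forces $a_\cL(C) = (v+f)_\cL(C) = \ell/2$, so the disjunctive hypothesis makes both quantities even and hence $\ell \equiv 0 \pmod 4$. Your added remark that the hypothesis is strictly stronger than needed (so the implication is not an equivalence) matches the paper's observation, made just after the corollary, that the converse fails.
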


 \begin{proof} Assume that $(v+f)_\cL(C)$ is even or $a_\cL(C)$ is even for
all cycles $C$ in $\cL$. Then for each v/f-gon $C$ in $\cL$, since v/f-gons
are themselves cycles in $\cL$, $(v+f)_\cL(C)$ or $a_\cL(C)$ is even. However,
in any v/f-gon $(v+f)_\cL(C) = a_\cL(C)$. Thus the total length of $C$ is
equal to 0 (mod 4). Therefore, by Theorem \ref{vfgontheorem}, all
partial duals of $\eG$ are Eulerian. 
 \end{proof} 

 Since the functions $(v+f)_\cL$ and $a_\cL$ correspond to conditions
$(110)$ and $(111)$, respectively, we have the following.

 \begin{corollary}\label{eulerpdcoro} Let $\eG$ be a connected,
cellularly embedded graph with corresponding medial graph $M$. If $\eG$
is 2-zigzag-colorable or if $M$ is bipartite then all partial duals of
$\eG$ are Eulerian. 
 \end{corollary}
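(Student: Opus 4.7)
The plan is to derive this corollary as a direct application of Corollary \ref{vfgoncoro}, by using the Fano framework translations to rewrite each hypothesis as a parity statement about $(v+f)_\cL$ or $a_\cL$ on cycles of the jewel $\cL$.

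First I would recall from Table \ref{tab:condoribip} (see also Subsubsection \ref{cond110}) that $\eG$ being $2$-zigzag-colorable is precisely condition $(110)$, which asserts that $(v+f)_\cL(K)$ is even for every closed walk $K$ in $\cL$. Analogously, from Table \ref{tab:condoribip} and Subsubsection \ref{cond111}, the medial graph $M$ being bipartite is precisely condition $(111)$, which asserts that $a_\cL(K)$ is even for every closed walk $K$ in $\cL$. Since every cycle in $\cL$ is in particular a closed walk, each of the two hypotheses of the corollary forces a uniform parity condition on all cycles of $\cL$.

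Next I would combine these observations with Corollary \ref{vfgoncoro}. If $\eG$ is $2$-zigzag-colorable, then $(v+f)_\cL(C)$ is even for every cycle $C$ in $\cL$, so certainly for each cycle $C$ at least one of $(v+f)_\cL(C)$ or $a_\cL(C)$ is even. If instead $M$ is bipartite, then $a_\cL(C)$ is even for every cycle $C$ in $\cL$, and again the disjunctive hypothesis of Corollary \ref{vfgoncoro} is satisfied. In either situation, Corollary \ref{vfgoncoro} immediately yields that every partial dual of $\eG$ is Eulerian.

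I do not anticipate any real obstacle here: all the difficulty has already been absorbed into (a) the Fano framework translations between the embedding properties (7) and (6) and the edge-counting functions $a_\cL$ and $(v+f)_\cL$, and (b) Corollary \ref{vfgoncoro} itself, which in turn rests on the v/f-gon characterization of Theorem \ref{vfgontheorem}. The proof is then essentially one line invoking these earlier results, so the write-up should just make explicit the correspondences between conditions $(110)$, $(111)$ and the functions $(v+f)_\cL$, $a_\cL$ and then cite Corollary \ref{vfgoncoro}.
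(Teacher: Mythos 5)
Your proposal matches the paper's own (one-line) derivation exactly: the paper justifies the corollary by noting that the functions $(v+f)_\cL$ and $a_\cL$ correspond to conditions $(110)$ ($2$-zigzag-colorability) and $(111)$ (bipartiteness of $M$), so each hypothesis gives the required parity on all cycles of $\cL$ and Corollary \ref{vfgoncoro} applies. Your write-up is correct and takes essentially the same approach, just spelled out in slightly more detail.
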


 Note that the converse of Corollary \ref{vfgoncoro} is not true. Figure
\ref{fig:EulerianPDCounterexample} shows a counterexample. To simplify
the figure we have shown only the gem $\cJ$, rather than the jewel
$\cL$; this still shows all of the v/f-gons.  This connected graph has
partial duals that have either a single vertex of degree $6$ or two
vertices of degree $4$ and $2$. So all partial duals are
Eulerian, but in this graph there exists a cycle with
$a_\cJ(C)=a_\cL(C)$ odd and $(v+f)_\cJ(C)=(v+f)_\cL(C)$ odd. One such
cycle is $(0,2,3,4,8,9,10,11)$ with three edges colored $\ca$, two edges
colored $\cf$, and three edges colored $\cv$.
 \begin{figure}

 \begin{center}
 \begin{tikzpicture}[scale=.7]
 	\node[vertex, label=above:0] at (10.709,-4.838) (v0) {};
 	\node[vertex, label=right:2] at (14.254,-6.532) (v1) {};
 	\node[vertex, label=above:1] at (12.815,-4.805) (v2) {};
 	\node[vertex, label=left:11] at (9.339,-6.583) (v3) {};
 	\node[vertex, label=above:10] at (8.488,-8.298) (v4) {};
 	\node[vertex, label=left:9] at (8.942,-9.569) (v5) {};
 	\node[vertex, label=below:8] at (9.835,-10.423) (v6) {};
 	\node[vertex, label=below:7] at (10.859,-10.908) (v7) {};
 	\node[vertex, label=below:6] at (13.141,-10.933) (v8) {};
 	\node[vertex, label=below:5] at (14.249,-10.359) (v9) {};
 	\node[vertex, label=right:4] at (14.985,-9.340) (v10) {};
 	\node[vertex, label=above:3] at (15.372,-8.231) (v11) {};
 	\begin{scope}[on background layer]
 		\draw[edge, blue] (v0) .. controls (12.858, -3.084) and (14.045,
-3.583) .. (v1);
 		\draw[edge, yellow!60!orange] (v0) to (v2);
 		\draw[edge, red] (v0) to (v3);
 		\draw[edge, red] (v1) to (v2);
 		\draw[edge, yellow!60!orange] (v1) to (v11);
 		\draw[edge, blue] (v2) .. controls (10.649, -3.057) and (9.491,
-3.650) .. (v3);
 		\draw[edge, yellow!60!orange] (v3) to (v4);
 		\draw[edge, red] (v4) to (v5);
 		\draw[edge, blue] (v4) to (v8);
 		\draw[edge, yellow!60!orange] (v5) to (v6);
 		\draw[edge, blue] (v5) to (v9);
 		\draw[edge, red] (v6) to (v7);
 		\draw[edge, blue] (v6) to (v10);
 		\draw[edge, yellow!60!orange] (v7) to (v8);
 		\draw[edge, blue] (v7) to (v11);
 		\draw[edge, red] (v8) to (v9);
 		\draw[edge, yellow!60!orange] (v9) to (v10);
 		\draw[edge, red] (v10) to (v11);
 	\end{scope}
 \end{tikzpicture} \includegraphics[]{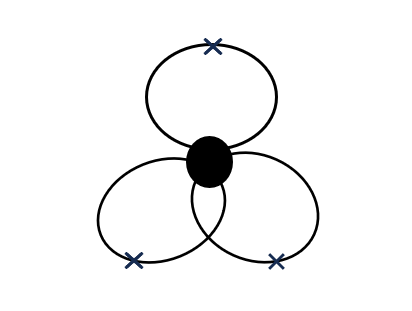}

 \end{center}
  
    \caption{A gem and its corresponding graph showing that the converse
of Corollary \ref{vfgoncoro} is false.}
    \label{fig:EulerianPDCounterexample}
 \end{figure}

 \subsection{Single-vertex partial duals}
 We know that every connected graph embedding $\eG$ has some Eulerian
partial dual.  In particular, it is well known that if $T$ is the set of
edges of a spanning tree in $\eG$, then $\eG\du T$ has a single vertex.
This vertex is incident with both ends of every edge, and thus $\eG\du
T$ is Eulerian.

 Single-vertex embeddings are an important special class of embeddings
that have been studied from a number of perspectives, and have
connections to other ideas such as chord diagrams.  A \emph{chord
diagram} consists of a circle drawn in the plane, with a finite number
of (straight-line) chords with disjoint sets of ends.
 Since every connected embedded graph has at least one single-vertex
partial dual, it is natural, as a strengthening of the result in the
previous subsection, to ask which embeddings (a subset of those from
Theorem \ref{vfgontheorem}) have the property that all partial duals are
single-vertex.  Note that this is equivalent to all partial duals being
single-face.
 In this subsection we answer this question.

 Consider an embedded graph $\eG$ all of whose partial duals are
single-vertex.
 The effects of taking a partial dual with respect to a single edge are
detailed in \cite[Section 2.2]{GraphsonSurfEMM}.
 If an embedded graph has an untwisted loop, then taking the
partial dual with respect to that loop splits the incident vertex into
two vertices.
 So $\eG$, and all partial duals of $\eG$, contain no untwisted loops.
Clearly, all edges of $\eG$ are loops, so all edges in $\eG$ are twisted
loops. Two loops $e$ and $f$ are \emph{interlaced} if they share a
common vertex $v$ and the cyclic order of edge labels around $v$ is of
the form $e \dots f \dots e \dots f \dots$.
 If two twisted loops are interlaced, then taking the partial dual with
respect to one of the two twisted loops results in the other loop
becoming untwisted.
 So $\eG$ has no interlaced twisted loops.  If there
are no interlaced twisted loops, then taking the partial dual with
respect to a noninterlaced twisted loop keeps that edge and all other
edges as noninterlaced twisted loops. So we arrive at the following
characterization. 

 \begin{theorem}
 The partial duals of an embedded graph $\eG$ are all single-vertex (or,
equivalently, all single-face)
 if and only if $\eG$ has a single vertex, every edge of $\eG$ is a
twisted loop, and no two edges are interlaced.
 \end{theorem}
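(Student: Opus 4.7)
The plan is to prove both implications by leveraging the local analysis of the effect of taking a partial dual with respect to a single edge, as sketched in the paragraph immediately preceding the theorem and drawn from \cite[Section 2.2]{GraphsonSurfEMM}. The three facts I would use as black boxes are: (i) taking a partial dual with respect to an untwisted loop splits its incident vertex into two; (ii) taking a partial dual with respect to a twisted loop keeps the vertex undivided; and (iii) if two twisted loops $e,f$ at a common vertex are interlaced, then $\eG \du \{e\}$ makes $f$ an untwisted loop.

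For the forward direction, I would apply each of these in turn. Since $\eG = \eG \du \emptyset$ is itself a partial dual, it must be single-vertex, so every edge is a loop. If some edge $e$ were an untwisted loop, fact (i) would make $\eG \du \{e\}$ have at least two vertices, contradicting the hypothesis. So every edge is a twisted loop. Now suppose two twisted loops $e,f$ are interlaced; by fact (iii), $f$ becomes an untwisted loop in $\eG \du \{e\}$. But $(\eG\du\{e\})\du\{f\} = \eG \du \{e,f\}$ is required to be single-vertex, which contradicts (i) applied to the untwisted loop $f$ inside $\eG\du\{e\}$. Hence no two edges are interlaced.

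For the backward direction, the plan is to show that the property ``single vertex, every edge a twisted loop, no two edges interlaced'' is invariant under $\du\{e\}$ for any single edge $e$. Granted this invariance, an arbitrary partial dual $\eG\du A$ is obtained by iterating single-edge partial duals one edge at a time; each intermediate graph retains the property and in particular has a single vertex, so $\eG \du A$ is single-vertex. The invariance claim is precisely the final assertion in the paragraph preceding the theorem statement: performing $\du\{e\}$ on a noninterlaced twisted loop leaves all edges as twisted loops, with no new interlacements created.

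The only real obstacle is verifying this invariance statement rigorously. This reduces to a careful bookkeeping of the rotation at the single vertex before and after the operation: one must check that the cyclic arrangement of the half-edges of every other loop $f$ relative to itself is unchanged (so $f$ remains a noninterlaced twisted loop), and that the cyclic positions of any two other loops $f,g$ relative to each other are also unchanged (so no new interlacement appears). This is a straightforward but slightly tedious local computation in the ribbon-graph or arrow-marked description of partial duality, which I would carry out by enumerating the possible positions of $f$ and $g$ around $e$ using that $e,f$ and $e,g$ are noninterlaced.
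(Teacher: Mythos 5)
Your proposal is correct and follows essentially the same route as the paper: both arguments rest on the same three local facts about single-edge partial duals (untwisted loops split the vertex, interlaced twisted loops become untwisted, noninterlaced twisted loops preserve the configuration), with the paper likewise asserting the key invariance step for the backward direction without carrying out the detailed rotation bookkeeping you flag as the remaining verification.
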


 A graph is \emph{outerplanar} if it has an embedding in the plane with
every vertex on the boundary of the outer face.  The end graph of
a graph embedding $\eG$ was defined in Subsubsection \ref{cond100};
it is $3$-regular, and at each vertex two of the edges correspond to
edges of color $\ca$ in the gem $\cJ$, while the other edges correspond
to edges of color $\cf$ in $\cJ$.  The `colored end graph', which we
will denote by $\cN$, transfers these colors to the edges of the end
graph.

 When $\eG$ is a single-vertex embedding, $\cN$ represents the chord
diagram associated with $\eG$: the edges of color $\ca$ in $\cN$ form a
cycle corresponding to the circle of the chord diagram, and the edges of
color $\cf$ in $\cN$ form a matching whose elements correspond to the
chords.  The fact that the edges of $\eG$ are not interlaced means that
the chords of the chord diagram are nonintersecting.  Since a chord
diagram without intersecting chords can be thought of as a
$2$-connected $3$-regular outerplanar graph, with the circle of the
chord diagram forming the boundary of the outer face, we obtain the
following.

 Having all partial duals single-vertex is a property preserved by
(full) duality, which swaps the end graph with the $3$-regular side
graph described in Subsubsection \ref{cond010}.  We can define a
`colored side graph', where every vertex is incident with two edges
colored $\ca$, and one edge colored $\cv$.  Duality also swaps loops
with \emph{coloops}, edges that have the same face on both sides.
 A coloop is \emph{twisted} if the face traverses the edge twice in the
same direction, and \emph{untwisted} if it traverses the edge once in
each direction.  Duality swaps twisted loops and twisted coloops.

 Putting all of this together, we have the following.

 \begin{theorem}
 Let $\eG$ be an embedded graph.  Then the following are equivalent.
 \begin{enumerate}[label=\rm(\alph*)]\setlength{\itemsep}{0pt}%
  \item
    The partial duals of $\eG$ are all single-vertex.
  \item
    The partial duals of $\eG$ are all single-face.
  \item
    Every edge of $\eG$ is a twisted loop and the colored end graph of
$\eG$ can be drawn as an outerplanar graph where all outside edges are
colored $\ca$.
  \item
    Every edge of $\eG$ is a twisted coloop and the colored side graph of
$\eG$ can be drawn as an outerplanar graph where all outside edges are
colored $\ca$.
 \end{enumerate}
 \end{theorem}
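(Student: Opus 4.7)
The plan is to reduce both new equivalences to the characterization established in the theorem just above, namely that (a) and (b) hold if and only if $\eG$ has a single vertex, every edge of $\eG$ is a twisted loop, and no two edges of $\eG$ are interlaced. The key observation is that when $\eG$ has a single vertex, the colored end graph $\cN$ is naturally a chord diagram: since the $\ca$-cycles of $\cN$ correspond to the vertices of $\eG$, the $\ca$-edges form a single cycle that is a Hamilton cycle of $\cN$, while the $\cf$-edges form a perfect matching whose elements are in bijection with the edges of $\eG$.

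To show (a) implies (c), I would use the preceding theorem to obtain a single vertex, all twisted loops, and no interlaced pair. Draw $\cN$ with its vertices placed on a circle in the cyclic order given by the rotation at the single vertex of $\eG$; the $\ca$-Hamilton cycle becomes the outer boundary of the drawing, and each edge of color $\cf$ becomes a chord joining the two halves of the corresponding edge of $\eG$. Two chords cross inside the disk if and only if their endpoint pairs are interlaced around the circle, which by construction matches interlacing of the corresponding edges of $\eG$. Since no pair is interlaced, the chords can be drawn without crossings, giving an outerplanar drawing with all outside edges colored $\ca$. Conversely, any such outerplanar drawing of $\cN$ forces each vertex to be incident with its two $\ca$-edges along the outer face boundary; combined with the connectivity of $\cN$ inherited from that of $\eG$ and its gem, this forces the $\ca$-edges to form a single Hamilton cycle, so $\eG$ has one vertex. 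The chords then lie inside the disk without crossings, so no two edges of $\eG$ are interlaced, and together with the twisted-loop hypothesis the preceding theorem yields (a).

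For (a) $\Leftrightarrow$ (d), and hence (b) $\Leftrightarrow$ (d), I would apply the argument above to $\eG\sdu$. Because $\eG\sdu \du A = \eG \du (E(G)\setminus A)$, the collection of partial duals of $\eG\sdu$ coincides with the collection of partial duals of $\eG$, so condition (a) holds for $\eG$ if and only if it holds for $\eG\sdu$. Under full duality the colored end graph of $\eG\sdu$ coincides with the colored side graph of $\eG$, and loops of $\eG\sdu$ correspond exactly to coloops of $\eG$ with the twisted/untwisted distinction preserved. Thus (c) applied to $\eG\sdu$ is precisely (d) applied to $\eG$, completing the chain of equivalences.

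The main obstacle is the careful verification of the two translation principles underpinning these steps: first, that interlacing of edges at a single vertex of $\eG$ corresponds exactly to crossing of chords in the induced chord diagram on $\cN$; and second, that full duality interchanges twisted loops with twisted coloops, preserving twistedness. The first is essentially bookkeeping with the gem or ribbon-graph description of the end graph, but the second requires checking the local picture around a crosscap in both the orientable and nonorientable settings, so that a loop whose rotation reverses at its two halves dualizes to a coloop whose bounding face traverses it twice in the same direction.
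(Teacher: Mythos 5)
Your proposal is correct and follows essentially the same route as the paper: reduce to the preceding characterization (single vertex, every edge a twisted loop, no two edges interlaced), identify the colored end graph of a single-vertex embedding with a chord diagram whose non-crossing condition is exactly outerplanarity with the $\ca$-cycle as outer boundary, and obtain (d) by applying duality, which exchanges the end and side graphs and swaps twisted loops with twisted coloops. The paper leaves these translation steps largely as assertions, so your more explicit treatment of the converse direction of (c) and of the duality bookkeeping is, if anything, a modest refinement of the same argument.
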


 We can also state similar theorems about when all partial Petrial duals
are single-face and single-zigzag, and when all partial Wilson duals are
single-vertex and single-zigzag.  We leave the details to the reader.

 \section{Conclusion}\label{sec:conclusion}

 There are two obvious directions in which it would make sense to try to
extend the results in this paper.

 The first is for delta-matroids.  These were introduced by Bouchet, who
showed that every embedded graph has an associated delta-matroid
\cite{Bou87, Bou89}.  Properties of the delta-matroids associated with embedded
graphs have been extensively investigated by Chun, Moffatt, Noble, and
Rueckriemen \cite{CMNR19a, CMNR19b}.  The twisted duality framework for graph
embeddings corresponds to natural operations in their delta-matroids,
and these operations can be extended to larger classes of
delta-matroids, such as binary delta-matroids.  The largest class of
delta-matroids for which these operations make sense is the class of
\emph{vf-safe} delta-matroids, defined by Brijder and Hoogeboom
\cite{BH13}.  It is natural to try to extend our Fano framework to binary
delta-matroids, or even to vf-safe delta matroids.  A step in this
direction has already been taken by Yan and Jin \cite[Theorem 3.6]{YJ22},
who showed that Theorem \ref{twoimplythethird} can be extended to binary
delta-matroids.  The overall difficulty with this direction of research
is that it is unclear what the counterpart of the medial graph is for
delta-matroids.

 Another possible direction in which our results might be extended is to
hypermaps, which are embeddings of hypergraphs in surfaces.  There is a
representation of hypermaps as edge-colored cubic graphs which
generalizes the idea of gems; see \cite{ChVT22}.  However, there is
no obvious counterpart of jewels in this setting, which means that we
may perhaps only be able to extend results that can be expressed in
terms of gems.

  \end{document}